\title[{}]{Orthosymplectic Feigin--Semikhatov duality}
\author[Justine Fasquel]{Justine Fasquel$^1$}\thanks{$^1$ Supported by a University of Melbourne Establishment Grant and Andrew Sisson Support Package 2023.}
\address[J.F.]{School of Mathematics and Statistics, University of Melbourne, Parkville, Australia, 3010}
\email{justine.fasquel@unimelb.edu.au}
\author[Shigenori Nakatsuka]{Shigenori Nakatsuka$^2$}\thanks{$^2$ Supported by JSPS Overseas Research Fellowships Grant Number 202260077.}
\address[S.N.]{Department Mathematik, FAU Erlangen–Nürnberg, Cauerstraße 11, 91058, Erlangen, Germany}
\email{shigenori.nakatsuka@fau.de}
\keywords{vertex operator algebra, $\W$-superalgebra, duality}
\subjclass{Primary 17B69; Secondary 17B67}
\definecolor{rouge}{rgb}{0.85,0.1,.4}
\definecolor{bleu}{rgb}{0.1,0.2,0.9}
\definecolor{violet}{rgb}{0.7,0,0.8}
\newtheorem{definition}{Definition}[section]
\newtheorem{proposition}[definition]{Proposition}
\newtheorem{theorem}[definition]{Theorem}
\newtheorem{corollary}[definition]{Corollary}
\newtheorem{lemma}[definition]{Lemma}
\newtheorem{conjecture}[definition]{Conjecture}
\newtheorem{MainThm}{Main Theorem}
\theoremstyle{remark}
\newtheorem{remark}[definition]{Remark}
\newtheorem{example}[definition]{Example}
\numberwithin{equation}{section}
\newcommand{\Z}{\mathbb{Z}}
\newcommand{\R}{\mathbb{R}}
\newcommand{\C}{\mathbb{C}}
\newcommand{\Hom}{\operatorname{Hom}}
\newcommand{\Aut}{\operatorname{Aut}}
\newcommand{\Com}{\operatorname{Com}}
\newcommand{\id}{\operatorname{id}}
\newcommand{\Ker}{\operatorname{Ker}}
\newcommand{\ch}{\operatorname{ch}}
\newcommand{\cA}{\mathcal{A}}
\newcommand{\lenght}[1]{\ell(#1)}
\newcommand{\Q}{\mathbb{Q}}
\newcommand{\rP}{\mathrm{P}}
\newcommand{\rQ}{\mathrm{Q}}
\newcommand{\g}{\mathfrak{g}}
\newcommand{\h}{\mathfrak{h}}
\newcommand{\gl}{\mathfrak{gl}}
\newcommand{\sll}{\mathfrak{sl}}
\newcommand{\osp}{\mathfrak{osp}}
\newcommand{\symp}{\mathfrak{sp}}
\newcommand{\so}{\mathfrak{so}}
\newcommand{\Top}{\mathrm{top}}
\newcommand{\ind}{\mathrm{Ind}}
\newcommand{\Mod}{\text{-}\mathrm{mod}}
\newcommand{\Irr}{\mathrm{Irr}}
\newcommand{\W}{\mathcal{W}}
\newcommand{\VA}{V^k(\g)}
\newcommand{\WA}{\mathcal{W}^k(\g,f)}
\newcommand{\Wsub}{\mathcal{W}^k_{D^+}(n,1)}
\newcommand{\Wpr}{\mathcal{W}^k(\so_{2n+1})}
\newcommand{\sWsub}{\mathcal{W}_k^{D^+}(n,1)}
\newcommand{\sWsuper}{\mathcal{W}_{\ell}^{D^-}(n,1)}
\newcommand{\Wsuper}{\mathcal{W}^{\ell}_{D^-}(n,1)}
\newcommand{\subW}{\mathcal{W}^{k}_{D^+}}
\newcommand{\sprW}{\W_{D^-}^\ell}
\newcommand{\ssubW}{\mathcal{W}_{k}^{D^+}}
\newcommand{\ssprW}{\W^{D^-}_\ell}
\newcommand{\NO}[1]{:\!#1\!:}
\newcommand{\ssqrt}[1]{\operatorname{\sqrt{\smash[b]{#1}}}}
\newcommand{\KL}{\mathbf{KL}}
\renewcommand{\Pr}{\mathrm{Pr}}
\newcommand{\weyl}{\mathbb{V}}
\newcommand{\parity}[1]{\overline{\mathstrut #1}}
\newcommand{\fsub}{\ensuremath{f_{\mathrm{sub}}}}
\newcommand{\fpr}{\ensuremath{f_{\mathrm{prin}}}}
\newcommand{\hv}[1]{\mathbf{e}^{#1}}
\newcommand{\dz}{\ensuremath{\mathrm{d}z}}
\newcommand{\poch}[1]{(#1;q)_{\infty}}
\newcommand{\half}{\frac{1}{2}}
\newcommand\doi[2]{\href{http://dx.doi.org/#1}{#2}}
\newcommand{\arxiv}[2]{\href{https://arxiv.org/abs/#1}{#2}}
\newcommand{\red}[1]{\textcolor{red}{#1}}
\begin{document}
\maketitle

\begin{abstract}
We study the representation theory of the subregular $\W$-algebra $\W^k(\so_{2n+1},\fsub)$ of type $B$ and the principal $\W$-superalgebra $\W^\ell(\osp_{2|2n})$, which are related by an orthosymplectic analogue of Feigin--Semikhatov duality in type $A$. 
We establish a block-wise equivalence of weight modules over the $\W$-superalgebras by using the relative semi-infinite cohomology functor and spectral flow twists, which generalizes the result of Feigin--Semikhatov--Tipunin for the $\mathcal{N}=2$ superconformal algebra. In particular, the correspondence of Wakimoto type free field representations is obtained. 
When the level of the subregular $\W$-algebra is exceptional, we classify the simple modules over the simple quotients $\W_k(\so_{2n+1},\fsub)$ and $\W_\ell(\osp_{2|2n})$ and derive the character formulae.
\end{abstract}

\section{Introduction}

Let $\g$ be a simple Lie superalgebra. To any nilpotent element $f\in\g$ of even parity and any complex number $k\in\C$, referred to as the level, one associates the $\W$-(super)algebra $\WA$. It is obtained by applying the quantum Drinfeld--Sokolov reduction (a.k.a BRST reduction) to the universal affine vertex superalgebra $\VA$ \cite{FF1,KRW03,KW} and can be regarded as an affinization of the finite $\W$-algebra $U(\g,f)$ introduced by Premet \cite{Pr}. Indeed, $U(\g,f)$ can be obtained from $\WA$ by taking its Zhu's algebra \cite{FZ,Z}.

The $\W$-algebras were first constructed by Feigin and Frenkel \cite{FF1} for principal nilpotent elements $f\in\g$. The corresponding principal $\W$-algebras $\W^k(\g,\fpr)$ -- usually denoted by $\W^k(\g)$ -- and their representation theories have been studied intensively \cite{A1,A2,AF,D,DR,FKW92,R}.
In particular, they enjoy the so-called Feigin--Frenkel duality, stating an isomorphism between $\W^k(\g)$ and $\W^\ell({^L}\g)$,
where ${}^L\g$ is the Langlands dual of $\g$, when the levels $k$ and $\ell$ satisfy the relation
\begin{align}\label{eq:duality_condition_levels}
r^\vee(k+h^\vee)(\ell+{}^Lh^\vee)=1,\quad (k,\ell)\neq (-h^\vee,-{}^Lh^\vee).
\end{align}
Here $r^\vee$ is the lacing number of $\g$,  $h^\vee$ and ${}^Lh^\vee$ are the dual Coxeter numbers of $\g$ and ${}^L\g$ respectively.
This duality can be seen as a vertex-algebraic refinement of the isomorphism between the centers of the enveloping algebras of $\g$ and ${}^L\g$ -- which corresponds to the isomorphism of their Zhu's algebras. 
Moreover, the Feigin--Frenkel duality plays a fundamental role in the quantum geometric Langlands program as $\W^k(\g)$ at the critical level $k=-h^\vee$ describes the center of the enveloping algebra of the affine Kac--Moody algebra at the corresponding level \cite{AF,Frenkel,Gai16}.

In the last decade, generalizations of the Feigin-Frenkel duality to other $\W$-superalgebras, called hook-type $\W$-superalgebras, were found by Gaiotto and Rap\v c\'ak in the connection to four-dimensional gauge theories in physics \cite{GR}.
They assert isomorphisms between cosets of the maximal affine vertex subalgebras inside hook-type $\W$-superalgebras.
When the affine vertex subalgebras are non-super (or of $\osp_{1|2n}$-type), these isomorphisms can be proven by using certain universal $\W_\infty$-algebras \cite{CL1,CL2}.
In addition, although the dualities assert isomorphisms between the cosets, one can relax the ``coset constraint'' to obtain a reconstruction theorem for hook-type $\W$-superalgebras from one side to the other \cite{CGN,CLNS}.
It opens up the possibility to understand the representation theory of $\W$-superalgebras, for which only a little is known so far.

The representation theory of the $\mathcal{N}=2$ superconformal algebra, that is $\W^\ell(\sll_{2|1})$, can be seen as a prototype of such an example, which attracted a lot of interest (see for instance \cite{Ad1,Ad2,CLRW,DVPYZ,FSST,FST,KaSu,S3}). 
The $\W$-superalgebra $\W^\ell(\sll_{2|1})$ is realized in terms of $V^k(\sll_2)$ \cite{KaSu} and vice versa \cite{FST}, which amounts to a block-wise equivalence of their weight module categories, including the natural isomorphisms of the spaces of intertwining operators and the fusion rules \cite{CGNS,FSST}.

The generalization of this construction for higher rank cases of type $A$ was originally suggested by Feigin and Semikhatov \cite{FS} and relates the subregular $\W$-algebras and the principal $\W$-superalgebras, namely 
\begin{align*}
    \W^k(\sll_n,\fsub),\quad \W^\ell(\sll_{n|1}).
\end{align*}
The Gaiotto--Rap\v c\'ak duality in this case is the isomorphism between their coset subalgebras by a rank one Heisenberg vertex subalgebra, which is proven in \cite{CGN,CL1} by different methods, and the block-wise equivalence of their weight module categories is established in \cite{CGNS} by one of the authors.

In this paper, we continue the study of the orthosymplectic analogue between
\begin{align*}
    \W_{D^+}^k=\W^k(\so_{2n+1},\fsub),\quad \W_{D^-}^\ell=\W^\ell(\osp_{2|2n}).
\end{align*}
In particular, we are interested in exceptional levels, for which the simple quotient of $\W^k(\so_{2n+1},\fsub)$ is lisse \cite{A3} and rational \cite{Mc}. 
It implies that the module category is finite, semisimple \cite{ABD04,DLM98,Z} and, moreover, admits a structure of modular tensor category \cite{H}. 
Although the rationality of vertex algebras is difficult to show in general, our understanding of the rationality of $\W$-algebras has been improved recently \cite{Ara13,A2,AvE,CL6,Fa,Mc}.
It is mainly based on the results of the admissible affine vertex algebras.
In contrast, the representation theory of affine vertex \emph{super}algebras is still quite mysterious, and thus the study of  $\W$-superalgebras is out of reach in general up to now.
The duality between $\W$-superalgebras and $\W$-algebras is very useful to reveal this mystery and provides a hint for a general pattern.
Indeed, the block-wise correspondence of module categories implies the rationality of the $\W$-superalgebra side as well.
We classify the simple modules of the subregular $\W$-algebra and of the principal $\W$-superalgebra as well through the equivalence and derive their character formulae.
Despite the development of the rationality results on exceptional $\W$-algebras, the classification of simple modules is not well-developed beyond the principal nilpotent case \cite{A2}.
In the literature, we find the type $A$ case and some simply-laced cases \cite{AvE,CGNS}. Our result is a generalization of $\W^k(\symp_{4},\fsub)$ established by one of the authors \cite{Fa} and seems to be the first result on the orthosymplectic type in the super side beyond the rank one case \cite{Ad3}.

\subsection{Results}

In the present paper, we start by proving an embedding between subregular and principal $\W$-algebras of type $B$.
These types of embeddings, called \emph{inverse Hamiltonian reductions}, have been investigated a lot in type $A$ with renewed interest in recent years \cite{Ada19,ACG,AKR21,Fe,Fe23,FR22,Sem94}.
Our construction is an analogue of \cite{Fe} in type $B$ and generalizes the rank $n=2$ case presented in \cite{BM21}.
As an application, we relate the Fegin--Semikhatov duality to the Feigin--Frenkel duality: 

\begin{MainThm}[Corollary \ref{FS vs FF}]\label{mainthm:FFFS}
For the levels $k,\ell$ satisfying \eqref{eq:duality_condition_levels}
the diagram below commutes:
    \begin{center}
		\begin{tikzcd}[row sep=huge, column sep = huge]
			\W^k(\so_{2n+1},\fsub)
			\arrow[d, leftrightarrow, "\text{F.-S.}"']
			\arrow[r, hook, "iHR"]&
			\Wpr\arrow[d, leftrightarrow, "\text{F.-F.}"]&
            \hspace{-3cm}\otimes \mathcal{F}_1 
			\\
			\W^\ell(\osp_{2|2n})
			\arrow[r, hook, "\text{Wakimoto real.}"]&
			\W^\ell(\symp_{2n})&
            \hspace{-3.4cm}\otimes \mathcal{F}_2. 
		\end{tikzcd}
	\end{center}
Here $\mathcal{F}_i$ $(i=1,2)$ are free field vertex superalgebras.
\end{MainThm} 

In the previous picture, the relation between the free field algebras $\mathcal{F}_i$ ($i=1,2$) is given by the relative semi-infinite cohomology functors $\mathrm{H}^\pm_\lambda$, ($\lambda\in\C$).
We study these functors and use them in order to establish a block-wise equivalence of module categories of $\W$-superalgebras. 
More precisely, we work over the categories $\KL_{D^+}^k$ and $\KL_{D^-}^\ell$ of weight modules over $\W^k(\so_{2n+1},\fsub)$ and $\W^\ell(\osp_{2|2n})$, i.e., the modules which decompose into direct sums of Fock modules when restricted to their Heisenberg vertex subalgebras. 
They naturally decompose into 
$$\KL_{D^+}^k=\bigoplus_{[\lambda]\in \C/\Z}\KL_{D^+}^{k,[\lambda]},\quad \KL_{D^-}^\ell=\bigoplus_{[\lambda]\in \C/\Z}\KL_{D^-}^{\ell,[\lambda]}$$
such that $\W^k(\so_{2n+1},\fsub)$ and $\W^\ell(\osp_{2|2n})$ lie in the $[0]$-blocks, see Sect.~\ref{subsection_Equivalence of categories}. 
Moreover, we prove that the different relative semi-infinite cohomology functors are related by the spectral flow twists.

\begin{MainThm}[Theorem \ref{categorical equivalence}/\ref{equivalence of categories}]
For $\theta\in \Z$, the functors 
		\begin{align*}
			\mathrm{H}_{\lambda_\theta^+}^+\colon \KL_{D^+}^{k,[\lambda\varepsilon]}\ \rightleftarrows \ \KL_{D^-}^{\ell,[\lambda_\theta^+/\varepsilon]} \colon \mathrm{H}_{\lambda_\theta^+}^-
		\end{align*}
		are quasi-inverse to each other and give equivalences of categories.
Moreover, for $\theta_1,\theta_2\in\Z$, there are natural isomorphisms between the functors 
\begin{align*}
\mathrm{H}_{\lambda_{\theta_1}^+}^+\simeq\mathrm{H}_{{\lambda'}_{\theta_1}^+}^+\circ S_{\theta_2 },\quad 
S_{\theta_2-\theta_1}\circ \mathrm{H}_{\lambda_{\theta_1}^+}^+\simeq \mathrm{H}_{\lambda_{\theta_2}^+}^+,
\end{align*}
where $\lambda'=\lambda+\varepsilon\theta_2$ and $\lambda_\theta^+=\lambda-\theta\varepsilon^{-1}$, $\varepsilon=(k+h^\vee)^{1/2}$.
\end{MainThm}

\noindent This generalizes the result of Feigin--Semikhatov--Tipunin for the $\mathcal{N}=2$ superconformal algebra \cite{FST}.

One of the classic techniques to study representation theory is to use resolutions by fundamental modules. Free field representations through Wakimoto realizations of $\W$-superalgebras play such a role \cite{A5,FF88,Gen,Wak}.
We introduce and study the correspondence between several Wakimoto-type free field representations, which we call ``thick" and ``thin" since the size of such modules over $\W^{k}(\so_{2n+1},\fsub)$ and $\W^{\ell}(\osp_{2|2n})$ are different. 
On the super-side, they serve as a higher rank analogue of massive and topological Verma modules defined for the $\mathcal{N}=2$ superconformal algebra.

In the second half of the paper, we focus on admissible levels
\begin{equation}\label{eq:exceptional_levels_subreg}
   k=-h^\vee+\frac{p}{q},\qquad (p,q)=1,\ p\geq q,\ q=2n-1,2n.
\end{equation}
The simple subregular $\W$-algebra $\W_k(\so_{2n+1},\fsub)$ at such a level is said exceptional.
A long-standing conjecture of Kac--Wakimoto and Arakawa, recently proved in \cite{Mc}, states that $\W_k(\so_{2n+1},\fsub)$ is rational.
Expanding the strategy introduced previously in \cite{Fa} to prove the rationality of $\W_k(\symp_4,\fsub)$, we classify the simple $\W_{k}(\so_{2n+1},\fsub)$-modules. 
These simple modules are obtained by applying spectral flow twists to the BRST reduction image $\mathbf{L}_k(\lambda)$ of the affine admissible representations $L_k(\lambda)$ with dominants integral highest weights $\lambda\in\Pr^k_\Z$.
On the super-side, the simple $\W_{\ell}(\osp_{2|2n})$-modules are obtained as the image of the relative semi-infinite cohomology functors of the simple $\W_k(\so_{2n+1},\fsub)$-modules
$$\mathbf{L}_\ell^-(\lambda)=\mathrm{H}_{\varepsilon^{-1}[\lambda]}^+\left(\mathbf{L}_k(\lambda)\right),$$
and their spectral flow twists. More precisely, we have:

\begin{MainThm}[Theorem \ref{classification of simple modules}/\ref{thm:classification_super_case}]
For the levels $k,\ell$ satisfying \eqref{eq:duality_condition_levels} and \eqref{eq:exceptional_levels_subreg}, 
the complete set of simple modules over $\W_{k}(\so_{2n+1},\fsub)$ and $\W_{\ell}(\osp_{2|2n})$ are respectively given by 
\begin{align*}
    \Irr(\W_{k}(\so_{2n+1},\fsub))&=\{S_{\theta} \mathbf{L}_k(\lambda)\mid \lambda\in \Pr_\Z^k,\ 0\leq \theta<\bar{q} \},\\
    \Irr(\W_{\ell}(\osp_{2|2n}))&=\{S_{\theta}\mathbf{L}_\ell^-(\lambda)\mid \lambda \in \Pr_\Z^k,\ 0\leq \theta < p \}.
\end{align*}
\end{MainThm}

\noindent 
Their character formulae are derived in Proposition \ref{chracter formula 2} and Remark \ref{last remark} through resolutions by Wakimoto-type modules which are related by the correspondence.
The periodicity of spectral flow twists gives an interesting description of $\W_k(\so_{2n+1},\fsub)$ in terms of lattice vertex algebras, with which we study the unitary cases (see Sect. \ref{sec:unitarity}).

\subsection{Outlook}
As we classify the simple $\W_k(\so_{2n+1},\fsub)$-modules in the exceptional case, the next natural and desirable step is to describe the fusion rules. In this view, and based on the construction of the simple modules, we formulate a precise conjecture when $k$ is principal admissible, i.e., denominator $q=2n-1$ (see Conjecture~\ref{conj:fusion_rules}). In that case, the description is similar to the type $A$ case \cite{AvE,CGNS}.

Moreover, we have identified all the simple $\W_k(\so_{2n+1},\fsub)$-modules with the image of the BRST reduction of admissible representations of $L_k(\so_{2n+1})$ and their spectral flow twists. 
From a categorical point of view, we expect the existence of a nice braided tensor category closed under the spectral flow twists extending the Kazhdan--Lusztig category for $L_k(\so_{2n+1})$, and the BRST reduction to be a quotient functor from this category to the category of $\W_k(\so_{2n+1},\fsub)$-modules, see for instance \cite{ACF,CR1,CR2}. 

Secondly, Main Theorem~\ref{mainthm:FFFS} suggests that the dualities between hook-type $\W$-superalgebras are closely related to the original Feigin--Frenkel duality. The key ingredient to emphasize these links seems to be the inverse Hamiltonian reduction for $\W$-algebras \cite{Fe23}.
It is also natural to expect a finite analogue of these dualities. As a nice presentation of finite $\W$-algebras, called the infinitesimal Cherednik (or Hecke) algebra, is obtained on the non-super side \cite{LT,T}, it would be useful and interesting to obtain a similar one on the super side.
We hope to come back to these topics in the near future.

\subsection{Plan of the paper}
The paper is organized as follows.
In Sect.~\ref{sec:definitions}, we present the main objects of our interest, that are the $\W$-superalgebras $\W^k(\so_{2n+1},\fsub)$ and $\W^\ell(\osp_{2|2n})$ ($n\geq2$).
After we recall the Feigin--Semikhatov duality, we relate it to the Feigin--Frenkel duality in Sect.~\ref{sec:FF-FS}.
To do so, we establish the inverse Hamiltonian reduction between the subregular and the principal $\W$-algebras associated to $\so_{2n+1}$ (see Theorem~\ref{inverse HR}).
Sect.~\ref{sec:spectral_flow} reviews some general facts on spectral flow twists and Sect.~\ref{Sec: equivalnce of categories} introduces the relative semi-infinite cohomology functors. Following \cite{CGNS}, we show that these functors define equivalences between categories of weight modules over $\W^k(\so_{2n+1},\fsub)$ and $\W^\ell(\osp_{2|2n})$.
In addition, we show that they are naturally related by spectral flow twists (Theorem~\ref{equivalence of categories}).
In Sect.~\ref{sec:free_fields}, the correspondence between Wakimoto-type free field representations is studied (Theorem \ref{thm:wakimoto_correspondence}).  
In Sect.~\ref{sec:subreg_symplectic_walgebra} and \ref{sec:rational_principal}, we classify the simple modules of the $\W$-superalgebras in the exceptional case (see Theorems~\ref{classification of simple modules} and \ref{thm:classification_super_case}).
In addition, the unitary cases (for the subregular $\W$-algebra) are studied in depth in Sect.~\ref{sec:unitarity}.
Finally, in Sect.~\ref{sec:characters}, we compute the characters of simple modules.

\subsection*{Acknowledgements} 
We thank Thomas Creutzig, Ching Hung Lam, and Andrew Linshaw for useful discussions.
We also thank a referee for their useful comment.
We are grateful to the organizers of the conferences ``Vertex Algebras and Representation Theory'' (CIRM, Luminy, 2022), ``Representation Theory XVII-XVIII'' (IUC, Dubrovnik, 2022-23), and ``Quantum symmetries: Tensor categories, Topological quantum field theories, Vertex algebras'' (CRM, Montréal, 2022). The project was initiated during the first conference, and significant progress was made during subsequent events. 
This paper has been completed during a Matrix Minor Program in Creswick (Australia).
S.N. thanks the institute and the University of Melbourne for their hospitality.

\section{Feigin--Semikhatov duality of orthosymplectic type}\label{sec:definitions}
In this section, we introduce the $\W$-superalgebras
$$\Wsub=\W^k(\so_{2n+1},f_{\mathrm{sub}}),\quad \Wsuper=\W^\ell(\osp_{2|2n},f_{\mathrm{prin}})$$
with $n \geq 2$.

\subsection{$\W$-superalgebras}\label{setting of our W-superalgebras}

The $\W$-algebra $\Wsub$ is the universal affine $\W$-algebra associated with a subregular nilpotent element in the Lie algebra $\so_{2n+1}$ of type $B_n$, whereas $\Wsuper$ is the principal $\W$-superalgebra associated to the Lie superalgebra $\osp_{2|2n}$.
Both are obtained from the quantized Drinfeld--Sokolov reduction of a universal affine vertex (super)algebra, $V^k(\so_{2n+1})$ and $V^\ell(\osp_{2|2n})$, respectively.
We refer to \cite{KRW03} for a detailed construction of general $\W$-superalgebras.
In the following, we sometimes abbreviate them as 
$$\subW=\Wsub,\quad \sprW=\Wsuper,$$
and denote their unique simple quotients by 
$$\ssubW=\sWsub,\quad \ssprW=\sWsuper.$$

For later use, we recall some data used in the construction of these $\W$-superalgebras.
We realize $\so_{2n+1}$ as 
$$\so_{2n+1} = \left\{ \begin{pmatrix} 0 & u & v \\
	- v^T &A & B \\
	- u^T & C & -A^T \\
\end{pmatrix} \left| 
\begin{array}{l}
A,B,C \in \gl_{n}\\
u,v\in\C^n\\
B = - B^T, \, C= - C^T 
\end{array}\right. \right\}\subset \mathrm{End}(\C^{2n+1})$$
where $x^T$ is the transpose matrix of $x$.
We fix a Cartan subalgebra $\h_+$ of $\so_{2n+1}$ to be the subset of diagonal matrices.
We identify $\h_+=\bigoplus_{i=1}^n \C \epsilon_i$ with its dual through the normalized invariant form $\half\mathrm{tr}$, so that $(\epsilon_i,\epsilon_j)=\delta_{i,j}$.
Let $\Delta$, $\Pi=\{\alpha_1,\ldots, \alpha_n\}$ and $\check{\Pi}=\{\check{\alpha}_1,\ldots, \check{\alpha}_n\}$ be the set of roots, simple roots, and simple coroots respectively. 
In particular, the simple roots and coroots are realized as 
\begin{align}\label{realization of Cartan of type B}
\alpha_i=
\begin{cases}
\epsilon_i-\epsilon_{i+1} & (i\neq n),\\
\epsilon_n & (i= n),
\end{cases}
\quad 
\check{\alpha}_i=
\begin{cases}
\epsilon_i-\epsilon_{i+1} & (i\neq n),\\
2\epsilon_n & (i= n).
\end{cases}
\end{align}
The highest root is $\theta=\epsilon_1+\epsilon_2$ and the highest short root is $\theta_s=\epsilon_1$.
The root lattice $\rQ$ and coroot lattice $\check{\rQ}$ are identified as
$$\rQ=\bigoplus_{i=1}^n\Z \epsilon_i,\quad  \check{\rQ}=\left\{\sum \lambda_i \epsilon_i\in \rQ\left| \sum \lambda_i= 0\ \mathrm{mod}\ 2 \right.\right\},$$
and the Weyl group is the semi-direct product $W=S_n \ltimes \Z_2^n$ where $S_n$ is the $n$-th symmetric group.
Finally, the fundamental weights $\varpi_i$ and coweights $\check{\varpi}_i$ are given by
$$\varpi_i=\begin{cases}\epsilon_1+\cdots +\epsilon_i & (i\neq n)\\ \tfrac{1}{2}(\epsilon_1+\cdots +\epsilon_n) & (i=n) \end{cases},\quad \check{\varpi}_i=\epsilon_1+\cdots +\epsilon_i.$$

Labeling the basis of the natural representation $\C^{2n+1}$ of $\so_{2n+1}$ as $\{0,\pm1,\ldots,\pm n\}$, a subregular nilpotent element is expressed as
$$\fsub=\sum_{i=2}^{n-1}(E_{i+1,i}-E_{-i,-i-1})+(E_{0,n}-E_{-n,0})$$
where the $E_{i,j}$'s denote the elementary matrices.
We embed $\fsub$ into an $\sll_2$-triple $\{e^+,h^+,f^+\}$ -- where $f^+=\fsub$ -- using the Jacobson--Morosov theorem.
The semisimple element $h^+=2\sum_{i=1}^n(n-i+1)(E_{i,i}-E_{-i,-i})-2(E_{1,1}-E_{-1,-1})$ defines a \emph{good grading}\footnote{See \cite{EK05} for a precise definition and classification of good gradings.}
on $\so_{2n+1}$.
The Fig.~\ref{good grading of so} below gives the good grading in terms of the weighted Dynkin diagram. 
Equivalently, it is the eigenvalue decomposition of the adjoint action of 
\begin{align}\label{grading element}
x_0^+:=\half h^+=\sum_{i=2}^n\check{\varpi}_i\in \h_+.
\end{align}

By \cite[Theorem~4.1]{KW04}, $\Wsub$ is strongly generated by a family of fields corresponding to a certain basis of the centralizer of $\fsub$ inside $\so_{2n+1}$ and so, is of type 
\begin{align}\label{sub strong generating type}
\Wsub=\W(1,2,4,\ldots,2n-2,(n)^2),
\end{align}
at noncritical levels $k\neq-h^\vee_+$, where $h^\vee_+$ is the dual Coxeter number of $\so_{2n+1}$.
Let us denote by $J^+(z)$, $L(z)$, $W_i(z)$ ($i=4,\ldots,2n-2$) and $G^\pm(z)$ the corresponding generating fields. 
In particular, $L(z)$ is the Virasoro field of central charge 
\begin{align}\label{cc}
c_k=1-\frac{h^\vee_{+}}{k+h^\vee_{+}}((n-1)(k+h^\vee_{+})-n)(2n(k+h^\vee_{+})-(2n+1))
\end{align}
and the strong generating type \eqref{sub strong generating type} is given by the $L_{0}$-eigenvalues of the generating fields.
Moreover, the field $J^+(z)$, which corresponds to the first fundamental coweight $\check{\varpi}_1=E_{1,1}-E_{-1,-1}$, generates a Heisenberg vertex subalgebra $\pi^{J^+}\subset \subW$. It satisfies the OPE 
\begin{equation}\label{level of J+}
J^+(z)J^+(w)\sim \frac{(k+h^\vee_+)-1}{(z-w)^2},\qquad h^\vee_+=2n-1.
\end{equation}
Since the $W_i(z)$'s correspond to vectors in $\so_{2n+1}$ which commute with $\check{\varpi}_1$, they commute with the field $J^+(z)$.
Meanwhile, $G^\pm(z)$ satisfy the OPEs
\begin{equation}\label{eq:OPEJG}
J^+(z)G^\pm(w)\sim \frac{\pm G^\pm(w)}{(z-w)}.
\end{equation}

On the other hand, we realize $\osp_{2|2n}$ as 
$$\osp_{2|2n} = \left\{ \begin{pmatrix} a & 0 & s^T & -v^T \\ 
0 & -a & r^T & -u^T \\
	u & v  & A & B \\
	r & s  & C & -A^T \\
\end{pmatrix} \left| \begin{array}{l}
      A,B,C \in \gl_{n}\\
      u,v,s,t \in \C^n, a\in\C  \\
    B^T=B, C^T=C  
\end{array}\right.\right\}\subset \mathrm{End}(\C^{2|2n})$$
and fix a Cartan subalgebra $\h_-$ of $\osp_{2|2n}$ to be the subset of diagonal matrices.
We identify $\h_-=\bigoplus_{i=0}^n \C t_i$ with the normalized invariant form $-\mathrm{str}$ so that $(t_i,t_j)=(-1)^{\delta_{i,0}}\delta_{i,j}/2$.
The simple roots and coroots are realized as
\begin{align}\label{realization of Cartan of type C}
\alpha_i=
\begin{cases}
-(t_0+t_1) & (i=0),\\
(t_i-t_{i+1}) & (1\leq i<n),\\
2t_n & (i= n),
\end{cases}
\quad 
\check{\alpha}_i=
\begin{cases}
-2(t_0+t_1) & (i=0),\\
2(t_i-t_{i+1}) & (1\leq i<n),\\
2t_n & (i= n).
\end{cases}
\end{align}

Labeling the basis of the natural representation $\C^{2|2n}$ of $\osp_{2|2n}$ as $\{+,-,\pm1,\ldots,\pm n\}$, a principal nilpotent element is expressed as 
$$\fpr=\sum_{i=1}^{n-1}(E_{i+1,i}-E_{-i,-i-1})+E_{-n,n}.$$
As previously, we embed $\fpr$ into an $\sll_2$-triple $\{e^-,h^-,f^-\}$ whose semisimple element $h^-=2\sum_{i=1}^n(n-i+\tfrac{1}{2})(E_{i,i}-E_{-i,-i})+2(n-\tfrac{1}{2})(E_{+,+}-E_{-,-})$ defines a good grading on $\osp_{2|2n}$. This grading corresponds to the weighted Dynkin diagram of Fig.~\ref{good grading of osp}.

\begin{figure}[h]
	\begin{minipage}[l]{0.5\linewidth}
		\begin{center} 
			\caption{$\g=\so_{2n+1}$}\label{good grading of so}
			\begin{tikzpicture}
				{\dynkin[root
					radius=.08cm,labels={\alpha_1,\alpha_2,\alpha_{n-1},\alpha_n},labels*={0,1,1,1},edge length=1cm]B{oo.oo}};
			\end{tikzpicture}
		\end{center}	
	\end{minipage}
	\begin{minipage}[c]{0.49\linewidth}
		\begin{center}
			\caption{$\g=\osp_{2|2n}$}\label{good grading of osp}
			\begin{tikzpicture}
				{\dynkin[root
					radius=.08cm,labels={\alpha_0,\alpha_1,\alpha_{n-1},\alpha_n},labels*={0,1,1,1},edge length=1cm]C{to.oo}};	
			\end{tikzpicture}
		\end{center}
	\end{minipage}
\end{figure}

When $\ell\neq-h^\vee_-$ is non-critical, $\Wsuper$ is conformal of strong generating type 
$$\Wsuper=\W(1,2,4,\ldots,2n,(n+\tfrac{1}{2})^2).$$
Its structure is very similar to that of $\Wsub$.
In particular, the field $J^-(z)$ of weight one corresponds to the $0$-th fundamental coweight $\check{\varpi}_0=E_{+,+}-E_{-,-}$ and generates a Heisenberg vertex subalgebra $\pi^{J^-}\subset\Wsuper$.
It satisfies the OPE 
\begin{align}\label{Heisenberg in sprin}
J^-(z)J^-(w)\sim \frac{-2(\ell+h^\vee_-)+1}{(z-w)^2},\qquad h^\vee_-=n.
\end{align}
The field of weight two is the Virasoro field, and the fields of weight $n+\tfrac{1}{2}$, which we denote by $G^\pm_o(z)$, are the only \emph{odd} fields and satisfy the OPEs 
$$J^-(z)G^\pm_o(w)\sim \frac{\pm G^\pm_o(w)}{(z-w)}.$$

\subsection{Duality}\label{sec. duality}
The Feigin--Semikhatov duality of orthosymplectic type states an isomorphism of vertex algebras between cosets of the previously defined Heisenberg vertex subalgebras inside their corresponding $\W$-superalgebras \cite[Theorem 1.2]{CGN}:
\begin{align}\label{FS duality}
\Com\left(\pi^{J^+}, \subW\right)\simeq \Com\left(\pi^{J^-}, \sprW \right)
\end{align}
where the levels $k,\ell$ satisfy
\begin{align}\label{level condition for FS}
2(k+h^\vee_+)(\ell+h^\vee_-)=1,\quad (k,\ell)\neq (-2n+2,-n+\tfrac{1}{2}).
\end{align}
Throughout this paper, we assume that \eqref{level condition for FS} holds. Note that the excluded pair of levels $(-2n+2,-n+\tfrac{1}{2})$ is exactly the levels where the fields $J^\pm(z)$ degenerate.

The proof in \cite{CGN} relies on the free field realizations of $\W$-superalgebras
\begin{align}\label{FFR}
\subW\hookrightarrow \pi^{k+h^\vee_{+}}_{\mathfrak{h}_+}\otimes \Pi(0),\quad \sprW\hookrightarrow \pi^{\ell+h^\vee_{-}}_{\mathfrak{h}_-}\otimes V_\Z,
\end{align}
The maps are obtained as composition of the \emph{Miura map} \cite{KW04} defined for general $\W$-superalgebras, which are injective by \cite{Ara17,Nak21}, and free field realizations of affine vertex (super)algebras for $\sll_2$ and $\gl_{1|1}$, respectively.
Here $\pi^{k+h^\vee_{+}}_{\mathfrak{h}_+}$ (resp. $\pi^{\ell+h^\vee_{-}}_{\mathfrak{h}_-}$) is the Heisenberg vertex algebra associated with $\mathfrak{h}_+\subset \so_{2n+1}$ at level $k+h^\vee_+$ (resp. $\mathfrak{h}_-\subset \osp_{2|2n}$ at level $\ell+h^\vee_-$), $V_\Z$ is the lattice vertex superalgebra associated with the lattice $\Z$, spanned by $x$ with norm $(x,x)=1$, and 
$\Pi(0)$ is the half-lattice vertex algebra defined as the subalgebra 
$$\Pi(0):=\bigoplus_{m\in \Z}\pi^{x,y}_{m(x+y)}\subset V_{\Z\oplus \ssqrt{-1}\Z}$$ 
of the lattice vertex superalgebra $V_{\Z\oplus \ssqrt{-1}\Z}$ associated with the lattice $\Z\oplus \ssqrt{-1}\Z$, spanned by $x,y$ with norms $(x,x)=1=-(y,y)$ and $(x,y)=0$.
It is an extension of the Heisenberg vertex algebra $\pi^{x,y}=\pi^{x,y}_{0}$ by the  Fock modules $\pi^{x,y}_{m(x+y)}$ generated by the highest weight vector $\hv{m(x+y)}$.

As in the proof of the Feigin--Frenkel duality \cite{FF1}, the proof of the Feigin--Semikhatov duality \eqref{FS duality} reduces to the identification of \emph{screening operators} \cite{Gen17} for \eqref{FFR}, which characterize the $\W$-superalgebras inside the free field algebras at generic levels.
Here, they are given by 
\begin{align}\label{screenings in Kazama Suzuki}
Q_i^+=\int Y(\hv{A^+_i},z)\dz, \quad Q_i^-=\int Y(\hv{A^-_i},z)\dz,
\end{align}
respectively, with 
\begin{align*}
    &A_0^+=x,\qquad A_1^+=-\alpha_1+(x+y),\qquad A_i^+=-\alpha_i\ (i=2,\ldots,n),\\
    &A_0^-=-\alpha_0+x,\qquad A_i^-=-\alpha_i\ (i=1,\ldots,n).
\end{align*}
The description by the screening operators implies the following refinement of \eqref{FS duality} known as the \emph{Kazama--Suzuki type coset construction}.

\begin{theorem}[{\cite[Theorem 1.3]{CGN}}]\label{Coset theorem}
For the levels $(k,\ell)$ satisfying \eqref{level condition for FS}, we have the following isomorphisms of vertex superalgebras:
\begin{align}
\label{FST}& \Wsub\simeq \Com \left(\pi^{J^-_\Delta}, \Wsuper\otimes V_{\ssqrt{-1}\Z}\right),\\
\label{KS}& \Wsuper\simeq \Com \left(\pi^{J^+_\Delta}, \Wsub\otimes V_{\Z}\right).
\end{align}
where $\pi^{J^\pm_\Delta}$ denote the Heisenberg vertex algebras generated by the fields
$$J^+_\Delta(z)=J^+(z)- x(z),\qquad J^-_\Delta(z)=J^-(z)+ y(z).$$
Moreover, the isomorphisms \eqref{FST} and \eqref{KS} still hold by replacing $\Wsub$ and $\Wsuper$ by their simple quotients $\sWsub$ and $\sWsuper$.
\end{theorem}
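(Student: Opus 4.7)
The plan is to realize both sides of each isomorphism inside a common free field algebra and compare them using the screening characterization \eqref{screenings in Kazama Suzuki}. The starting data are the free field embeddings \eqref{FFR}: each $\W$-superalgebra is cut out from the ambient Heisenberg/lattice algebra as the joint kernel of the corresponding screening operators $Q_i^\pm$.

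For \eqref{FST}, I first tensor the Wakimoto realization $\sprW \hookrightarrow \pi^{\ell+h^\vee_-}_{\mathfrak{h}_-}\otimes V_\Z$ with $V_{\ssqrt{-1}\Z}$ to place $\sprW \otimes V_{\ssqrt{-1}\Z}$ inside $\pi^{\ell+h^\vee_-}_{\mathfrak{h}_-}\otimes V_{\Z\oplus\ssqrt{-1}\Z}$. Next, I apply a linear change of basis in the combined Heisenberg directions that isolates the boson $J^-_\Delta = J^- + y$; the duality condition \eqref{level condition for FS} is precisely the constraint ensuring that, after this change of basis, the resulting Heisenberg levels match those of $\pi^{k+h^\vee_+}_{\mathfrak{h}_+}\otimes\Pi(0)$ appearing in the free field realization of $\subW$, with $\Pi(0)$ now realized as a subalgebra of $V_{\Z\oplus\ssqrt{-1}\Z}$. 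Within this common ambient algebra, I transport the screenings $Q_i^-$: since each $A_i^-$ differs from $A_i^+$ only along the direction of $J^-_\Delta$, the integrands $Y(\hv{A_i^\pm},z)$ restrict to the same operator on the commutant of $\pi^{J^-_\Delta}$. Taking joint kernels gives
$$
\Com\bigl(\pi^{J^-_\Delta},\,\sprW\otimes V_{\ssqrt{-1}\Z}\bigr) = \bigcap_{i=0}^{n}\ker Q_i^+ = \subW
$$
on a generic locus in $k$, where the screening description is known to be exact. The symmetric isomorphism \eqref{KS} follows from the same argument starting from the other free field embedding in \eqref{FFR} and tensoring by $V_\Z$ instead. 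Non-generic non-critical levels are handled by deforming in $k$: both sides define holomorphic families of subalgebras of the ambient free field algebra, and injectivity of the Miura map at all non-critical levels \cite{Ara17, Nak21} controls the degenerations.

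For the simple-quotient statement, $V_{\ssqrt{-1}\Z}$ is simple as a vertex superalgebra, so the maximal proper ideal of $\sprW \otimes V_{\ssqrt{-1}\Z}$ is $J_{\max}^{\sprW}\otimes V_{\ssqrt{-1}\Z}$, where $J_{\max}^{\sprW}$ is the maximal ideal of $\sprW$. Intersecting with the commutant of $\pi^{J^-_\Delta}$ and invoking the first part of the theorem shows that this maximal ideal is sent to the maximal ideal of $\subW$ under the isomorphism; an analogous argument using the simplicity of $V_\Z$ handles \eqref{KS}. The Kazama-Suzuki-type coset inherits the simple quotient on each side.

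The main obstacle lies in the change-of-basis step: one must simultaneously identify the Heisenberg lattices on both sides, match the levels through \eqref{level condition for FS}, and transport the screening vectors $A_i^\pm$ into a common form. The presence of the half-lattice vertex algebra $\Pi(0)$ (rather than an honest lattice vertex algebra) on the subregular side, combined with the super structure on the osp side and the need to interchange the odd generators $G_o^\pm$ of $\sprW$ with the even generators $G^\pm$ of $\subW$, is what makes the explicit verification delicate; bosonization through $V_{\ssqrt{-1}\Z}$ is precisely the mechanism by which odd and even fields get exchanged under the identification.
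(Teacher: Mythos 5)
Your route --- free field realization, identification of the screening operators at generic levels after a linear change of basis that isolates $J^\pm_\Delta$, and a continuity/specialization argument for non-generic non-critical levels --- is essentially the route the paper attributes to \cite{CGN}, which the theorem cites directly. For the universal part of the statement the sketch captures the right strategy: the duality relation \eqref{level condition for FS} is exactly the constraint that makes the Heisenberg levels match after the change of basis, and the commutant is then cut out by the transported screening operators, which at generic levels agree with the free field characterization of the other $\W$-superalgebra. The delicacy you flag about $\Pi(0)$ being a half-lattice and about the bosonization exchanging odd with even generators is genuine, and your description of the deformation argument is in the right spirit (though strictly speaking what one deforms is the joint kernel of the screenings, which behaves semi-continuously, together with the universal-property inclusion at all non-critical levels).

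The gap is in the simple-quotient part. You argue: the maximal proper ideal of $\sprW\otimes V_{\ssqrt{-1}\Z}$ is $J_{\max}^{\sprW}\otimes V_{\ssqrt{-1}\Z}$, and intersecting this with the commutant $\Com(\pi^{J^-_\Delta},\,\sprW\otimes V_{\ssqrt{-1}\Z})\simeq\subW$ gives the maximal ideal of $\subW$. The first claim is fine since lattice vertex superalgebras are simple and rational, but the second does not follow from what you have. The intersection $J\cap\subW$ is certainly an ideal of $\subW$, and the quotient $\subW/(J\cap\subW)$ maps to $\Com\bigl(\pi^{J^-_\Delta},\,\ssprW\otimes V_{\ssqrt{-1}\Z}\bigr)$, but there is no a priori reason that this map is surjective, nor that $J\cap\subW$ is maximal. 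What is actually needed is the fact that the commutant of the Heisenberg subalgebra inside a simple vertex superalgebra is itself simple in this situation, and that its Fock-multiplicity spaces are simple modules over it. This is a genuine theorem, proved in \cite{CGN} as part of the coset machinery, and it is precisely the result that the present paper invokes as \cite[Proposition 5.6]{CGN} in the proof of Proposition~\ref{period of spectral flow}. With that input, $\Com\bigl(\pi^{J^-_\Delta},\,\ssprW\otimes V_{\ssqrt{-1}\Z}\bigr)$ is simple and the identification with $\ssubW$ follows; as you have written it, the simplicity of the coset is silently assumed rather than proved.
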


Using the coset construction, we can compare the multiplicities of the Fock modules over $\pi^{J^\pm}$. 
We decompose 
\begin{align}\label{eq: decomposition}
    \subW\simeq \bigoplus_{m\in \Z}\mathscr{C}^k_{D^+,m}\otimes \pi^{J^+}_m,\qquad 
    \sprW\simeq \bigoplus_{m\in \Z}\mathscr{C}^\ell_{D^-,m}\otimes \pi^{J^-}_m
\end{align}
as modules over $\pi^{J^+}$ (resp.\ $\pi^{J^-}$). 
Then $\mathscr{C}^k_{D^+,m}$ (resp.\ $\mathscr{C}^\ell_{D^-,m}$) is naturally a $\mathscr{C}^k_{D^+}$-module (resp.\ $\mathscr{C}^\ell_{D^-}$-module) 
where we set 
\begin{align*}
    \mathscr{C}^k_{D^+}:=\Com \left(\pi^{J^+}, \subW\right),\quad \mathscr{C}^\ell_{D^-}:=\Com \left(\pi^{J^-}, \sprW\right).
\end{align*}
By replacing $\subW$ (resp.\ $\sprW$) by the simple quotient $\ssubW$ (resp.\ $\ssprW$), we have a similar decomposition and write $\mathscr{C}_{k,m}^{D^+}$ (resp.\ $\mathscr{C}_{\ell,m}^{D^-}$) for the multiplicity spaces. 
They are simple quotients of $\mathscr{C}^k_{D^+,m}$ (resp.\ $\mathscr{C}^\ell_{D^-,m}$) as $\mathscr{C}^k_{D^+}$-modules (resp.\ $\mathscr{C}^\ell_{D^-}$-modules).

\begin{corollary}\label{isom for the multiplicities}
For the levels $k,\ell$ satisfying \eqref{level condition for FS}, we have isomorphisms
$$\mathscr{C}^k_{D^+,m}\simeq \mathscr{C}^\ell_{D^-,m},\qquad \mathscr{C}_{k,m}^{D^+}\simeq \mathscr{C}_{\ell,m}^{D^-}, \quad (m\in \Z)$$
as modules over the coset $\mathscr{C}^k_{D^+}\simeq \mathscr{C}^\ell_{D^-}$.  
\end{corollary}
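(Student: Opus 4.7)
The plan is to apply the Kazama--Suzuki coset construction \eqref{KS} of Theorem~\ref{Coset theorem} and decompose both sides with respect to a rank-two Heisenberg subalgebra.

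First I would substitute the decomposition \eqref{eq: decomposition} of $\subW$ and the Fock decomposition $V_\Z=\bigoplus_{m'\in\Z}\pi^x_{m'x}$ into the right-hand side of \eqref{KS}, obtaining
\[
\subW\otimes V_\Z\simeq \bigoplus_{m,m'\in\Z}\mathscr{C}^k_{D^+,m}\otimes \pi^{J^+}_m\otimes \pi^x_{m'x}.
\]
Inside the rank-two Heisenberg $\pi^{J^+}\otimes \pi^x$, the generator $J^+_\Delta=J^+-x$ has level $k+h^\vee_+$ by \eqref{level of J+} and the normalization $(x,x)=1$. I would then seek an orthogonal generator $\widetilde{J}=\alpha J^++\beta x$ commuting with $J^+_\Delta$: orthogonality forces $\beta=\alpha(k+h^\vee_+-1)$, and requiring that its level match that of $J^-$ given by \eqref{Heisenberg in sprin} together with the duality condition \eqref{level condition for FS} determines $\alpha$ uniquely up to sign. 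Fixing this sign yields the change of basis $\pi^{J^+}\otimes \pi^x\simeq \pi^{J^+_\Delta}\otimes \pi^{\widetilde{J}}$ as Heisenberg vertex algebras, and each tensor Fock decomposes as
\[
\pi^{J^+}_m\otimes \pi^x_{m'x}\simeq \pi^{J^+_\Delta}_{m-m'}\otimes \pi^{\widetilde{J}}_{\mu(m,m')},
\]
with $\mu(m,m)=m$ by a direct computation of zero-mode eigenvalues.

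Taking the commutant of $\pi^{J^+_\Delta}$ selects the subspace on which $(J^+_\Delta)_0$ vanishes, forcing $m=m'$, and hence
\[
\Com\bigl(\pi^{J^+_\Delta},\ \subW\otimes V_\Z\bigr)\simeq \bigoplus_{m\in\Z}\mathscr{C}^k_{D^+,m}\otimes \pi^{\widetilde{J}}_m.
\]
Since \eqref{KS} is an isomorphism of vertex superalgebras, $\widetilde{J}$ must be identified with the image of $J^-\in\sprW$; comparing with the decomposition $\sprW\simeq \bigoplus_m \mathscr{C}^\ell_{D^-,m}\otimes \pi^{J^-}_m$ from \eqref{eq: decomposition}, one reads off the desired isomorphism $\mathscr{C}^k_{D^+,m}\simeq \mathscr{C}^\ell_{D^-,m}$ for each $m\in\Z$. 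Compatibility with the coset actions follows from the naturality of vertex operator structures under \eqref{KS} together with the Feigin--Semikhatov isomorphism \eqref{FS duality}. Finally, the simple-quotient statement follows by applying exactly the same argument to $\ssubW\otimes V_\Z$ via the simple version of Theorem~\ref{Coset theorem}, yielding $\mathscr{C}_{k,m}^{D^+}\simeq \mathscr{C}_{\ell,m}^{D^-}$.

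The hard part will be the explicit level matching: one must verify that the unique normalization of $\widetilde{J}$ forced by the required Heisenberg level simultaneously yields $\mu(m,m)=m$, so that the Fock modules surviving in the commutant are labeled exactly as the $\pi^{J^-}_m$ appearing in \eqref{eq: decomposition} on the super side. Once this alignment is checked, the rest is a direct manipulation of tensor products of Heisenberg Fock modules.
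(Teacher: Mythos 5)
Your proposal is correct and follows essentially the same route as the paper: the paper introduces the same orthogonal generator $\widehat{J}^-=\frac{1}{k+h^\vee_+}(J^+ +(k+h^\vee_+-1)x)$ (you derive it via orthogonality and level matching rather than quoting it), rewrites $\subW\otimes V_\Z$ as a module over $\mathscr{C}^k_{D^+}\otimes\pi^{J^-}\otimes\pi^{J^+_\Delta}$, and identifies the $\pi^{J^+_\Delta}_0$-isotypic piece with $\sprW$ to read off the isomorphisms of multiplicity spaces. The only cosmetic difference is that the paper records the full decomposition into pieces $\mathcal{E}(a)\otimes\pi^{J^+_\Delta}_{-a}$ because it is reused later, whereas you extract only the commutant $a=0$ part, which is all the corollary needs.
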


\proof
By setting 
$$\widehat{J}^-(z):=\frac{1}{k+h^\vee_+}(J^+(z)+(k+h^\vee_+-1)x(z))$$
in \eqref{KS}, it is straightforward to show that we have an isomorphism of vertex algebras $\pi^{J^+}\otimes \pi^x\simeq \pi^{J^+_\Delta}\otimes \pi^{J^-}$
and then an isomorphism between their modules
\begin{align*}
\pi^{J^+}_a\otimes \pi^x_b\simeq \pi^{J^+_\Delta}_{a-b}\otimes \pi^{J^-}_{b+\frac{a-b}{k+h^\vee_+}},\quad (a,b\in \Z).
\end{align*}
It follows from \eqref{eq: decomposition} that we have an isomorphism 
 \begin{equation*}
\subW\otimes V_{\Z}
\simeq \bigoplus_{a,b\in \Z} \mathscr{C}^k_{D^+,a}\otimes \pi^{J^-}_{a+b(-2(\ell+h^\vee_-)+1)}\otimes \pi^{J^+_\Delta}_{-b}
\end{equation*}
as modules over $\mathscr{C}^k_{D^+}\otimes \pi^{J^-}\otimes \pi^{J^+_\Delta}$.
As a module over $\sprW\otimes \pi^{J^+_\Delta}$ via \eqref{KS}, we have the decomposition
\begin{align}\label{decompositoin of the KS}
\subW\otimes V_{\Z}
\simeq \bigoplus_{a\in \Z} \mathcal{E}(a)\otimes \pi^{J^+_\Delta}_{-a}
\end{align}
where we set $\mathcal{E}(a)$ to be the $\sprW$-submodules
\begin{align}\label{multiplicity in the KS}
\mathcal{E}(a)\simeq \bigoplus_{m\in \Z} \mathscr{C}^{k}_{D^+,m}\otimes \pi^{J^-}_{m+a(-2(\ell+h^\vee_-)+1)}.
\end{align}
Since $\mathcal{E}(0)\simeq \W_{D^-}^\ell$, we obtain $\mathscr{C}^k_{D^+,m}\simeq \mathscr{C}^\ell_{D^-,m}$ by comparing \eqref{eq: decomposition} and \eqref{multiplicity in the KS}.
The second isomorphism is obtained similarly.
\endproof

\subsection{Some special levels}\label{sec:affine_conformal_embeddings}
Here we consider the noncritical levels when the simple $\W$-algebra $\ssubW$ has some special property: one of $\mathscr{C}_{D^+}^k$ or $\pi^{J+}$ appearing in the decomposition \eqref{eq: decomposition} degenerates to $\C$.
\\

\paragraph{Case (i): $\mathscr{C}_{D^+}^k$ degenerates.}
In this case, the embedding $\pi^{J^+}\hookrightarrow \subW$ is \emph{conformal}, i.e. preserves the conformal vector \cite{AMP}.
To detect such levels, we decompose the conformal vector $L=L_{\mathscr{C}}+L_{J^+}$
where $L_{J^+}\in \pi^{J^+}$ is given by the Segal--Sugawara construction and $L_{\mathscr{C}}=L-L_{J^+}$ is the conformal vector in $\mathscr{C}^k_{D^+}$. 
Since the image of $L_{\mathscr{C}}$ is zero in the simple quotient $\ssubW$ by assumption, its central charge must be zero.
This happens if and only if $k$ is one of 
$$k_1=-h^\vee_++\frac{n}{n-1},\qquad k_2=-h^\vee_++\frac{2n+1}{2n},$$ 
and thus the dual level $\ell$ corresponds to
$$\ell_1=-h^\vee_-+\frac{n-1}{2n},\qquad \ell_2=-h^\vee_-+\frac{n}{2n+1}.$$

\begin{proposition}\label{free field case}
The embedding $\pi^{J^+}\hookrightarrow \W^{D^+}_k(n,1)$ preserves the conformal vectors iff $k=k_1$, $k_2$. In these cases, we have isomorphisms of vertex (super)algebras
\begin{align*}
\begin{array}{ll}
\W_{k_1}^{D^+}(n,1)\simeq \pi^{J^+},\quad& \W_{k_2}^{D^+}(n,1)\simeq V_{\sqrt{2n}\Z},\\
\W_{\ell_1}^{D^-}(n,1)\simeq \pi^{J^-},\quad& \W_{\ell_2}^{D^-}(n,1)\simeq V_{\sqrt{2n+1}\Z}.
\end{array}
\end{align*}
\end{proposition}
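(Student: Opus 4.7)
The plan is to first characterize the conformal-embedding levels via the central charge and then identify the simple quotients explicitly at those levels, finally transferring the results to the super side.

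For the ``iff'' part, note that the Segal--Sugawara vector $L_{J^+} \in \pi^{J^+}$ has central charge $1$, so the complementary piece $L_\mathscr{C} := L - L_{J^+}$ lies in the coset $\mathscr{C}^k_{D^+} = \Com(\pi^{J^+}, \subW)$ and carries central charge $c_k - 1$. If the embedding is conformal, then $L_\mathscr{C}$ vanishes in $\ssubW$, forcing $c_k = 1$. Using formula \eqref{cc} and the non-criticality $k \neq -h^\vee_+$, this reduces to the factorization $((n-1)(k+h^\vee_+) - n)(2n(k+h^\vee_+) - (2n+1)) = 0$, producing exactly $k = k_1$ and $k = k_2$. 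Conversely, at these levels the simple coset $\mathscr{C}_k^{D^+}$ is an $\N$-graded simple conformal vertex algebra of central charge zero with one-dimensional weight-zero part, hence isomorphic to $\C$; so $L_\mathscr{C}$ vanishes in $\ssubW$ and the embedding is conformal.

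To identify the simple quotients, I will exploit the decomposition \eqref{eq: decomposition} applied to the simple quotient: $\ssubW \simeq \bigoplus_{m \in \Z} \mathscr{C}^{D^+}_{k,m} \otimes \pi^{J^+}_m$, where $\mathscr{C}^{D^+}_{k,0} = \mathscr{C}^{D^+}_{k} = \C$. At $k = k_1$, the Heisenberg level computed from \eqref{level of J+} is $(k_1+h^\vee_+) - 1 = 1/(n-1)$, so the top vector of $\pi^{J^+}_m$ has Sugawara weight $m^2(n-1)/2$. Integer-grading of $\ssubW$ constrains which blocks may appear; combined with \eqref{eq:OPEJG}, which forces the strong generators $G^\pm$ into the blocks $m = \pm 1$, a weight/parity analysis together with the absence of other strong generators compatible with the type \eqref{sub strong generating type} rules out all nontrivial blocks, yielding $\ssubW \simeq \pi^{J^+}$. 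At $k = k_2$, the Heisenberg level is $1/(2n)$, so the top vector of $\pi^{J^+}_{\pm 1}$ has Sugawara weight exactly $n$, which equals the conformal weight of $G^\pm$. Thus $G^\pm$ are, up to scalar, the highest-weight vectors of $\pi^{J^+}_{\pm 1}$; their OPEs generate a lattice extension, and after rescaling the Heisenberg generator to norm $2n$ the lattice is identified with $\sqrt{2n}\Z$, yielding $\ssubW \simeq V_{\sqrt{2n}\Z}$.

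The super-side identifications follow formally from Corollary~\ref{isom for the multiplicities}: the multiplicity isomorphisms $\mathscr{C}^{D^+}_{k_i, m} \simeq \mathscr{C}^{D^-}_{\ell_i, m}$ for all $m \in \Z$ imply that the Fock decomposition of $\ssprW$ mirrors that of $\ssubW$. The Heisenberg level of $J^-$ computed from \eqref{Heisenberg in sprin} equals $1/n$ at $\ell_1$ and $1/(2n+1)$ at $\ell_2$; in the latter case the top vector of $\pi^{J^-}_{\pm 1}$ has Sugawara weight $(2n+1)/2$, matching the conformal weight $n+\tfrac{1}{2}$ of $G^\pm_o$, yielding $\ssprW \simeq V_{\sqrt{2n+1}\Z}$, while in the former only the $m=0$ block survives, giving $\ssprW \simeq \pi^{J^-}$. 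The main obstacle will be the identification step at $k = k_1$: while matching central charges is immediate, ruling out all nontrivial conformal extensions of $\pi^{J^+}$ in the simple quotient requires a careful analysis of the possible Fock blocks against the known strong generating type, particularly when $n$ is odd so that the weight-integrality constraint alone is insufficient and one must invoke additional information from the OPEs of $G^\pm$ and $W_i$.
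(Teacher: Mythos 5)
Your outline matches the paper's (compute $c_k=1$ to isolate $k_1,k_2$; identify the simple quotients; transfer to the super side), and the necessity part is correct and essentially identical. But both key identifications have genuine gaps. At $k=k_1$, you assert the simple coset is $\C$ because it is a simple $\N$-graded CFT-type vertex algebra of central charge zero; that implication is false without unitarity or strong-rationality hypotheses, and even granting it, your argument for ruling out the Fock blocks $\mathscr{C}^{D^+}_{k,m}$ with $m\neq 0$ is only a sketch, as you yourself acknowledge. The paper handles both difficulties at once by invoking the collapsing criterion of \cite[Theorem~1.2]{AMP}: after checking that the strong generators $W_j$ and $G^\pm$ have strictly positive $L_{\mathscr{C}}$-weight (in particular $L_{\mathscr{C},0}G^\pm = (n+1)/2 \neq 0$), that theorem gives directly that $\W_{k_1}^{D^+}$ is a simple quotient of $\pi^{J^+}$, hence equal to it.

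At $k=k_2$, your observation that $G^\pm$ has $L_{\mathscr{C}}$-weight zero (so its conformal weight $n$ equals the Sugawara weight of the top of $\pi^{J^+}_{\pm 1}$) is the right diagnostic for why the collapsing criterion fails and a nontrivial lattice extension should survive, but ``their OPEs generate a lattice extension'' is not a proof: you have not shown that each $\mathscr{C}^{D^+}_{k_2,m}$ is nonzero and one-dimensional, and nonvanishing at the universal level does not by itself persist in the simple quotient. The paper explicitly defers this case to Theorem~\ref{unitary case1}, where it uses that $k_2$ is an admissible exceptional level (so $\W_{k_2}^{D^+}$ is rational and lisse) and pins down the decomposition via asymptotic data, a considerably longer argument than your two sentences suggest. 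Your super-side transfer via Corollary~\ref{isom for the multiplicities} is sound in spirit once the $\W_{k_i}^{D^+}$ identifications are available, though the paper reaches it more directly by computing the commutant in Theorem~\ref{Coset theorem}.
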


\proof
For the first assertion, we have already shown that $k=k_1,k_2$ are necessary. The sufficiency follows from the second assertion, i.e., the isomorphisms for $\W_{k_i}^{D^+}$ $(i=1,2)$.
For $i=1$, the strong generators $W_j$ ($j\geq 4$) and $G^\pm$ have non-zero conformal weights with respect to $L_\mathscr{C}$: indeed $L_{\mathscr{C},0}W_j=L_{0}W_j=2j$ as $W_i\in \mathscr{C}_{D^+}^k$ and $L_{\mathscr{C},0}G^\pm=(L_{0}-L_{J^+,0})G^\pm=(n-\frac{n-1}{2})G^\pm$ as $G^\pm$ are highest weight vectors for $\pi^{J^+}$. Hence, by \cite[Theorem 1.2]{AMP}, $\W_{k_1}^{D^+}$ is a simple quotient of $\pi^{J^+}$, that is $\pi^{J^+}$ itself.
We postpone the proof in the case $i=2$ to Theorem~\ref{unitary case1}. Indeed, the level $k_2$ is known to be admissible and exceptional. We study these levels in more detail in Sect.~\ref{sec:unitarity}.
The isomorphisms for $\W_{\ell_i}^{D^-}$ are straightforward to obtain by Theorem~\ref{Coset theorem}.
\endproof

\paragraph{Case (ii): $\pi^{J^+}$ degenerates.} 
By \eqref{level of J+}, the corresponding level is $k=-h^\vee_++1$. 
In this case, it is obvious that $J^+$ maps to zero by considering the quotient to $\ssubW$, and so do $G^\pm$ as they have nonzero $J^+_0$-eigenvalues. 
Hence, $\ssubW$ is a quotient of a $\W$-algebra of type $\W(2,4,\ldots,2n-2)$. 
The latter is obtained by a quotient of an universal object, called the even spin $\W_\infty$-algebra $\W^{\mathrm{ev}}(c,\lambda)$ \cite{KL19}, which depends on two parameters $(c,\lambda)\in \C^2$ and is of strong generating type $\W(2,4,6,\ldots)$.
The well-known quotients of strong generating type $\W(2,4,\ldots,2n-2)$ are the principal $\W$-algebras $\W^\ell(\so_{2n-1})$ -- and $\W^{\ell'}(\symp_{2n-2})$ by the Feigin--Frenkel duality. 

Making the ansatz $\sWsub\simeq \W_\ell(\so_{2n-1})$, the coincidence of central charges implies that $\ell$ must be one of
$$\ell_1+h^\vee_{n-1}=1\qquad\text{and}\qquad\ell_2+h^\vee_{n-1}=\frac{2n-3}{2n}$$
where we set $h^\vee_m=2m-1$.
Based on lower rank examples, the first case seems plausible in general \cite{FLN}. In this case,
$\W^{\ell_1}(\so_{2n-1})$ is isomorphic to the $\mathrm{Sp}_{2n-2}$-orbifold $\mathcal{A}(n-1)^{\mathrm{Sp}_{2n-2}}$ of the rank $n-1$ symplectic fermion algebra $\mathcal{A}(n-1)$ \cite[Theorem~5.1]{KL19}, which is simple by \cite{DLM}.

Similar considerations apply to the super side at the dual level $\ell=-h^\vee_-+\frac{1}{2}$: ${J^-}$ belongs to the maximal ideal as well as the fields $G^\pm_o$.
Hence, at first sight, $\ssprW$ is of strong generating type $\W(2,4,\ldots,2n)$. 
Moreover, the first order pole in the OPE of $G^+_o(z)G^-_o(w)$ has conformal weight $2n$ and commutes with ${J^-}$. This suggests that the strong generator of conformal weight $2n$ appears in this pole and thus belongs to the maximal ideal and, therefore, $\ssprW$ should be of type $\W(2,4,\ldots,2n-2)$ too with the same central charge as $\ssubW$. 
We conjecture the following:
\begin{conjecture}\label{conjecture on collapsing}
    For $n\geq2$, we have an isomorphism of vertex algebras
    $$\W_{-h^\vee_{n}+1}^{D^+}(n,1)\simeq \W^{-h^\vee_{n-1}+1}(\so_{2n-1})\simeq \W_{-\frac{1}{2}h^\vee_{n}}^{D^-}(n,1).$$
\end{conjecture}
For $n=2$, $\W^{-h^\vee_{n-1}+1}(\so_{2n-1})$ is the Virasoro vertex algebra of central charge $-2$. In this case, the first isomorphism is established by the first-named author \cite{Fas21} and the second one can be checked using the explicit description of the $\W$-superalgebras $\sprW$.

\section{Feigin--Frenkel duality}\label{sec:FF-FS}
We relate the Feigin--Semikhatov duality \eqref{FS duality} to the Feigin--Frenkel duality \cite{FF1} between the principal $\W$-algebras of type $B$ and $C$ by adapting the technique called \emph{inverse Hamiltonian reduction} introduced previously to study subregular $\W$-algebras of type $A$ \cite{Ada19,AKR21,Fe}.

\subsection{Inverse Hamiltonian reduction}
Recall that we have an embedding of vertex algebras
\begin{align}\label{free field realization of subreg}
    \subW\overset{\mu_{\mathrm{sub}}}{\hookrightarrow} \bigcap_{i=0}^n \Ker Q_i^+ \subset \Pi(0)\otimes\pi^{k+h^\vee_{+}}_{\h_+},
\end{align}
such that $\mu_{\mathrm{sub}}$ is surjective for generic levels $k$, see \eqref{screenings in Kazama Suzuki}. 
By \cite{FF90b}, the principal $\W$-algebra $\Wpr$ also has an embedding, through the Miura map $\mu_{\mathrm{pr}}$
\begin{align}\label{Principal W-algebra of type B}
    \Wpr\overset{\mu_{\mathrm{pr}}}{\hookrightarrow}\bigcap_{i=1}^n\Ker Q_i^{\mathrm{pr}}\subset \pi^{k+h^\vee_+}_{\h_+},\quad Q_i^{\mathrm{pr}}=\int Y\left(\hv{-\alpha_i},z\right)\dz,
\end{align}
which is again surjective for generic $k$.
Hence, both $\W$-algebras are realized as subalgebras of $\Pi(0)\otimes\pi^{k+h^\vee_{+}}_{\h_+}$, which are characterized as the joint kernel of screening operators at generic levels. Moreover, most of these operators coincide: $Q_i^+=Q_i^{\mathrm{pr}}$ for $i=2,\ldots,n$. 
In order to relate $Q_1^+$ and $Q_1^{\mathrm{pr}}$, we use the automorphism of $\Pi(0)\otimes\pi^{k+h^\vee_{+}}_{\h_+}$ given in the lemma below.

\begin{lemma}\label{prop:automVA}
There is an automorphism of $\Pi(0)\otimes\pi^{k+h^\vee_+}_{\h_+}$ satisfying 
	\begin{align*}
		g\colon &\Pi(0)\otimes\pi^{k+h^\vee_+}_{\h_+}\overset{\sim} 
  {\longrightarrow}\Pi(0)\otimes\pi^{k+h^\vee_+}_{\h_+} \\
		&\hv{n(x+y)}\mapsto \hv{n(x+y)},\quad  \check{\alpha}_i\mapsto \check{\alpha}_i-\delta_{i,1}(k+h^\vee_+)(x+y) \quad (i=1,\ldots,n)\\
		&x\mapsto x-\tfrac{k+h^\vee_+}{2}(x+y)+\varpi_1,\quad y\mapsto y+\tfrac{k+h^\vee_+}{2}(x+y)-\varpi_1.
	\end{align*}
\end{lemma}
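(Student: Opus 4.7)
The plan is to build $g$ in two stages: first, construct $g$ as a linear endomorphism of the generating space of the Heisenberg subalgebra which preserves the OPE pairing; then, extend it across the lattice sector of $\Pi(0)$. Write $\mathfrak{H}=\C x\oplus\C y\oplus\h_+$ for the Heisenberg generating space of $\Pi(0)\otimes\pi^{k+h^\vee_+}_{\h_+}$, equipped with the symmetric bilinear form $(\cdot,\cdot)_{\mathrm{OPE}}$ read off from the second order OPE poles, namely $(x,x)=1=-(y,y)$, $(x,y)=0$, $(h,h')_{\mathrm{OPE}}=(k+h^\vee_+)(h,h')_{\h_+}$ for $h,h'\in\h_+$, and vanishing in the mixed sector. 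The prescribed formulas determine a linear map of $\mathfrak{H}$ to itself once one also sets $g(\check\alpha_i)=\check\alpha_i$ for $i\geq 2$.

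The key observation is that $x+y$ is isotropic and orthogonal to $\h_+$: $(x+y,x+y)=1-1=0$ and $(x+y,h)=0$ for $h\in\h_+$. Consequently, modifying any generator by a multiple of $x+y$ is ``free'' as far as the form is concerned, and this instantly accounts for $g(\check\alpha_1)=\check\alpha_1-(k+h^\vee_+)(x+y)$; moreover, $g$ fixes $x+y$ itself, since $g(x)+g(y)=x+y$. The nontrivial check is that the addition of $\pm\varpi_1$ in the formulas for $g(x),g(y)$ is compensated by the $\mp\frac{k+h^\vee_+}{2}(x+y)$ terms. Writing $g(x)=x+u$ with $u=-\tfrac{k+h^\vee_+}{2}(x+y)+\varpi_1$, the identity $(g(x),g(x))=(x,x)+2(x,u)+(u,u)$ collapses to the single scalar identity $(\varpi_1,\varpi_1)_{\mathrm{OPE}}=k+h^\vee_+$, valid since $(\varpi_1,\varpi_1)_{\h_+}=(\epsilon_1,\epsilon_1)=1$. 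The checks for $g(y)$ and for the mixed pairing $(g(x),g(y))$ are entirely symmetric.

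Once $g$ is orthogonal on $\mathfrak{H}$, it lifts canonically to a vertex algebra automorphism of the Heisenberg vertex algebra $\pi^{x,y}\otimes\pi^{k+h^\vee_+}_{\h_+}$. To extend across the lattice sector, we set $g(\hv{n(x+y)})=\hv{n(x+y)}$ for all $n\in\Z$; no sign/cocycle obstruction arises because the lattice $\Z(x+y)$ is isotropic and fixed pointwise. The only compatibility to verify is for OPEs of the form $h(z)\hv{n(x+y)}(w)$ with $h\in\mathfrak{H}$, and it reduces to the $g$-invariance of the scalar $(h,n(x+y))$, which follows from $g(x+y)=x+y$ together with the orthogonality of $g$. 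Thus the only genuine obstacle is the small Gram-matrix computation in the middle paragraph; the lifting to the Heisenberg vertex algebra and the extension to $\Pi(0)\otimes\pi^{k+h^\vee_+}_{\h_+}$ are both formal.
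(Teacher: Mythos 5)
The paper dispatches this lemma in one sentence (``checking the OPEs and constructing an inverse map''), so your detailed orthogonality-plus-lifting argument is the appropriate way to fill in the proof, and your final conclusion is correct. There is, however, a hole in the middle: the claim that modifying $\check\alpha_1$ by a multiple of $x+y$ is ``free'' does not hold for the cross-pairings with $x$ and $y$. While $x+y$ is isotropic and orthogonal to $\h_+$, it is \emph{not} orthogonal to $x$ or $y$ separately --- $(x+y,x)=1$ and $(x+y,y)=-1$ --- so the shift by $-(k+h^\vee_+)(x+y)$ in $g(\check\alpha_1)$ changes the pairing of $\check\alpha_1$ against $x$ and $y$. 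That change is killed only by the $\pm\varpi_1$ terms added to $g(x)$ and $g(y)$: explicitly,
\begin{align*}
\bigl(g(\check\alpha_1),g(x)\bigr)
&=\bigl(x,-(k+h^\vee_+)(x+y)\bigr)+\bigl(\varpi_1,\check\alpha_1\bigr)_{\mathrm{OPE}}
=-(k+h^\vee_+)+(k+h^\vee_+)(\varpi_1,\check\alpha_1)_{\h_+},
\end{align*}
which vanishes because $(\varpi_1,\check\alpha_1)_{\h_+}=(\epsilon_1,\epsilon_1-\epsilon_2)=1$, and similarly for $g(y)$. This is a genuinely separate verification: it is not covered by your ``key observation'' (which only kills the $\h_+$-sector self-pairings) nor by your ``nontrivial check'' (which only concerns pairings among $x,y$ and uses the different scalar identity $(\varpi_1,\varpi_1)_{\h_+}=1$). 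You should spell out these two mixed pairings explicitly; once they are added, the Gram-matrix verification is complete, and the lift to the Heisenberg vertex algebra and the extension across the half-lattice $\Pi(0)$ are both sound as you describe.
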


\proof
The proof is straightforward by checking the OPEs and constructing an inverse map.
\endproof

Under the automorphism on $\Pi(0)\otimes\pi^{k+h^\vee_+}_{\h_+}$, the joint kernel of the screening operators \eqref{free field realization of subreg} maps to the joint kernel of another set of screening operators.
It is straightforward to show that $Q_i^+=Q_i^{\mathrm{pr}}$ remains the same for $1\leq i\leq n$ whereas $Q_0^+$ is replaced by
\begin{align*}
S=\int  Y\left(\hv{x-\tfrac{k+h^\vee_+}{2}(x+y)-\varpi_1},z\right)\,\dz.
\end{align*}
Hence, by combining the embedding \eqref{free field realization of subreg} with the automorphism $g^{-1}$, we obtain the embedding 
\begin{align}\label{new free field realization of subreg}
    \subW\overset{\mu_{\mathrm{sub}}}{\hookrightarrow} \bigcap_{i=0}^n \Ker Q_i^+\xrightarrow{\substack{g^{-1}\\\sim}} \Ker S \cap \bigcap_{i=1}^n \Ker Q_i^{\mathrm{pr}} \subset \Pi(0)\otimes\pi^{k+h^\vee_{+}}_{\h_+}.
\end{align}

\begin{theorem}\label{inverse HR}
    For $k\neq-h^\vee_+$, there exists an embedding of vertex algebras 
\begin{align*}
   \mu_{\mathrm{iHR}}\colon \Wsub\hookrightarrow \Ker S \cap \left(\Pi(0)\otimes\Wpr\right),
\end{align*}
which is an isomorphism at generic levels.
\end{theorem}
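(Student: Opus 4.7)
The plan is to take $\mu_{\mathrm{iHR}}$ to be the composition $g^{-1}\circ\mu_{\mathrm{sub}}$ already constructed in \eqref{new free field realization of subreg}. Injectivity is immediate since $\mu_{\mathrm{sub}}$ is injective by \cite{Nak21} and $g^{-1}$ is a vertex algebra automorphism, so the real content is the identification of the target with $\Ker S \cap (\Pi(0) \otimes \Wpr)$.

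By \eqref{new free field realization of subreg}, the image of $\mu_{\mathrm{iHR}}$ already lies in $\Ker S \cap \bigcap_{i=1}^n \Ker Q_i^{\mathrm{pr}}$ inside $\Pi(0) \otimes \pi^{k+h^\vee_+}_{\h_+}$. The crucial observation is that each principal screening $Q_i^{\mathrm{pr}} = \int Y(\hv{-\alpha_i},z)\dz$ for $i=1,\dots,n$ is built from a vertex operator whose exponent $-\alpha_i$ lies entirely in $\h_+$ and has no component along the $\Pi(0)$-directions $x,y$. Hence $Q_i^{\mathrm{pr}}$ acts on the tensor product as $\id_{\Pi(0)} \otimes Q_i^{\mathrm{pr}}$, yielding the factorization
$$\bigcap_{i=1}^n \Ker Q_i^{\mathrm{pr}} = \Pi(0) \otimes \bigcap_{i=1}^n \Ker Q_i^{\mathrm{pr}}\big|_{\pi^{k+h^\vee_+}_{\h_+}}.$$
Combining this with the free field realization \eqref{Principal W-algebra of type B} of the principal $\W$-algebra, which identifies $\Wpr$ with the joint kernel of $Q_1^{\mathrm{pr}},\dots, Q_n^{\mathrm{pr}}$ inside $\pi^{k+h^\vee_+}_{\h_+}$, produces
$$\bigcap_{i=1}^n \Ker Q_i^{\mathrm{pr}} = \Pi(0) \otimes \Wpr,$$
and hence the desired embedding $\mu_{\mathrm{iHR}}\colon \Wsub \hookrightarrow \Ker S \cap (\Pi(0) \otimes \Wpr)$.

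For the isomorphism at generic $k$, I would invoke that $\mu_{\mathrm{sub}}$ surjects onto $\bigcap_{i=0}^n \Ker Q_i^+$ at such levels, as was used in the proof of the Feigin--Semikhatov duality in \cite{CGN}. Applying the automorphism $g^{-1}$ then shows that $\mu_{\mathrm{iHR}}$ surjects onto $\Ker S \cap \bigcap_{i=1}^n \Ker Q_i^{\mathrm{pr}} = \Ker S \cap (\Pi(0) \otimes \Wpr)$, giving the isomorphism.

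The main obstacle is the equality $\Wpr = \bigcap_{i=1}^n \Ker Q_i^{\mathrm{pr}}|_{\pi^{k+h^\vee_+}_{\h_+}}$ at \emph{all} non-critical levels (rather than only generic ones), which is needed so that the image of $\mu_{\mathrm{iHR}}$ actually lies in $\Pi(0) \otimes \Wpr$ at non-generic non-critical $k$, rather than merely in the a priori larger $\Pi(0) \otimes \bigcap_i \Ker Q_i^{\mathrm{pr}}|_{\pi^{k+h^\vee_+}_{\h_+}}$. This equality is classical for principal $\W$-algebras and follows from the surjectivity of the principal Miura map onto the joint kernel of its screenings (see e.g.~\cite{Ara17}), though its application in type $B$ at every non-critical level deserves a careful reference.
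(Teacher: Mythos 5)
Your argument correctly handles the generic-level case and identifies the right issue, but the way you propose to close the gap at non-generic levels does not work. You claim that the equality
\[
\Wpr \;=\; \bigcap_{i=1}^n \Ker Q_i^{\mathrm{pr}}\big|_{\pi^{k+h^\vee_+}_{\h_+}}
\]
at \emph{all} non-critical levels is ``classical'' and follows from surjectivity of the principal Miura map onto the joint kernel of screenings. But the paper itself, right at \eqref{Principal W-algebra of type B}, only asserts this surjectivity \emph{for generic} $k$, and for good reason: the Miura map $\mu_{\mathrm{pr}}$ is injective for all non-critical $k$ (by \cite{Ara17,Nak21}), yet the joint screening kernel may be strictly larger than the image of $\Wpr$ at non-generic non-critical levels. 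There is no reference that upgrades this to all non-critical $k$, so your proposed fix introduces a gap rather than filling one.

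The device the paper uses to bypass this is an argument of continuity, which is the key idea missing from your proof. Fix the ambient graded space $V=\Pi(0)\otimes\pi^{k+h^\vee_+}_{\h_+}$ (identifying the Heisenberg algebras for varying $k\in U=\C\setminus\{-h^\vee_+\}$), with finite-dimensional graded pieces. Both $\{\Wsub\}_{k\in U}$ and $\{\Pi(0)\otimes\Wpr\}_{k\in U}$ are \emph{continuous families} of graded subspaces of $V$ in the sense of constant graded dimensions and a continuous Grassmannian map. You have already established $\Wsub\subset\Pi(0)\otimes\Wpr$ on the dense set of generic $k$; the continuity lemma (a weaker form of \cite[Lemma 5.14]{CGN}) then yields the inclusion for every $k\in U$. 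Separately, for each $k$ the condition of lying in $\Ker S$ on any fixed graded piece is a Zariski-closed condition depending algebraically on $k$; since $\Wsub\subset\Ker S$ holds generically, upper-semicontinuity gives it for all $k\neq -h^\vee_+$. Combining these two steps produces the embedding at every non-critical level without ever needing the Miura map to surject onto the screening kernel. Your identification of the generic image with $\Ker S\cap(\Pi(0)\otimes\Wpr)$, which proves the isomorphism claim, is fine as it stands.
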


For generic levels, the assertion follows immediately by comparing \eqref{Principal W-algebra of type B} and \eqref{new free field realization of subreg}. To deduce the assertion for all the non-critical levels, we use an argument of continuity.
Consider a vector space $V=\bigoplus_{a\in A}V_a$ graded by a set $A$ such that each $V_a$ is finite-dimensional. A family of graded vector subspaces $\{W_z=\bigoplus_{a\in A}W_z^a\}_{z\in U}$ over a connected subset $U\subset \C$ is said continuous if $\dim W_z^a$ is constant for each $a$, say $d_a$, and the induced map $U\rightarrow \operatorname{Gr}(d_a,V_a)$ to the Grassmannian manifold is continuous \cite{Tau11}. If we have two continuous families of vector subspaces $\{W^1_\alpha\}_{\alpha\in U}$, $\{W^2_\alpha\}_{\alpha\in U}$ such that $W^1_\alpha\subset W^2_\alpha$ holds for a dense subset $U^\circ\subset U$, then $W^1_\alpha\subset W^2_\alpha$ holds for $U$. This is a weaker version of \cite[Lemma~5.14]{CGN}.

\proof[Proof of Theorem \ref{inverse HR}]
All the Heisenberg vertex algebras $\pi^{k+h^\vee_+}_{\h_+}$ with $k$ in $U=\C\backslash \{-h^\vee_+\}$ are isomorphic. Thus, we may identify them and set $V=\Pi(0)\otimes \pi^{k+h^\vee_+}_{\h_+}$, which is graded by the highest weights and the conformal weights.
Since $\{\subW\}_{k\in U}$ and $\{\Pi(0)\otimes\Wpr\}_{k\in U}$ are continuous families such that $\subW\subset \Pi(0)\otimes\Wpr$ holds for generic $k$, the argument of continuity implies the inclusion for all $k\neq -h^\vee_+$. 
Next, the condition for an element of $V$ with homogeneous weight to lie in $\Ker S$ is a closed condition depending on $k$ algebraically. 
Since $\subW\subset \Ker S$ holds for generic $k$, it follows that $\subW\subset \Ker S$ for all $k\neq -h^\vee_+$ by the upper-semicontinuity of subspaces characterized by closed conditions.
\endproof

\begin{example}[cf.\ \cite{BM21}] The map $\mu_{\mathrm{iHR}}$ for $n=2$ is explicitly given by 
	\begin{align*}
	&J^+\mapsto \mathbf{y},\quad L\mapsto \tfrac{1}{2}(u^2-v^2)+2\partial\mathbf{x}-\tfrac{1}{k+3}S_2,\quad G^+\mapsto \hv{x+y},\\
	&G^-\mapsto (36S_4-\tfrac{k}{8}\partial^2S_2+\tfrac{1}{4}S_2^2-\tfrac{k+2}{2}(\partial S_2) \mathbf{x}-\tfrac{2k+5}{2}S_2 \partial \mathbf{x}+\tfrac{1}{2}S_2\mathbf{x}^2+\tfrac{1}{4}\mathbf{x}^4\\
        &\hspace{1.2cm}-\tfrac{4k+9}{2}\partial\mathbf{x}\cdot\mathbf{x}^2+\tfrac{7k^2+33k+39}{4}(\partial\mathbf{x})^2-\tfrac{(4k+9)(3k+7)(k+2)}{2}\partial^2\mathbf{x}\cdot\mathbf{x})\hv{-(x+y)}\
	\end{align*}
 where 
 \begin{align*}
    \mathbf{x}=\tfrac{k+1}{2}x+\tfrac{k+3}{2}y,\quad \mathbf{y}=\tfrac{k+3}{2}x+\tfrac{k+1}{2}y.
 \end{align*}
Here $S_2,S_4$ are the strong generators of $\W^k(\so_5)$ given in \cite[Theorem 4.1]{KW04} with conformal weights 2 and 4, respectively.
See Sect.~\ref{setting of our W-superalgebras} for the strong generators $J^+, L,G^\pm$ and \cite{Fas21} for their explicit OPEs. This implies that $\mu_{\mathrm{iHR}}$ is well-defined for all levels $k\in \C$.
\end{example}

\subsection{Feigin--Frenkel duality}
The embedding $\mu_{\mathrm{iHR}}$ induces an embedding
\begin{align}\label{iHR for coset}
\begin{split}
    \Com(\pi^{J^+_\Delta},\subW\otimes V_\Z)
&\hookrightarrow \Com(\pi^{J^+_\Delta},\Wpr \otimes \Pi(0)\otimes V_\Z)\\
&\subset \Com(\pi^{J^+_\Delta},\pi^{k+h^\vee_+}_{\h_+}\otimes \Pi(0)\otimes V_\Z)
\end{split}
\end{align}
whose image agrees with the kernel of the screening operator $S$ at generic $k$.
As the embedding \eqref{free field realization of subreg} maps the Heisenberg field $J^+$ to $\mathbf{1}\otimes\varpi_1-y\otimes \mathbf{1}$, we find that
\begin{align*}
   \mu_{\mathrm{iHR}}\colon \pi^{J^+_\Delta}\hookrightarrow \Pi(0)\otimes V_\Z,\quad J^+_\Delta\mapsto (-y+\tfrac{k+h^\vee_+}{2}(x+y))\otimes \mathbf{1}-\mathbf{1}\otimes y
\end{align*}
and that \eqref{iHR for coset} factors as 
$$\Com(\pi^{J^+_\Delta},\subW\otimes V_\Z)
\hookrightarrow \Wpr \otimes\Com(\pi^{J^+_\Delta},\Pi(0)\otimes V_\Z).$$
The following can be checked straightforwardly.

\begin{lemma}
There is an isomorphism of vertex superalgebras
\begin{align*}
    \begin{array}{llll}
        \phi\colon &V_\Z\otimes \pi^y &\xrightarrow{\simeq}  &\Com(\pi^{J^+_\Delta},\Pi(0)\otimes V_\Z)\\ 
        & \hv{nx}\otimes \mathbf{1} &\mapsto &\hv{n(x+y)}\otimes \hv{nx}\\
        &x\otimes \mathbf{1}&\mapsto & (x+y)\otimes \mathbf{1}+\mathbf{1}\otimes x\\
        &\mathbf{1}\otimes y &\mapsto &\tfrac{\sqrt{k+h^\vee_+}}{2}(x+y)\otimes \mathbf{1}+\tfrac{1}{\sqrt{k+h^\vee_+}}(y\otimes \mathbf{1}+\mathbf{1}\otimes x).
    \end{array}
\end{align*}
\end{lemma}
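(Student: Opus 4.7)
The plan is a direct OPE check followed by a character-theoretic argument for surjectivity. Let $x_1,y_1$ denote the Heisenberg generators of $\Pi(0)$ so that $(x_1,x_1)=1=-(y_1,y_1)$ and $(x_1,y_1)=0$, let $x_2$ denote the generator of $V_\Z$ in the codomain, and set $\varepsilon=(k+h^\vee_+)^{1/2}$, $c=\varepsilon^2/2$. In these coordinates the proposed map reads
\[
\phi(x)=(x_1+y_1)+x_2,\quad \phi(y)=\tfrac{\varepsilon}{2}(x_1+y_1)+\tfrac{1}{\varepsilon}(y_1+x_2),\quad \phi(\hv{nx})=\hv{n(x_1+y_1)}\otimes\hv{nx_2},
\]
while the commutant-defining current extracted from the expression displayed just before the lemma takes the form $J^+_\Delta=c(x_1+y_1)-y_1-x_2=cx_1+(c-1)y_1-x_2$.

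The first step is to verify that $\phi$ is a vertex superalgebra homomorphism. Since $(x_1+y_1,x_1+y_1)=0$, the Gram matrix of $\{\phi(x),\phi(y)\}$ is $\mathrm{diag}(1,-1)$, matching the Heisenberg relations on the source (recall that $(y,y)=-1$ in $\pi^y$, inherited from the Miura data of $\Pi(0)$). The remaining OPE identities follow from three pairings: $(\phi(x),n(x_1+y_1)+nx_2)=n$, $(\phi(y),n(x_1+y_1)+nx_2)=0$, and $nm(x_1+y_1,x_1+y_1)+nm(x_2,x_2)=nm$, which reproduce respectively $x(z)\hv{nx}(w)\sim n\hv{nx}(w)/(z-w)$, the independence of $\pi^y$ from $V_\Z$, and $\hv{nx}(z)\hv{mx}(w)\sim(z-w)^{nm}\hv{(n+m)x}(w)$. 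That the image lies in the commutant is confirmed by the three identities $(J^+_\Delta,\phi(x))=c-(c-1)-1=0$, $(J^+_\Delta,\phi(y))=\tfrac{\varepsilon}{2}\bigl(c-(c-1)\bigr)+\tfrac{1}{\varepsilon}\bigl(-(c-1)-1\bigr)=\tfrac{\varepsilon}{2}-\tfrac{c}{\varepsilon}=0$, and $(J^+_\Delta,n(x_1+y_1)+nx_2)=n-n=0$, so that each of $\phi(x)$, $\phi(y)$ and $\phi(\hv{nx})$ is annihilated by $(J^+_\Delta)_{\ge 0}$.

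The main obstacle is surjectivity, for which I would use the $J^+_\Delta$-weight decomposition
\[
\Pi(0)\otimes V_\Z=\bigoplus_{m,n\in\Z}\pi^{x_1,y_1}_{m(x_1+y_1)}\otimes\pi^{x_2}_{nx_2},
\]
whose $(m,n)$-summand has $J^+_\Delta$-weight $m-n$ on its top vector. Only the diagonal summands $m=n$ can contribute to the commutant. On each such summand, the orthogonal splitting $\C x_1\oplus\C y_1\oplus\C x_2=\C J^+_\Delta\oplus\C\phi(x)\oplus\C\phi(y)$, which is valid because $(J^+_\Delta,J^+_\Delta)=k+h^\vee_+\neq 0$ at noncritical levels, factors the Heisenberg Fock module as $\pi^{J^+_\Delta}\otimes\pi^{\phi(x),\phi(y)}$, with $\hv{n(x_1+y_1)}\otimes\hv{nx_2}=\phi(\hv{nx})$ as the highest-weight vector for the $\phi(x),\phi(y)$-factor. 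Consequently the commutant equals $\bigoplus_{n\in\Z}\pi^{\phi(x),\phi(y)}\cdot\phi(\hv{nx})$, which is exactly the image of $\phi$; comparing graded characters on both sides (each equal to $\sum_{n\in\Z}q^{n^2/2}/\eta(q)^2$ up to overall normalization) confirms the match with $V_\Z\otimes\pi^y$ and shows that $\phi$ is an isomorphism of vertex superalgebras.
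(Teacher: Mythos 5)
Your proof is correct, and since the paper supplies no written argument here (it prefaces the lemma with ``the following can be checked straightforwardly''), your direct OPE verification of the Gram data and the commutation conditions, followed by the $J^+_\Delta$-weight (i.e.\ Fock-module) decomposition of $\Pi(0)\otimes V_\Z$ along the orthogonal splitting $\C J^+_\Delta\oplus\C\phi(x)\oplus\C\phi(y)$, is exactly that straightforward check. Two small remarks: you implicitly (and correctly) read the paper's displayed expression $\mu_{\mathrm{iHR}}(J^+_\Delta)=(\cdots)\otimes\mathbf{1}-\mathbf{1}\otimes y$ as $-\mathbf{1}\otimes x$, which is the right reading since the $V_\Z$ factor in the codomain is generated by $x$ (as the lemma's own formula $\mathbf{1}\otimes y\mapsto\cdots+\tfrac{1}{\varepsilon}\mathbf{1}\otimes x$ confirms); and the final character comparison is superfluous, since the orthogonal factorization $\pi^{x_1,y_1}_{m(x_1+y_1)}\otimes\pi^{x_2}_{mx_2}\simeq\pi^{J^+_\Delta}_{0}\otimes\pi^{\phi(x),\phi(y)}_{m\phi(x)}$ already exhibits the commutant as $\bigoplus_m\pi^{\phi(x),\phi(y)}_{m\phi(x)}\simeq V_\Z\otimes\pi^y$ with $\phi$ the identification.
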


Let $\pi^{k+h^\vee_+}_{\epsilon_0|\epsilon_1,\ldots,\epsilon_n}$ be a Heisenberg vertex algebra generated by the fields $\epsilon_i(z)$ satisfying the OPEs
\begin{align}\label{intermediate heisenberg}
    \epsilon_i(z)\epsilon_j(w)\sim \frac{(-1)^{\delta_{i,0}}\delta_{i,j}(k+h^\vee_+)}{(z-w)^2}.
\end{align}
Note that the subalgebra generated by $\epsilon_i(z)$ ($i=1,\ldots, n$) is isomorphic to $\pi^{k+h^\vee_+}_{\h_+}$ since $\h_+=\bigoplus_{i=1}^n \C \epsilon_i$ (see Sect.~\ref{setting of our W-superalgebras}) and the one by $\epsilon_0(z)$ to $\pi^y$.
Hence \eqref{iHR for coset} implies the embedding of vertex superalgebras 
    \begin{align}\label{FFR for the Kazama-Suzuki coset}
        \Com(\pi^{J^+_\Delta},\subW\otimes V_\Z)
        &\hookrightarrow \Wpr\otimes V_\Z\otimes \pi^y \subset \pi^{k+h^\vee_+}_{\epsilon_0|\epsilon_1,\ldots,\epsilon_n}\otimes V_\Z.
    \end{align}
By applying the Feigin--Frenkel duality of the Virasoro vertex algebras (c.f. \cite[Chapter 15]{FBZ}), we have
$$\Ker\ Q_i^{\mathrm{pr}}=\Ker\ Q_i^-,\quad (i=1,\ldots,n)$$
through the following isomorphism of vertex superalgebras 
\begin{align}\label{isom for the large FFR}
\begin{array}{cll}
\pi^{k+h^\vee_+}_{\epsilon_0|\epsilon_1,\ldots,\epsilon_n}\otimes V_\Z &\xrightarrow{\simeq}& \pi^{\ell+h^\vee_-}_{\h_-}\otimes V_\Z\\
\epsilon_i\otimes \mathbf{1} &\mapsto & -\tfrac{1}{\ell+h^\vee_-}t_i \otimes \mathbf{1}\\
\mathbf{1}\otimes x &\mapsto & \mathbf{1}\otimes x\\
\mathbf{1}\otimes \hv{mx} &\mapsto & \mathbf{1}\otimes \hv{mx}.
\end{array}
\end{align}
This identification induces the Feigin-Frenkel duality of the principal $\W$-algebras
$$\Wpr\otimes V_\Z \otimes \pi^{k+h^\vee_+}_{\epsilon_0} \simeq \W^\ell(\symp_{2n})\otimes V_\Z \otimes \pi^{\ell+h^\vee_-}_{t_0},$$ 
as $Q_i^-$ ($i=1,\ldots,n$) corresponds to the screenings of  $\W^\ell(\symp_{2n})$.
It is straightforward to show that the remaining screening $S$ is identified with 
$$\int Y(\hv{-(\epsilon_0+\epsilon_1)}\otimes \hv{x},z)\ \dz$$
under \eqref{FFR for the Kazama-Suzuki coset} and then $Q_0^-$ through \eqref{isom for the large FFR}. 
Therefore, we obtain a relation between the Feigin--Semikhatov duality and the Feigin--Frenkel duality:

\begin{corollary}\label{FS vs FF}
For the levels $k,\ell$ satisfying \eqref{level condition for FS}, the diagram below commutes:
    \begin{center}
		\begin{tikzcd}[row sep=huge, column sep = huge]
			\Com(\pi^{J^+_\Delta},\Wsub\otimes V_\Z)
			\arrow[d, leftrightarrow, "\text{F.-S.}"']
			\arrow[r, hook, "\mu_{\mathrm{iHR}}"]&
			\Wpr\arrow[d, leftrightarrow, "\text{F.-F.}"]&
            \hspace{-1.9cm}\otimes  V_\Z\otimes\pi^{k+h^\vee_+}_{\epsilon_0}
			\\
			\Wsuper
			\arrow[r, hook, "\text{Wakimoto real.}"]&
			\W^\ell(\symp_{2n})&
            \hspace{-2.4cm}\otimes V_\Z\otimes\pi^{\ell+h^\vee_-}_{t_0}.
		\end{tikzcd}
	\end{center}
\end{corollary}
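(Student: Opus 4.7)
The plan is to obtain commutativity by concatenating the constructions developed in the preceding subsections, so that the statement becomes essentially tautological once the screening operators are tracked through the chain of identifications. Starting from the F.-S. duality of Theorem \ref{Coset theorem}, which realizes $\Wsuper \simeq \Com(\pi^{J^+_\Delta}, \Wsub \otimes V_\Z)$, the first step is to compose with the inverse Hamiltonian reduction $\mu_{\mathrm{iHR}}$ from Theorem \ref{inverse HR}, tensored with the identity on $V_\Z$. Passing to the coset by $\pi^{J^+_\Delta}$ and invoking the lemma immediately preceding the corollary reduces the embedding \eqref{FFR for the Kazama-Suzuki coset} to an inclusion $\Wsuper \hookrightarrow \Wpr \otimes V_\Z \otimes \pi^y$ whose image is contained in the kernel of the residual screening $S$.

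The second step is to transport this embedding through the Feigin--Frenkel duality. Using the isomorphism \eqref{isom for the large FFR} of intermediate Heisenberg vertex algebras, the screenings $Q_i^{\mathrm{pr}}$ of $\Wpr$ are identified with the screenings $Q_i^-$ of $\W^\ell(\symp_{2n})$ for $i=1,\ldots,n$, producing an identification
\begin{align*}
\Wpr \otimes \pi^{k+h^\vee_+}_{\epsilon_0} \simeq \W^\ell(\symp_{2n}) \otimes \pi^{\ell+h^\vee_-}_{t_0}
\end{align*}
inside a common free field ambient. Tensoring with $V_\Z$ gives the right-hand column of the diagram, and the resulting embedding into $\W^\ell(\symp_{2n}) \otimes V_\Z \otimes \pi^{\ell+h^\vee_-}_{t_0}$ is, by construction, the Wakimoto realization of $\Wsuper$ factored through the F.-F. duality.

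The decisive verification is that the extra screening $S$ of \eqref{new free field realization of subreg}, which characterizes $\Wsuper$ inside $\Wpr \otimes \Pi(0) \otimes V_\Z$ modulo the coset constraint, is identified with $Q_0^-$ under the chain of maps. This is a direct momentum bookkeeping: the weight $x - \tfrac{k+h^\vee_+}{2}(x+y) - \varpi_1$ of $S$ is to be traced through the isomorphism $\phi$ of the lemma, through the rescaling $\epsilon_i \mapsto -(\ell+h^\vee_-)^{-1} t_i$ of \eqref{isom for the large FFR}, and compared with the screening momentum $-\alpha_0 + x$ of $Q_0^-$. Once this identification is in place, both sides of the diagram are characterized as joint kernels of the same set of screenings $\{Q_0^-, Q_1^-, \ldots, Q_n^-\}$ inside $\pi^{\ell+h^\vee_-}_{\h_-} \otimes V_\Z$, and commutativity follows.

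The main obstacle is precisely this screening match: the shift by $\varpi_1$ produced by the automorphism $g$ of Lemma \ref{prop:automVA}, the choice of basis for $\pi^{k+h^\vee_+}_{\epsilon_0|\epsilon_1,\ldots,\epsilon_n}$, and the F.-F. rescaling must conspire correctly. Conceptually this is expected since all three ingredients encode the same duality relation $2(k+h^\vee_+)(\ell+h^\vee_-) = 1$, but the cleanest way to make it rigorous is to expand both $S$ and $Q_0^-$ in the common Heisenberg basis $\{x, t_0, t_1, \ldots, t_n\}$ of $\pi^{\ell+h^\vee_-}_{\h_-} \otimes \pi^x$ and check the coefficients directly.
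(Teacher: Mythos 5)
Your proposal follows essentially the same route as the paper: compose $\mu_{\mathrm{iHR}}$ with the Kazama--Suzuki coset construction, use the isomorphism $\Com(\pi^{J^+_\Delta},\Pi(0)\otimes V_\Z)\simeq V_\Z\otimes\pi^y$ to land in $\Wpr\otimes V_\Z\otimes\pi^y$, transport the principal screenings across the Feigin--Frenkel duality via the rescaling $\epsilon_i\mapsto -(\ell+h^\vee_-)^{-1}t_i$, and reduce the commutativity claim to matching the residual screening $S$ with $Q_0^-$ by a weight computation. This is exactly the argument the paper assembles in the paragraphs preceding the corollary, including the crucial identification $S\leftrightarrow \int Y(\hv{-(\epsilon_0+\epsilon_1)}\otimes\hv{x},z)\,\dz \leftrightarrow Q_0^-$.
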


\section{Spectral flow twists}\label{sec:spectral_flow}
We collect some useful properties of spectral flow twists for later use.
\subsection{Spectral flow twist}
Let $V$ be a conformal vertex superalgebra with conformal vector $L$ and $\pi^H\subset V$ be a simple Heisenberg vertex subalgebra generated by the field $H(z)=\sum_{m\in \Z}H_mz^{-m-1}$ satisfying 
\begin{itemize}
\item
$H_0$ acts semisimply on $V$ with $\Z$-eigenvalues,
\item
$L(z)H(w)\sim \frac{H(w)}{(z-w)^2}+\frac{\partial_w H(w)}{(z-w)}$.
\end{itemize}
Given a (weak) $V$-module $(M,Y_M(\cdot,z))$, we can define another $V$-module structure on $M$ by modifying the structure map
$$V\times M\rightarrow M(\!(z)\!),\quad (a,m)\mapsto Y_M^{H}(a,z)m:=Y_M(\Delta_H(z)a,z)m$$
where $\Delta_H(z)$ is the Li's $\Delta$-operator \cite{Li2}
$$\Delta_{H}(z)=z^{H_0}\mathrm{exp}\left(-\sum_{m>0}\frac{H_m}{m}(-z)^{-m} \right).$$
The module endowed with the new action is called a \emph{spectral flow twist} of $M$ and denoted by $S_HM$.
Similarly, we define $S_{\theta H}M$ for $\theta\in\Z$. Note that for $\theta,\theta'\in\Z$, $S_{(\theta+\theta')H}M\simeq S_{\theta H}\circ S_{\theta' H}M$.
In the following, we simply write $S_{\theta}$ when the choice of $H$ is obvious.

For $V$-modules $M_i$ $(1\leq i\leq 3)$, let $I_V\binom{M_3}{M_1\ M_2}$ denote the (super-)space of (logarithmic) intertwining operators of type $\binom{M_3}{M_1\ M_2}$ \cite{HLZ}. By definition, it is a vector superspace spanned by parity-homogeneous linear maps of the form
\begin{align*}
\begin{array}{cccl}
\mathcal{Y}(\cdot,z)\colon & M_1 \otimes M_2 &\rightarrow & M_3\{z\}[\log z]\\
& m_1\otimes m_2 &\mapsto & \mathcal{Y}(m_1,z)m_2=\displaystyle{\sum_{\alpha=0}^{K} \sum_{n\in \C}} (m_1)^{\mathcal{Y}}_{(n;\alpha)}m_2 z^{-n-1}(\log z)^\alpha, 
\end{array}
\end{align*}
with $K\in \Z_{\geq0}$, subject to certain axioms, see \cite{HLZ}.
In particular, for the Heisenberg vertex algebra $\pi=\pi^x$
$I_{\pi}\binom{\pi_{\nu}}{\pi_{\lambda }\ \pi_{\mu}}$ is nonzero iff $\nu=\lambda+\mu$ and, in this case, it is spanned by the intertwining operator $I_\lambda(\cdot,z)$ uniquely determined by 
\begin{align}\label{Fock intertwiner}
I_\lambda(\hv{\lambda},z)= S_\lambda z^{(\lambda,\mu)}E^-(\lambda,z)E^+(\lambda,z),
\end{align} 
where $S_\lambda$ is the shift operator $\hv{\mu}\mapsto \hv{\lambda+\mu}$ and 
\begin{align*}
E^-(\lambda,z)=\mathrm{exp}\left(-\sum_{n<0}\frac{\lambda_{n}}{n}z^{-n}\right),\quad E^+(\lambda,z)=\mathrm{exp}\left(-\sum_{n>0}\frac{\lambda_{n}}{n}z^{-n}\right).
\end{align*}
Note that $\Delta_H(z)=z^{H_0} E^+(H,-z)$ in this notation.
A well-known result of Li \cite[Proposition 2.11]{Li2} asserts the following in this generality.

\begin{proposition}[\cite{Li2}]\label{spectral flow twist and IO}
We have functorial isomorphisms between spaces of intertwining operators
\begin{align*}
I_V\binom{S_H(M_3)}{S_H(M_1)\ M_2}\simeq I_V\binom{M_3}{ M_1\ M_2}\simeq I_V\binom{S_H(M_3)}{M_1\ S_H(M_2)}.
\end{align*}
In particular, if a category of $V$-modules is a braided tensor (super)category closed under the spectral flow twists $S_{\theta H}$ $(\theta\in\Z)$,
then we have isomorphisms of $V$-modules
$$S_H(M_1)\boxtimes M_2\simeq S_H(M_1\boxtimes M_2)\simeq M_1 \boxtimes S_H(M_2).$$
\end{proposition}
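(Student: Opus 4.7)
I would prove the first assertion following Li's strategy \cite[Proposition 2.11]{Li2}. The idea is to construct explicit mutually inverse bijections between the spaces of intertwining operators by twisting with the $\Delta$-operator $\Delta_H(z)$. Given $\mathcal{Y} \in I_V\binom{M_3}{M_1\ M_2}$, I define an operator $\widetilde{\mathcal{Y}}(m_1, z)m_2$ on $M_1 \otimes M_2$ valued in $M_3\{z\}[\log z]$ by an explicit formula involving compositions with $\Delta_H(z)$, and then verify that, when $M_1$ and $M_3$ are reinterpreted as $S_HM_1$ and $S_HM_3$ via the twisted vertex operator $Y^H_M(a, z) := Y_M(\Delta_H(z)a, z)$, the operator $\widetilde{\mathcal{Y}}$ becomes an intertwining operator of type $\binom{S_HM_3}{S_HM_1\ M_2}$. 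The inverse map is obtained by applying the same procedure with $-H$ in place of $H$, and the two compositions yield the identity using the basic property $\Delta_H(z)\Delta_{-H}(z)=1$.

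The heart of the argument is the verification of the Jacobi identity for $\widetilde{\mathcal{Y}}$ relative to the twisted module structures on $S_HM_1$ and $S_HM_3$. This is where I expect the main difficulty to lie: the reduction to the original Jacobi identity for $\mathcal{Y}$ requires the conjugation properties of $\Delta_H(z)$ with vertex operators from $V$, combined with delicate manipulations of formal $\delta$-functions and their Taylor expansions $\Delta_H(z_1+z_2)=\Delta_H(z_1)\cdot(\text{correction})$. In the superalgebra setting, one must also keep track of parity signs, but these follow the same pattern as in the bosonic case. Functoriality in all three arguments is immediate from the construction, and the symmetric isomorphism $I_V\binom{M_3}{M_1\ M_2} \simeq I_V\binom{S_HM_3}{M_1\ S_HM_2}$ is obtained by applying the same twisting procedure to the $M_2$-slot instead of the $M_1$-slot.

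For the second assertion, I invoke the universal property of the tensor product. By assumption, the braided tensor (super)category under consideration is closed under the spectral flow twists $S_{\theta H}$ for $\theta\in\Z$, so all three objects $S_H(M_1 \boxtimes M_2)$, $S_H(M_1) \boxtimes M_2$, and $M_1 \boxtimes S_H(M_2)$ lie in the category. The contravariant functor $N \mapsto I_V\binom{N}{M_1\ M_2}$ on this category is corepresented by $M_1 \boxtimes M_2$, and similarly for the spectral-flow-twisted variants. The functorial isomorphisms established in the first part, with $N$ varied over the category and combined with the Yoneda lemma, then translate into the asserted natural isomorphisms of the representing $V$-modules.
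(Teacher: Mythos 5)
Your proposal follows essentially the same two-step structure as the paper's proof: first, construct the isomorphisms of intertwining-operator spaces by explicit twisting with $\Delta_H(z)$ and related exponentials (with inverses given by replacing $H$ with $-H$), and second, invoke corepresentability of $I_V\binom{\bullet}{M_1\ M_2}$ by the fusion product together with the Yoneda lemma to conclude the isomorphisms of fusion products. One small slip: the functor $N\mapsto I_V\binom{N}{M_1\ M_2}\simeq \Hom_V(M_1\boxtimes M_2,N)$ is \emph{covariant}, not contravariant; this does not affect the argument.
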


\proof
For the completeness of the paper, we give a proof of the proposition.
It is straightforward to show that we have linear maps 
\begin{align*}
&I_V\binom{M_3}{ M_1\ M_2}\simeq I_V\binom{S_H(M_3)}{M_1\ S_H(M_2)},\quad \mathcal{Y}(\cdot,z)\mapsto \mathcal{Y}(\Delta_{H}(z)\cdot,z),\\
&I_V\binom{M_3}{ M_1\ M_2}\simeq I_V\binom{S_H(M_3)}{S_H(M_1)\ M_2},\quad \mathcal{Y}(\cdot,z)\mapsto E^-(H ,z)\mathcal{Y}(\cdot,z)E^+(H,z)(-z)^{H_0},
\end{align*}
which are functorial with respect to the $M_i$'s. Since they have obvious inverses, they are indeed isomorphisms. Next, suppose $M_1$ and $M_2$ admit a fusion product $M_1\boxtimes M_2$, which represents the functor 
\begin{align}\label{reprentativity of IO}
    I_V\binom{\bullet}{ M_1\ M_2} \simeq \mathrm{Hom}_{V}(M_1\boxtimes M_2,\bullet).
\end{align}
Since we have functorial isomorphisms 
\begin{align*}
&I_V\binom{\bullet}{S_H (M_1)\ M_2}\simeq \mathrm{Hom}_{V}(M_1\boxtimes M_2,S_{-H}(\bullet)) \simeq \mathrm{Hom}_{V}(S_H(M_1\boxtimes M_2),\bullet),\\
&I_V\binom{\bullet}{ M_1\ S_H(M_2)}\simeq \mathrm{Hom}_{V}(M_1\boxtimes M_2,S_{-H}(\bullet)) \simeq \mathrm{Hom}_{V}(S_H(M_1\boxtimes M_2),\bullet),
\end{align*}
$S_H(M_1\boxtimes M_2)$ is the fusion product of $S_H(M_1)\boxtimes M_2$ and $M_1\boxtimes S_H(M_2)$.
\endproof

\subsection{Periods of spectral flow twists}
Let $V$ be a simple conformal vertex superalgebra which contains a Heisenberg vertex algebra $\pi\subset V$ generated by a non-degenerate Cartan subspace $\mathfrak{h}\subset V$ and set $\mathscr{C}=\Com(\pi,V)$.
Suppose that $\mathscr{C}\otimes \pi\hookrightarrow V$ is a conformal embedding and that $V$, as a $\pi$-module, decomposes into a direct sum of Fock modules
\begin{align}\label{decomposition into Focks}
V\simeq \bigoplus_{\lambda\in M}\mathscr{C}_\lambda \otimes \pi_\lambda.
\end{align}
Here $M\subset \mathfrak{h}^*$ is the subset consisting of the $\lambda$ satisfying $\mathscr{C}_\lambda\neq 0$, which forms a $\Z$-lattice as $V$ is simple. 
The dual lattice $M^*:=\{\lambda\in \h\mid (\lambda,M)\subset \Z\}$ is identified with the set of all the spectral flow twists $S_{h}$ ($h\in \h$) on $V$.
We identify $\h\simeq \h^*$ by the bilinear form $(h,h')=h_{(1)}h'$ for $h,h'\in \h^*$.

\begin{proposition}\label{period of spectral flow}
Let $\mathcal{P}\subset M^*$ denote the stabilizer, i.e. $S_h(V)\simeq V$ iff $h\in \mathcal{P}$.
Then for $\lambda-\mu\in \mathcal{P}$, $\mathscr{C}_\lambda\simeq \mathscr{C}_\mu$ holds as $\mathscr{C}$-modules. 
Moreover, \eqref{decomposition into Focks} induces an isomorphism of $\mathscr{C}\otimes V_{\mathcal{P}}$-modules
\begin{align}\label{decomposition into lattice}
V\simeq \bigoplus_{\lambda\in M/\mathcal{P}} \mathscr{C}_\lambda \otimes V_{\lambda+\mathcal{P}}
\end{align} 
and $V_{\mathcal{P}}$ is the maximum lattice vertex superalgebra inside $V$ obtained as a conformal extension of $\pi$.
\end{proposition}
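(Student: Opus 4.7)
The plan is to extract the content of the proposition from the uniqueness of the $\pi$-isotypic decomposition of $V$ together with the explicit action of spectral flow on Fock modules. A preliminary observation is that $\mathcal{P}$ is a sublattice of $M^*$: it contains $0$, and since $S_{h_1+h_2}\simeq S_{h_1}\circ S_{h_2}$ and $S_{-h}\simeq S_h^{-1}$, it is closed under $\pm$.

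For the first assertion I would recall that $S_h(\pi_\lambda)\simeq \pi_{\lambda+h}$ as $\pi$-modules for $h\in M^*$, so applying $S_h$ termwise to \eqref{decomposition into Focks} and relabelling yields
\begin{equation*}
S_h V \simeq \bigoplus_{\mu\in M}\mathscr{C}_{\mu-h}\otimes \pi_\mu
\end{equation*}
as $\mathscr{C}\otimes \pi$-modules. If $h\in \mathcal{P}$, the hypothesis $S_h V\simeq V$ holds as $V$-modules, hence as $\mathscr{C}\otimes \pi$-modules, and uniqueness of the $\pi$-isotypic decomposition (the multiplicity spaces being recovered canonically) forces $\mathscr{C}_{\mu-h}\simeq \mathscr{C}_\mu$ as $\mathscr{C}$-modules for every $\mu\in M$. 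Applied with $\mu=\lambda$ and $h=\lambda-\nu\in \mathcal{P}$ this gives $\mathscr{C}_\lambda\simeq \mathscr{C}_\nu$.

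To construct $V_{\mathcal{P}}$ inside $V$, for each $h\in \mathcal{P}$ I would pick a $\mathscr{C}$-linear isomorphism $\iota_h\colon \mathscr{C}\to\mathscr{C}_h$ with $\iota_0=\id$ and set $|h\rangle:=\iota_h(\mathbf{1})\otimes \hv{h}\in V$. Since $\iota_h$ commutes with the action of $L_{\mathscr{C}}$, the vector $\iota_h(\mathbf{1})$ has $L_{\mathscr{C}}$-weight $0$, so $|h\rangle$ has total conformal weight $(h,h)/2$ in $V$. Combining the gradings $\mathscr{C}_{h_1}\cdot \mathscr{C}_{h_2}\subset \mathscr{C}_{h_1+h_2}$ and $\pi_{h_1}\cdot \pi_{h_2}\subset \pi_{h_1+h_2}$ with the simplicity of $V$ shows that the OPE $Y(|h_1\rangle,z)|h_2\rangle$ has nonzero leading term proportional to $|h_1+h_2\rangle$; rescaling the $\iota_h$'s to absorb the resulting $2$-cocycle (with the appropriate super signs) identifies the subalgebra generated by the $|h\rangle$ and $\pi$ with the lattice vertex superalgebra $V_{\mathcal{P}}$. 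Regrouping \eqref{decomposition into Focks} along cosets of $\mathcal{P}$ in $M$ and using the first assertion to identify every $\mathscr{C}_{\lambda+h}$ ($h\in \mathcal{P}$) with $\mathscr{C}_\lambda$ then produces the desired isomorphism of $\mathscr{C}\otimes V_{\mathcal{P}}$-modules.

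For maximality, any lattice conformal extension $V_{\mathcal{P}'}$ of $\pi$ inside $V$ is generated over $\pi$ by vectors $w_h=u_h\otimes \hv{h}$ ($h\in \mathcal{P}'\subset M$) for which $\mathscr{C}\cdot u_h$ realizes a copy of $\mathscr{C}$ inside $\mathscr{C}_h$; viewing $V$ as a module over $V_{\mathcal{P}'}$ and using that spectral flow by $h\in \mathcal{P}'$ acts trivially on lattice $V_{\mathcal{P}'}$-modules yields $S_h V\simeq V$, i.e.\ $h\in \mathcal{P}$, so $\mathcal{P}'\subset \mathcal{P}$. The main obstacle is the construction step above: the representatives $\iota_h(\mathbf{1})$ must be adjusted so that their vertex operators really close into the desired lattice cocycle, and the parity conventions must be tracked carefully in the super setting.
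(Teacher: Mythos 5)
The first assertion is handled the same way as the paper: apply $S_h$ termwise to the $\pi$-isotypic decomposition and match multiplicity spaces. For the remaining two claims your route diverges and has genuine gaps.

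\textbf{The lattice construction.} You attempt to build $V_\mathcal{P}$ by hand: choose $\mathscr{C}$-linear isomorphisms $\iota_h\colon \mathscr{C}\to\mathscr{C}_h$, form $|h\rangle=\iota_h(\mathbf{1})\otimes\hv{h}$, and argue that the OPEs close into a lattice cocycle. You correctly flag the cocycle-normalization step as the ``main obstacle,'' but that step is exactly what is not resolved; in the super setting this is nontrivial. Moreover, the whole construction presupposes that $\iota_h$ is well-defined up to scalar and that $\mathscr{C}_h$ actually has a $\mathscr{C}$-singular vector of $L_\mathscr{C}$-weight $0$ --- this requires that $\mathscr{C}_h$ is \emph{simple} as a $\mathscr{C}$-module, which you never invoke. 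The paper cites precisely this simplicity (from an earlier result in the reference) and then avoids the cocycle construction entirely: $\Com(\mathscr{C},V)$ is a simple vertex superalgebra which, thanks to the \emph{conformal} embedding hypothesis, is a conformal extension of the Heisenberg $\pi$ bounded below; by a standard structural fact this forces it to be a lattice vertex superalgebra, and $V_\mathcal{P}$ is one of its subalgebras. Your constructive argument could in principle be made to work, but as written it is incomplete where the paper's structural argument is complete.

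\textbf{Maximality.} You claim that for a putative larger lattice $\mathcal{P}'$, ``spectral flow by $h\in\mathcal{P}'$ acts trivially on lattice $V_{\mathcal{P}'}$-modules yields $S_h V\simeq V$.'' This only produces an isomorphism $S_h V\simeq V$ as $V_{\mathcal{P}'}$-modules, whereas the definition of $\mathcal{P}$ asks for an isomorphism as $V$-modules; one does not follow from the other without an intertwiner compatible with the full vertex algebra structure. The paper instead decomposes $V$ over $\mathscr{C}\otimes V_Q$ for the candidate lattice $Q$, uses the resulting identifications $\mathscr{C}_{\lambda+\mu}\simeq\mathscr{C}_\mu$ for $\mu\in Q$, and exhibits the shift of highest weight vectors $\hv{\lambda}\mapsto\hv{\lambda-\mu}$ as an explicit $V$-module isomorphism $V\simeq S_\mu(V)$. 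That is the missing ingredient in your argument.
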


\proof
The first assertion is immediate from the definition of ${\mathcal{P}}$ and the isomorphism of $\mathscr{C}\otimes \pi$-modules
$$S_h(V)\simeq \bigoplus_{\lambda\in M}\mathscr{C}_\lambda \otimes \pi_{\lambda+h}.$$
Therefore, \eqref{decomposition into lattice} holds as $\mathscr{C}\otimes \pi$-modules. 
By \cite[Proposition 5.6]{CGN}, $\mathscr{C}_\lambda$ is simple as a $\mathscr{C}$-module and the vertex superalgebra
$\Com(\mathscr{C},V)\simeq \bigoplus_{\lambda}\pi_\lambda$, where $\lambda$ satifies $\mathscr{C}_\lambda\simeq\mathscr{C}$, is simple.
Then, since $\mathscr{C}\otimes \pi\hookrightarrow V$ is a conformal embedding, $\Com(\mathscr{C},V)$ is a conformal extension of $\pi$. Therefore, it is a lattice vertex superalgebra, and so is the subalgebra $V_{\mathcal{P}}\subset \Com(\mathscr{C},V)$. This completes the proof of \eqref{decomposition into lattice}. 
Finally, suppose that we have a lattice $\mathcal{P}\subset Q\subset M$ such that $V_\mathcal{P}\subset V_Q\subset V$ and thus $V\simeq \bigoplus_{\lambda\in M/Q}\mathscr{C}_\lambda \otimes V_{\lambda+Q}$. 
By using isomorphisms $\mathscr{C}_{\lambda+\mu}\simeq \mathscr{C}_{\mu}$ as $\mathscr{C}$-modules for $\lambda\in M$, $\mu\in Q$, it is straightforward to show that the shift of the highest weight vectors $\hv{\lambda}\mapsto \hv{\lambda-\mu}$ ($\lambda \in M$) induces $V\simeq S_{\mu}(V)$ as $V$-modules. Thus $Q\subset \mathcal{P}$, i.e. $Q= \mathcal{P}$. This completes the proof.
\endproof

This proposition immediately implies the following result established by Mason \cite{Ma} under strong rationality in the purely even case: 

\begin{corollary}\label{extension to lattice}
If $V$ is a $\Z_{\geq0}$-graded simple lisse conformal vertex superalgebra of CFT type, then the Cartan subspace $\pi\subset V$ is extended to a lattice vertex superalgebra inside $V$.
\end{corollary}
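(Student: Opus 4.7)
The plan is to apply Proposition \ref{period of spectral flow} to $V$ with its Heisenberg subalgebra $\pi$ generated by the Cartan subspace $\mathfrak{h}$. The proposition already produces the maximum lattice vertex superalgebra $V_{\mathcal{P}} \subset V$ obtained as a conformal extension of $\pi$; the task thus reduces to showing that the stabilizer $\mathcal{P}$ is a lattice of full rank $\dim\mathfrak{h}$, which is exactly the condition for $V_{\mathcal{P}}$ to be a lattice vertex superalgebra in the usual sense.

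First I would check that the weight set $M \subset \mathfrak{h}^{*}$ spans $\mathfrak{h}^{*}$: any $\xi \in \mathfrak{h}$ with $\lambda(\xi) = 0$ for all $\lambda \in M$ would have $\xi_{0}$ acting as zero on every weight space of $V$, hence on all of $V$, which together with the simplicity of $V$, the CFT-type condition, and the non-degeneracy of the form on $\mathfrak{h}$ forces $\xi = 0$. The main step is to show $|M/\mathcal{P}| < \infty$; this is where the lisse hypothesis enters. Using the decomposition of Proposition \ref{period of spectral flow}, the graded supercharacter factorizes as
\begin{align*}
\chi_V(q,z) = \str_V\bigl(q^{L_0 - c/24}\, z^{\mathfrak{h}}\bigr) = \frac{1}{\eta(q)^{\dim \mathfrak{h}}}\sum_{[\lambda]\in M/\mathcal{P}} \chi_{\mathscr{C}_\lambda}(q)\,\Theta_{\lambda + \mathcal{P}}(q,z),
\end{align*}
where $\Theta_{\lambda+\mathcal{P}}$ is the theta series of the coset. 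By the super-analogue of the Zhu--Miyamoto theorem for $C_{2}$-cofinite vertex superalgebras of CFT type, $\chi_V(q,z)$ lies in a finite-dimensional space of weak Jacobi super-forms, while the $\Theta_{\lambda+\mathcal{P}}$ for distinct cosets $[\lambda] \in M/\mathcal{P}$ are linearly independent in the $z$-variable. Since each $\mathscr{C}_\lambda \neq 0$, the sum must be finite, i.e.\ $|M/\mathcal{P}| < \infty$, and combined with the spanning condition this gives $\mathrm{rk}(\mathcal{P}) = \dim \mathfrak{h}$.

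The main obstacle is the invocation of modularity in the super and merely-lisse (non-rational) setting; one may alternatively adapt Mason's argument in \cite{Ma} directly, since the essential finiteness inputs there (finite-dimensional Zhu's algebra and discrete lower-bounded $L_{0}$-spectrum) continue to hold under the stated hypotheses. In either approach, the decisive new ingredient compared with \cite{Ma} is Proposition \ref{period of spectral flow}, which furnishes the maximal lattice subalgebra $V_{\mathcal{P}}$ in one stroke.
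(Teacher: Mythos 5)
Your framing is right: the corollary reduces, via Proposition~\ref{period of spectral flow}, to showing that the stabilizer $\mathcal{P}$ is a full-rank sublattice, and your remark that $M$ spans $\h^*$ is built into the nondegeneracy hypothesis on the Cartan subspace. The divergence from the paper is in the finiteness step, and there the proposal has a genuine gap that you flag yourself: a Zhu--Miyamoto modularity theorem, together with linear independence of theta series, would need to be available for $C_2$-cofinite vertex \emph{super}algebras that are lisse but not assumed rational. In that logarithmic super setting the relevant space of (pseudo-)characters and its transformation theory are delicate, and even granting them, the leap from ``$\chi_V$ lies in a finite-dimensional space'' to ``only finitely many cosets $[\lambda]\in M/\mathcal{P}$ contribute'' is not automatic when $\mathcal{P}$ is a priori allowed to be lower rank (the theta sums may fail to converge, the putative Jacobi index is degenerate, and linear independence of infinitely many theta series must be checked). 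The fallback ``adapt Mason's argument'' is asserted but not carried out, and Mason's original proof is in the nonsuper, strongly regular setting.

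The paper avoids all of this with a much more elementary argument that you missed. Since $V$ is simple, every spectral flow twist $S_h(V)$ with $h\in M^*$ is again a simple $V$-module, and two such twists are isomorphic precisely when $h-h'\in\mathcal{P}$; thus the fibers of $h\mapsto[S_h(V)]$ are exactly $\mathcal{P}$-cosets. The hypotheses ($\Z_{\geq0}$-graded, lisse, CFT type, simple) guarantee that $V$ has only finitely many inequivalent simple modules (finite-dimensionality of Zhu's algebra), so $|M^*/\mathcal{P}|<\infty$ and $\mathcal{P}$ has full rank. This sidesteps characters, theta functions, and modularity entirely, works verbatim in the super and possibly nonrational lisse setting, and then Proposition~\ref{period of spectral flow} concludes exactly as you say. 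I would replace the modularity paragraph with this module-counting argument; it is both more robust and shorter.
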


\proof
It follows from the assumption that $V$ has only finitely many inequivalent simple modules. Since $V$ is simple, the spectral flow twists $S_h(V)$ ($h\in M^*$) are all simple $V$-modules. 
Therefore, the stabilizer $\mathcal{P}\subset M^*$ satisfies $|M^*/\mathcal{P}|<\infty$ and thus $\mathcal{P}$ forms a $\Z$-sublattice of $M^*$ of the same rank. By Proposition~\ref{period of spectral flow}, $V_{\mathcal{P}}\subset V$ is a lattice vertex superalgebra.
\endproof

\subsection{Affine vertex algebras}\label{sec:affine_VA}
Let $\g$ be a simple Lie algebra with Dynkin diagram $X_n$.   
Fix a triangular decomposition of $\g=\mathfrak{n}_+\oplus \mathfrak{h}\oplus \mathfrak{n}_-$, $\Delta$ the set of roots, $\Pi$ (resp.\ $\check{\Pi}$) the set of simple (co)roots.
Denote by $\varpi_i$ (resp.\ $\check{\varpi}_i$) the $i$-th fundamental (co)weight, $\rP$ and $\rQ$ (resp.\ $\check{\rP}$ and $\check{\rQ}$) the (co)weight and (co)root lattices, and $(\cdot,\cdot)$ the normalized invariant bilinear form. 

The affine Lie algebra $\widehat{\g}=\g[t^{\pm1}]\oplus \C K$ also has a triangular decomposition $\widehat{\g}=\widehat{\mathfrak{n}}_+\oplus \widehat{\mathfrak{h}}\oplus \widehat{\mathfrak{n}}_-$ with the Cartan subalgebra $\widehat{\h}=\h\oplus \C K$.
By extending $\widehat{\h}$ to $\widetilde{\h}=\h\oplus \C K \oplus \C D$ with $D=t\partial_t$, we obtain the affine Kac--Moody algebra $\widetilde{\g}$ whose Dynkin diagram is denoted by $X_n^{(1)}$. 
We set $\widetilde{\h}^*=\h^*\oplus \C \Lambda_0\oplus \C \delta$ to be the dual of $\widetilde{\h}$, with nontrivial pairing $\Lambda_0(K)=\delta(D)=1$ extending the natural pairing $\h^*\times \h\rightarrow \C$. In particular, the dual $\widehat{\h}^*$ of $\widehat{\h}$ is identified with $\h^*\oplus \C \Lambda_0\subset \widetilde{\h}^*$. 
Let $\Lambda_i$ denote the $i$-th affine fundamental weight and $a_i$ (resp.\ $\check{a}_i$) the $i$-th Kac (co)label.

Let $V^k(\g)$ be the universal affine vertex algebra at level $k$ and $L_k(\g)$ its unique simple quotient. 
The set of simple $L_{k}(\g)$-modules for $k\in \Z_{\geq 1}$ is given by the integrable representations $L_k(\Lambda)$ of highest weight $\Lambda\in \rP^k_+$ where
$$\rP^k_+=\left\{\Lambda=\sum \lambda_i \Lambda_i \left| \sum \check{a}_i \lambda_i=k\right.\right\}\subset \bigoplus_{i=0}^n\Z_{\geq0}\Lambda_i,$$
see \cite[Theorem 3.1.3]{FZ}.
By fixing the level $k$, we can identify the highest weight $\Lambda$ with its restriction $\lambda$ to $\h\subset\widehat{\h}$, which gives
\begin{align}\label{parametrization of highest weights}
\rP^k_+=\left\{\lambda=\sum \lambda_i \varpi_i \left| (\lambda,\theta)\leq k\right.\right\}\subset \rP_+:=\bigoplus_{i=1}^n\Z_{\geq0}\varpi_i\subset \rP,
\end{align}
where $\theta\in \Delta$ is the highest root.
By \cite{Fu}, the set of simple currents modules $\mathrm{Pic}(L_{k}(\g))$ for the classical type (i.e.\ $X=ABCD$) are given by 
$$\mathrm{Pic}(L_k(\g))=\{L_k(k \Lambda_i)\mid 0\leq i\leq n,\ a_i=1\}.$$
These modules are indeed realized by the spectral flow twists $L_k(k \Lambda_i)\simeq S_{\check{\varpi}_i}L_k(\g)$ \cite[Proposition 3.5]{Li2}.
By Proposition \ref{spectral flow twist and IO},
\begin{align}\label{spectral flow twist of affine weight}
L_k(k \Lambda_i)\boxtimes L_k(\Lambda)\simeq S_{\check{\varpi}_i}(L_k(\Lambda)),\quad (\Lambda \in \rP^k_+).
\end{align}
The permutations, denoted $s_i$ ($1\leq i\leq n$), of $\rP^k_+$ given by \eqref{spectral flow twist of affine weight} form a group which we also denote by $\mathrm{Pic}(L_k(\g))$.
The following result is well-known (see, for instance \cite{dFMS}).

\begin{proposition}\label{simple current action}
For $X=ABCD$, $\mathrm{Pic}(L_k(X_n))$ is isomorphic to $\check{\rP}/\check{\rQ}$ and the action on $\rP^k_+$ is given in Table \ref{tab: affine simple currents}. 
\end{proposition}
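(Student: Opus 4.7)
The plan is to split the proposition into two independent claims: the group isomorphism $\mathrm{Pic}(L_k(\g)) \simeq \check{\rP}/\check{\rQ}$, and the explicit permutation action on $\rP^k_+$ recorded in the table.

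For the group isomorphism, I would combine the realization $L_k(k\Lambda_i) \simeq S_{\check{\varpi}_i} L_k(\g)$ recalled just above the statement with Proposition~\ref{period of spectral flow}. Applying the latter with $V = L_k(\g)$ and $\pi$ the Heisenberg subalgebra attached to $\mathfrak{h}$, the simple currents obtained as spectral flow twists $S_h L_k(\g)$ with $h \in \check{\rP}$ form $\check{\rP}/\mathcal{P}$, where $\mathcal{P} \subset \check{\rP}$ is the stabilizer of $L_k(\g)$. The inclusion $\check{\rQ} \subset \mathcal{P}$ would follow from the fact that, for each simple coroot $\check{\alpha}_i$, the spectral flow by $\check{\alpha}_i$ corresponds under the extended affine Weyl group $\widetilde{W} = W \ltimes \check{\rP}$ to an element of the affine Weyl group $W_{\mathrm{aff}} = W \ltimes \check{\rQ}$, which acts trivially on the vacuum weight $k\Lambda_0$ up to isomorphism of integrable modules, giving $S_{\check{\alpha}_i} L_k(\g) \simeq L_k(\g)$. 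The reverse inclusion $\mathcal{P} \subset \check{\rQ}$ reduces to checking that the $|\check{\rP}/\check{\rQ}|$ twists indexed by representatives of this quotient yield pairwise non-isomorphic simple currents, which matches the count of nodes $i$ with Kac label $a_i = 1$ in Fuchs' classification~\cite{Fu} for classical types.

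For the action on $\rP^k_+$, the key identity is $L_k(k\Lambda_i) \boxtimes L_k(\Lambda) \simeq S_{\check{\varpi}_i} L_k(\Lambda)$ from~\eqref{spectral flow twist of affine weight}. The action of $S_{\check{\varpi}_i}$ on a level-$k$ integrable weight is controlled by the translation $t_{\check{\varpi}_i} \in \widetilde{W}$. Because $a_i = 1$, the coset $t_{\check{\varpi}_i} W_{\mathrm{aff}} \in \widetilde{W}/W_{\mathrm{aff}} \simeq \check{\rP}/\check{\rQ}$ realizes a nontrivial automorphism $\sigma_i$ of the affine Dynkin diagram $X_n^{(1)}$. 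Hence the permutation $s_i$ of $\rP^k_+$ identifies with the action of $\sigma_i$ on the affine Dynkin labels $(\lambda_0, \ldots, \lambda_n)$, where $\lambda_0 = k - (\lambda, \theta)$. Translating this label-permutation back to the finite weight $\lambda = \sum_{i \geq 1} \lambda_i \varpi_i$ yields the table entries case-by-case.

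The most delicate step is the type $D_n$ case, where $\check{\rP}/\check{\rQ} \simeq \Z_4$ for $n$ odd and $\Z_2 \times \Z_2$ for $n$ even, so one must simultaneously track two commuting generators corresponding to different diagram automorphisms (a ``tail swap'' at the fork versus the ``end-to-end rotation''). Distinguishing these in the even-$n$ case and checking that the induced action on $\rP^k_+$ matches the tabulated one is the main bookkeeping obstacle; the types $A$, $B$, $C$ involve a single cyclic generator and follow routinely from the same analysis.
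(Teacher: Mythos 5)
Your proposal takes a genuinely different route from the paper. The paper's proof in Appendix~A.1 is a direct, case-by-case computation: under the twisted action \eqref{modified action}, the lowest conformal weight subspace $L(\lambda)$ contains a new affine highest weight vector $v_{\mathrm{new}}$ (a highest weight vector for the polarization $\{\alpha_j,\,-\theta \mid j\neq i\}$), and the paper computes $v_{\mathrm{new}}$ explicitly for each basic representation $L(\varpi_j)$ using the Fulton--Harris realizations (exterior powers of the standard module and the spin representations), then reads off the new affine weight from the twisted $H_j$-eigenvalues. You instead appeal to the correspondence between translations in the extended affine Weyl group and automorphisms of the affine Dynkin diagram, arguing that $S_{\check{\varpi}_i}$ acts on level-$k$ integrable weights through the diagram automorphism $\sigma_i$ representing the coset $t_{\check{\varpi}_i}W_{\mathrm{aff}}\in\widetilde{W}/W_{\mathrm{aff}}$, then translating the Dynkin label permutation back to the finite weight. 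Your route is more conceptual and less computational; the paper's is more self-contained and elementary but requires the tedious bookkeeping with explicit weight vectors.

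One caveat worth flagging: the step ``Hence the permutation $s_i$ of $\rP^k_+$ identifies with the action of $\sigma_i$ on the affine Dynkin labels'' is asserted rather than derived, and this is precisely the content of the proposition. To close it you would need one of two things: either identify the spectral flow twist $S_{\check{\mu}}$ with the $\widehat{\g}$-module twist by the Lie algebra automorphism associated to $t_{\check{\mu}}$ and then show that this automorphism twist sends $L_k(\Lambda)$ to $L_k(\Lambda')$ with $\Lambda' = w\circ t_{-\check{\mu}}(\Lambda)$ for the unique $w\in W_{\mathrm{aff}}$ returning $t_{-\check{\mu}}(\Lambda)$ to the dominant alcove; or alternatively use the transformation law \eqref{effect of spectral flow twists} of characters together with $\widehat{W}$-(anti)invariance of the Kac--Weyl numerator to match characters. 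Either way this is roughly equivalent to the work the paper does via explicit highest weight vectors, so your sketch does not save effort so much as trade one computation for another, and as written leaves that core identification as a gap. There is also a mild circularity concern in using the Fuchs count to establish $\mathcal{P}\subset\check{\rQ}$ for the group isomorphism, since the Fuchs classification already encodes most of what the proposition records.
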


\renewcommand{\arraystretch}{1.2}
\begin{table}[h]
    \caption{Simple currents of $L_k(\g)$}
    \label{tab: affine simple currents}
    \centering
    \renewcommand{\arraystretch}{1.5}
\begin{tabular}{cccc}
\hline
$X_n$ & $\mathrm{Pic}(L_k(\g))$ & Group structure & $s_i$-action \\ \hline 
$A_n$ & $0\leq i\leq n$ & $\Z_{n+1}$ &$\begin{array}{c}\Lambda_j\overset{s_i}{\mapsto} \Lambda_{j+i}\\ (\text{mod}\  n+1)\end{array}$ \\ \hline 
$B_n$ & $i=0,1$ &  $\Z_2$ &  $ \Lambda_0\overset{s_1}{\leftrightarrow} \Lambda_1$ \\ \hline 
$C_n$ & $i=0,n $ & $\Z_2$ & $\Lambda_i \overset{s_n}{\leftrightarrow} \Lambda_{n-i}$  \\  \hline 
$\begin{array}{c}D_n \\ (n\colon \text{even}) \end{array}$& $i=0,1,n-1,n$ & $\Z_2\times \Z_2$  & $\begin{array}{c} (\Lambda_0,\Lambda_n) \overset{s_1}{\leftrightarrow} (\Lambda_{1}, \Lambda_{n-1}) \\ \Lambda_i \overset{s_n}{\leftrightarrow} \Lambda_{n-i}\\ s_{n-1}=s_1s_{n} \end{array} $ \\  \hline 
$\begin{array}{c}D_n \\ (n\colon \text{odd}) \end{array}$ & $i=0,1,n-1,n$ & $\Z_4$&  $\begin{array}{c}(\Lambda_0,\Lambda_1, \Lambda_{n-1}, \Lambda_n)\\ \overset{s_{n-1}}{\leftrightarrow} (\Lambda_n,\Lambda_{n-1}, \Lambda_{0}, \Lambda_1)   \\ \Lambda_i \overset{s_{n-1}}{\leftrightarrow} \Lambda_{n-i}\  (\text{o.w.}) \\ s_{1}=s_{n-1}^2,\ s_n=s_{n-1}^3 \end{array} $ \\  \hline
\end{tabular}
\renewcommand{\arraystretch}{1}
\end{table}

\subsection{Twisted BRST reduction}\label{sec:BRST}
We consider the compatibility of spectral flow twists and the BRST reduction for $\W$-algebras $\WA$ with $\Z$-grading.
Let us recall shortly the definition of $\WA$, that is the BRST reduction of $V^k(\g)$ indexed by nilpotent elements $f$ in $\g$ and an (even) good grading $\Gamma\colon \g=\bigoplus_{j\in \Z}\g_j$ for $f$. The grading $\Gamma$ gives a grading for the set of roots $\Delta=\sqcup_{j\in \Z}\Delta_j$ through the grading of their corresponding root vectors $e_{\alpha}\in \g$ ($\alpha\in \Delta$).
Then, given a $\VA$-module $M$, we associate the BRST complex \cite{KRW03}
$$C_f^\bullet(M)=M\otimes \bigwedge{}^{\frac{\infty}{2}+\bullet}(\g_{>0}).$$
Here $\bigwedge{}^{\frac{\infty}{2}+\bullet}(\g_{>0})$ denotes the charged fermion vertex superalgebra generated by the odd fields $\varphi_{\alpha}(z)$ and $\varphi_{\alpha}^*(z)$ ($\alpha\in \Delta_{>0}$) satisfying the OPEs
$$\varphi_\alpha(z)\varphi_\beta^*(w)\sim \delta_{\alpha,\beta}/(z-w),\quad \varphi_\alpha^*(z)\varphi_\beta^*(w)\sim \varphi_\alpha(z)\varphi_\beta(w)\sim 0.$$
The cohomological degree on $C_f^\bullet(M)$ is given by the $\Z$-grading on $\bigwedge{}^{\frac{\infty}{2}+\bullet}(\g_{>0})$ defined by 
$\mathrm{deg}(\varphi_\alpha^*(z))=1$ and $\mathrm{deg}(\varphi_\alpha(z))=-1$.
The differential $d$ is given by
\begin{align*}
&d=d_{\mathrm{st}}+d_\chi,\\
& d_{\mathrm{st}}=\int \dz \sum_{\alpha\in \Delta_{>0}}e_\alpha(z) \varphi_\alpha^*(z)-\half\sum_{\alpha,\beta,\gamma\in \Delta_{>0}}c_{\alpha,\beta}^\gamma :\varphi_\alpha(z)^*\varphi_\beta(z)^*\varphi_\gamma(z) :,\\
&d_{\chi}=\int \dz \sum_{\alpha\in \Delta_{>0}}\chi(e_\alpha)\varphi_\alpha^*(z),
\end{align*}
where $\chi(\cdot)=(f,\cdot)$.
Let $H_f^\bullet(M)$ denote the cohomology $H(C_f^\bullet(M))$.
In particular, the $\W$-algebra $\WA$ is defined as $\WA=H^0_f(\VA)$ and $H_f^\bullet(M)$ is naturally a $\WA$-module. 
By \cite[Theorem 2.4]{KRW03}, $\WA$ has a Heisenberg vertex subalgebra generated by the cohomology class of $\h^f$: 
$$\check{\mu}(z)+\sum_{\alpha\in \Delta_{>0}}(\check{\mu}, \alpha):\varphi_\alpha(z)\varphi_\alpha^*(z):,\quad (\check{\mu}\in \h^f),$$
which we also denote $\check{\mu}(z)$ by abuse of notation. Then we can consider spectral flow twists $S_{\check{\mu}}$ ($\check{\mu}\in \mathfrak{h}^f\cap \check{\rP}$) of $\WA$-modules.

Since $[d,\check{\mu}(z)]=0$, the spectral flow twist $S_{\check{\mu}}$ for the complex itself is well-defined
$$Y^{\check{\mu}}(\cdot,z)\colon C_f^\bullet(\VA)\otimes C_f^\bullet(M)\mapsto C_f^\bullet(M)(\!(z)\!),$$
and $d$ is twisted to a new differential $d_{\check{\mu}}$ given by
$$d_{\check{\mu}}=d_{st}+d_{\chi,\check{\mu}},\quad d_{\chi,\check{\mu}}=\int \dz \sum_{\alpha\in \Delta_{>0}}\chi(e_\alpha)\varphi_\alpha^*(z)z^{(\check{\mu},\alpha)}.$$
The corresponding cohomology, denoted by $H_{f,\check{\mu}}^\bullet(M)$, is called the twisted BRST reduction of $M$ \cite{ACF,AF} and is a module over $\W^k(\g,f)$. 
Therefore, we have obtained the first part of the following:

\begin{theorem}\label{compatibility of spectral flow twists}
For $\check{\mu}\in \h^f\cap \check{\rP}$, we have isomorphisms of $\W^k(\g,f)$-modules 
\begin{align*}
S_{\check{\mu}} H_f^\bullet(M)\simeq  H_{f,\check{\mu}}^\bullet(M)\simeq H_f^{\bullet -2 (\check{\mu},\rho_{>0})}(S_{\check{\mu}}M)
\end{align*}
where we set $\rho_{>0}:=\half\sum_{\alpha\in \Delta_{>0}}\alpha$.
\end{theorem}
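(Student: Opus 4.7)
The plan is to prove both isomorphisms by explicit manipulations of the BRST complex, with the spectral flow acting directly on the charged fermion sector.

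First, I would lift the Heisenberg generator $\check\mu$ of $\WA$ to the cocycle
$$\widetilde{\check\mu}(z) := \check\mu(z) + \sum_{\alpha\in\Delta_{>0}}(\check\mu,\alpha):\varphi_\alpha(z)\varphi_\alpha^*(z):$$
in $C_f^\bullet(V^k(\g))$. Because $\widetilde{\check\mu}$ is $d$-closed (which is precisely the statement that $\check\mu$ descends to a class in $\WA$), the Li $\Delta$-operator $\Delta_{\widetilde{\check\mu}}(z)$ commutes with $d_{\mathrm{st}}$. A direct OPE computation shows that conjugation by $\Delta_{\widetilde{\check\mu}}(z)$ multiplies $\varphi_\alpha^*(z)$ by $z^{(\check\mu,\alpha)}$, hence transforms $d_\chi$ into $d_{\chi,\check\mu}$. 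Thus the spectral flow $S_{\check\mu}$ on $H_f^\bullet(M)$ amounts to replacing the differential $d$ on the complex by $d_{\check\mu}$, yielding the first isomorphism $S_{\check\mu}H_f^\bullet(M)\simeq H_{f,\check\mu}^\bullet(M)$.

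Next, for the second isomorphism, I would split $\widetilde{\check\mu}(z) = \check\mu(z) + J_{\check\mu}^{\mathrm{gh}}(z)$ into its $\g$-part $\check\mu(z)$ (acting on $M$) and its ghost part $J_{\check\mu}^{\mathrm{gh}}(z) := \sum_{\alpha\in\Delta_{>0}}(\check\mu,\alpha):\varphi_\alpha\varphi_\alpha^*:(z)$ (acting on the fermion factor), and then apply a fermion-only spectral flow to absorb the $z^{(\check\mu,\alpha)}$ factors in $d_{\chi,\check\mu}$. Concretely, setting $\widetilde\varphi_\alpha^*(z) := z^{(\check\mu,\alpha)}\varphi_\alpha^*(z)$ and $\widetilde\varphi_\alpha(z) := z^{-(\check\mu,\alpha)}\varphi_\alpha(z)$ rewrites $d_{\chi,\check\mu}$ as the standard $d_\chi$ in the tilded variables, while $d_{\mathrm{st}}$ keeps its form but with $e_\alpha$ now acting on the spectral flow twist $S_{\check\mu}M$ (using $\check\mu_0 e_\alpha=(\check\mu,\alpha)e_\alpha$). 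Bosonizing $\bigwedge^{\frac{\infty}{2}+\bullet}(\g_{>0})$, this substitution is realized as a lattice translation whose new vacuum carries total ghost charge
$$\sum_{\alpha\in\Delta_{>0}}(\check\mu,\alpha) = (\check\mu,2\rho_{>0}) = 2(\check\mu,\rho_{>0}),$$
which is the source of the grading shift. This produces an isomorphism of complexes $(C_f^\bullet(M), d_{\check\mu}) \simeq (C_f^{\bullet-2(\check\mu,\rho_{>0})}(S_{\check\mu}M), d)$, and combining with the first isomorphism gives the second.

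The main obstacle is identifying the grading shift $2(\check\mu,\rho_{>0})$ precisely, since it originates from the anomaly of the fermion number under the ghost spectral flow. The cleanest way to handle this is through the bosonization of the charged fermion algebra, which turns spectral flow into a lattice translation and makes the ghost charge of the new vacuum a direct combinatorial quantity. One also has to verify that the $\WA$-module structures on both sides match, but this is automatic from the first isomorphism together with the explicit realization of the Heisenberg generator as $\widetilde{\check\mu}$ on the complex.
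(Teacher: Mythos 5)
Your approach to both isomorphisms follows the same overall strategy as the paper --- lifting $\check{\mu}$ to $\widetilde{\check{\mu}}(z)=\check{\mu}(z)+\sum_{\alpha}(\check{\mu},\alpha){:}\varphi_\alpha\varphi_\alpha^*{:}$ on the complex, then using the charged-fermion spectral flow and boson--fermion correspondence to produce the cohomological shift $-2(\check{\mu},\rho_{>0})$. The first isomorphism and the source of the grading shift are identified correctly.

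However, for the second isomorphism there is a concrete gap: you never invoke the hypothesis $\check{\mu}\in\h^f$, and this is exactly where the paper's proof does real work. The paper's key observation is that for every $\alpha\in\Delta_{>0}$ with $\chi(e_\alpha)\neq 0$, one has
\[
(\check{\mu},\alpha)=\frac{(f\mid[\check{\mu},e_\alpha])}{(f\mid e_\alpha)}=\frac{([f,\check{\mu}]\mid e_\alpha)}{(f\mid e_\alpha)}=0 ,
\]
because $\check{\mu}$ centralizes $f$. Consequently $d_{\chi,\check{\mu}}=d_\chi$, so the twisted differential $d_{\check{\mu}}$ literally coincides with $d$, and the graded isomorphism $S_{\check{\mu}}C_f^\bullet(M)\simeq C_f^{\bullet-2(\check{\mu},\rho_{>0})}(S_{\check{\mu}}M)$ immediately passes to cohomology. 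Your substitution $\widetilde{\varphi}_\alpha^*(z)=z^{(\check{\mu},\alpha)}\varphi_\alpha^*(z)$ is intended to absorb the $z^{(\check{\mu},\alpha)}$ factors uniformly, but it does not interact cleanly with the rest of $d_{\mathrm{st}}$: in the term $e_\alpha(z)\varphi_\alpha^*(z)$, your substitution inserts $z^{-(\check{\mu},\alpha)}$ on the fermion, while the action of $e_\alpha$ on $S_{\check{\mu}}M$ inserts $z^{+(\check{\mu},\alpha)}$; these do not cancel for $\alpha$ with $(\check{\mu},\alpha)\neq 0$, so the claim that ``$d_{\mathrm{st}}$ keeps its form with $e_\alpha$ acting on $S_{\check{\mu}}M$'' does not hold term by term as stated. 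You also do not address the cubic term of $d_{\mathrm{st}}$, which only behaves well under your substitution because $c_{\alpha\beta}^\gamma\neq 0$ forces $\gamma=\alpha+\beta$. In short, your argument requires careful sign bookkeeping across all three pieces of $d$, while the paper's observation renders $d_{\chi,\check{\mu}}=d_\chi$ outright and makes the matching automatic; the absence of that observation --- and of any use of $\check{\mu}\in\h^f$ --- is the gap to fill.
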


\proof  
It remains to show the second isomorphism. Note that $ \bigwedge{}^{\frac{\infty}{2}+\bullet}(\g_{>0})$ is a tensor product of the bc-system $\bigwedge^{\bullet}$ generated by $\varphi_\alpha^*(z)$ and $\varphi_\alpha(z)$ for each $\alpha\in\Delta_{>0}$. 
By the boson--fermion correspondence, $\bigwedge^{\bullet}\simeq V_\Z$ as vertex superalgebra and $\bigwedge^{\bullet}$ has only one simple module  $\bigwedge^{\bullet}$, up to parity conjugation.
The Heisenberg field $x(z)=\NO{\varphi(z)\varphi^*(z)}$ satisfies $x(z)x(w)\sim 1/(z-w)^2$ and 
\begin{align*}
    x(z)\varphi(w)\sim \varphi(w)/(z-w),\quad  x(z)\varphi^*(w)\sim -\varphi^*(w)/(z-w).
\end{align*}
Hence, the spectral flow twists $S_{\theta x} \bigwedge^\bullet$ are all isomorphic to $\bigwedge^\bullet$ as $\bigwedge^\bullet$-modules but the parity and the $\Z$-grading are modified
\begin{align*}
\bigwedge{}^{\bullet+\theta} \simeq  S_{\theta x} \bigwedge{}^\bullet, \quad \mathbf{1}\mapsto 
\begin{cases}
\varphi_{0}^*\cdots \varphi_{-(\theta-1)}^* & (\theta>0),\\
\varphi_{-1}\cdots \varphi_{\theta} & (\theta<0).
\end{cases}
\end{align*}
This implies an isomorphism of graded $V^k(\g)\otimes  \bigwedge^\bullet(\g_{>0})$-modules
\begin{align}\label{graded isom}
S_{\check{\mu}}C_f^\bullet(M)\simeq C_f^{\bullet -2(\check{\mu},\rho_{>0})}(S_{\check{\mu}}M).
\end{align}
For $\alpha\in \Delta_{>0}$ such that $\chi(e_\alpha)\neq 0$, we have 
$$(\check{\mu},\alpha)= \frac{(f|[\check{\mu},e_\alpha])}{(f|e_\alpha)}= \frac{([f,\check{\mu}]|e_\alpha)}{(f|e_\alpha)}=0$$
since $\check{\mu}\in \h^f$.
Then it follows that $d_{\chi,\check{\mu}}=d_{\chi}$ and thus $d_{\check{\mu}}=d$. Therefore, \eqref{graded isom} induces the isomorphism $H_{f,\check{\mu}}^\bullet(M)\simeq H_f^{\bullet -2 (\check{\mu},\rho_{>0})}(S_{\check{\mu}}M)$ as $\W^k(\g,f)$-modules.
\endproof

\begin{remark}
The cohomological shift $-2(\check{\mu}|\rho_{>0})$ is equal to $\ell^{\frac{\infty}{2}}_\g(t_{\check{\mu}})-\ell^{\frac{\infty}{2}}_{\g_0}(t_{\check{\mu}})$ in terms of the semi-infinite lengths of extended affine Weyl group \cite{FF90}.
\end{remark}

\section{Block-wise equivalence of Kazhdan--Lusztig categories}\label{Sec: equivalnce of categories}

\subsection{Convolution}
Following \cite{CLNS}, we rephrase Theorem~\ref{Coset theorem} into the language of \emph{relative semi-infinite cohomology} \cite{Feigin,FGZ}. 
This approach is motivated by the module decompositions \eqref{eq: decomposition} of $\subW$ and $\sprW$. By Corollary~\ref{isom for the multiplicities}, these decompositions are the same over the common coset subalgebra $\mathscr{C}^k_{D^+}\simeq \mathscr{C}^\ell_{D^-}$, but not over the rank one Heisenberg vertex algebras as the generating Heisenberg fields $J^+(z)$ and $J^-(z)$ have different norms.
To exchange the Fock modules $\pi^{J^+}_m$ with $\pi^{J^-}_m$ in \eqref{eq: decomposition}, we use the following property of the relative semi-infinite cohomology functor \cite{Feigin,FGZ}
\begin{align}\label{rel coh functor}
\mathrm{H}_{\mathrm{rel}}^{\frac{\infty}{2}+m}(\widehat{\mathfrak{gl}}_1,\mathfrak{gl}_1; \pi^{A_*}_p\otimes \pi^{A}_q)\simeq \delta_{m,0}\delta_{p+q,0}\C[\hv{p} \otimes \hv{q}],
\end{align}
(see \cite[Proposition 4]{CGNS} for a short proof). Here $A(z)$ is a Heisenberg field whose norm is non-zero and $A_*(z)=\ssqrt{-1}A(z)$. 
Introduce the following modules over Heisenberg vertex algebras: 
\begin{align*}
\bigoplus_{m\in \Z}\pi^{J^+_*}_{-m}\otimes \pi^{J^-}_m,\quad \bigoplus_{m\in \Z}\pi^{J^-_*}_{-m}\otimes \pi^{J^+}_m.
\end{align*} 
We find that they actually have a structure of vertex superalgebras given by
\begin{align}\label{decomposition of kernels}
\begin{array}{cccccc}
V_{\Z}\otimes \pi^y &\xrightarrow{\simeq}& \displaystyle{\bigoplus_{m\in \Z}}\pi^{J^+_*}_{-m}\otimes \pi^{J^-}_m,& V_{\ssqrt{-1}\Z}\otimes \pi^x &\xrightarrow{\simeq}& \displaystyle{\bigoplus_{m\in \Z}}\pi^{J^-_*}_{-m}\otimes \pi^{J^+}_m\\
x\otimes 1&\mapsto &\tfrac{1}{\varepsilon^2-1}(J^+_*+\varepsilon^2 J^-)&y\otimes \mathbf{1}&\mapsto &\tfrac{1}{\varepsilon^2-1}(\varepsilon^2J^-_*+J^+) \\
1\otimes y&\mapsto &\frac{\varepsilon}{\varepsilon^2-1}(J^+_*+J^-)& \mathbf{1}\otimes x&\mapsto&\frac{\varepsilon}{\varepsilon^2-1}(J^-_*+J^+) \\
\hv{mx}\otimes \mathbf{1}&\mapsto &\hv{-m} \otimes \hv{m}&\hv{my}\otimes \mathbf{1}&\mapsto &\hv{-m} \otimes \hv{m},\\
\end{array}
\end{align} 
where
$$\varepsilon:=(k+h^\vee_+)^{1/2}=(2(\ell+h^\vee_-))^{-1/2}.$$
We set 
\begin{align}\label{kernels}
K_{D^+}:=V_\Z\otimes \pi^y,\quad K_{D^-}:=V_{\ssqrt{-1}\Z}\otimes \pi^x.
\end{align}
We briefly write $\mathrm{H}_{\mathrm{rel}}^{m}(\mathfrak{gl}_1,\bullet)$ for $\mathrm{H}_{\mathrm{rel}}^{\frac{\infty}{2}+m}(\widehat{\mathfrak{gl}}_1,\mathfrak{gl}_1;\bullet)$.
The following theorem is a refinement of \cite[Theorem 4.4]{CLNS} stated for irrational levels.

\begin{proposition}\label{reconstruction}
For $k,\ell\in \C$ satisfying \eqref{level condition for FS}, there exist isomorphisms of vertex superalgebras
\begin{align*}
&\Wsub\simeq \mathrm{H}_{\mathrm{rel}}^{0}(\mathfrak{gl}_1, \Wsuper\otimes K_{D^-} ),\\
&\Wsuper\simeq \mathrm{H}_{\mathrm{rel}}^{0}(\mathfrak{gl}_1, \Wsub\otimes K_{D^+} ).
\end{align*}
\end{proposition}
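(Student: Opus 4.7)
The two isomorphisms are parallel; I focus on the first, the second following by the symmetric argument. The plan is to reduce the computation to the Künneth-type vanishing \eqref{rel coh functor} via the decompositions of both tensor factors, and then to identify the resulting vertex superalgebra structure using the Kazama--Suzuki coset construction of Theorem \ref{Coset theorem}.

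Substituting the decomposition \eqref{eq: decomposition} for $\Wsuper$ as a $\pi^{J^-}$-module together with the vertex superalgebra isomorphism \eqref{decomposition of kernels} for $K_{D^-}$ yields
$$\Wsuper \otimes K_{D^-} \simeq \bigoplus_{m,m'\in \Z} \mathscr{C}^\ell_{D^-,m'} \otimes \pi^{J^-}_{m'} \otimes \pi^{J^-_*}_{-m} \otimes \pi^{J^+}_m.$$
The relative semi-infinite cohomology is taken with respect to the Heisenberg $\mathfrak{gl}_1$-pair $(J^-, J^-_*)$, which commutes with the remaining factors $\mathscr{C}^\ell_{D^-,m'} \otimes \pi^{J^+}_m$. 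A term-by-term application of \eqref{rel coh functor} collapses the double sum to the diagonal $m=m'$, each surviving term contributing a one-dimensional cohomology in degree zero, so
$$\mathrm{H}_{\mathrm{rel}}^{0}(\mathfrak{gl}_1, \Wsuper \otimes K_{D^-}) \simeq \bigoplus_{m\in\Z} \mathscr{C}^\ell_{D^-,m} \otimes \pi^{J^+}_m.$$
By Corollary \ref{isom for the multiplicities}, the multiplicity spaces match $\mathscr{C}^k_{D^+,m}$, and comparison with \eqref{eq: decomposition} for $\Wsub$ produces the asserted isomorphism as $\mathscr{C}^k_{D^+}\otimes \pi^{J^+}$-modules.

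To upgrade this vector-space identification to one of vertex superalgebras, I would construct an explicit morphism using the coset embedding \eqref{FST}: tensoring $\Wsub \hookrightarrow \Wsuper \otimes V_{\ssqrt{-1}\Z}$ with $\pi^x$ yields $\Wsub \otimes \pi^x \hookrightarrow \Wsuper \otimes K_{D^-}$, with the additional Heisenberg factor $\pi^x$ arranged inside the decomposition \eqref{decomposition of kernels} precisely so that the resulting vectors represent cocycles for the relative BRST differential. The cohomology inherits its vertex superalgebra structure from the fact that this differential is an inner derivation (its zero-mode realization), and the induced map $\Wsub \to \mathrm{H}^0_{\mathrm{rel}}(\mathfrak{gl}_1,\Wsuper\otimes K_{D^-})$ is then forced to be an isomorphism by the character matching of the previous step.

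The main obstacle is to execute the last step at all non-degenerate levels. For irrational $k$ this is essentially \cite[Theorem~4.4]{CLNS}; the refinement here to all $(k,\ell)$ satisfying \eqref{level condition for FS} should be handled by a continuity argument analogous to the one used in the proof of Theorem \ref{inverse HR}, since both sides form algebraic families of graded vector superspaces parametrized by $k$, and the vertex superalgebra map depends algebraically on the parameter, so that the isomorphism at generic points propagates by upper-semicontinuity.
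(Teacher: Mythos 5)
Your first paragraph is correct and matches the paper's intended route: the paper's own proof is a one-line citation of \cite[Proposition 5]{CGNS}, whose type-$A$ argument is precisely the decomposition-plus-vanishing strategy you describe. The second paragraph correctly identifies what remains but leaves a key verification unstated: to see that the coset embedding lands in cocycles, use the relations implicit in \eqref{decomposition of kernels}, which give $J^-_*=y-\tfrac{1}{\varepsilon}x$ and hence $J^-+J^-_*=J^-_\Delta-\tfrac{1}{\varepsilon}x$. Then for $a\in\Wsub\simeq\Com(\pi^{J^-_\Delta},\Wsuper\otimes V_{\ssqrt{-1}\Z})$, the element $a\otimes\mathbf{1}\in\Wsuper\otimes K_{D^-}$ (vacuum in the $\pi^x$-factor) is annihilated by all non-negative modes of $J^-+J^-_*$ and is thus a degree-zero relative cocycle; the induced map $a\mapsto[a\otimes\mathbf{1}]$ is manifestly a vertex superalgebra homomorphism, and your module decomposition shows it is bijective. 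This is the computation your phrase ``arranged \dots\ so that the resulting vectors represent cocycles'' glosses over.

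The continuity argument at the end is unnecessary and reflects a confusion of the citation chain. Theorem~\ref{Coset theorem}, the input to the whole argument, is quoted from \cite[Theorem 1.3]{CGN} and holds at every pair $(k,\ell)$ satisfying \eqref{level condition for FS}, not merely generically. The decomposition and cocycle computations therefore apply uniformly at all non-degenerate levels; no deformation-to-generic-$k$ is required. Semicontinuity is needed in the proof of Theorem~\ref{inverse HR} because there the screening-operator characterization of $\Wsub$ is a priori only generic, but no analogous generic-only input appears here. The restriction to irrational levels in \cite[Theorem~4.4]{CLNS} reflects the hypotheses of that reference, not an intrinsic obstacle that this proposition must overcome by continuity.
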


\proof
The assertion is obtained from Theorem \ref{Coset theorem} by word-by-word translation of the proof of \cite[Proposition 5]{CGNS} for the similar statement in type $A$.
\endproof

\subsection{Equivalence of categories} \label{subsection_Equivalence of categories}
We use Proposition \ref{reconstruction} to relate the modules of $\W$-superalgebras, following the argument \cite{CGNS} in type $A$. 
We introduce the full subcategories
$$\KL_{D^+}^k,\quad \KL_{D^-}^\ell$$
of the categories of weight (super)modules over $\subW$ and $\sprW$, which by definition decompose into direct sums of Fock modules over $\pi^{J^+}$ and $\pi^{J^-}$. 
Since the set of eigenvalues of $J^+_0$ on $\subW$ (resp.\ of $J^-_0$ on $\sprW$) is $\Z$, they decompose into 
$$\KL_{D^+}^k=\bigoplus_{[\lambda\varepsilon]\in \C/\Z}\KL_{D^+}^{k,[\lambda\varepsilon]},\quad \KL_{D^-}^\ell=\bigoplus_{[\lambda\varepsilon]\in \C/\Z}\KL_{D^-}^{\ell,[\lambda\varepsilon]}$$
where $\KL_{D^+}^{k,[\lambda\varepsilon]}$ (resp.\ $\KL_{D^-}^{\ell,[\lambda\varepsilon]}$) is the full subcategory consisting of modules whose $J^+_0$-eigenvalues (resp.\ $J^-_0$-eigenvalues) modulo $\Z$ lie in $[\lambda\varepsilon]$.
By using the modules 
$$K_{D^+}^\lambda:=V_\Z\otimes \pi^y_{\lambda y},\quad K_{D^-}^\lambda:=V_{\ssqrt{-1}\Z}\otimes \pi^x_{\lambda x},\quad (\lambda\in \C),$$
over \eqref{kernels}, we assign
\begin{align}\label{assignment}
& M\mapsto \mathrm{H}_{\mathrm{rel}}^{0}(\mathfrak{gl}_1, M \otimes K_{D^-}^\lambda ),\quad N\mapsto \mathrm{H}_{\mathrm{rel}}^{0}(\mathfrak{gl}_1, N \otimes K_{D^+}^\lambda ),
\end{align}
to a $\sprW$-module $M$ (resp.\ $\subW$-module $N$). 
Proposition \ref{reconstruction} implies that the cohomology is naturally a module over $\subW$ (resp.\ $\sprW$).
Let $\mathrm{H}_\lambda^\pm$ denote the functors obtained by the assignments \eqref{assignment}
$$\mathrm{H}_\lambda^+\colon \KL_{D^+}^k\ \rightleftarrows \ \KL_{D^-}^\ell \colon \mathrm{H}_\lambda^-.$$
They are exact thanks to \eqref{rel coh functor}. 
It follows from \eqref{decomposition of kernels} that they can be restricted block-wise to 
\begin{align}
\mathrm{H}_\lambda^+\colon \KL_{D^+}^{k,[\lambda\varepsilon]}\ \rightleftarrows \ \KL_{D^-}^{\ell,[\lambda/\varepsilon]} \colon \mathrm{H}_\lambda^-.
\end{align}
Observe that the domain of $\mathrm{H}_\lambda^+$ (resp.\ $\mathrm{H}_\lambda^-$) does not change under the shift $\lambda \mapsto \lambda_\theta^+:=\lambda-\theta \varepsilon^{-1}$ (resp.\ $\lambda \mapsto \lambda_\theta^-:=\lambda-\theta \varepsilon$), but does the target. 

\begin{theorem}\label{categorical equivalence}
For $\theta\in \Z$, the functors 
		\begin{align*}
			\mathrm{H}_{\lambda_\theta^+}^+\colon \KL_{D^+}^{k,[\lambda\varepsilon]}\ \rightleftarrows \ \KL_{D^-}^{\ell,[\lambda_\theta^+/\varepsilon]} \colon \mathrm{H}_{\lambda_\theta^+}^-
		\end{align*}
		are quasi-inverse to each other and give equivalences of categories.
\end{theorem}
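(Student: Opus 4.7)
The plan is to verify that the two compositions $\mathrm{H}_{\lambda_\theta^+}^-\circ\mathrm{H}_{\lambda_\theta^+}^+$ and $\mathrm{H}_{\lambda_\theta^+}^+\circ\mathrm{H}_{\lambda_\theta^+}^-$ are naturally isomorphic to the identity functors on $\KL_{D^+}^{k,[\lambda\varepsilon]}$ and $\KL_{D^-}^{\ell,[\lambda_\theta^+/\varepsilon]}$, respectively, adapting the type $A$ strategy of \cite{CGNS}. By the symmetry between the two compositions, it suffices to carry out the argument for the first; the second is handled by interchanging the roles of $(J^+,J^+_*)$ with $(J^-,J^-_*)$ and of $K_{D^+}^{\lambda_\theta^+}$ with $K_{D^-}^{\lambda_\theta^+}$. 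The dependence on $\theta$ is harmless: only the numerical charge shifts in the Fock decompositions \eqref{decomposition of kernels} of $K_{D^\pm}^{\lambda_\theta^+}$ are affected, and these shifts enter the argument purely through bookkeeping.

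Fix $M\in \KL_{D^+}^{k,[\lambda\varepsilon]}$. The cohomology $\mathrm{H}_{\lambda_\theta^+}^+$ is taken with respect to the diagonal $\widehat{\mathfrak{gl}}_1$ built out of $J^+$ (acting on $M$) and $J^+_*$ (acting on $K_{D^+}^{\lambda_\theta^+}$), while $\mathrm{H}_{\lambda_\theta^+}^-$ is taken with respect to $J^-$ (acting on the output, inherited from $K_{D^+}^{\lambda_\theta^+}$) and $J^-_*$ (acting on $K_{D^-}^{\lambda_\theta^+}$). These two pairings involve disjoint Heisenberg fields and pairwise commute, and the relative semi-infinite cohomology is exact on the subcategory of Fock-decomposable modules by \eqref{rel coh functor}. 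Hence the two cohomologies may be interchanged and combined into a single double cohomology
\[
\mathrm{H}_{\lambda_\theta^+}^-\bigl(\mathrm{H}_{\lambda_\theta^+}^+(M)\bigr)\simeq \mathrm{H}^0_{\mathrm{rel}}\!\bigl(\widehat{\mathfrak{gl}}_1^{\oplus 2},\mathfrak{gl}_1^{\oplus 2};\, M\otimes K_{D^+}^{\lambda_\theta^+}\otimes K_{D^-}^{\lambda_\theta^+}\bigr).
\]
Next, decompose $M\simeq \bigoplus_{m\in\Z} M_m\otimes \pi^{J^+}_{\lambda\varepsilon+m}$ as a $\pi^{J^+}$-module and expand $K_{D^+}^{\lambda_\theta^+}$, $K_{D^-}^{\lambda_\theta^+}$ using \eqref{decomposition of kernels}. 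The double cohomology splits into a direct sum over integers indexing the Fock components; the charge vanishing \eqref{rel coh functor} applied to each of the two Heisenberg pairings forces all but a single summand to vanish. The surviving term canonically reproduces $\bigoplus_m M_m\otimes \pi^{J^+}_{\lambda\varepsilon+m}\simeq M$ as a $\pi^{J^+}$-module.

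To promote this to an isomorphism of $\subW$-modules, invoke Proposition \ref{reconstruction}: the $\subW$-action on the double cohomology is obtained from the action of $\subW\otimes K_{D^+}^{\lambda_\theta^+}\otimes K_{D^-}^{\lambda_\theta^+}$ through the reconstruction isomorphism $\subW\simeq \mathrm{H}^0_{\mathrm{rel}}(\mathfrak{gl}_1,\sprW\otimes K_{D^-})\simeq \mathrm{H}^0_{\mathrm{rel}}(\mathfrak{gl}_1\oplus\mathfrak{gl}_1,\subW\otimes K_{D^+}\otimes K_{D^-})$, and unwinding the identifications shows that this is precisely the original $\subW$-action on $M$. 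Naturality in $M$ is immediate from the functoriality of tensor products and cohomology. The main obstacle is the precise bookkeeping of the charge shifts by $\lambda_\theta^+$ in the Fock decompositions of the kernels: one must check that the unique surviving summand of the double cohomology is exactly $M$ (with no residual Fock twist or parity shift) and that the $\subW$-action inherited through the reconstruction matches the original action on $M$; both reductions are formal but require tracing the charge conservation imposed by \eqref{rel coh functor} through all four Heisenberg structures $\pi^{J^\pm}$, $\pi^{J^\pm_*}$ simultaneously.
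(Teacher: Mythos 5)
Your proposal is correct and takes essentially the same route as the paper, whose proof simply cites the word-for-word translation of \cite[Theorems 7 and 8]{CGNS}: identify the composite with a single double relative semi-infinite cohomology, reduce to charge bookkeeping via the vanishing \eqref{rel coh functor}, and use Proposition~\ref{reconstruction} to upgrade the resulting Fock-level identification to an isomorphism of $\subW$-modules. The charge-conservation calculation you flag at the end is indeed the only substantive step left to carry out.
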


\proof
The assertion is obtained by word-by-word translation of the proof of \cite[Theorems 7 and 8]{CGNS} in type $A$. 
\endproof

The functors $\mathrm{H}_{\lambda_\theta^+}^+$ are actually related to each other through spectral flow twists.
\begin{theorem}\label{equivalence of categories}
 For $\theta_1,\theta_2\in\Z$, there are natural isomorphisms between functors 
\begin{align}\label{spectral flow for KL}
\mathrm{H}_{\lambda_{\theta_1}^+}^+\simeq\mathrm{H}_{{\lambda'}_{\theta_1}^+}^+\circ S_{\theta_2 J^+},\quad 
S_{(\theta_2-\theta_1)J^-}\circ \mathrm{H}_{\lambda_{\theta_1}^+}^+\simeq \mathrm{H}_{\lambda_{\theta_2}^+}^+,
\end{align}
where $\lambda'=\lambda+\varepsilon\theta_2$.
In other words, the diagrams
	\begin{center}
		\begin{tikzcd}[row sep=huge, column sep = huge]
			\KL_{D^+}^{k,[\lambda\varepsilon]}
			\arrow[d,"S_{\theta_2J^+}"']
			\arrow[r,"\mathrm{H}_{\lambda_{\theta_1}^+}^+"]&
			\KL_{D^-}^{\ell,[\lambda_{\theta_1}^+/\varepsilon]}\\
			\KL_{D^+}^{k,[\lambda'\varepsilon]}
			\arrow[ru, "\mathrm{H}_{{\lambda'}_{\theta_1}^+}^+"']&
		\end{tikzcd}
		\begin{tikzcd}[row sep=huge, column sep = huge]
		& \KL_{D^-}^{\ell,[\lambda_{\theta_1}^+/\varepsilon]}
		\arrow[d,"S_{(\theta_2-\theta_1)J^-}"]\\
		\KL_{D^+}^{k,[\lambda\varepsilon]}
		\arrow[ru, "{\mathrm{H}_{\lambda_{\theta_1}^+}^+}"]
		\arrow[r, "\mathrm{H}_{\lambda_{\theta_2}^+}^+"']& \KL_{D^-}^{\ell,[\lambda_{\theta_2}^+/\varepsilon]}
	\end{tikzcd}
\end{center}
		commute up to natural isomorphisms.
\end{theorem}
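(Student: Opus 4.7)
The plan is to verify each claimed isomorphism first at the level of Fock (i.e.\ $\pi^{J^-}$-module) decompositions, then promote these to natural isomorphisms of $\sprW$-modules using the reconstruction formula of Proposition~\ref{reconstruction}. Decompose a module $N \in \KL_{D^+}^{k,[\lambda\varepsilon]}$ as $N \simeq \bigoplus_{p} N_p \otimes \pi^{J^+}_p$ over $\pi^{J^+}$, with $p \in \varepsilon\lambda + \Z$. Solving \eqref{decomposition of kernels} yields the explicit identifications $J^+_* = -x + \varepsilon y$ and $J^- = x - y/\varepsilon$ inside $K_{D^+} = V_\Z \otimes \pi^y$; direct evaluation on $\mathbf{1}\otimes\hv{\mu y}$ gives
\[
K_{D^+}^\mu \;\simeq\; \bigoplus_{m\in\Z} \pi^{J^+_*}_{-m-\varepsilon\mu}\otimes \pi^{J^-}_{m+\mu/\varepsilon}.
\]
Combined with the vanishing \eqref{rel coh functor}, which selects the pairs whose $J^+$- and $J^+_*$-weights sum to zero, this produces
\[
\mathrm{H}^+_\mu(N) \;\simeq\; \bigoplus_p N_p \otimes \pi^{J^-}_{p - \varepsilon\mu + \mu/\varepsilon}
\]
as a $\pi^{J^-}$-module, natural in $N$.

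For the first isomorphism, $S_{\theta_2 J^+}$ shifts each $p$ by $\theta_2(J^+,J^+) = \theta_2(\varepsilon^2-1)$. Substituting this into the previous formula with $\mu = \lambda'_{\theta_1}^+ = \lambda_{\theta_1}^+ + \varepsilon\theta_2$ yields the $\pi^{J^-}$-weight
\[
p + \theta_2(\varepsilon^2-1) - \varepsilon(\lambda_{\theta_1}^+ + \varepsilon\theta_2) + (\lambda_{\theta_1}^+ + \varepsilon\theta_2)/\varepsilon \;=\; p - \varepsilon\lambda_{\theta_1}^+ + \lambda_{\theta_1}^+/\varepsilon,
\]
so that the $\theta_2$-contributions cancel exactly. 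For the second isomorphism, the kernel-parameter difference $\lambda_{\theta_2}^+ - \lambda_{\theta_1}^+ = -(\theta_2-\theta_1)\varepsilon^{-1}$ induces a $\pi^{J^-}$-weight shift of $(\theta_2-\theta_1)(\varepsilon^2-1)/\varepsilon^2 = (\theta_2-\theta_1)(J^-,J^-)$, precisely the shift produced by $S_{(\theta_2-\theta_1)J^-}$. Both identifications preserve the multiplicity spaces $N_p$ and are manifestly functorial in $N$.

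To upgrade these to natural isomorphisms of $\sprW$-modules, recall that the $\sprW$-structure on $\mathrm{H}^+_\mu(N)$ is inherited from the reconstruction $\sprW \hookrightarrow \mathrm{H}_{\mathrm{rel}}^{0}(\mathfrak{gl}_1, \subW\otimes K_{D^+})$ of Proposition~\ref{reconstruction}. The spectral flow $S_{\theta_2 J^+}$ on $N$ equals spectral flow by $\theta_2(J^+\otimes\mathbf{1})$ on $N\otimes K_{D^+}^\mu$, and writing
\[
\theta_2(J^+\otimes\mathbf{1}) \;=\; \theta_2(J^+\otimes\mathbf{1} + \mathbf{1}\otimes J^+_*) - \theta_2(\mathbf{1}\otimes J^+_*),
\]
the first summand is a spectral flow by the null diagonal Heisenberg that is reduced by the relative cohomology, whose action descends to a natural isomorphism of the cohomology (Li's $\Delta$-operator for a null field produces no Fock-weight shift). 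The second summand transfers the spectral flow onto the kernel factor, where it is implemented by the parameter shift $\mu \mapsto \mu + \varepsilon\theta_2$, as is straightforward to check on the Fock and lattice generators of $V_\Z\otimes\pi^y$. An entirely parallel manipulation—transferring a kernel-parameter shift to a spectral flow on the target $\sprW$-module—yields the second isomorphism. Naturality in $N$ is automatic from the exactness of the relative cohomology functor.

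The main technical obstacle is the $\sprW$-equivariance check: one must verify that the ``null-Heisenberg'' spectral flow acts as a natural automorphism of the cohomology compatibly with the full $\sprW$-action, not merely with the Heisenberg subalgebra $\pi^{J^-}$. This reduces to checking that Li's $\Delta$-operator commutes with the relative BRST differential up to cohomologically trivial terms, which follows from a direct computation using the explicit form of the differential, together with Proposition~\ref{spectral flow twist and IO} for the naturality of the transfer through tensor products and intertwining operators.
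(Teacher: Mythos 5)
Your proposal is essentially correct and follows the same strategy as the paper: the second isomorphism is obtained by noting that $J^-$ lives entirely in the kernel $K_{D^+}$, so $S_{\theta J^-}$ becomes a shift of the parameter $\lambda\mapsto\lambda-\theta/\varepsilon$ on $K_{D^+}^\lambda$; the first is obtained by the same ``diagonal Heisenberg'' decomposition $\theta_2 J^+ = \theta_2(J^+ + J^+_*) - \theta_2 J^+_*$ that you use, with the diagonal piece acting trivially on the relative cohomology and the remainder absorbed into the kernel parameter. The Fock-level weight bookkeeping in your first two paragraphs is a useful consistency check but is not logically necessary once you have the chain-level argument; and the concern you flag at the end about $\sprW$-equivariance beyond $\pi^{J^-}$ resolves more simply than you suggest — the paper just observes that the cohomology representatives of $\sprW$ have vanishing action of the diagonal Heisenberg modes, so Li's $\Delta$-operator for $\theta_2(J^+ + J^+_*)$ is literally the identity on them, and no appeal to Proposition~\ref{spectral flow twist and IO} is needed for this step.
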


\proof
We first show $S_{(\theta_2-\theta_1)J^-}\circ \mathrm{H}_{\lambda_{\theta_1}^+}^+\simeq \mathrm{H}_{\lambda_{\theta_2}^+}^+$. 
It suffices to show the case $(\theta_1,\theta_2)=(0,\theta)$. 
As $J^-=x-\frac{1}{\varepsilon}y$ by \eqref{decomposition of kernels}, we have 
$$S_{J^-}K_{D^+}^\lambda= (S_x\otimes S_{-\frac{1}{\varepsilon}y})K_{D^+}^\lambda\simeq K_{D^+}^{\lambda-\frac{1}{\varepsilon}}$$
as $V_\Z\otimes \pi^y$-modules.
Then, given a module $M$ in $\KL_{D^+}^{k,[\lambda\varepsilon]}$, we have
\begin{align*}
S_{\theta J^-}\circ\mathrm{H}_{\lambda^+}^+(M)
&=S_{\theta J^-}\circ\mathrm{H}_{\mathrm{rel}}^{0}(\mathfrak{gl}_1, M \otimes K_{D^+}^\lambda)
\simeq\mathrm{H}_{\mathrm{rel}}^{0}(\mathfrak{gl}_1, M \otimes  K_{D^+}^{\lambda-\frac{\theta}{\varepsilon}})\simeq\mathrm{H}_{\lambda_{\theta}^+}^+(M).
\end{align*}
Thus we obtain an isomorphism $S_{\theta J^-}\circ\mathrm{H}_{\lambda^+}^+(M)\simeq \mathrm{H}_{\lambda_{\theta}^+}^+(M)$, which is functorial for $M$, i.e. $S_{\theta J^-}\circ\mathrm{H}_{\lambda^+}^+\simeq \mathrm{H}_{\lambda_{\theta}^+}^+$.
Next, we show $\mathrm{H}_{\lambda_{\theta_1}^+}^+\simeq\mathrm{H}_{{\lambda'}_{(\theta_1-\theta_2)}^+}^+\circ S_{\theta_2 J^+}$.
For $M$ in $\KL_{D^+}^{k,[\lambda\varepsilon]}$, one can show similarly a functorial isomorphism of $\sprW$-modules
\begin{align*}
    \mathrm{H}_{{\lambda'}_{\theta_1}^+}^+\circ S_{\theta_2 J^+}(M)
    &=\mathrm{H}_{\mathrm{rel}}^{0}(\mathfrak{gl}_1, S_{\theta_2 J^+}(M) \otimes  K_{D^+}^{\lambda'-\frac{\theta_1}{\varepsilon}})\\
    &\simeq \mathrm{H}_{\mathrm{rel}}^{0}(\mathfrak{gl}_1, S_{\theta_2(J^+-x+\varepsilon y)}(M) \otimes  K_{D^+}^{\lambda-\frac{\theta_1}{\varepsilon}})\simeq \mathrm{H}_{\lambda_{\theta_1}^+}^+\circ S_{\theta_2(J^++J^+_*)}(M)
\end{align*}
as $J^+-x+\varepsilon y=J^++J^+_*$. 
Recall that $\sprW$ acts on the right-hand side through the isomorphism in Proposition \ref{reconstruction}. Then cohomology classes $[x]\in\mathrm{H}_{\mathrm{rel}}^{0}(\mathfrak{gl}_1, \subW\otimes K_{D^+})$ act as 
$Y([\Delta\left(\theta_2(J^++J^+_*),z\right)x],z)$, which coincides with $Y([x],z)$. 
It follows that $\mathrm{H}_{{\lambda'}_{\theta_1}^+}^+\circ S_{\theta_2(J^++J^+_*)}(M)\simeq \mathrm{H}_{\lambda_{\theta_1}^+}^+(M)$
canonically as $\sprW$-modules. This completes the proof. 
\endproof

\begin{corollary}\label{cor of spectral flowwise equivalence}
For $\theta_i\in\Z$ ($1\leq i\leq 4$) and $\lambda,\lambda'\in \C$ with relations 
$$\theta_4=\theta_2+\theta_3,\quad \lambda'=\lambda+\varepsilon \theta_1,$$ 
the diagram 
	\begin{center}
		\begin{tikzcd}[row sep=large, column sep = huge]
			\KL_{D^+}^{k,[\lambda\varepsilon]}
			\arrow[d,"S_{\theta_1 J^+}"']
			\arrow[r,"\mathrm{H}_{\lambda_{\theta_3}^+}^+"]&
			\KL_{D^-}^{\ell,[\lambda_{\theta_3}^+/\varepsilon]}
			\arrow[d,"S_{\theta_2 J^-}"]\\
			\KL_{D^+}^{k,[\lambda'\varepsilon]}
			\arrow[r, "\mathrm{H}_{{\lambda'}_{\theta_4}^+}^+"]&
			\KL_{D^-}^{\ell,[{\lambda'}_{\theta_4}^+/\varepsilon]}
		\end{tikzcd}
	\end{center}
commutes up to a natural isomorphism and all the arrows give equivalences of categories. 
\end{corollary}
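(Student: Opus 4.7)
The plan is to derive the corollary directly from Theorem~\ref{equivalence of categories} and Theorem~\ref{categorical equivalence}, by composing the two natural isomorphisms of Theorem~\ref{equivalence of categories} along the two possible paths through the square. No new computation is needed; the content is purely a bookkeeping exercise in matching parameters.

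First, I would chase the square clockwise. Apply the second natural isomorphism of Theorem~\ref{equivalence of categories} with the theorem's $\theta_1$ set to the corollary's $\theta_3$ and the theorem's $\theta_2$ set to the corollary's $\theta_4=\theta_2+\theta_3$, so that the theorem's difference $\theta_2-\theta_1$ matches the corollary's $\theta_2$. This yields a natural isomorphism
\begin{equation*}
S_{\theta_2 J^-}\circ \mathrm{H}_{\lambda_{\theta_3}^+}^+ \ \simeq\ \mathrm{H}_{\lambda_{\theta_4}^+}^+
\end{equation*}
of functors from $\KL_{D^+}^{k,[\lambda\varepsilon]}$ to $\KL_{D^-}^{\ell,[\lambda_{\theta_4}^+/\varepsilon]}$. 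Next, apply the first natural isomorphism of Theorem~\ref{equivalence of categories} with the theorem's $\theta_1$ set to the corollary's $\theta_4$ and the theorem's $\theta_2$ set to the corollary's $\theta_1$, so that the theorem's $\lambda'=\lambda+\varepsilon\theta_2$ matches the corollary's $\lambda'=\lambda+\varepsilon\theta_1$. This yields
\begin{equation*}
\mathrm{H}_{\lambda_{\theta_4}^+}^+\ \simeq\ \mathrm{H}_{{\lambda'}_{\theta_4}^+}^+\circ S_{\theta_1 J^+}.
\end{equation*}
Splicing the two isomorphisms gives the commutativity asserted by the square:
\begin{equation*}
S_{\theta_2 J^-}\circ \mathrm{H}_{\lambda_{\theta_3}^+}^+\ \simeq\ \mathrm{H}_{{\lambda'}_{\theta_4}^+}^+\circ S_{\theta_1 J^+}.
\end{equation*}

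For the second part, observing that all four arrows are equivalences is immediate: the spectral flow twists $S_{\theta_1 J^+}$ and $S_{\theta_2 J^-}$ have obvious quasi-inverses $S_{-\theta_1 J^+}$ and $S_{-\theta_2 J^-}$ respectively, while $\mathrm{H}_{\lambda_{\theta_3}^+}^+$ and $\mathrm{H}_{{\lambda'}_{\theta_4}^+}^+$ admit quasi-inverses $\mathrm{H}_{\lambda_{\theta_3}^+}^-$ and $\mathrm{H}_{{\lambda'}_{\theta_4}^+}^-$ by Theorem~\ref{categorical equivalence}.

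There is really no genuine obstacle here; the only care needed is to track the parameters correctly. In particular one must check that the source and target blocks of each arrow are correctly identified, which is automatic from the shifts $\lambda\mapsto\lambda+\varepsilon\theta_1$ and $\lambda_{\theta_3}^+\mapsto {\lambda'}_{\theta_4}^+$ once one notes the identity $(\lambda+\varepsilon\theta_1)_{\theta_2+\theta_3}^+=\lambda_{\theta_3}^+-\theta_2\varepsilon^{-1}+\varepsilon\theta_1$, which is exactly what the spectral flow twists do on the Heisenberg gradings. This is why the combinatorial constraint $\theta_4=\theta_2+\theta_3$ is the right one for the square to close.
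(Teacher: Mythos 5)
Your proof is correct and follows the intended route of composing the two natural isomorphisms of Theorem \ref{equivalence of categories} under the parameter matching you spell out, together with Theorem \ref{categorical equivalence} and the invertibility of spectral flow twists for the equivalence claim. The paper states the corollary without proof, treating it as an immediate consequence of Theorems \ref{categorical equivalence} and \ref{equivalence of categories}, so your write-up fills in exactly the intended argument.
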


\begin{remark}
One can show the analogues of Theorem \ref{equivalence of categories} and Corollary \ref{cor of spectral flowwise equivalence} for type A, i.e. the original Feigin--Semikhatov duality between the $\W$-(super)algebras
\begin{align*}
    \W^k(\sll_{n},f_{\mathrm{\mathrm{sub}}}),\quad \W^\ell(\sll_{1|n},f_{\mathrm{prin}})
\end{align*}
under the relation $(k+n)(\ell+n-1)=1$. 
Then Corollary \ref{cor of spectral flowwise equivalence} generalizes the case $n=2$ between $V^k(\sll_2)$ and the $\mathcal{N}=2$ superconformal algebra \cite{FST}.
\end{remark}

\subsection{Twists of braided tensor categories}
The functors $\mathrm{H}_\lambda^{\pm}$ ($\lambda\in \C$) are enhanced by functorial homomorphisms for the (super)spaces of intertwining operators.
Given the $\subW$-modules $M_i$ in $\KL_{D^+}^k$ ($1\leq i\leq 3$) and an intertwining operator $\mathcal{Y}(\cdot,z)$ of type $\binom{M_3}{M_1\ M_2}$, we have an intertwining operator
\begin{align}\label{extending IO}
&\mathcal{Y}(\cdot,z)\otimes Y_{V_\Z}(\cdot,z)\otimes I_{\lambda y}(\cdot,z)\colon  (M_1\otimes K_{D^+}^\lambda) \otimes (M_2\otimes K_{D^+}^\mu)
\longrightarrow  (M_3\otimes K_{D^+}^{\lambda+\mu})\{z\}[\log z],
\end{align}
see \eqref{Fock intertwiner} for $I_{\lambda y}(\cdot,z)$. Here $Y_{V_\Z}(\cdot,z)\colon V_\Z\otimes V_\Z\rightarrow V_\Z(\!(z)\!)$ is the structure map.
Then, it is straightforward to show that \eqref{extending IO} gives rise to an intertwining operator for the relative semi-infinite cohomology
\begin{align*}
\mathcal{H}_\lambda^+(\mathcal{Y})(\cdot,z)\colon \mathrm{H}_\lambda^{+}(M_1)\otimes \mathrm{H}_\mu^{+}(M_2)\rightarrow \mathrm{H}_{\lambda+\mu}^{+}(M_3)\{z\}[\log z]
\end{align*}
for the modules over $\mathrm{H}_0^+(\subW)\simeq\sprW$ by Proposition \ref{reconstruction}, see \cite [Sect.~6.3]{CGNS} for a detailed proof in type $A$. 
Therefore, we obtain a functorial homomorphism of spaces of intertwining operators
\begin{align*}
\mathcal{H}_\lambda^+(\cdot)\colon I_{\subW}\binom{M_3}{M_1\ M_2}\longrightarrow I_{\sprW}\binom{\mathrm{H}_{\lambda+\mu}^+(M_3)}{\mathrm{H}_\lambda^+(M_1)\ \mathrm{H}_\mu^+(M_2)}.
\end{align*}
Similarly, replacing $I_{\lambda y}(\cdot,z)$ with $I_{\lambda x}(\cdot,z)$ in \eqref{extending IO}, we obtain a functorial homomorphism of spaces of intertwining operators in the opposite direction for the $\sprW$-modules $N_i$ in $\KL_{D^-}^\ell$ ($1\leq i\leq3$)
\begin{align*}
\mathcal{H}_\lambda^-(\cdot)\colon I_{\sprW}\binom{N_3}{N_1\ N_2}\longrightarrow I_{\subW}\binom{\mathrm{H}_{\lambda+\mu}^-(N_3)}{\mathrm{H}_\lambda^-(N_1)\ \mathrm{H}_\mu^-(N_2)}.
\end{align*}

\begin{proposition}\label{correspondence of IO}
The functorial homomorphisms $\mathcal{H}_\lambda^\pm(\cdot)$ are linear isomorphisms.
\end{proposition}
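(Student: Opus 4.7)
The plan is to exhibit $\mathcal{H}_\lambda^{\mp}$ (with the matching block-wise shifts coming from Theorem \ref{categorical equivalence}) as a two-sided inverse of $\mathcal{H}_\lambda^{\pm}$, directly mirroring the type $A$ argument in \cite[Sect.~6.3]{CGNS}. The underlying reason to expect this is that $\mathrm{H}_{\lambda^+_\theta}^+$ and $\mathrm{H}_{\lambda^+_\theta}^-$ are already quasi-inverse equivalences between $\KL_{D^+}^{k,[\lambda\varepsilon]}$ and $\KL_{D^-}^{\ell,[\lambda_\theta^+/\varepsilon]}$, so the obstruction is purely computational: one has to confirm the same holds for intertwining operators.

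First, given $\tilde{\mathcal{Y}}\in I_{\sprW}\binom{\mathrm{H}_{\lambda+\mu}^+(M_3)}{\mathrm{H}_\lambda^+(M_1)\ \mathrm{H}_\mu^+(M_2)}$, I would define a candidate inverse $\Phi(\tilde{\mathcal{Y}})\in I_{\subW}\binom{M_3}{M_1\ M_2}$ by the symmetric construction on the super-side: tensor $\tilde{\mathcal{Y}}$ with the lattice structure map $Y_{V_{\ssqrt{-1}\Z}}$ and the Fock intertwiner $I_{-\lambda x}$ on the $K_{D^-}^{-\lambda}$-factors, then pass to $\mathrm{H}_{\mathrm{rel}}^0(\widehat{\mathfrak{gl}}_1,\mathfrak{gl}_1;\cdot)$. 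By Proposition \ref{reconstruction}, in the form $\subW\simeq \mathrm{H}_{\mathrm{rel}}^{0}(\mathfrak{gl}_1,\sprW\otimes K_{D^-})$, the output is automatically an intertwiner for $\subW$-modules, the latter being recovered from the super-side via Theorem \ref{categorical equivalence}.

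Next, I would compute the composition $\Phi\circ\mathcal{H}_\lambda^+(\mathcal{Y})$ for $\mathcal{Y}\in I_{\subW}\binom{M_3}{M_1\ M_2}$. Unraveling definitions, it equals the iterated relative semi-infinite cohomology of the operator
\begin{equation*}
\mathcal{Y}(\cdot,z)\otimes Y_{V_\Z}(\cdot,z)\otimes I_{\lambda y}(\cdot,z)\otimes Y_{V_{\ssqrt{-1}\Z}}(\cdot,z)\otimes I_{-\lambda x}(\cdot,z)
\end{equation*}
acting on $M_i\otimes K_{D^+}^{\lambda_i}\otimes K_{D^-}^{-\lambda_i}$. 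Using the identification \eqref{decomposition of kernels} to change coordinates from $(x,y)$ to $(J^\pm,J^\pm_*)$, the pair $K_{D^+}\otimes K_{D^-}$ restricted to the relevant Heisenberg subalgebras decomposes into tensor products of Fock modules of the form $\pi^{A_*}_p\otimes \pi^{A}_q$. Invoking the vanishing \eqref{rel coh functor} term by term, the double $\mathrm{H}_{\mathrm{rel}}^0$ collapses onto the charge-zero component, which restores $M_i$ and leaves precisely $\mathcal{Y}$ as the induced intertwiner. The reverse composition $\mathcal{H}_\lambda^+\circ\Phi$ and the $\mathcal{H}_\lambda^-$ assertion are handled by the entirely symmetric argument with the roles of $K_{D^+}$ and $K_{D^-}$ exchanged.

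The main obstacle is the bookkeeping of the tensor product of three intertwining operators with their $z^{(\cdot,\cdot)}$-prefactors and their potential logarithmic components: one has to verify that the product $I_{\lambda y}(\hv{\lambda y},z)\cdot I_{-\lambda x}(\hv{-\lambda x},z)$, after the change of variables dictated by \eqref{decomposition of kernels} and passage to the double cohomology, collapses to a fixed scalar multiple of the identity independent of $\mathcal{Y}$. Because the Heisenberg data here are one-dimensional (generated by $J^\pm$, $x$, $y$) and the Fock decomposition \eqref{decomposition of kernels} is structurally identical to the type $A$ one, this calculation parallels \cite[Sect.~6.3]{CGNS} verbatim, which is what ultimately justifies that both $\mathcal{H}_\lambda^+$ and $\mathcal{H}_\lambda^-$ are linear isomorphisms.
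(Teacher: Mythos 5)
Your proposal takes essentially the same approach as the paper, which handles the proof by declaring it a word-by-word translation of \cite[Theorem~9]{CGNS}: you unpack that citation by constructing the two-sided inverse via the dual convolution with $K_{D^-}$ and invoking the vanishing \eqref{rel coh functor} to collapse the double cohomology, which is indeed the CGNS mechanism. One caveat on the bookkeeping you flagged: for the quasi-inverse to cancel, Theorem \ref{categorical equivalence} pairs $\mathrm{H}_{\lambda_\theta^+}^+$ with $\mathrm{H}_{\lambda_\theta^+}^-$ at the \emph{same} index, so the factor you tensor with on the return trip should be $K_{D^-}^{\lambda}$ with the Fock intertwiner $I_{\lambda x}$ rather than $K_{D^-}^{-\lambda}$ and $I_{-\lambda x}$ --- the required charge cancellation already comes from the $\pi^{A_*}_p\otimes\pi^A_q$ pairing in \eqref{rel coh functor} (where the norms of $x$ and $y$ differ by sign), not from flipping the Fock label. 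This is a sign-convention slip rather than a structural flaw, and once corrected the argument matches the paper's intended route.
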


\proof
The assertion is obtained by word-by-word translation of the proof of \cite[Theorem 9]{CGNS} for type $A$. 
\endproof

Suppose that a full subcategory 
$$\mathscr{C}_{D^+}^k=\bigoplus\mathscr{C}_{D^+}^{k,[\lambda \varepsilon]}\subset \bigoplus\KL_{D^+}^{k,[\lambda \varepsilon]}=\KL_{D^+}^k$$
is closed under spectral flow twists and has a structure of braided tensor category (BTC for short) in the sense of Huang--Lepowsky--Zhang \cite{HLZ}. 
In particular, we have a fusion product $M_1\boxtimes M_2$ for objects in $\mathscr{C}_{D^+}^k$, which represents the functor assigning the space of intertwining operators, see \eqref{reprentativity of IO}.
By the compatibility with the $J^+_0$-eigenvalues, the fusion product restricts to 
\begin{align*}
    \boxtimes \colon \mathscr{C}_{D^+}^{k,[\lambda\varepsilon]} \times \mathscr{C}_{D^+}^{k,[\mu \varepsilon]}\rightarrow \mathscr{C}_{D^+}^{k,[\nu\varepsilon]},\quad (\nu=\lambda+\mu).
\end{align*}
Let
$$\mathscr{C}_{D^-}^\ell=\bigoplus\mathscr{C}_{D^-}^{\ell,[\lambda /\varepsilon]}\subset \bigoplus\KL_{D^-}^{\ell,[\lambda /\varepsilon]}=\KL_{D^-}^\ell$$
denote the full subcategory of $\KL_{D^-}^\ell$ obtained from $\mathscr{C}_{D^+}^k$ by the equivalence in Theorem \ref{categorical equivalence}.
By Proposition \ref{correspondence of IO}, Theorem \ref{categorical equivalence} implies that the $\sprW$-modules $\mathrm{H}^+_{\lambda_{\theta_1}^+}(M_1)$ and $\mathrm{H}^+_{\mu_{\theta_2}^+}(M_2)$ admit a fusion product which is
\begin{align}\label{fusion product}
    \mathrm{H}^+_{\lambda_{\theta_1}^+}(M_1)\boxtimes \mathrm{H}^+_{\mu_{\theta_2}^+}(M_2):=\mathrm{H}^+_{\nu_{\theta_3}^+}(M_1\boxtimes M_2),\quad (\theta_3=\theta_1+\theta_2)
\end{align}
equipped with the canonical intertwining operator 
\begin{align*}
\mathcal{H}_{\lambda_{\theta_{1}}}^+(\eta)(\cdot,z)\colon  \mathrm{H}^+_{\lambda_{\theta_1}^+}(M_1)\otimes \mathrm{H}^+_{\mu_{\theta_2}^+}(M_2)\rightarrow \mathrm{H}^+_{\nu_{\theta_3}^+}(M_1\boxtimes M_2)\{z\}[\log z]
\end{align*}
induced by $\eta(\cdot,z)\colon M_1\otimes M_2\rightarrow (M_1\boxtimes M_2)\{z\}[\log z]$ on the $\subW$-side. Therefore, we have a bifunctor 
\begin{align}\label{fusion bifunctor for super W}
    \boxtimes \colon \mathscr{C}_{D^-}^{\ell,[\lambda_{\theta_1}/\varepsilon]}\times \mathscr{C}_{D^-}^{\ell,[\mu_{\theta_2}/\varepsilon]} \rightarrow 
			\mathscr{C}_{D^-}^{\ell,[\nu_{\theta_3}/\varepsilon]}.
\end{align}
We consider the braiding and associator for \eqref{fusion bifunctor for super W}. 
Recall that the braiding and associator
\begin{align*}
\mathcal{M}\colon M_1\boxtimes M_2\xrightarrow{\simeq}M_2\boxtimes M_1,\quad \mathcal{A}\colon M_1\boxtimes (M_2\boxtimes M_3)\xrightarrow{\simeq} (M_1\boxtimes M_2)\boxtimes M_3
\end{align*}
on the $\mathscr{C}_{D^+}^{k}$-side are constructed through the so-called $P(z)$-tensor product $\boxtimes_{z}:=\boxtimes_{P(z)}$ with $z\in \C^\times$ \cite{HL,HLZ}. Then the values of the canonical intertwining operator $\eta(a,z)b$ are regarded as a multi-valued function in $z$ with coefficient $M_1\boxtimes M_2$ (more precisely, its algebraic completion). The isomorphisms are derived from the $P(z)$-tensor product and parallel transport isomorphisms $T_{z,w}$ depending on paths connecting $z,w\in \C^\times$ up to homotopy as depicted in Fig.~\ref{fig:Braiding an associators} below.
\begin{figure}[h]
	\caption{}\label{fig:Braiding an associators}
 \begin{center}
		\begin{tikzcd}[row sep=large, column sep = small]
			M_1\boxtimes M_2 
			\arrow[d,"T_{1,z}"']
			\arrow[r,"\mathcal{M}"]&
			M_2 \boxtimes M_1 
                \arrow[d,"T_{1,\text{-}z}"] \\
			M_1\underset{z}{\boxtimes} M_2
			\arrow[r,"\simeq"]&
                M_2\underset{-z}{\boxtimes} M_1,
		\end{tikzcd}
		\begin{tikzcd}[row sep=huge]
              M_1\boxtimes (M_2\boxtimes M_3)
              \arrow[r,"\mathcal{A}"]
               \arrow[d,"T_{1,z}\boxtimes T_{1,w}"']
		& (M_1\boxtimes M_2)\boxtimes M_3
               \arrow[d,"T_{1,z\text{-}w}\boxtimes T_{w}"]\\
                M_1\underset{z}{\boxtimes} (M_2 \underset{w}{\boxtimes} M_3)
		\arrow[r, "\simeq"]& 
           (M_1\underset{z-w}{\boxtimes} M_2)\underset{w}{\boxtimes} M_3.
	\end{tikzcd}
\end{center}
\end{figure}
Here $M_1\boxtimes M_2$ is regarded as $M_1\boxtimes_{P(1)} M_2$, see also \cite[Sect.~3.3]{CKM1}. 
We note that the horizontal isomorphisms are given by the skew-symmetry of intertwining operators and the associativity isomorphism of $P(z)$-intertwining map \cite[Theorem 10.3]{HLZ}.

The category $\mathscr{H}_\R^x$ (resp.\ $\mathscr{H}_\R^y$) of $\pi^x$-modules (resp.\ $\pi^y$-modules) which are direct sums of $\pi^x_{\lambda x}$ (resp.\ $\pi^y_{\lambda y}$) for $\lambda\in \R$, forms a BTC, see e.g. \cite[Sect.~2.2]{CKLR}. They are braided tensor equivalent to the categories of graded vector spaces 
\begin{align*}
   &\mathscr{H}_\R^x \xrightarrow{\simeq}\mathrm{Vect}_{\mathbb{R}}\ (\pi^x_{\lambda x}\mapsto \C_\lambda),\quad \mathscr{H}_\R^y \xrightarrow{\simeq}\mathrm{Vect}_{\ssqrt{-1}\mathbb{R}}\ (\pi^y_{\lambda y}\mapsto \C_{\lambda\ssqrt{-1}}),
\end{align*}
where $\C_\lambda$'s are copies of $\C$ of grading $\lambda$'s satisfying the braiding isomorphisms 
\begin{align}\label{braiding for the Heisenberg}
    &e^{\pi\ssqrt{-1}\lambda\mu}\colon \C_{\lambda+\mu}=\C_\lambda\otimes \C_\mu\xrightarrow{\simeq}\C_\mu\otimes \C_{\lambda}=\C_{\mu+\lambda}.
\end{align}
The correspondence $\mathscr{H}_\R^x\rightarrow \mathscr{H}_\R^y$ ($\pi_{\lambda x}^x\mapsto \pi_{\lambda y}^y$) induces a braided reverse equivalence.
Let us replace the $M_i$'s in the above diagram with the complexes associated to $M_i\otimes K_{D^+}^{\bullet_{\theta_i}}$ ($\bullet=\lambda,\mu,\nu$). 

Taking the cohomology and using \eqref{fusion product}, we obtain the braiding and associator for \eqref{fusion bifunctor for super W}. 
For example, the braiding is depicted in Fig.~\ref{fig:Braiding an associators for super side} below. 
There, $[(-1)^\bullet e^{\pi\ssqrt{-1}\lambda_{\theta_1}^+\mu_{\theta_2}^+}\mathcal{M}]$ is the linear operator on the cohomology induced by the original associator $\mathcal{M}$ dressed by the associator $(-1)^\bullet$, which is the Koszul sign rule of $V_\Z$, and $e^{\pi\ssqrt{-1}\lambda_{\theta_1}^+\mu_{\theta_2}^+}$, which corresponds to \eqref{braiding for the Heisenberg}; $T_{1,z}\otimes T_{1,\pm z}^K$ are the parallel transport isomorphisms for the cohomology complexes $M_i\otimes K_{D^+}^{\bullet_{\theta_i}}$. 
As we work on the category of supermodules, we omit the Koszul sign rule $(-1)^\bullet$ as usual.
\begin{figure}[h]
	\caption{}\label{fig:Braiding an associators for super side}
 \begin{center}
		\begin{tikzcd}[column sep = large]
		\mathrm{H}_{\lambda_{\theta_1}^+}(M_1)\boxtimes \mathrm{H}_{\mu_{\theta_2}^+}(M_2) \arrow[d,equal] \arrow[rr,"\mathcal{M}"] \arrow[ddd,bend right=70,"T_{1,z}"'] & &\mathrm{H}_{\mu_{\theta_2}^+}(M_2)\boxtimes \mathrm{H}_{\lambda_{\theta_1}^+}(M_1)  \arrow[d,equal] \arrow[ddd,bend left=70,"T_{1,-z}"]\\
                \mathrm{H}_{\nu_{\theta_3}^+}(M_1\boxtimes M_2) 
			\arrow[d,"{[}T_{1,z}\otimes T_{1,z}^K{]}"'] \arrow[rr,"{[}(-1)^\bullet e^{-\pi\ssqrt{-1}\lambda_{\theta_1}^+\mu_{\theta_2}^+}\mathcal{M}{]}"]&&
			\mathrm{H}_{\nu_{\theta_3}^+}(M_2 \boxtimes M_1) 
                \arrow[d,"{[}T_{1,\text{-}z}\otimes T_{1,-z}^K{]}"]    \\
		    \mathrm{H}_{\nu_{\theta_3}^+}(M_1\underset{z}{\boxtimes} M_2)
			\arrow[rr,"\simeq"]&&
                \mathrm{H}_{\nu_{\theta_3}^+}(M_2\underset{-z}{\boxtimes} M_1)\\
                \mathrm{H}_{\lambda_{\theta_1}^+}(M_1)\underset{z}{\boxtimes} \mathrm{H}_{\mu_{\theta_2}^+}(M_2) \arrow[u,equal] \arrow[rr,"\simeq"] && \mathrm{H}_{\mu_{\theta_2}^+}(M_2)\underset{-z}{\boxtimes} \mathrm{H}_{\lambda_{\theta_1}^+}(M_1)  \arrow[u,equal]
		\end{tikzcd}
\end{center}
\end{figure}
To summarize, we prove the following.
\begin{corollary}\label{BTC bijection}
Let $\mathscr{C}_{D^+}^k\subset \KL_{D^+}^k$, $\mathscr{C}_{D^-}^\ell\subset \KL_{D^-}^\ell$ be full subcategories block-wisely equivalent as abelian categories
$$\mathrm{H}_{\lambda_{\theta}^+}\colon \mathscr{C}_{D^+}^{k,[\lambda \varepsilon]}\xrightarrow{\simeq}\mathscr{C}_{D^-}^{\ell,[\lambda^+_\theta/ \varepsilon]}.$$ 
Then $\mathscr{C}_{D^+}^k$ is a BTC iff $\mathscr{C}_{D^-}^\ell$ is a BTC. 
In this case, the braiding and associator on $\mathscr{C}_{D^-}^\ell$ are expressed by those of $\mathscr{C}_{D^+}^k$ and $\mathrm{Vect}_{\ssqrt{-1}\mathbb{R}}$.
\end{corollary}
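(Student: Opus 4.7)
The plan is to verify that the construction of $\boxtimes$, $\mathcal{M}^{D^-}$, and $\mathcal{A}^{D^-}$ carried out just before the corollary actually defines a BTC on $\mathscr{C}_{D^-}^\ell$. For the forward direction, assume $\mathscr{C}_{D^+}^k$ is a BTC. Formula~\eqref{fusion product} together with the canonical intertwining operator $\mathcal{H}_{\lambda_{\theta_1}^+}^+(\eta)$ represents the intertwining-operator functor on the super side: this is immediate from Proposition~\ref{correspondence of IO} and the universal property~\eqref{reprentativity of IO}, combined with Theorem~\ref{equivalence of categories} to ensure that $\mathscr{C}_{D^-}^\ell$ is closed under spectral flow. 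The braiding and associator are then produced by Fig.~\ref{fig:Braiding an associators for super side} and its three-factor analogue.

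The substantive step is to verify the hexagon and pentagon axioms on $\mathscr{C}_{D^-}^\ell$. The key observation is that the kernel $K_{D^+}^\lambda=V_\Z\otimes \pi^y_{\lambda y}$ splits into a lattice factor and a Heisenberg factor, so that the super-side data (before taking cohomology) lives naturally in a componentwise braided tensor product of three BTCs: (i) $\mathscr{C}_{D^+}^k$, coherent by hypothesis; (ii) the lattice super vertex algebra $V_\Z$ with its Koszul sign braiding, coherent by standard facts; and (iii) $\mathscr{H}_\R^y\simeq \mathrm{Vect}_{\ssqrt{-1}\R}$ with the exponential braiding~\eqref{braiding for the Heisenberg}, coherent by direct computation. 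The parallel transport $T_{1,z}$ on $M_i\otimes K_{D^+}^{\bullet_{\theta_i}}$ decomposes as a tensor product of transports on each factor, so the outer rectangle of Fig.~\ref{fig:Braiding an associators for super side} and its pentagon analogue factor as products of three commuting diagrams. Applying $\mathrm{H}_{\mathrm{rel}}^0(\mathfrak{gl}_1,-)$, which is exact on direct sums of Fock modules by~\eqref{rel coh functor} and which realizes the fusion product via~\eqref{fusion product}, transports this commutativity to $\mathscr{C}_{D^-}^\ell$. This simultaneously proves the axioms and yields the explicit expression of $\mathcal{M}^{D^-},\mathcal{A}^{D^-}$ in terms of those of $\mathscr{C}_{D^+}^k$ and $\mathrm{Vect}_{\ssqrt{-1}\R}$, with the Koszul sign from $V_\Z$ absorbed into the super structure of $\mathscr{C}_{D^-}^\ell$.

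The converse direction is symmetric: replace $\mathrm{H}_{\lambda_\theta^+}^+$ by its quasi-inverse $\mathrm{H}_{\lambda_\theta^+}^-$ from Theorem~\ref{categorical equivalence} and use the kernel $K_{D^-}^\lambda=V_{\ssqrt{-1}\Z}\otimes \pi^x_{\lambda x}$, whose Heisenberg factor now lies in $\mathscr{H}_\R^x\simeq \mathrm{Vect}_\R$; the same three-factor argument yields a BTC on $\mathscr{C}_{D^+}^k$. The main obstacle, in either direction, is the precise bookkeeping of the parallel transport isomorphisms $T_{z,w}$ in the Huang--Lepowsky--Zhang framework: one must justify that $T_{1,z}$ on $M_i\otimes K_{D^+}^{\bullet_{\theta_i}}$ splits as a tensor product of transports on each of the three tensor factors, and that the cohomology functor $\mathrm{H}_{\mathrm{rel}}^0(\mathfrak{gl}_1,-)$ respects this splitting at the level of the $P(z)$-tensor product and its associativity isomorphisms. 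Once these are in place, the iff statement and the final identity about the form of the $\mathscr{C}_{D^-}^\ell$-structure follow directly by reading off the scalars appearing in Fig.~\ref{fig:Braiding an associators for super side}.
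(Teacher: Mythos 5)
Your proposal is correct and follows essentially the same route as the paper: you identify the same three-factor decomposition of the kernel $K_{D^+}^\lambda=V_\Z\otimes \pi^y_{\lambda y}$ (so the intertwining operator~\eqref{extending IO} factors into the $\mathscr{C}_{D^+}^k$-part, the $V_\Z$-part with Koszul sign, and the $\mathscr{H}_\R^y\simeq\mathrm{Vect}_{\ssqrt{-1}\R}$-part with exponential braiding), the same splitting of the parallel transports $T_{1,z}$, and the same passage through $\mathrm{H}_{\mathrm{rel}}^0(\mathfrak{gl}_1,-)$ using~\eqref{fusion product} and Proposition~\ref{correspondence of IO} to induce the braiding and associator on the super side. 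The paper packages exactly this in the commutative diagrams of Figs.~\ref{fig:Braiding an associators} and~\ref{fig:Braiding an associators for super side}, while you give the same construction in prose, and your remark that the converse direction follows symmetrically by swapping $\mathrm{H}^+$, $K_{D^+}^\lambda$ for $\mathrm{H}^-$, $K_{D^-}^\lambda$ matches the implicit symmetry in the paper's statement.
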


\section{Free field representations}\label{sec:free_fields}
\subsection{Free field realization revisited}
Recall the Wakimoto realization of $V^k(\g)$ \cite{FF88,Wak}
\begin{equation}\label{wakimoto realization}
 V^k(\g)\hookrightarrow \beta\gamma^{\Delta_+}\otimes \pi^{k+h^\vee_+}_{\h_+},
\end{equation}
see Sect.~\ref{setting of our W-superalgebras} for the realization of $\h_+$. 
Here, $\beta\gamma^{\Delta_+}$ denotes the tensor product of the $\beta\gamma$-system indexed by the positive roots in $\Delta_+$. Explicitly, it is generated by the fields $\beta_\alpha(z), \gamma_\alpha(z)$ ($\alpha \in \Delta_+$) which satisfy the following OPEs:
\begin{align*}
    \beta_\alpha(z)\gamma_{\alpha'}(w)\sim \frac{\delta_{\alpha,\alpha'}}{(z-w)},\quad \beta_\alpha(z)\beta_{\alpha'}(w)\sim 0 \sim \gamma_\alpha(z)\gamma_{\alpha'}(w).
\end{align*}
Applying $H_f^0$ to \eqref{wakimoto realization}, we obtain the embedding 
\begin{align}\label{Wakimoto realization of subregular}
\subW\hookrightarrow \beta\gamma\otimes\pi^{k+h^\vee_{+}}_{\mathfrak{h}_+},
\end{align}
called the Wakimoto realization of $\subW$, see \cite{Gen}.
Here, $\beta\gamma$ is naturally associated to the positive root $\alpha_1$ which is of zero weight for the good grading in \S \ref{setting of our W-superalgebras}.

The embedding \eqref{Wakimoto realization of subregular} is indeed a restriction of \eqref{FFR} 
\begin{align}\label{successive embeddings}
    \subW\hookrightarrow 
    \beta\gamma\otimes\pi^{k+h^\vee_{+}}_{\mathfrak{h}_+}\hookrightarrow 
    \Pi(0)\otimes \pi^{k+h^\vee_{+}}_{\mathfrak{h}_+}
\end{align}
where we have used the Friedan--Martinec--Shenker bosonization \cite{FMS}
\begin{align*}
    \beta\gamma\xrightarrow{\simeq }\Ker_{\Pi(0)}\int Y(\hv{x},z)\ \dz,\quad \beta\mapsto \hv{x+y},\ \gamma\mapsto -x\hv{-(x+y)}.
\end{align*}
Note that at generic $k$, the image of the first embedding in \eqref{successive embeddings} is characterized by the screening operators $Q_i^+$ ($1\leq i\leq n$) defined in \eqref{screenings in Kazama Suzuki} and the screening operator $Q_0^+$ agrees with the above one.

By using the Heisenberg vertex algebra  $\pi^{k+h^\vee_+}_{\epsilon_0|\epsilon_1,\ldots,\epsilon_n}$ introduced in \eqref{intermediate heisenberg}, we may express the relative semi-infinite cohomology $\mathrm{H}_{\mathrm{rel}}^{0}(\mathfrak{gl}_1, \Pi(0)\otimes \pi^{k+h^\vee_{+}}_{\h_+}\otimes K_{D^+} ) $, which is a reformulation of the coset appearing in \eqref{iHR for coset}. 

\begin{lemma}\label{relcoh for the free field algebra}
    There is an isomorphism of vertex superalgebras 
    \begin{align}
     V_\Z\otimes \pi^{k+h^\vee_+}_{\epsilon_0|\epsilon_1,\ldots,\epsilon_n}  \xrightarrow{\simeq} \mathrm{H}_{\mathrm{rel}}^{0}(\mathfrak{gl}_1, \Pi(0)\otimes \pi^{k+h^\vee_{+}}_{\h_+}\otimes K_{D^+} )
    \end{align}
    which sends 
    \begin{align*}
        x&\mapsto \left(x-\frac{\varepsilon^2}{1-\varepsilon^2}y\right)\otimes \mathbf{1}\otimes \mathbf{1} +\mathbf{1}\otimes\frac{1}{1-\varepsilon^2}\epsilon_1\otimes \mathbf{1}-\mathbf{1}\otimes \mathbf{1}\otimes\frac{\varepsilon^2}{1-\varepsilon^2}J^-\\
        \epsilon_0&\mapsto \varepsilon^2\left(x-\frac{\varepsilon^2}{1-\varepsilon^2}y\right)\otimes \mathbf{1}\otimes \mathbf{1} +\mathbf{1}\otimes\frac{\varepsilon^2}{1-\varepsilon^2}\epsilon_1\otimes \mathbf{1}-\mathbf{1}\otimes \mathbf{1}\otimes\frac{\varepsilon^2}{1-\varepsilon^2}J^-\\
        \epsilon_1&\mapsto -\varepsilon^2(x+y)\otimes \mathbf{1}\otimes \mathbf{1}+ \mathbf{1}\otimes \epsilon_1 \otimes \mathbf{1}\\
        \epsilon_i  & \mapsto \epsilon_i \quad (2\leq i\leq n)\\
        \hv{mx} & \mapsto \hv{m(x+y)}\otimes \mathbf{1}\otimes \hv{mx}.
    \end{align*}
    Under the isomorphism, $Q_i^+$ $(0\leq i \leq n)$ are identified as 
    \begin{align}\label{modified screening}
    \widehat{Q}_i^+=\int Y(\hv{\widehat{A}_i^+},z)\ \dz,\quad (0\leq i \leq n)   
    \end{align}
    where 
    \begin{align*}
    \widehat{A}_0^+=x-(\epsilon_0+\epsilon_1),\ \widehat{A}_i^+=-\tfrac{1}{\varepsilon^2}(\epsilon_i-\epsilon_{i+1})\ (1\leq i <n),\ \widehat{A}_n^+=-\tfrac{1}{\varepsilon^2}\epsilon_n.
    \end{align*}
\end{lemma}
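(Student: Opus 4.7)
The strategy is to compute the relative semi-infinite cohomology explicitly by reducing it to the base formula \eqref{rel coh functor}. The relevant $\mathfrak{gl}_1$ is the isotropic diagonal Heisenberg pairing $J^+$ (embedded in $\Pi(0)\otimes\pi^{k+h^\vee_+}_{\h_+}$ as $-y+\epsilon_1$ via \eqref{free field realization of subreg}) with $J^+_*$ inside $K_{D^+}$ (which equals $-x_V+\varepsilon y_K$ in the standard generators $x_V,y_K$ of $V_\Z\otimes\pi^y$, obtained by inverting \eqref{decomposition of kernels}). The fields $J^+$ and $J^+_*$ have opposite norms $\pm(\varepsilon^2-1)$, matching the setup of \eqref{rel coh functor}, and the $\mathfrak{gl}_1$-generator is proportional to $J^+-J^+_*$: isotropic and nondegenerately paired with $J^++J^+_*$, so that the cohomology kills exactly two Heisenberg directions and leaves $n+2$.

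Granting this, I would construct the map by orthogonalizing the Heisenberg part of $\Pi(0)\otimes\pi^{k+h^\vee_+}_{\h_+}\otimes K_{D^+}$ against the $\mathfrak{gl}_1$-pair. The five ``involved'' bosons $x_\Pi,y_\Pi,\epsilon_1^{\mathrm{old}},x_V,y_K$ decompose into the killed $\mathfrak{gl}_1$-pair and three transverse bosons; together with the untouched $\epsilon_i^{\mathrm{old}}$ for $i\geq 2$, these give the $n+2$ surviving generators. Direct OPE computations verify that the stated images of $x,\epsilon_0,\epsilon_1,\ldots,\epsilon_n$ have zero $\mathfrak{gl}_1$-charge and satisfy the correct norms \eqref{intermediate heisenberg} (for instance $(x^{\mathrm{new}},x^{\mathrm{new}})=1$ and $(\epsilon_1^{\mathrm{new}},\epsilon_1^{\mathrm{new}})=\varepsilon^2$ after a short expansion). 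For the lattice generator, $\hv{m(x+y)}\otimes\mathbf{1}\otimes\hv{mx_V}$ carries total $\mathfrak{gl}_1$-charge $m-m=0$ and thus represents a nontrivial class by \eqref{rel coh functor}, identifying the lattice vertex algebra $V_\Z$ inside the cohomology. The map is a homomorphism since all OPEs are inherited from the big free-field algebra, and it is an isomorphism by a graded-character comparison: applying \eqref{rel coh functor} to each $\mathfrak{gl}_1$-Fock isotypic component of $\Pi(0)\otimes\pi^{k+h^\vee_+}_{\h_+}\otimes K_{D^+}$ yields the character of $V_\Z\otimes\pi^{k+h^\vee_+}_{\epsilon_0|\epsilon_1,\ldots,\epsilon_n}$.

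For the screening operators, I invert the given formulas modulo the $\mathfrak{gl}_1$-direction to obtain $\epsilon_i^{\mathrm{old}}\equiv\epsilon_i^{\mathrm{new}}+\delta_{i,1}\varepsilon^2(x_\Pi+y_\Pi)$ for $1\leq i\leq n$ and $x_\Pi\equiv x^{\mathrm{new}}-\epsilon_0^{\mathrm{new}}-\epsilon_1^{\mathrm{new}}$; substituting into each $A_i^+$ then produces the stated $\widehat{A}_i^+$. The main obstacle is the interpretation of this identification: since $(A_1^+,J^+)=1-\varepsilon^2\neq 0$, the original $Q_i^+$ do not preserve the $\mathfrak{gl}_1$-charge and therefore do not descend naively to the cohomology. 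The $\widehat{Q}_i^+$ should be viewed as the intrinsic screening operators on the cohomology characterizing the image of $\Wsuper$, in complete analogy with the Feigin--Frenkel situation where $Q_i^{\mathrm{pr}}$ and $Q_i^-$ characterize corresponding but distinct subalgebras of dual free-field realizations. Phrasing the identification at the level of defining kernels, rather than as equality of individual operators, is the principal technical delicacy of the proof.
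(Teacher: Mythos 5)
The overall strategy (locate the diagonal $\mathfrak{gl}_1$, invoke the cohomology vanishing \eqref{rel coh functor} to reduce to a free-field character comparison, orthogonalize to exhibit the surviving $n+2$ bosons, check norms, identify the lattice generator, and invert the change of coordinates to read off the screening charges) is sound and is essentially the only reasonable way to prove the lemma, which the paper omits. However, your final paragraph contains a genuine error that also invalidates the workaround it proposes. You compute $(A_1^+, J^+) = 1 - \varepsilon^2$ using the OPE pairing $\epsilon_i(z)\epsilon_j(w) \sim \delta_{ij}(k+h^\vee_+)(z-w)^{-2}$ together with the na\"ive identification $\alpha_1 = \epsilon_1 - \epsilon_2$ as a Heisenberg field. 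This is the wrong normalization for a screening charge. Since $J^+ \in \Wsub$ and $\Wsub \hookrightarrow \cap_i\Ker Q_i^+$ by \eqref{free field realization of subreg}, we must have $Q_1^+(J^+) = 0$, and $Q_1^+(J^+) = -\langle A_1^+, J^+\rangle\hv{A_1^+}$ forces $\langle A_1^+, J^+\rangle = 0$. The resolution is that the label $-\alpha_1$ in the exponent of the screening $Q_1^+$ lies in $\h_+^*$, and the vertex operator $Y(\hv{-\alpha_1},z)$ is built from the lifted field $-\tfrac{1}{k+h^\vee_+}(\epsilon_1 - \epsilon_2)$, so the relevant pairing with $J^+ = -y + \check{\varpi}_1$ is the natural $\h^*\times\h$ pairing $\langle-\alpha_1,\check{\varpi}_1\rangle + (x+y,-y) = -1 + 1 = 0$. (This is confirmed by the $\frac{1}{\varepsilon^2}$ prefactors in the stated $\widehat{A}_i^+$ and by the $i\geq 2$ case, where the lemma's substitution $\epsilon_i^{\mathrm{old}} = \epsilon_i^{\mathrm{new}}$ only produces $-\tfrac{1}{\varepsilon^2}(\epsilon_i-\epsilon_{i+1})$ if the original $\alpha_i$ carries that factor.)

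Consequently, each $Q_i^+\otimes\mathrm{id}_{K_{D^+}}$ \emph{does} commute with the diagonal $\widehat{\mathfrak{gl}}_1$ generated by $J^+ + J^+_*$ — it commutes with all modes of $J^+$ because $J^+\in\Ker Q_i^+$, and it acts trivially on $K_{D^+}$ — so it descends directly to the cohomology, and the ``identification'' of the lemma is literal pullback under the isomorphism, not a kernel-level correspondence. Note also that your proposed workaround would not be well-posed if the screenings failed to commute: if $Q_1^+$ shifted the diagonal charge, then $\Ker Q_1^+$ would not be a $\widehat{\mathfrak{gl}}_1$-submodule and one could not even form $\mathrm{H}^0_{\mathrm{rel}}(\mathfrak{gl}_1, \Ker Q_1^+\otimes K_{D^+})$, so the exactness needed to commute $\mathrm{H}^0_{\mathrm{rel}}$ past the joint kernel would fail. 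The correct normalization is therefore not an optional convention here — it is the very mechanism that makes the screenings descend, and the appeal to ``characterization by kernels'' is unnecessary once it is in place. Apart from this point, the construction of the isomorphism and the charge and character checks are correct.
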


Let us apply the Feigin--Frenkel duality of the Virasoro vertex algebras (c.f. \cite[Chapter 15]{FBZ}) to the screening operators \eqref{modified screening} with $i=1,\ldots,n$. Then we obtain the following equivalent set of screening operators on $V_\Z\otimes \pi^{k+h^\vee_+}_{\epsilon_0|\epsilon_1,\ldots,\epsilon_n}$:
\begin{align}\label{final screening}
    \widetilde{Q}_0^+=\widehat{Q}_0^+,\ \widetilde{Q}_i^+=\int Y(\hv{\epsilon_i-\epsilon_{i+1}},z)\ \dz,\ \widetilde{Q}_n^+=\int Y(\hv{2\epsilon_n},z)\ \dz.  
    \end{align}
with $i=1,\ldots, n-1$.
Under the duality relation of the levels \eqref{level condition for FS}, recall that we have an isomorphism 
\begin{align}\label{switching to the osp side}
        \begin{array}{ccl}
       V_\Z\otimes \pi^{k+h^\vee_+}_{\epsilon_0|\epsilon_1,\ldots,\epsilon_n}  &  \xrightarrow{\simeq} & V_\Z\otimes \pi^{\ell+h^\vee_-}_{\h_-},
    \end{array}
\end{align}
see \eqref{isom for the large FFR}.
It identifies the screening operators \eqref{final screening} with \eqref{screenings in Kazama Suzuki}.

\begin{proposition}\label{finding spr via relcoh}
    There is an isomorphism of vertex superalgebras which makes the following diagram commutative:
	\begin{center}
		\begin{tikzcd}[row sep=large, column sep = huge]
			\Wsuper
			\arrow[d,hook]
			\arrow[r,"\simeq"]&
			\mathrm{H}_{\mathrm{rel}}^{0}(\mathfrak{gl}_1, \Wsub\otimes K_{D^+} )
			\arrow[d,hook]\\
			V_\Z\otimes \pi^{\ell+h^\vee_-}_{\h_-}
			\arrow[r, "\simeq"]&
			\mathrm{H}_{\mathrm{rel}}^{0}(\mathfrak{gl}_1, \Pi(0)\otimes \pi^{k+h^\vee_+}_{\h_+}\otimes K_{D^+} ).
		\end{tikzcd}
	\end{center}
\end{proposition}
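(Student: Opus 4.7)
The plan is to assemble the isomorphism from three ingredients already in place: the reconstruction theorem (Proposition~\ref{reconstruction}), which identifies $\sprW$ with $\mathrm{H}_{\mathrm{rel}}^{0}(\mathfrak{gl}_1, \subW\otimes K_{D^+})$; the explicit computation of the cohomology of the ambient free field algebra (Lemma~\ref{relcoh for the free field algebra}); and the Feigin--Frenkel self-duality of the Virasoro vertex algebras, which transports the long-root screenings from type $B$ to type $C$.

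Concretely, I would obtain the right vertical embedding by applying the exact functor $\mathrm{H}_{\mathrm{rel}}^{0}(\mathfrak{gl}_1, \bullet \otimes K_{D^+})$ to the Miura embedding $\subW \hookrightarrow \Pi(0)\otimes\pi^{k+h^\vee_+}_{\h_+}$ of \eqref{FFR}, exactness being guaranteed by \eqref{rel coh functor}. The left vertical embedding is the Miura realization of $\sprW$ from \eqref{FFR}. The bottom horizontal isomorphism is then defined as the composition of the inverse of Lemma~\ref{relcoh for the free field algebra} with \eqref{switching to the osp side}. The commutativity of the diagram is essentially built into the construction, since all maps arise by naturality of cohomology from inclusions of free field algebras; the only non-trivial content is that the image of the right vertical map coincides with the image of the left vertical map.

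At generic levels $k$, both images inside $V_\Z\otimes \pi^{\ell+h^\vee_-}_{\h_-}$ can be cut out by screening operators. On the one hand, $\sprW$ is characterized by the screenings $Q_i^-$ of \eqref{screenings in Kazama Suzuki}. On the other hand, the image of the right vertical map arises from the screenings $Q_i^+$ characterizing $\subW$ inside $\Pi(0)\otimes\pi^{k+h^\vee_+}_{\h_+}$; after passing to cohomology via Lemma~\ref{relcoh for the free field algebra} they become the operators $\widehat{Q}_i^+$ of \eqref{modified screening}; after applying the Virasoro self-duality to the $1 \le i \le n$ screenings they become the operators $\widetilde{Q}_i^+$ of \eqref{final screening}; and finally under \eqref{switching to the osp side} they are identified with the $Q_i^-$. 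Hence both images agree generically, and composing with the isomorphism of Proposition~\ref{reconstruction} yields the commuting diagram.

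For non-critical levels satisfying \eqref{level condition for FS} outside the generic locus, I would extend by the same continuity argument used in the proof of Theorem~\ref{inverse HR}: the families on both sides vary continuously in $k$, inclusion into the joint kernel of screening operators is a closed algebraic condition, so the generic inclusion propagates, and matching graded dimensions then upgrade it to equality. The main obstacle I anticipate is bookkeeping: one must check that the replacement $\widehat{Q}_i^+ \rightsquigarrow \widetilde{Q}_i^+$ through Virasoro self-duality is compatible with the remaining screening $\widehat{Q}_0^+ = \widetilde{Q}_0^+$, which involves the lattice factor $\hv{x}$ and mixes several Heisenberg fields under the change of variables of Lemma~\ref{relcoh for the free field algebra}. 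Once this compatibility is verified, the rest of the argument is formal.
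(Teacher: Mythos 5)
Your proposal follows the same route as the paper: the paper proves this proposition precisely by the chain Lemma~\ref{relcoh for the free field algebra} $\to$ Virasoro self-duality ($\widehat{Q}^+_i\rightsquigarrow\widetilde Q^+_i$ in \eqref{final screening}) $\to$ \eqref{switching to the osp side}, matching the screening characterizations of both sides. You go slightly further by making explicit the continuity argument for non-generic levels (which the paper leaves implicit here); the only small imprecision is the phrase ``composing with the isomorphism of Proposition~\ref{reconstruction} yields the commuting diagram''—since the proposition only asserts existence of \emph{some} isomorphism, once the two images in $V_\Z\otimes\pi^{\ell+h^\vee_-}_{\h_-}$ coincide the top arrow is simply the induced map, and one need not (and should not, without further checking) identify it with the specific isomorphism of Proposition~\ref{reconstruction}.
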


\subsection{Wakimoto-type representations}\label{sec:wakimoto}
The realization \eqref{wakimoto realization} of $V^k(\g)$ induces a family of free field representations 
$\mathbb{W}^k_\mu:=\beta\gamma^{\Delta_+}\otimes \pi^{k+h^\vee_+}_{\h_+,\mu}$ $(\mu\in \h_+^*)$
called Wakimoto modules. 
They are $H_f^\bullet$-acyclic in general (see e.g. \cite[Proposition 4.5]{Gen}), and more precisely we have
\begin{align*}
    H_{f_{\mathrm{sub}}}^n(\mathbb{W}^k_\mu)\simeq \delta_{n,0} \mathbb{W}^{k,+}_\mu,\quad \mathbb{W}^{k,+}_\mu:=\beta\gamma\otimes \pi^{k+h^\vee_+}_{\h_+,\mu},
\end{align*}
which are naturally $\subW$-modules, also called Wakimoto modules. We note that the module structure on $\mathbb{W}^{k,+}_\mu$ can also be obtained through the embedding \eqref{Wakimoto realization of subregular}.
Consider the embedding \eqref{successive embeddings} instead of \eqref{Wakimoto realization of subregular}. Then by using the $\Pi(0)$-modules 
$$\Pi_\theta[a]=\bigoplus_{m\in \Z}\pi^{x,y}_{-\theta y+(m+a)(x+y)},\quad (\theta\in \Z, [a]\in \C/\Z)$$
we introduce the following induced $\subW$-modules
\begin{align*}
\widehat{\mathbb{W}}^{k,+}_{\mu_0;\mu}:=\Pi[\mu_0]\otimes \pi^{k+h^\vee_+}_{\h_+,\mu},\qquad \widehat{\mathbb{W}}^{k,+}_{\mu_0;\mu;\theta}:=S_{\theta J^+}\widehat{\mathbb{W}}^{k,+}_{\mu_0;\mu}\simeq \Pi_\theta[\mu_0]\otimes \pi^{k+h^\vee_+}_{\h_+,\mu+\theta\varepsilon^2\epsilon_1}\quad(\theta\in\Z).
\end{align*}
We call these modules \emph{thick Wakimoto modules} in the following.
Note that when $[\mu_0]=[0]$ we have $\mathbb{W}^{k,+}_{\mu}\subset \widehat{\mathbb{W}}^{k,+}_{0;\mu}$.
Similarly, we introduce the Wakimoto modules for the $\W$-superalgebra $\sprW$ by using \eqref{FFR}
$$\mathbb{W}^{\ell,-}_{\mu}:=V_{\Z}\otimes\pi^{\ell+h^\vee_-}_{\h_-,\mu}\quad (\mu\in\h_-^*).$$
If $(\alpha_0,\mu)=0$, then the $0$-th screening operator $Q_0^-$ in \eqref{screenings in Kazama Suzuki} can be used to introduce a $\sprW$-submodule
\begin{align*}
{W}^{\ell,-}_\mu:=\Ker\left(Q_0^-\colon\mathbb{W}^{\ell,-}_{\mu}\rightarrow \mathbb{W}^{\ell,-}_{\mu-\alpha_0}\right).
\end{align*}
We call ${W}^{\ell,-}_\mu$ the \emph{thin Wakimoto module} of highest weight $\mu$.

We have the following correspondence among these Wakimoto type free field representations.

\begin{theorem}\label{thm:wakimoto_correspondence}
For $(\mu_0,\mu)\in \C\times \h_+^*$ and $\theta\in\Z$, set
\begin{align*}
    \mu=\sum_{i=1}^n \mu_i \varpi_i\in \h_+^*,\quad \check{\mu}=\sum_{i=0}^n\mu_i\check{\varpi}_i\in\h_-^*.
\end{align*}

\noindent
\begin{enumerate}[wide, labelindent=0pt, font=\normalfont]
\item We have isomorphisms of $\Wsuper$-modules
    \begin{align*}
    \mathrm{H}_{\varepsilon^{-1}(\mu_0+(\mu,\varpi_1)+\varepsilon^2\theta)}^+\left(\widehat{\mathbb{W}}^{k,+}_{\mu_0;\mu;\theta}\right)\simeq
    \mathbb{W}^{\ell,-}_{-\frac{1}{2\varepsilon^2}\check{\mu}},\qquad
    \mathrm{H}_{\varepsilon^{-1}(\mu,\varpi_1)}^+\left(\mathbb{W}^{k,+}_\mu\right)\simeq
    W^{\ell,-}_{-\frac{1}{2\varepsilon^2}\check{\mu}}.
    \end{align*}
\item We have isomorphisms of $\Wsub$-modules    \begin{align*}
    \mathrm{H}_{\varepsilon((\check{\mu},\check{\varpi}_0)+\theta)}^-\left(\mathbb{W}^{\ell,-}_{\check{\mu}}\right)\simeq
    \widehat{\mathbb{W}}^{k,+}_{-2\varepsilon^2\mu_0;-2\varepsilon^2\mu;\theta},\qquad
    \mathrm{H}_{\varepsilon(\check{\mu},\check{\varpi}_0)}^-(W^{\ell,-}_{\check{\mu}})\simeq
    \mathbb{W}^{k,+}_{-2\varepsilon^2\mu}.
    \end{align*} 
\end{enumerate}
\end{theorem}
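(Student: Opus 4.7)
The plan is to reduce each claim to a Fock-module computation via the free-field realizations \eqref{FFR} and \eqref{successive embeddings}, and then invoke the module-level analogue of Lemma~\ref{relcoh for the free field algebra} together with the Feigin--Frenkel identification \eqref{switching to the osp side}.

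\emph{Absorbing the spectral flow.} By Theorem~\ref{equivalence of categories} applied to $\widehat{\mathbb{W}}^{k,+}_{\mu_0;\mu;\theta}=S_{\theta J^+}\widehat{\mathbb{W}}^{k,+}_{\mu_0;\mu;0}$, we have
\[
\mathrm{H}^+_{\varepsilon^{-1}(\mu_0+(\mu,\varpi_1)+\varepsilon^2\theta)}\bigl(\widehat{\mathbb{W}}^{k,+}_{\mu_0;\mu;\theta}\bigr)\simeq \mathrm{H}^+_{\varepsilon^{-1}(\mu_0+(\mu,\varpi_1))}\bigl(\widehat{\mathbb{W}}^{k,+}_{\mu_0;\mu;0}\bigr),
\]
so it suffices to treat the first isomorphism of (1) at $\theta=0$.

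\emph{The case $\theta=0$.} The module $\widehat{\mathbb{W}}^{k,+}_{\mu_0;\mu;0}=\Pi[\mu_0]\otimes \pi^{k+h^\vee_+}_{\h_+,\mu}$ is by construction a direct sum of Fock modules for $\Pi(0)\otimes \pi^{k+h^\vee_+}_{\h_+}$, in which $\subW$ acts through \eqref{successive embeddings}. The computation in \eqref{rel coh functor} that underlies Lemma~\ref{relcoh for the free field algebra} works for arbitrary Fock modules, so the same identification yields
\[
\mathrm{H}^0_{\mathrm{rel}}\bigl(\mathfrak{gl}_1,\ \widehat{\mathbb{W}}^{k,+}_{\mu_0;\mu;0}\otimes K^\lambda_{D^+}\bigr)\simeq V_\Z\otimes F_\nu,
\]
with $F_\nu$ a Fock module over $\pi^{k+h^\vee_+}_{\epsilon_0|\epsilon_1,\ldots,\epsilon_n}$ and the value $\lambda=\varepsilon^{-1}(\mu_0+(\mu,\varpi_1))$ singled out by the vanishing of the $\mathfrak{gl}_1$-weight (otherwise the cohomology is zero). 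Sending this Fock module to the super side via \eqref{switching to the osp side}, and matching weights using the norms \eqref{intermediate heisenberg} together with $(t_i,t_j)=(-1)^{\delta_{i,0}}\delta_{i,j}/2$ and the level duality $2\varepsilon^2(\ell+h^\vee_-)=1$, produces $\mathbb{W}^{\ell,-}_{-\check\mu/(2\varepsilon^2)}$, as claimed.

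\emph{Thin case of (1).} Under the Friedan--Martinec--Shenker bosonization, $\mathbb{W}^{k,+}_\mu=\Ker\bigl(Q_0^+\colon \widehat{\mathbb{W}}^{k,+}_{0;\mu;0}\to \widehat{\mathbb{W}}^{k,+}_{-1;\mu;0}\bigr)$ with $Q_0^+=\int Y(\hv{x},z)\,\dz$. By Lemma~\ref{relcoh for the free field algebra} and the duality \eqref{switching to the osp side}, this screening is carried to the screening $Q_0^-$ defining $W^{\ell,-}_{-\check\mu/(2\varepsilon^2)}$. Exactness of $\mathrm{H}^+_\lambda$, guaranteed by \eqref{rel coh functor}, combined with the thick case, gives the second isomorphism.

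\emph{Part (2).} Each isomorphism in (2) is obtained from the corresponding one in (1) by applying the quasi-inverse $\mathrm{H}^-_\lambda$ of Theorem~\ref{categorical equivalence} and relabelling the super-side weight as $\check\mu\mapsto -2\varepsilon^2\check\mu$. The required matching of the spectral parameters
\[
\varepsilon\bigl((\check\mu,\check\varpi_0)+\theta\bigr)
\;=\;\varepsilon^{-1}\bigl(-2\varepsilon^2\mu_0+(-2\varepsilon^2\mu,\varpi_1)+\varepsilon^2\theta\bigr)
\]
reduces to the identity $(\check\mu,\check\varpi_0)=-2\mu_0-2(\mu,\varpi_1)$, which is verified directly from $\check\varpi_0=2t_0$ and the relations \eqref{realization of Cartan of type B}--\eqref{realization of Cartan of type C} (noting that the half-integral normalization of $\varpi_n$ matches the fact that $\check\varpi_n$ has $t_0$-coefficient $1$ rather than $2$).

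\emph{Main obstacle.} Conceptually everything is already prepared in Sections~\ref{sec:FF-FS} and \ref{Sec: equivalnce of categories}; the only delicate point is the weight bookkeeping through the three Heisenberg presentations $(x,y,\h_+)$, $(\epsilon_0,\ldots,\epsilon_n)$, and $\h_-$. Extracting the precise factor $-1/(2\varepsilon^2)$ and checking that the $\lambda$-parameter selected by the vanishing of the $\mathfrak{gl}_1$-weight agrees with the expression in the theorem requires a careful comparison of the bilinear forms via \eqref{switching to the osp side} and the level duality \eqref{level condition for FS}.
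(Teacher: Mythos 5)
Your plan is correct and follows essentially the same route as the paper: absorb the spectral flow via Theorem~\ref{equivalence of categories}, compute the $\theta=0$ case by tracing Fock modules through Lemma~\ref{relcoh for the free field algebra} and the Feigin--Frenkel change of Heisenberg \eqref{switching to the osp side}, identify the thin module as the kernel of $Q_0^+$ carried to $Q_0^-$ (this is precisely the content of Proposition~\ref{finding spr via relcoh}), and obtain (2) by applying the quasi-inverse. Your explicit check of the pairing identity $(\check\mu,\check\varpi_0)=-2\mu_0-2(\mu,\varpi_1)$ is a useful addition. Two small inaccuracies, neither of which affects the argument: the target of the FMS screening is not $\widehat{\mathbb{W}}^{k,+}_{-1;\mu;0}$ (which is $\Pi_0[-1]\otimes\pi^{k+h^\vee_+}_{\h_+,\mu}\cong\Pi_0[0]\otimes\pi^{k+h^\vee_+}_{\h_+,\mu}$) but rather $\Pi_1[0]\otimes\pi^{k+h^\vee_+}_{\h_+,\mu}$, since $\hv{x}$ shifts $\Pi_0[0]\to\Pi_1[0]$; and the phrase ``otherwise the cohomology is zero'' is slightly too strong: for any $\lambda'\equiv\lambda\pmod{\varepsilon^{-1}\Z}$ the cohomology is nonzero, but differs by a $J^-$-spectral flow twist, so the specific $\lambda$ in the theorem is the one producing the untwisted $\mathbb{W}^{\ell,-}_{-\check\mu/(2\varepsilon^2)}$.
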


\proof
(1) Let us write $\mu=\sum \nu_i \epsilon_i$. We show the first isomorphism for $\theta=0$. Since $\widehat{\mathbb{W}}^{k,+}_{\mu_0;\mu}$ lies in $\KL_{D^+}^{k,[\mu_0+\nu_1]}$, we may apply the functor $\mathrm{H}_{\varepsilon^{-1}(\mu_0+(\mu,\varpi_1))}^+=\mathrm{H}_{\varepsilon^{-1}(\mu_0+\nu_1)}^+$. 
It follows from Lemma \ref{relcoh for the free field algebra} that we have an isomorphism 
\begin{align*}
    \mathrm{H}_{\varepsilon^{-1}(\lambda,\mu_0+\nu_1)}^+\left(\widehat{\mathbb{W}}^{k,+}_{\mu_0;\mu}\right)\simeq V_\Z\otimes \pi^{k+h^\vee_+}_{\epsilon_0|\epsilon_1,\ldots,\epsilon_n,[-(\mu_0+\nu_1);\nu_1,\ldots, \nu_n]}
\end{align*}
as $V_\Z\otimes \pi^{k+h^\vee_+}_{\epsilon_0|\epsilon_1,\ldots,\epsilon_n}$-modules where $\pi^{k+h^\vee_+}_{\epsilon_0|\epsilon_1,\ldots,\epsilon_n,[-(\mu_0+\nu_1);\nu_1,\ldots, \nu_n]}$ is the Fock module over $\pi^{k+h^\vee_+}_{\epsilon_0|\epsilon_1,\ldots,\epsilon_n}$ whose highest weight is determined by 
$$\epsilon_0=-(\mu_0+\nu_1), \epsilon_1=\nu_1,\ldots, \epsilon_n=\nu_n.$$
By the isomorphism \eqref{switching to the osp side}, we identify 
\begin{align*}
    V_\Z\otimes\pi^{k+h^\vee_+}_{\epsilon_0|\epsilon_1,\ldots,\epsilon_n,[-(\mu_0+\nu_1);\nu_1,\ldots, \nu_n]}\simeq  V_\Z\otimes\pi^{\ell+h^\vee_-}_{\h_-,[(\mu_0+\nu_1)/2\varepsilon^2, -\nu_1/2\varepsilon^2,\ldots, -\nu_n/2\varepsilon^2]}
\end{align*}
as $V_\Z\otimes\pi^{\ell+h^\vee_-}_{\h_-}$-modules where $\pi^{\ell+h^\vee_-}_{\h_-,[(\mu_0+\nu_1)/2\varepsilon^2, -\nu_1/2\varepsilon^2,\ldots, -\nu_n/2\varepsilon^2]}$ is the Fock module over $\pi^{\ell+h^\vee_-}_{\h_-}$ with the highest weight
$$t_0=\frac{(\mu_0+\nu_1)}{2\varepsilon^2}, t_1=-\frac{\nu_1}{2\varepsilon^2},\ldots, t_n=-\frac{\nu_n}{2\varepsilon^2},$$
see \eqref{realization of Cartan of type C} for the basis $t_i$'s of $\h_-$. Since the fundamental coweights $\check{\varpi}_i$ of $\osp_{2|2n}$ are expressed as
\begin{align*}
\check{\varpi}_0=2t_0,\ldots, \check{\varpi}_{n-1}=2(t_0+\cdots+t_{n-1}), \check{\varpi}_{n}=(t_0+\cdots+t_{n}),
\end{align*}
we obtain an isomorphism 
\begin{align*}
    \mathrm{H}_{\varepsilon^{-1}(\lambda,\mu_0+\nu_1)}^+\left(\widehat{\mathbb{W}}^{k,+}_{\mu_0;\mu}\right)\simeq V_\Z\otimes \pi^{\ell+h^\vee_-}_{\h_-,-\frac{1}{2\varepsilon^2}\check{\mu}}=\mathbb{W}^{\ell,-}_{-\frac{1}{2\varepsilon^2}\check{\mu}}
\end{align*}
as $V_\Z\otimes \pi^{\ell+h^\vee_-}_{\h_-}$-modules and thus as $\W^\ell_{D^-}$-modules by Proposition~\ref{finding spr via relcoh}.
This proves the case $\theta=0$.
The general case $\theta\in \Z$ follows from the case $\theta=0$ by a direct application of Theorem~\ref{equivalence of categories}. 
Next, setting $(\mu_0,\theta)=(0,0)$, the first isomorphism 
gives an embedding
\begin{align*}
     \mathrm{H}_{\varepsilon^{-1}(\mu,\varpi_1)}^+\left(\mathbb{W}^{k,+}_\mu\right)\subset \mathrm{H}_{\varepsilon^{-1}(\mu,\varpi_1)}^+\left(\widehat{\mathbb{W}}^{k,+}_{0;\mu}\right)\simeq
    \mathbb{W}^{\ell,-}_{-\frac{1}{2\varepsilon^2}\check{\mu}}
    \end{align*}
as $\sprW$-modules. 
The image coincides with the kernel of the screening operator induced by $Q_0^+$, that is $Q_0^-$ by Proposition~\ref{finding spr via relcoh}. Therefore, $\mathrm{H}_{\varepsilon^{-1}(\mu,\varpi_1)}^+\left(\mathbb{W}^{k,+}_\mu\right)\simeq W^{\ell,-}_{-\frac{1}{2\varepsilon^2}\check{\mu}}$.
(2) The assertion follows from (1) and the quasi-inverse property of functors $\mathrm{H}^\pm_\bullet$ (see Theorem~\ref{categorical equivalence}).
\endproof

\begin{remark}
Note that in Theorem~\ref{thm:wakimoto_correspondence} (1) the effect of the spectral flow twists $S_{\theta J^+}$ on $\widehat{\mathbb{W}}^{k,+}_{\mu_0;\mu}$ is invisible on the $\sprW$-module side: this difference is encoded in the choice of the functor $\mathrm{H}^+_\bullet$ depending on $\theta$. Conversely, the effect of the spectral flow twists $S_{\theta J^-}$ on $\mathbb{W}^{\ell,-}_{-\frac{1}{2\varepsilon^2}\check{\mu}}$, which shift $\mu_0$ by integers, is invisible on the $\subW$-modules side since $\widehat{\mathbb{W}}^{k,+}_{\mu_0;\mu}$ depends on $\mu_0$ \emph{modulo $\Z$} by definition.
\end{remark}

\subsection{Characters}
Here we compute the characters of Wakimoto-type modules over $\subW$ and $\sprW$, respectively.
Set
$$x_{\Gamma}^+=x_{\Gamma,1}^++x_{\Gamma,2}^+,\quad x_{\Gamma}^-=x_{\Gamma,1}^-+x_{\Gamma,2}^-$$
with
\begin{align*}
    &x_{\Gamma,1}^+:=-\tfrac{1}{2}(x+y)+ny,  &&x_{\Gamma,2}^+:=x_0^+-(n-1)\check{\varpi}_1=\rho+\varpi_n-n\varpi_1\in \h_+,\\
    &x_{\Gamma,1}^-:=-n x,  &&x_{\Gamma,2}^-:=x_0^--\tfrac{2n-1}{2}\check{\varpi}_0=2\rho-\varpi_n-\tfrac{2n-1}{2}\check{\varpi}_0\in \h_-,
\end{align*}
where $\rho=\half\sum_{\alpha\in\Delta}(-1)^{p(\alpha)}\alpha$ is the Weyl vector.
Then the embeddings \eqref{FFR} send
\begin{align}\label{character info}
\begin{array}{lllll}
\subW&\hookrightarrow\Pi(0)\otimes\pi^{k+h^\vee_+}_{\h_+},& \sprW&\hookrightarrow V_{\Z}\otimes\pi^{\ell+h^\vee_-}_{\h_-}\\
L &\mapsto L_{\mathrm{sug}}+\partial\left(x_{\Gamma}^+-\tfrac{1}{k+h^\vee_+}\rho\right)\quad 
& L& \mapsto L_{\mathrm{sug}}+\partial\left(x_{\Gamma}^--\tfrac{1}{\ell+h^\vee_-}\rho\right)\\
J^+ &\mapsto -y+\check{{\varpi}}_{1} & J^-& \mapsto x+\check{\varpi}_0\\
\end{array}
\end{align}
by \cite{KW22} where $L_{\mathrm{sug}}$ is the conformal vector given by the Segal--Sugarawa construction.
By using the $q$-Pochhammer symbol and the delta function
\begin{align*}
(a;q)_\infty=\prod_{n=0}^{\infty}(1-aq^n),\quad \delta(z)=\sum_{n\in \Z} z^n
\end{align*}
we obtain the following formulae for the characters $\ch M (z,q)=\mathrm{tr}_{M} z^{J^{\pm}_{0}}q^{L_0}$ of the Wakimoto-type modules.

\begin{proposition}\label{prop:Wakimoto_characters}\hspace{0mm}
\begin{enumerate}[wide, labelindent=0pt, font=\normalfont]
\item Set $\mu_{[\theta]}=\mu+\varepsilon^2\theta\varpi_1$ and  $$\Delta_{\mu_0;\mu;\theta}^k=\tfrac{(\mu_{[\theta]},\mu_{[\theta]}+2\rho)}{2\varepsilon^2}-(x_{\Gamma}^+,\mu_{[\theta]}+(\theta-\mu_0)J^+),\quad s_{\mu_0,\theta}=\tfrac{\mu_0(\mu_0-1)-(\mu_0-\theta)^2}{2}.$$ 
Then we have
\begin{align*}
\mathrm{ch}\ \mathbb{W}^{k,+}_{\mu}=\frac{q^{\Delta_{0;\mu;0}^k}z^{(\mu,\varpi_{1})}}{\poch{q}^{n}\poch{zq^{n},z^{-1}q^{1-n}}},\qquad
\mathrm{ch}\ \widehat{\mathbb{W}}^{k,+}_{\mu_0;\mu;\theta}=\frac{q^{\Delta_{\mu_0;\mu;\theta}^k+s_{\mu_0,\theta}}z^{(\mu_{[\theta]},\varpi_{1})+(\mu_0-\theta)}\delta(zq^{n+\theta})}{\poch{q}^{n+2}}.
\end{align*}
\item
    Set $\Delta_\mu^\ell=\varepsilon^2(\mu,\mu+2\rho)-(x_{\Gamma}^-,\mu)$. Then we have
    \begin{align*}
        &\mathrm{ch}\ \mathbb{W}^{\ell,-}_{\mu}=q^{\Delta_{\mu}^\ell}z^{(\mu,\check{\varpi}_0)}\frac{\poch{-zq^{n+\half},-z^{-1}q^{-n+\half} }}{\poch{q}^{n+1}},\quad \mathrm{ch}\ W^{\ell}_\mu= \frac{\mathrm{ch}\ \mathbb{W}^{\ell,-}_{\mu}}{1+z^{-1}q^{-n+\half} }.
    \end{align*}
\end{enumerate}
\end{proposition}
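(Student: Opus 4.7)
The plan is to compute each character directly from the free field realizations \eqref{FFR}, by reading off the $L_0$- and $J^\pm_0$-eigenvalues of the highest weight vectors via the explicit formulas \eqref{character info}. Since the characters of the underlying free field modules ($\beta\gamma$, $\Pi(0)$-modules, $V_\Z$, and Heisenberg Fock modules) are standard, the main work is bookkeeping of eigenvalues. The key fact I would use repeatedly is that, if $L=L_{\mathrm{sug}}+\partial h$ for some Heisenberg element $h$ of conformal weight~$1$, then $(\partial h)_0=-h_0$, so on any Fock vector $|\lambda\rangle$ we have $L_0|\lambda\rangle=\bigl(\tfrac{(\lambda,\lambda)}{2k_0}-(h,\lambda)\bigr)|\lambda\rangle$, with $k_0$ the level of the relevant Heisenberg.

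For part (1), I combine this with $J^+\mapsto -y+\varpi_1$ and the FMS bosonization $\beta=\hv{x+y}$, $\gamma=-x\,\hv{-(x+y)}$ to see that $\beta$ and $\gamma$ have bidegrees $(n,+1)$ and $(1-n,-1)$ under $(L_0,J^+_0)$; the $\beta\gamma$-character $1/(zq^n,z^{-1}q^{1-n};q)_\infty$ multiplied by the Fock character then reproduces the formula for $\mathbb{W}^{k,+}_\mu$. For $\widehat{\mathbb{W}}^{k,+}_{\mu_0;\mu;\theta}$, I decompose $\Pi_\theta[\mu_0]=\bigoplus_{m\in\Z}\pi^{x,y}_{\nu_m}$ with $\nu_m=-\theta y+(m+\mu_0)(x+y)$, compute the pairings on $\pi^{x,y}$ using $(x,x)=1$, $(y,y)=-1$, $(x,y)=0$, and observe that both $L_0$ and $J^+_0$ are affine-linear in~$m$. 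Summing the geometric series produces the factor $\delta(zq^{n+\theta})$, and the constants are matched to $\Delta_{\mu_0;\mu;\theta}^k+s_{\mu_0,\theta}$ and $(\mu_{[\theta]},\varpi_1)+(\mu_0-\theta)$ by direct expansion, using the identity $(x_\Gamma^+,J^+)=n-\tfrac{1}{2}$ (which follows from $(x_{\Gamma,1}^+,-y)=n-\tfrac{1}{2}$ and $(x_{\Gamma,2}^+,\varpi_1)=0$; the latter is obtained from the type-$B$ identities $(\rho,\varpi_1)=n-\tfrac{1}{2}$, $(\varpi_n,\varpi_1)=\tfrac{1}{2}$ and $(\varpi_1,\varpi_1)=1$).

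For part (2), I apply the same strategy to $\mathbb{W}^{\ell,-}_\mu=V_\Z\otimes\pi^{\ell+h^\vee_-}_{\h_-,\mu}$: the Fock factor contributes $q^{\Delta_\mu^\ell}z^{(\mu,\check{\varpi}_0)}/(q;q)_\infty^{n+1}$ via the analogous computation with $J^-\mapsto x+\check{\varpi}_0$. Writing $V_\Z=\bigoplus_{k\in\Z}\pi^x_{kx}$ and using $x_{\Gamma,1}^-=-nx$, one finds $L_0|kx\rangle=k(k+2n)/2$ and $J^-_0|kx\rangle=k$, so the Jacobi triple product identity yields $\mathrm{ch}\,V_\Z=\tfrac{1}{(q;q)_\infty}\sum_k z^kq^{k(k+2n)/2}=(-zq^{n+\half},-z^{-1}q^{-n+\half};q)_\infty$, giving the first formula after multiplication with the Fock character. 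For the thin Wakimoto $W^{\ell,-}_\mu=\Ker Q_0^-$, the screening $Q_0^-=\int Y(\hv{A_0^-},z)\,\dz$ with $A_0^-=-\alpha_0+x=t_0+t_1+x$, under the boson--fermion correspondence identifying $V_\Z$ with a pair of weight-$\tfrac{1}{2}$ fermions, identifies with contraction along a distinguished fermionic mode of weight $-n+\tfrac{1}{2}$ and $J^-$-charge $-1$; cutting this mode out of each Fock summand removes precisely the factor $(1+z^{-1}q^{-n+\half})$ from the numerator.

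The main obstacle is bookkeeping across the different bilinear forms in play—the indefinite form on $\pi^{x,y}$ and the super-symmetric form $(t_i,t_j)=(-1)^{\delta_{i,0}}\delta_{i,j}/2$ on $\h_-$—since both feed simultaneously into the Sugawara eigenvalues and the linear dilaton corrections, and a sign error at any step propagates throughout. For the thin Wakimoto statement, the identification of the precise mode of $Q_0^-$ under the boson--fermion correspondence also requires tracking the conformal weight and $J^-$-charge of $\hv{A_0^-}$ carefully using the concrete expression $A_0^-=t_0+t_1+x$ together with the Heisenberg OPEs. Once the eigenvalues are in hand, the character formulas follow immediately.
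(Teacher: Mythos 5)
For the three "thick" characters (items $\mathbb{W}^{k,+}_\mu$, $\widehat{\mathbb{W}}^{k,+}_{\mu_0;\mu;\theta}$, $\mathbb{W}^{\ell,-}_\mu$), your direct bookkeeping through \eqref{character info} is exactly the paper's method, and your computations (FMS bidegrees of $\beta,\gamma$, the geometric-series summation over $\Pi_\theta[\mu_0]$ producing $\delta(zq^{n+\theta})$, and the Jacobi triple-product rewriting of $\mathrm{ch}\,V_\Z$) are correct and complete.

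For the thin Wakimoto module $W^{\ell,-}_\mu=\Ker Q_0^-$, your route diverges from the paper's, and as written it has a genuine gap. You assert that under boson--fermion correspondence $Q_0^-$ ``identifies with contraction along a distinguished fermionic mode,'' and then conclude the character by cutting that mode out of each Fock summand. But $Q_0^-=\int Y(\hv{A_0^-},z)\,\dz$ with $A_0^-=-\alpha_0+x$ is the residue of a \emph{product} $Y(\hv{x},z)\otimes Y(\hv{-\alpha_0},z)$: only the first factor is a fermion under the boson--fermion correspondence, while the second is a nontrivial Heisenberg vertex operator on $\pi^{\ell+h^\vee_-}_{\h_-,\mu}$ whose exponential dressing $E^\pm(-\alpha_0,z)$ does not go away just because $(\alpha_0,\mu)=0$. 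Consequently $Q_0^-$ is \emph{not} literally a single fermionic mode; it is a sum $\sum_m \psi_m\cdot(\text{Heisenberg modes})$, and taking its kernel is not the same as deleting one factor from the character numerator. To make your argument rigorous you would need either (i) a conjugation that absorbs the Heisenberg dressing and turns $Q_0^-$ into a bare mode while preserving the $(L_0,J^-_0)$-bigrading, or (ii) an exactness/cohomology-vanishing statement for the screening complex $\mathbb{W}^{\ell,-}_\mu\to\mathbb{W}^{\ell,-}_{\mu-\alpha_0}$; neither is supplied. The paper avoids this difficulty entirely: it invokes Theorem~\ref{thm:wakimoto_correspondence} to identify $W^{\ell,-}_{-\frac{1}{2\varepsilon^2}\check\mu}\simeq \mathrm{H}^+_{\frac{(\mu,\varpi_1)}{\varepsilon}}(\mathbb{W}^{k,+}_\mu)$, then computes the character of the right-hand side by the Euler--Poincar\'e principle (justified by the one-dimensional cohomology vanishing \eqref{rel coh functor}) together with Stokman's $q$-beta integral identity. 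If you wish to keep your fermionic-mode picture, you should either produce the bigrading-preserving conjugation, or fall back to the paper's indirect route via the $\mathrm{H}^+$-functor.
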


\proof
Except for $W^{\ell}_\mu$, the characters are obtained straightforwardly by using \eqref{character info}. 
For the remaining case, it suffices to show instead the equality 
\begin{align}
\mathrm{ch}\ \mathrm{H}^+_{\frac{(\mu,\varpi_1)}{\varepsilon}}(\mathbb{W}^{k,+}_\mu) (w,q)=\frac{\mathrm{ch}\ \mathbb{W}^{\ell,-}_{-\frac{1}{2\varepsilon^2}\check{\mu}}(w,q)}{1+z^{-1}q^{-n+\half}},\quad (\mu\in \h_+^*)
\end{align}
by Theorem \ref{thm:wakimoto_correspondence}. 
The left-hand side can be computed via the Euler--Poincar\'{e} principle thanks to the cohomology vanishing \eqref{rel coh functor}.
Moreover 
\begin{align*}
    \mathrm{ch}\ K_{D^+}^\lambda (z,w,q):&=\mathrm{tr}_{K_{D^+}^\lambda}z^{J^+_{*0}}w^{J^-}q^{L_0}
    =q^{-\half\lambda^2}z^{-\varepsilon \lambda}w^{\frac{1}{\varepsilon}\lambda}\frac{\poch{-\frac{w}{z}q^{\half},-\frac{z}{w}q^{\half}}}{\poch{q}}.
\end{align*}
It follows from the definition of the cohomology $\mathrm{H}_{\mathrm{rel}}^{\frac{\infty}{2}+\bullet}(\widehat{\mathfrak{gl}}_1,\mathfrak{gl}_1; \bullet)$ \cite{Feigin,FGZ} that
\begin{align*}
    \mathrm{ch}\  &\mathrm{H}^+_{\frac{(\mu,\varpi_1)}{\varepsilon}}(\mathbb{W}^{k,+}_\mu) (w,q)
    =\int \mathrm{ch}\ \mathbb{W}^{k,+}_\mu (z,q) \cdot \mathrm{ch}\ K_{D^+}^{\frac{(\mu,\varpi_1)}{\varepsilon}} (z,w,q) \cdot \poch{q}^2 \frac{\dz}{z}\\
    &=\frac{q^{\Delta_\mu^k-\frac{(\mu,\varpi_1)^2}{2\varepsilon^2}}w^{\frac{1}{\varepsilon^2}(\mu,\varpi_{1})}}{\poch{q}^{n-1}} \int\frac{(-\frac{w}{z}q^\half,-\frac{z}{w}q^\half;q)_\infty}{(zq^n,z^{-1}q^{1-n};q)_\infty}\frac{\dz}{z}\\
    &=q^{\Delta_\mu^k-\frac{(\mu,\varpi_1)^2}{2\varepsilon^2}}w^{\frac{1}{\varepsilon^2}(\mu,\varpi_{1})} \frac{\poch{-wq^{n+\half},-w^{-1}q^{-n+\frac{3}{2}}}}{\poch{q}^{n+1}}=\frac{\mathrm{ch}\ \mathbb{W}^{\ell,-}_{-\frac{1}{2\varepsilon^2}\check{\mu}}(w,q)}{1+z^{-1}q^{-n+\half}}.
\end{align*}
In this computation, we have used the identity \cite{Sto}
\begin{align*}
    \int\frac{\poch{-zq^\half,-z^{-1}q^\half}}{\poch{-azq^\half,-bz^{-1}q^\half}}\frac{\dz}{z}=\frac{\poch{aq,bq}}{\poch{q,abq}}
\end{align*}
with adaptation $(z,a,b)\mapsto (z^{-1}w,-wq^{n-\half},-w^{-1}q^{n+\half})$
for the third equality and 
\begin{align*}
    \Delta_{-\frac{1}{2\varepsilon^2}\check{\mu}}^\ell=\Delta_{0;\mu;0}^k-\frac{(\mu,\varpi_1)^2}{2\varepsilon^2},\quad 
    (\check{\mu},\check{\varpi}_0)=-2(\mu,\varpi_1)
\end{align*}
for the last equality. This completes the proof.
\endproof

\section{Rational case: exceptional subregular $\W$-algebra $\W_k(\so_{2n+1},\fsub)$}\label{sec:subreg_symplectic_walgebra}

In this section, we classify the irreducible modules over the simple subregular $\W$-algebras $\ssubW$ at exceptional levels. 
We adapt the strategy for exceptional $\W$-algebras developed in \cite{AvE} in our setting. Therefore, we recollect some general results from \cite{AvE} at the beginnings of \S \ref{admissible representations} and \ref{Zhu's algera} for convenience of readers.

\subsection{Admissible representations}\label{admissible representations}
We start with collecting some useful data to describe the representation theory of the affine Kac--Moody algebra associated with ${\so}_{2n+1}$.
Following the notations of Sect.~\ref{sec:affine_VA}, let $\widetilde{\h}$ be the extended Cartan subalgebra of the affine Kac--Moody algebra $\widetilde{\g}$ and $\widehat{\Delta}\subset \widetilde{\h}^*$ be the set of roots for $\widehat{\g}$.
The subsets of real roots and positive real roots in $\widehat{\Delta}$ are given by
\begin{align*}
    \widehat{\Delta}^{\mathrm{re}}=\{\alpha+n\delta\mid \alpha\in \Delta, n\in \Z\},\quad 
    \widehat{\Delta}^{\mathrm{re}}_+=\Delta_+\sqcup \{\alpha+n\delta\mid \alpha\in \Delta, n>0\},
\end{align*}
respectively.
Let $\widehat{W}$ denote the affine Weyl group of $\widehat{\g}$, which is isomorphic to $W\ltimes \check{\rQ}$, and $\widetilde{W}=W\ltimes \check{\rP}$ the extended affine Weyl group. The elements $\alpha \in \check{\rP}$ act on $\widetilde{\h}^*$ by 
$$t_\alpha \lambda=\lambda+(\lambda,\delta)\alpha-\left(\tfrac{1}{2}|\alpha|^2(\lambda,\delta) + (\lambda,\alpha)\right)\delta.$$
For $\lambda\in \widehat{\h}^*$, let $\widehat{\Delta}(\lambda)$ be the integral root system
\begin{align*}
\widehat{\Delta}(\lambda)=\{\alpha\in \widehat{\Delta}^{\mathrm{re}}\mid (\lambda+\widehat{\rho},\check{\alpha})\in \Z\} 
\end{align*}
where $\widehat{\rho}=\rho+h^\vee\Lambda_0$ is the affine Weyl vector for $\widehat{\g}$ and $\check{\alpha}=\frac{2}{(\alpha,\alpha)}\alpha$.
Let $\widehat{\Delta}(\lambda)_+=\widehat{\Delta}(\lambda)\cap \widehat{\Delta}^{\mathrm{re}}_+$ be the set of positive roots and $\widehat{\Pi}(\lambda)\subset \widehat{\Delta}(\lambda)_+$ the set of simple roots.
The weight $\lambda\in \widehat{\h}^*$ is \emph{admissible} if it is regular dominant, i.e. $(\lambda+\widehat{\rho},\check{\alpha})>0$ for all $\alpha\in\widehat{\Delta}(\lambda)_+$, and $\Q\widehat{\Delta}(\lambda)=\Q\widehat{\Delta}^{\mathrm{re}}$.
The level $k$ is called admissible if $k\Lambda$ is an admissible weight.

\begin{proposition}[\cite{KW89}]
    The level $k\in\C$ is admissible iff it is of the form
    \begin{align}\label{exceptional admissible weights}
        k+h^\vee=\frac{p}{q},\quad p,q>0,\,(p,q)=1,\,  
    \begin{cases}
    p\geq h^\vee & (r^\vee,q)=1,\\
    p\geq h & (r^\vee,q)=r^\vee,
    \end{cases}
    \end{align}
    where $r^\vee$ is the lacing number of $\g$, that is, $r^\vee=1,2,3$ if $\g$ has type $ADE$, $BCF$, $G$ respectively.
\end{proposition}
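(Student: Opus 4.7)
The plan is to unpack the definition of admissibility for $\lambda=k\Lambda_0$: regularity and dominance with respect to the integral root system $\widehat{\Delta}(\lambda)$, together with the full-rank condition $\Q\widehat{\Delta}(\lambda)=\Q\widehat{\Delta}^{\mathrm{re}}$. First I would reduce to rational $k$ by observing that for a real root $\alpha+n\delta$,
\[
(\lambda+\widehat{\rho},(\alpha+n\delta)^\vee)=(\rho,\check{\alpha})+\frac{2n}{(\alpha,\alpha)}(k+h^\vee),
\]
with $(\rho,\check{\alpha})\in\Z$; the requirement that infinitely many $n$ give integer values for at least some $\alpha$ (needed for the full-rank condition) forces $k+h^\vee\in\Q$. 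Writing $k+h^\vee=p/q$ in lowest terms with $q>0$, the condition $\alpha+n\delta\in\widehat{\Delta}(\lambda)$ becomes $q\mid \tfrac{2n}{(\alpha,\alpha)}$.

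Next, normalizing long roots to have squared length $2$ (so short roots have squared length $2/r^\vee$), I would perform a case analysis on $\gcd(r^\vee,q)$, which since $r^\vee\in\{1,2,3\}$ is prime must equal $1$ or $r^\vee$. In the case $(r^\vee,q)=1$, both long and short roots require $q\mid n$, so $\widehat{\Delta}(\lambda)=\{\alpha+qm\delta\mid\alpha\in\Delta,\ m\in\Z\}$, which is an affine root system of the same type $X_n^{(1)}$ but with imaginary period rescaled by $q$; a choice of simple roots is $\widehat{\Pi}(\lambda)=\{\alpha_1,\ldots,\alpha_n,\,q\delta-\theta\}$ with $\theta$ the highest (long) root. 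In the case $r^\vee\mid q$, short roots only require $(q/r^\vee)\mid n$, and $\widehat{\Delta}(\lambda)$ becomes the twisted affine root system of Langlands dual type, with affine simple root $(q/r^\vee)\delta-\theta_s$ where $\theta_s$ is the highest short root. Verifying that these sets genuinely form bases of simple roots for the integral system---so that dominance on them is equivalent to regular dominance on all of $\widehat{\Delta}(\lambda)_+$---is the main technical obstacle; it relies on the general fact that the integral Weyl group acts simply transitively on chambers in $\Q\widehat{\Delta}^{\mathrm{re}}$.

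Finally, I would check the dominance inequalities. The non-affine conditions $(\lambda+\widehat{\rho},\check{\alpha}_i)=(\rho,\check{\alpha}_i)=1>0$ for $i=1,\ldots,n$ are automatic since $\rho$ is strictly dominant. In Case 1 the condition on $q\delta-\theta$ reads $q(k+h^\vee)-(\rho,\check{\theta})>0$, i.e.\ $p>h^\vee-1$, equivalently $p\geq h^\vee$. In Case 2, using $\check{\theta}_s=r^\vee\theta_s$ and $(\rho,\check{\theta}_s)=h-1$ with $h$ the Coxeter number, the condition on $(q/r^\vee)\delta-\theta_s$ reduces to $p\geq h$. Together with coprimality $(p,q)=1$ dictated by the lowest-terms writing, these are precisely the stated inequalities, giving both directions of the equivalence.
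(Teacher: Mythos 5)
The paper does not prove this proposition; it is cited directly from \cite{KW89}, so there is no in-paper argument to compare against. That said, your reconstruction follows the standard Kac--Wakimoto proof faithfully and the logic is sound: the formula $(\lambda+\widehat\rho,(\alpha+n\delta)^\vee)=(\rho,\check\alpha)+\frac{2n}{(\alpha,\alpha)}(k+h^\vee)$ immediately forces $k+h^\vee\in\Q$ for the full-rank condition to hold (you only need one nonzero $n$, not infinitely many, though of course one gives infinitely many), and the dichotomy $(r^\vee,q)\in\{1,r^\vee\}$ correctly separates the principal case (integral system of type $X_n^{(1)}$ with long period $q\delta$, affine simple root $q\delta-\theta$) from the coprincipal case (twisted type, affine simple root $(q/r^\vee)\delta-\theta_s$). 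The dominance calculations $(\rho,\check\theta)=h^\vee-1$ and $(\rho,\check\theta_s)=h-1$ are correct, and the integrality of $p,h^\vee,h$ converts the strict inequalities to $p\geq h^\vee$ and $p\geq h$; the conditions $p,q>0$ then come for free from taking $q>0$ in the lowest-terms representation together with $p>h^\vee-1\geq 0$ (resp.\ $p>h-1\geq 0$).

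The one point you flag as "the main technical obstacle" --- that $\{\alpha_1,\dots,\alpha_n,\,\bar q\delta-\theta_{(s)}\}$ is genuinely a base of simple roots for $\widehat{\Delta}(\lambda)_+$, so that dominance on the base implies regular dominance on all positive integral roots --- is indeed the one step that requires real care, and the appeal to the simply transitive action of the integral Weyl group on chambers of $\Q\widehat\Delta^{\mathrm{re}}$ is the right tool. In \cite{KW89} this is handled by verifying directly that every element of $\widehat\Delta(\lambda)_+$ is a nonnegative integer combination of the proposed simple roots, which is more elementary but amounts to the same thing. Finally, for the reverse implication you should make explicit that, given $(p,q)$ of the stated form, the system $\widehat\Delta(\lambda)$ contains both $\theta$ and $\theta+\bar q\delta$, hence $\delta\in\Q\widehat\Delta(\lambda)$, so the full-rank condition is automatic; this is implicit in your sketch but worth stating since the equivalence has two directions.
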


From now on, we set 
$$\g=\so_{2n+1},\quad f=f_{\mathrm{sub}},\quad h^\vee=h^\vee_+=2n-1,\quad h=h_+=2n.$$
We consider admissible levels $k$ with denominator $q=2n-1$, called principal, and $q=2n$, called coprincipal.
For such levels, the associated variety of the simple affine vertex algebra $L_k(\g)$ is
\begin{align*}
X_{L_k(\g)}=\begin{cases}\{x\in \g\mid x^{2\bar{q}}|_{L(\theta)}=0 \} & (q=2n-1)\\ \{x\in \g\mid x^{2\bar{q}}|_{L(\theta_s)}=0 \} &(q=2n) \end{cases},\quad 
\bar{q}=\begin{cases} 2n-1 & (q=2n-1)\\ n &(q=2n), \end{cases}
\end{align*}
which coincides with the closure of the subregular nilpotent orbit $\overline{G.f}$ \cite{A3}.
In this case, the simple $\W$-algebra $\ssubW$ is called \emph{exceptional} \cite{AvE,KW08}. It is known to be lisse \cite{A3} and rational \cite{Mc}.

We have $\widehat{\Pi}(k\Lambda_0)=\{\dot{\alpha}_0,\ldots, \dot{\alpha}_n\}$ where\begin{equation*}
\dot{\alpha}_0=\begin{cases}-\theta+\bar{q}\delta& (q=2n-1)\\ -\theta_s+\bar{q}\delta& (q=2n)\end{cases},\quad \dot{\alpha}_1=\alpha_1,\ldots, \dot{\alpha}_n=\alpha_n,
\end{equation*}
and the associated Dynkin diagram is the extended Dynkin diagram $\tilde{B}_n$ for $q=2n-1$ and the twisted one $\tilde{C}_n^t$ for $q=2n$ as described in Fig.~\ref{fig:Bn}, \ref{fig:Cnt} below.
\begin{figure}[h]
\centering
\begin{minipage}[t]{.5\textwidth}
	\centering
	\caption{$\tilde{B}_n$}\label{fig:Bn}
\end{minipage}
\begin{minipage}[t]{0.49\textwidth}
	\centering
	\caption{$\tilde{C}_n^t$}\label{fig:Cnt}
\end{minipage}
\begin{tikzpicture}
    \node at (-2.2, 0)   (a) {\dynkin[root
					radius=.08cm,labels={\dot{\alpha}_1,\dot{\alpha}_0,\dot{\alpha}_2,\dot{\alpha}_{n-1},\dot{\alpha}_n},edge length=1cm]B[1]{oo.oo}};
    \node at (6.2, -0.17)   (b) {\dynkin[extended, reverse arrows, root
					radius=.08cm,labels={\dot{\alpha}_0,\dot{\alpha}_1,\dot{\alpha}_{n-1},\dot{\alpha}_n},edge length=1cm]C{o.oo}};
\end{tikzpicture} 
\end{figure}
Note that in both cases the automorphism group $\mathrm{Aut}(\widehat{\Pi}(k\Lambda_0))$ is isomorphic to $\Z_2$ and is generated by $\widetilde{\sigma}={\sigma}t_{-\bar{q}\eta_\circ}$ with 
\begin{align}\label{Dynkin automorphisms}
{\sigma}=\begin{cases}\epsilon_i\mapsto (-1)^{\delta_{i,1}}\epsilon_i& (q= 2n-1)\\ \epsilon_i\mapsto -\epsilon_{n+1-i} & (q=2n) \end{cases},\quad 
\eta_\circ=\begin{cases}\check{\varpi}_1& (q= 2n-1)\\ \check{\varpi}_n & (q=2n). \end{cases}
\end{align}

The category $\mathcal{O}_k$ of $L_k(\g)$-modules,
whose objects are $\widehat{\g}$-modules in the category $\mathcal{O}$,
is semisimple and the simple modules are given by the \emph{(co)principal admissible representations} \cite{A4}. 
More concretely, they are simple highest weight representations $L_k(\lambda)$ such that the associated Dynkin diagram of $\widehat{\Pi}(\lambda)$ is the same as the one of $\widehat{\Pi}(k \Lambda_0)$, or equivalently such that $\lambda$ lies in
\begin{align}\label{principal admissble weights}
\Pr^k=\bigcup_{\widetilde{w}\in \widetilde{S}}
    \Pr_{\widetilde{w}}^k,\quad \Pr_{\widetilde{w}}^k=\tilde{w}\circ \Pr_\Z^k,\quad \widetilde{S}=\left\{\widetilde{w}\in \widetilde{W}\left| \widetilde{w}\ \widehat{\Pi}(k\Lambda_0)\subset \widehat{\Delta}_+^{\mathrm{re}}\right.\right\}
\end{align}
where $\circ$ is the dot action $\widetilde{w}\circ\lambda=\widetilde{w}(\lambda+\widehat{\rho})-\widehat{\rho}$, for $\widetilde{w}\in \widetilde{W}$ and $\lambda\in\widehat{\h}^*$, and $\Pr_\Z^k$ is the set of admissible weights at level $k$ such that $\widehat{\Pi}(\lambda)=\widehat{\Pi}(k\Lambda_0)$.
More precisely, we have
\begin{align*}
\Pr_\Z^k
=\begin{cases}
\{\lambda\in \rP_+ \mid (\lambda,\theta)\leq p-h^\vee\} & (q=2n-1),\\  
\{\lambda\in \rP_+ \mid (\lambda,\check{\theta}_s^)\leq p-h\} & (q=2n).
\end{cases}
\end{align*} 
The set $\Pr_\Z^k$ can be identified with
\begin{align*}
\Pr_\Z^k
\simeq \begin{cases} \rP_+^{p-h^\vee} & (q=2n-1),
\\ {}^L\check{\rP}_+^{p-h} & (q=2n), \end{cases}
\end{align*} 
where ${}^L\check{\rP}_+^{p-h}$ is the set of dominant integral coweights at level $p-h$ for type $C_n$ -- the Langlands dual of $B_n$ -- in which we identify $\lambda=\sum \lambda_i \varpi_i\in \rP_+$ for $B_n$ with $\lambda=\sum \lambda_i \check{\varpi}_i\in \check{\rP}_+$ for $C_n$. 

\subsection{Zhu's algebra}\label{Zhu's algera}
To any conformal vertex algebra $V$ is attached a unital associative $\C$-algebra $A(V)$, called Zhu's algebra \cite{Z}. 
In particular, for a general Lie algebra $\g$, Zhu's algebra $A(V^k(\g))$ of the universal affine vertex algebra $V^k(\g)$ is isomorphic to the enveloping algebra $U(\g)$. 
Therefore, the Zhu's algebra $A(L_k(\g))$ of the simple affine vertex algebra $L_k(\g)$ is isomorphic to a quotient of $U(\g)$ by a certain two-sided ideal $I_k$ depending on the level $k$. 
When $k$ is admissible, $A(L_k(\g))$ is known to be semisimple, and we have the decomposition \cite[Theorem 3.4]{AvE}
\begin{align}
A(L_k(\g))\simeq \bigoplus_{\lambda\in [\Pr^k]}U(\g)/J_\lambda,
\end{align}
where $J_\lambda$ is the annihilating ideal in $U(\g)$ of the highest weight $\g$-module $L(\lambda)$ ($\lambda\in \h^*$) and $[\Pr^k]$ the set of equivalent classes of $\Pr^k$ under the dot action of $W$ as $J_\lambda=J_\mu$ iff $\mu\in W\circ \lambda$.

Still for a general $\g$ and given a nilpotent element $f\in\g$, the Zhu's algebra of the universal $\W$-algebra $\W^k(\g,f)$ is isomorphic to the corresponding finite $\W$-algebra $U(\g,f)$ \cite{Pr}, which is isomorphic to the finite analogue of the BRST reduction $H_f^0(U(\g))$ (see \cite{A1,DK}). Moreover, we have an isomorphism of $\C$-algebras
\begin{align*}
A(H_f^0(L_k(\g)))\simeq \bigoplus_{\lambda\in [\mathrm{Pr}^k]} H_f^0(U(\g)/J_\lambda).
\end{align*}
By \cite{Lo}, $H_{f}^0(U(\g)/J_\lambda)\neq 0$ does not vanish only if $\lambda$ lies in  
\begin{align}\label{non-degenerate condition}
\Pr_\circ^k=\{\lambda\in \Pr^k\mid \sharp\Delta(\lambda)=2\},
\end{align}
and in this case, it is a semisimple algebra which decomposes as
$$H_f^0(U(\g)/J_\lambda)\simeq \bigoplus_{E}\mathrm{End}_\C(E)$$
where $E$ runs over the complete set of representatives $I^\W_{[\lambda]}$ of simple finite-dimensional $U(\g,f)$-modules such that the corresponding simple $U(\g)$-module under the Skryabin's equivalence has the annihilating ideal $J_\lambda$.

Going back to our setting $\g=\so_{2n+1}$ and $f=f_{\mathrm{sub}}$, by \cite[Theorem 7.2]{AvE} we have an isomorphism of vertex algebras
\begin{align}\label{simple exceptional W-algebra}
    \ssubW\simeq H_f^0(L_k(\g)),
\end{align}
which can be used to describe Zhu's algebra of the simple $\W$-algebras $\ssubW$ \cite[Theorem 4.5]{AvE}
\begin{align}\label{ finite W-algebra}
A(\ssubW)\simeq \bigoplus_{[\lambda]\in [\Pr^k_\circ]}\bigoplus_{E\in I^\W_{[\lambda]}}\mathrm{End}_\C(E).
\end{align}
Since $\ssubW$ is lisse \cite{A3}, all the simple $\ssubW$-modules are positively graded by conformal weights and thus are in one-to-one correspondence with simple $A(\ssubW)$-modules by \cite{Z}. Therefore, we have
\begin{align}\label{simple modules of finite}
    \Irr(\ssubW)\simeq  \bigsqcup_{\lambda\in [\Pr^k_\circ]} I^\W_{[\lambda]}.
\end{align}
By constructing enough simple $\ssubW$-modules, we describe the sets $I^\W_{[\lambda]}$. 

To start with, we give a more precise description of $[\Pr^k_\circ]$.
Let us describe the subset $\widetilde{S}\subset \widetilde{W}$ in \eqref{principal admissble weights}.
We introduce the sets
\begin{align*}
    \check{\rP}_+^{\bar{q}}:=\{\alpha\in \check{\rP}_+\mid (\alpha,\theta)\leq {\bar{q}} \},\quad 
    {}^L\rP_+^{\bar{q}}:=\{\alpha\in {}^L\rP_+\mid (\alpha,{}^L\theta)\leq \bar{q} \}
\end{align*}
for $q=2n-1$ and $q=2n$ respectively. Note that ${}^L\rP_+^{\bar{q}}$ is naturally identified with 
$\{\alpha\in \check{\rP}_+\mid (\alpha,\check{\theta}_s)\leq q \}$.
We have
\begin{align*}
    \widetilde{S}=\left\{\widetilde{w}=wt_{-\eta}\in \widetilde{W}\mid (w,\eta) \text{ satisfies } (S1), (S2), (S3)\right\}
\end{align*}
where for $q=2n-1$ (resp. $q=2n$),
\begin{itemize}
    \item[(S1)] $\eta\in \check{\mathrm{P}}_+^{\bar{q}}$ (resp.\ $\eta\in  {}^L\rP_+^{\bar{q}}$),
    \item[(S2)] $(\eta,\theta)< {\bar{q}}$ if $w\theta \in \Delta_+$ (resp. $(\alpha,\theta_s)< \overline{q}$ if $w\theta_s \in \Delta_+$),
    \item[(S3)] $(\eta, \alpha_i)>0$ if $w \alpha_i\in \Delta_-$ for each $i=1,\ldots, n$.
\end{itemize}
When representing the admissible weights as $\widetilde{w}\circ \lambda =w\circ(\lambda-\frac{p}{q}\eta)$ with $\widetilde{w}=wt_{-\eta}\in\widetilde{S}$ and $\lambda\in \Pr_{\Z}^k$, the condition $\sharp\Delta(\lambda)=2$ in \eqref{non-degenerate condition} reads
$\sharp\{\alpha \in \Delta_+\mid (\eta,\check{\alpha})\in q\Z\}=1$,
which is equivalent to 
\begin{align}\label{condition for eta}
\sharp\left\{\dot{\alpha}_i\in \widehat{\Pi}(k\Lambda_0) \left| (\check{\dot{\alpha}}_i|\eta+D)=0 \right.\right\}=1.
\end{align}
Note that the set appearing in the condition \eqref{condition for eta} is independent of the choice of $\lambda$ and $w$. 
Let $\check{\rP}_{+,\mathrm{sub}}^{\bar{q}}$ (resp.\ ${}^L\mathrm{P}_{+,\mathrm{sub}}^{\bar{q}}$) denote the subset of $\check{\mathrm{P}}_+^{\bar{q}}$ (resp.\ ${}^L\mathrm{P}_+^{\bar{q}}$) consisting of elements satisfying \eqref{condition for eta}.
It is straightforward to show
\begin{align}\label{shift parameter}
\check{\rP}_{+,\mathrm{sub}}^{\bar{q}}=\{\eta_0,\eta_1, \eta_2, \eta_2',\ldots,  \eta_n, \eta_n'\},\quad 
{}^L\mathrm{P}_{+,\mathrm{sub}}^{\bar{q}}=\{\eta_0,\eta_1, \ldots,  \eta_n\},
\end{align}
where
\begin{align*}
    \eta_i=\sum_{\begin{subarray}{c}0\leq j \leq n \\ j\neq 0,i\end{subarray}}\check{\varpi}_j,\quad
    \eta_i'=\check{\varpi}_1+\sum_{\begin{subarray}{c}1\leq j \leq n \\ j\neq i\end{subarray}}\check{\varpi}_j.
\end{align*}
Conversely, for each $\eta$ in \eqref{shift parameter}, we find an element $\widetilde{w}=wt_{-\eta}\in \widetilde{W}$ which actually lies in $\widetilde{S}$. Explicitly, we find the elements
\begin{align}\label{lifts to affine Weyl}
    &\sigma t_{-\eta_0},\quad t_{-\eta_i}\ (i=1,\ldots, n),\quad t_{-\eta_i'}\ (i=2,\ldots, n),
\end{align}
which indeed satisfy the conditions (S1), (S2), (S3). 
Thereby, we obtain a subset of $\Pr^k_\circ$ parameterized by $\rP_+^{p-h^\vee} \times \check{\rP}_{+,\mathrm{sub}}^{\bar{q}}$ for $q=2n-1$ and ${}^L\check{\rP}_+^{p-h} \times {}^L\rP_{+,\mathrm{sub}}^{\bar{q}}$ for $q=2n$, respectively. 
Note that the assignment $\eta\mapsto wt_{-\eta}$ only differs by the choice of elements in $W$, which does not change the image under the quotient $\Pr^k_\circ\twoheadrightarrow [\Pr^k_\circ]$.
This implies that these subsets surject onto $[\Pr^k_\circ]$:
\begin{align}\label{surjection1}
\begin{array}{cll}
\rP_+^{p-h^\vee}\times \check{\rP}_{+,\mathrm{sub}}^{\bar{q}} & \twoheadrightarrow &[\Pr_\circ^k]\\
(\lambda,\eta_0) &\mapsto & [\sigma\circ(\lambda-\tfrac{p}{q}\eta_0)]\\
(\lambda,\eta_i) &\mapsto & [\lambda-\tfrac{p}{q}\eta_i]\quad (1\leq i\ \leq n)\\
(\lambda,\eta_i') &\mapsto & [\lambda-\tfrac{p}{q}\eta_i']\quad (2\leq i\ \leq n),
\end{array}
\end{align}
for $q=2n-1$ and 
\begin{align}\label{surjection2}
\begin{array}{cll}
{}^L\check{\rP}_+^{p-h}\times {}^L\rP_{+,\mathrm{sub}}^{\bar{q}} & \twoheadrightarrow &[\Pr_\circ^k]\\
(\lambda,\eta_0) &\mapsto & [\sigma\circ(\lambda-\tfrac{p}{q}\eta_0)]\\
(\lambda,\eta_i) &\mapsto & [\lambda-\tfrac{p}{q}\eta_i]\quad (1\leq i \leq n),
\end{array}
\end{align}
for $q=2n$. 
To describe the fibers, recall that the union \eqref{principal admissble weights} is almost disjoint by \cite{KW89}
\begin{align*}
\Pr_{\widetilde{w}_1}^k\cap \Pr_{\widetilde{w}_2}^k\neq \emptyset \Leftrightarrow \Pr_{\widetilde{w}_1}^k= \Pr_{\widetilde{w}_2}^k\ \Leftrightarrow \widetilde{w}_2\in \widetilde{w}_1 \mathrm{Aut}(\widehat{\Pi}(k\Lambda_0)).
\end{align*} 
It follows that $\Aut(\widehat{\Pi}(k\Lambda_0))(\simeq\Z_2)$ acts on each fiber transitively. The action of the generator $\widetilde{\sigma}$ on the set $\rP_+^{p-h^\vee}\times \check{\rP}_{+,\mathrm{sub}}^q$ (resp.\ ${}^L\check{\rP}_+^{p-h}\times {}^L\rP_{+,\mathrm{sub}}^{\bar{q}}$) for $q=2n-1$ (resp.\ $q=2n$) is described as
\begin{align}\label{fiber action} 
\begin{cases}
(\lambda,\eta_0)\leftrightarrow (\sigma\lambda,\eta_1),\quad (\lambda,\eta_i) \leftrightarrow (\sigma\lambda,\eta_i'),\quad (i=2,\ldots,n) & (q=2n-1),\\
(\lambda,\eta_i )\leftrightarrow (\sigma\lambda, \eta_{n-i}) & (q=2n). 
\end{cases}
\end{align}
where the ${\sigma}$-action on the first factor is defined in \eqref{Dynkin automorphisms}.
\begin{proposition}\label{parametrization}
We have a bijective correspondence
\begin{align*}
[\Pr_\circ^k]\simeq \begin{cases}\displaystyle{(\rP_+^{p-h^\vee}\times \check{\rP}_{+,\mathrm{sub}}^{\bar{q}})/\Z_2} & (q=2n-1),\\ \displaystyle{({}^L\check{\rP}_+^{p-h}\times {}^L\rP_{+,\mathrm{sub}}^{\bar{q}})/\Z_2} & (q=2n). \end{cases}
\end{align*}
Moreover, the $\Z_2$-action on the right-hand side is fixed-point free. 
\end{proposition}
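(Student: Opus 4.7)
The plan is to combine the surjections \eqref{surjection1}--\eqref{surjection2} with the explicit $\Z_2$-action \eqref{fiber action} on the parameter sets to produce the claimed bijections; fixed-point freeness will then follow by case analysis.

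First I would identify the fibers of each surjection with the $\Z_2$-orbits in \eqref{fiber action}. By the preceding discussion, the lifts $\widetilde w\in\widetilde S$ over a given class in $[\Pr^k_\circ]$ form a single right $\Aut(\widehat\Pi(k\Lambda_0))\simeq\Z_2$-orbit, and a direct calculation using $\widetilde w=wt_{-\eta}$ together with $\widetilde w\circ\lambda=w\circ(\lambda-\tfrac{p}{q}\eta)$ shows that translating $\widetilde w_2=\widetilde w_1\widetilde\sigma$ into $(\lambda,\eta)$-coordinates reproduces \eqref{fiber action}. Hence the induced map on the $\Z_2$-quotient is a bijection onto $[\Pr^k_\circ]$.

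For freeness, the principal case $q=2n-1$ is immediate: the exchanged pairs of $\eta$-components, namely $(\eta_0,\eta_1)$ and $(\eta_i,\eta'_i)$ for $i=2,\dots,n$, are distinct elements of $\check{\rP}^{\bar q}_{+,\mathrm{sub}}$ by their defining formulas, so no fixed point can occur. In the coprincipal case $q=2n$, the swap $(\lambda,\eta_i)\leftrightarrow(\sigma\lambda,\eta_{n-i})$ changes the $\eta$-component unless $i=n-i$, forcing $n$ even and $i=n/2$; outside this subcase freeness is again automatic.

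The main obstacle will be the residual subcase $q=2n$ with $n$ even and $i=n/2$. A fixed point there would demand $\widetilde\sigma\circ\lambda=\lambda$ in ${}^L\check\rP_+^{p-h}$, where although the naive linear $\sigma\colon \epsilon_j\mapsto-\epsilon_{n+1-j}$ fixes $\lambda=0$, the action actually inherited from $\widetilde\sigma=\sigma t_{-\bar q\eta_\circ}$ is the affine dot-action, which picks up an extra translation. Unpacking, using $\sigma\check\varpi_n=-\check\varpi_n$ and the $B_n$ computation $\sigma\rho-\rho=-n\check\varpi_n$, one obtains $\widetilde\sigma\circ\lambda=\sigma\lambda+\tfrac{p-h}{2}\check\varpi_n$; writing $\lambda=\sum_j\lambda_j^*\epsilon_j$ (with $\lambda_j^*\in\Z_{\ge0}$ by $C_n$-dominance), the fixed-point condition reduces coordinate-wise to $\lambda_j^*+\lambda_{n+1-j}^*=\tfrac{p-h}{2}$ for all $j$. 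But admissibility with $q=2n$ forces $(p,2n)=1$, hence $p$ odd and $\tfrac{p-h}{2}=\tfrac{p-2n}{2}\notin\Z$, contradicting integrality of the left-hand side. This rules out fixed points and completes the proof.
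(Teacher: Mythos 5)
Your proposal follows the same overall strategy as the paper: use the surjections~\eqref{surjection1}--\eqref{surjection2} together with the transitivity of the $\Aut(\widehat\Pi(k\Lambda_0))\simeq\Z_2$-action on fibers for the bijection, and then the explicit fiber action~\eqref{fiber action} for fixed-point freeness. For $q=2n-1$ your argument (the $\eta$-component always changes) is identical to the paper's. For $q=2n$ your route is slightly more roundabout than necessary: you first reduce to the subcase $n$ even, $i=n/2$ via the $\eta$-component, and then derive the affine dot action $\widetilde\sigma\circ\lambda=\sigma\lambda+\tfrac{p-h}{2}\check\varpi_n$ from scratch using $\sigma\check\varpi_n=-\check\varpi_n$ and $\sigma\rho-\rho=-n\check\varpi_n$ (both of which check out). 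The paper instead writes the action directly in Dynkin labels $(\lambda_1,\ldots,\lambda_n)\mapsto(\lambda_{n-1},\ldots,\lambda_1,\,p-h-2\sum_{i<n}\lambda_i-\lambda_n)$ and notes that the last coordinate being fixed would force $2\lambda_n+2\sum_{i<n}\lambda_i=p-h$, impossible by parity; this argument covers all $i$ at once and makes the preliminary reduction to $i=n/2$ unnecessary.

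One claim in your $q=2n$ analysis needs fixing: the parenthetical ``$\lambda_j^*\in\Z_{\ge0}$ by $C_n$-dominance'' is not correct. Writing $\lambda=\sum_i\lambda_i\varpi_i$ with $\varpi_n=\tfrac{1}{2}(\epsilon_1+\cdots+\epsilon_n)$, the $\epsilon$-coordinates $\lambda_j^*$ are simultaneously in $\tfrac{1}{2}+\Z$ whenever $\lambda_n$ is odd (e.g.\ $\lambda=\varpi_n$ gives $\lambda_j^*=\tfrac{1}{2}$ for all $j$, and this weight does lie in $\Pr_\Z^k$). Your contradiction nevertheless survives: since $\lambda\in\rP_{B_n}$ forces all $\lambda_j^*$ to lie in $\Z$ or all in $\tfrac{1}{2}+\Z$, the pairwise sum $\lambda_j^*+\lambda_{n+1-j}^*$ (with $j\neq n+1-j$, guaranteed since $n$ is even in your subcase) is always an integer, so the fixed-point equation $\lambda_j^*+\lambda_{n+1-j}^*=\tfrac{p-h}{2}\notin\Z$ is still contradictory. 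You should replace the parenthetical with this integrality observation, or simply adopt the paper's Dynkin-label formulation, which sidesteps the half-integer subtlety entirely.
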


\proof
It remains to show that the last statement.
For $q=2n-1$, it follows from \eqref{fiber action} immediately. 
For $q=2n$, we express the first factor $\lambda=\sum_i \lambda_i \varpi_i$. Then $\sigma$ acts as 
\begin{align*}
    (\lambda_1,\ldots,\lambda_{n-1},\lambda_n )\mapsto \left(\lambda_{n-1},\ldots,\lambda_1,p-h-2\sum_{i=1}^{n-1}\lambda_{i}-\lambda_n\right). 
\end{align*}
Since $p$ is odd and $h=2n$ is even, the $\sigma$-action on the first factor ${}^L\check{\mathrm{P}}_+^{p-h}$ is fixed-point free. 
\endproof

\subsection{Construction of simple modules}\label{sec:simple_modules_subreg}
For $\lambda \in \Pr_\Z^k$, let us introduce the $\ssubW$-module
\begin{equation*}
\mathbf{L}_k(\lambda):=H^0_f(L_k(\lambda))
\end{equation*}
obtained from the BRST reduction of the simple highest-weight $\widehat{\g}$-module $L_k(\lambda)$ at level $k$ (see Sect.~\ref{sec:BRST}).
By \cite[Theorem 7.8]{AvE}, it is unique simple $\ssubW$-module whose lowest conformal subspace $\mathbf{L}_k(\lambda)^{\Top}$ admits the central character $\chi_{\lambda-\frac{p}{q}x_0^+}\colon \mathcal{Z}(\g)\rightarrow \C$, see \eqref{grading element} for the definition of $x_0^+$.
Hence, we have
\begin{align*}
I^{\W}_{[\lambda-\frac{p}{q}x_0^+]}=\{\mathbf{L}_k(\lambda)^{\Top}\}.
\end{align*}
Note that $x_0^+=\eta_1$ by \eqref{shift parameter} and thus $[\lambda-\frac{p}{q}x_0^+]$ $(\lambda \in \Pr_\Z^k)$ are all different by Proposition \ref{parametrization}.

\begin{lemma}\label{upper bound of simples}
The number of simple $\sWsub$-modules is bounded by
$$\sharp\Irr(\ssubW)\leq \bar{q}\cdot \sharp{\Pr}_\Z^k.$$
\end{lemma}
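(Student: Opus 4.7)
The plan is to derive the bound from the decomposition
$$\sharp\Irr(\ssubW)=\sum_{[\lambda]\in[\Pr^k_\circ]}|I^\W_{[\lambda]}|$$
supplied by \eqref{simple modules of finite}, by combining a count of the central characters with an estimate on each fibre. By Proposition~\ref{parametrization} the fixed-point free $\Z/2\Z$-action of \eqref{fiber action} produces $|[\Pr_\circ^k]|=n\sharp\Pr_\Z^k$ orbits in the principal case and $\tfrac{n+1}{2}\sharp\Pr_\Z^k$ orbits in the coprincipal case, so a naive uniform bound $|I^\W_{[\lambda]}|\leq 2$ would only yield $2n\sharp\Pr_\Z^k$ and $(n+1)\sharp\Pr_\Z^k$, respectively. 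The refinement required, in both cases, is to isolate exactly $\sharp\Pr_\Z^k$ distinguished orbits on which $|I^\W_{[\lambda]}|=1$ and use the weaker bound on the rest.

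The uniform bound $|I^\W_{[\lambda]}|\leq 2$ is a structural property of the finite $\W$-algebra $U(\g,f_{\mathrm{sub}})$ for $\g=\so_{2n+1}$: the component group of the centralizer of a subregular nilpotent in $\mathrm{SO}_{2n+1}$ is $\Z/2\Z$, so by Losev's description of simple finite $\W$-modules (\cite{Lo}), at most two inequivalent simples share any given primitive annihilator in $U(\g)$.

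The distinguished ``standard'' orbits are those which contain the pair $(\lambda,\eta_1)$ (when $q=2n-1$) or $(\lambda,\eta_0)$ (when $q=2n$) labelled by $\lambda\in\Pr_\Z^k$; these are precisely the classes carrying the reductions $\mathbf{L}_k(\lambda)^{\Top}$ identified in the discussion preceding the lemma, so $|I^\W_{[\lambda]}|\geq 1$ for each. To show $|I^\W_{[\lambda]}|=1$, the second potential simple coming from the non-trivial character of the component group must be ruled out: via Skryabin's equivalence it would correspond to a $\g$-module whose associated variety is strictly smaller than $\overline{G\cdot f_{\mathrm{sub}}}$, which is incompatible with the identification $X_{\ssubW}=\overline{G\cdot f_{\mathrm{sub}}}$ together with the requirement that every simple $\ssubW$-module be supported on $X_{\ssubW}$. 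Alternatively, one can invoke the uniqueness part of \cite[Theorem~7.8]{AvE} directly to pin down $I^\W_{[\lambda-\tfrac{p}{q}\eta_1]}=\{\mathbf{L}_k(\lambda)^{\Top}\}$. Summing these estimates yields
$$\sharp\Irr(\ssubW)\leq \sharp\Pr_\Z^k+2\bigl(|[\Pr_\circ^k]|-\sharp\Pr_\Z^k\bigr)=\bar q\cdot\sharp\Pr_\Z^k.$$

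The main obstacle is the refinement step for the standard orbits: one must pin down the associated variety of the primitive ideal attached to each admissible weight and match it with the component-group character data. This is the technical heart of the argument and leans on the interplay between primitive ideals, Skryabin's equivalence, and the lisseness of $\ssubW$ established in \cite{A3}; the counting inputs from Proposition~\ref{parametrization} and \eqref{ finite W-algebra} are comparatively routine.
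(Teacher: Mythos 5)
Your proposal uses exactly the paper's route: decompose $\Irr(\ssubW)$ via \eqref{simple modules of finite}, cap each $\sharp I^\W_{[\lambda]}$ by $2$ through Losev and the $\Z_2$ component group, compute $\sharp[\Pr^k_\circ]=\tfrac{1}{2}(\bar q+1)\sharp\Pr_\Z^k$ via Proposition~\ref{parametrization}, and improve the naive bound by exhibiting $\sharp\Pr_\Z^k$ distinguished classes on which $\sharp I^\W_{[\lambda]}=1$. That architecture matches the paper.

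However, your primary mechanism for the refinement step does not work. You argue that a second simple in $I^\W_{[\lambda]}$ would, under Skryabin's equivalence, correspond to a $\g$-module with associated variety strictly smaller than $\overline{G\cdot f_{\mathrm{sub}}}$ and so be excluded by lisseness. This fails on two counts: both elements of $I^\W_{[\lambda]}$ have, by definition, the same annihilator $J_\lambda$, and for non-degenerate $\lambda\in\Pr_\circ^k$ the associated variety of $J_\lambda$ is already $\overline{G\cdot f_{\mathrm{sub}}}$ regardless of which simple one picks; moreover, lisseness does not forbid simple modules supported on proper subvarieties of $X_{\ssubW}$. What actually pins down $\sharp I^\W_{[\lambda-\frac{p}{q}x_0^+]}=1$ is precisely the uniqueness statement of \cite[Theorem~7.8]{AvE}, which you offer only as the fallback; this is what the paper invokes, so the proof is sound only after discarding your first option. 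One further small slip: the distinguished index is $\eta_1=x_0^+$ in both cases $q=2n-1$ and $q=2n$, not $\eta_0$ when $q=2n$ (the count is unaffected since the $\Z_2$-action is free, but the orbit carrying $\mathbf{L}_k(\lambda)^{\Top}$ to which \cite[Theorem~7.8]{AvE} applies is the one through $(\lambda,\eta_1)$).
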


\proof
We realize the desired upper bound by using \eqref{simple modules of finite}. 
By \cite{Lo}, each set $I^\W_{[\lambda]}$ ($\lambda\in [\Pr_\circ]^k$) has a transitive action of the component group $C(f)$, which is $\Z_2$ in our case \cite[Corollary 6.1.6]{CM}.
Therefore, $\sharp I^\W_{[\lambda]}=1,2$.
We have $\sharp I^{\W}_{[\lambda-\frac{p}{q}x_0^+]}=1$ for $\lambda\in \Pr_\Z^k$ and bound $\sharp I^{\W}_{[\lambda]}\leq 2$ otherwise. It follows from Proposition~\ref{parametrization} that 
\begin{align*}
\sharp [\Pr_\circ^k]
&=\half\begin{cases}
2n \cdot\sharp\Pr_\Z^k &(q=2n-1)\\
(n+1)\cdot \sharp\Pr_\Z^k &(q=2n)\\
\end{cases}\\
&=\half(\bar{q}+1)\sharp \Pr_\Z^k.
\end{align*}
Then we obtain the required upper bound
$$\sharp \Irr(\ssubW) \leq \sharp\Pr_\Z^k+ 2 (\sharp [\Pr_\circ^k ] -\sharp \Pr_\Z^k )=\bar{q}\cdot \sharp \Pr_\Z^k.$$
\endproof 

In order to construct more simple $\ssubW$-modules, we consider spectral flow twists of $\mathbf{L}_k(\lambda)$. 
It is useful to introduce the set $\mathrm{wt}(M)$ of $J_0^+$-eigenvalues for a given $\ssubW$-module $M$. Since $\mathrm{wt}(\ssubW)=\Z$, we have the spectral flow twists $S_\theta=S_{\theta J^+}$ $(\theta\in \Z)$.
Recall that $S_\theta$ transforms the actions of $J^+(z)=\sum_{m\in \Z}J^+_mz^{-m-1}$ and $L(z)=\sum_{m\in \Z}L_mz^{-m-2}$ into
\begin{align*}
J_{m}^+\mapsto J_m^++\delta_{m,0}\tfrac{p-q}{q}\theta,\quad L_m\mapsto L_m+\theta J_m^++\tfrac{1}{2}\tfrac{p-q}{q}\theta^2 \delta_{m,0}.
\end{align*}
Accordingly, the character $\mathrm{ch}_{M}(z,q):=\mathrm{tr}_{M}z^{J^+_0}q^{L_0}$ and the set $\mathrm{wt}(M)$ transform into 
\begin{align}\label{effect of spectral flow twists}
\mathrm{ch}_{S_\theta M}(z,q)=q^{\half\theta^2 \frac{p-q}{q}}z^{\frac{p-q}{q}\theta}\mathrm{ch}_{M}(zq^\theta,q),\quad \mathrm{wt}(S_\theta M)=\tfrac{p-q}{q}\theta+ \mathrm{wt}(M).
\end{align}

The following classification was obtained for $n=2$ by the first-named author \cite{Fa}.

\begin{theorem}\label{classification of simple modules}
For admissible levels $k=-h^\vee+\frac{p}{q}$ with $q=2n-1, 2n$,
the complete set of simple $\sWsub$-modules is given by 
$$\Irr(\sWsub)=\{S_\theta \mathbf{L}_k(\lambda)\mid \lambda\in \Pr_\Z^k,\ 0\leq \theta<\bar{q} \}.$$
\end{theorem}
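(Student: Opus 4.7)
The plan is to produce $\bar{q}\cdot\sharp\Pr_\Z^k$ pairwise non-isomorphic simple $\sWsub$-modules of the claimed form; combined with the upper bound of Lemma~\ref{upper bound of simples}, this will force completeness. Each candidate $S_\theta\mathbf{L}_k(\lambda)$ is simple because $\mathbf{L}_k(\lambda)$ is simple by \cite[Theorem 7.8]{AvE} and the spectral flow $S_\theta=S_{\theta J^+}$ is an autoequivalence of the category of $\sWsub$-modules. Lisseness of $\sWsub$ \cite{A3} guarantees that any simple $\sWsub$-module is ordinary and therefore appears in the enumeration via \eqref{simple modules of finite}, so the task reduces to showing the $(\lambda,\theta)$-parametrization is injective.

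I will exploit two invariants: the $J_0^+$-spectrum modulo $\Z$ and the central character on the lowest conformal subspace. Suppose $S_\theta\mathbf{L}_k(\lambda)\simeq S_{\theta'}\mathbf{L}_k(\lambda')$ with $\lambda,\lambda'\in\Pr_\Z^k$ and $0\le\theta,\theta'<\bar{q}$. The set of $J_0^+$-eigenvalues on $\mathbf{L}_k(\lambda)$ lies in a single coset of $\Z$, namely $(\lambda,\check{\varpi}_1)-\tfrac{p}{q}(x_0^+,\check{\varpi}_1)+\Z = (\lambda,\check{\varpi}_1)-\tfrac{p(n-1)}{q}+\Z$. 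By \eqref{effect of spectral flow twists}, applying $S_\theta$ shifts this coset by $\tfrac{(p-q)\theta}{q}$. Using $(\lambda-\lambda',\check{\varpi}_1)\in\tfrac12\Z$ (a direct calculation from \eqref{realization of Cartan of type B} and the expressions for $\varpi_i$), identification of the two cosets yields
\begin{equation*}
q \,\big|\, 2(p-q)(\theta-\theta').
\end{equation*}
Since $\gcd(p,q)=1$ implies $\gcd(p-q,q)=1$, in the principal case ($q=2n-1$ odd) this gives $q\mid\theta-\theta'$; in the coprincipal case ($q=2n$, forcing $p$ odd and $\gcd(p,n)=1$) it gives $n\mid\theta-\theta'$. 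In either case, $|\theta-\theta'|<\bar{q}$ forces $\theta=\theta'$.

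With $\theta=\theta'$ fixed, applying $S_{-\theta}$ yields $\mathbf{L}_k(\lambda)\simeq\mathbf{L}_k(\lambda')$. By \cite[Theorem 7.8]{AvE}, the lowest conformal subspace $\mathbf{L}_k(\lambda)^{\Top}$ admits the central character $\chi_{\lambda-(p/q)x_0^+}$. Since $x_0^+=\eta_1$ by \eqref{shift parameter} and the map $\Pr_\Z^k\rightarrow[\Pr^k_\circ]$, $\lambda\mapsto[\lambda-\tfrac{p}{q}\eta_1]$ is injective by Proposition~\ref{parametrization}, these central characters are pairwise distinct, whence $\lambda=\lambda'$.

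The main obstacle is really just the divisibility bookkeeping: verifying that the chosen range $0\le\theta<\bar{q}$ matches the period of the spectral flow on $\{\mathbf{L}_k(\lambda)\}_{\lambda\in\Pr^k_\Z}$, which requires treating $q=2n-1$ and $q=2n$ separately as above. Once this is done, we have produced exactly $\bar{q}\cdot\sharp\Pr_\Z^k$ pairwise non-isomorphic simples, and Lemma~\ref{upper bound of simples} shows this is the maximal possible count, completing the classification.
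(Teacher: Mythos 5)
Your proof is correct and follows essentially the same strategy as the paper: produce $\bar{q}\cdot\sharp\Pr_\Z^k$ pairwise non-isomorphic simples by combining the $J_0^+$-coset invariant (forcing $\theta=\theta'$ via the divisibility $q\mid 2(p-q)(\theta-\theta')$ and coprimality of $p-q$ and $q$) with the pairwise distinctness of the $\mathbf{L}_k(\lambda)$'s, then invoke the upper bound of Lemma~\ref{upper bound of simples}. One small remark: the extra term $-\tfrac{p}{q}(x_0^+,\check{\varpi}_1)$ you insert in the $J_0^+$-coset of $\mathbf{L}_k(\lambda)$ is spurious (the $J^+_0$-eigenvalues lie in $(\check{\varpi}_1,\lambda)+\Z$ with no $\lambda$-independent fractional shift, which is why the paper asserts $\mathrm{wt}(\mathbf{L}_k(\lambda))=\Z$ or $\tfrac12+\Z$), but since it is the same for all $\lambda$ it cancels in your comparison and leaves the argument intact.
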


\proof
Since $J^+$ corresponds to $\check{\varpi}_1$, $\mathrm{wt}(\mathbf{L}_k(\lambda))=\Z$ or $\half+\Z$ for $\lambda \in \Pr_\Z^k$.
It follows from \eqref{effect of spectral flow twists} that $S_\theta \mathbf{L}_k(\lambda)\simeq  \mathbf{L}_k(\mu)$ ($\lambda,\mu\in \Pr_\Z^k$) happens only if $\theta\in \bar{q}\Z$. Since $\mathbf{L}_k(\lambda)$ ($\lambda\in \Pr_\Z^k$) are all inequivalent simple modules, $S_\theta \mathbf{L}_k(\lambda)$ ($\lambda\in \Pr_\Z^k,\ 0\leq \theta<\bar{q})$ are also all inequivalent simple modules. The number of these modules already gives the upper bound by Lemma \ref{upper bound of simples}. Hence, they complete the set of all the simple $\ssubW$-modules.
\endproof

Note that we have not used the rationality of $\ssubW$ so far. In fact, the previous classification of simple modules gives a quick alternative proof.

\begin{corollary}[\cite{Mc}]
The simple exceptional $\W$-algebra $\sWsub$ is rational.
\end{corollary}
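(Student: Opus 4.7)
The argument is to apply the standard rationality criterion of Dong--Li--Mason and Abe--Buhl--Dong: a lisse, $\Z_{\geq 0}$-graded conformal vertex algebra of CFT type is rational if and only if its Zhu's algebra is semisimple. Both hypotheses are already in place from the preceding material, so the proof reduces to a verification.

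Lisseness of $\sWsub$ has been recorded from \cite{A3}. For semisimplicity, the Zhu's algebra description \eqref{ finite W-algebra} presents $A(\sWsub)$ as a finite direct sum of full matrix algebras $\mathrm{End}_\C(E)$, which is automatically semisimple. Invoking the rationality criterion (see \cite{ABD04,DLM98}) then immediately yields the rationality of $\sWsub$.

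The ``quick alternative'' aspect advertised in the paragraph above is that Theorem~\ref{classification of simple modules} makes the argument self-contained on the side of finite-dimensional representation theory: combining the lisseness-induced bijection \eqref{simple modules of finite} between $\Irr(\sWsub)$ and simple $A(\sWsub)$-modules with the explicit count $\sharp\Irr(\sWsub)=\bar{q}\cdot\sharp\Pr_\Z^k$ just established, one sees that each pair $([\lambda],E)$ appearing in the indexing set of \eqref{ finite W-algebra} does contribute a nonzero simple factor (so none of the $\mathrm{End}_\C(E)$ summands degenerate). Consequently $A(\sWsub)$ is exactly the predicted semisimple algebra, of total dimension $\sum_E (\dim E)^2$.

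No significant obstacle is expected at this point, since the three non-trivial ingredients --- lisseness \cite{A3}, the matrix-algebra description of the Zhu's algebra \cite{AvE}, and the explicit classification of simple modules via spectral flow twists (Theorem~\ref{classification of simple modules}) --- have all been established already. The corollary is essentially a packaging of these inputs through a standard criterion, which is why only a short proof is required.
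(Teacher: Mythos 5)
Your proposal rests on a criterion that does not exist in the references you cite. What Dong--Li--Mason \cite{DLM98} and Abe--Buhl--Dong \cite{ABD04} prove goes in the \emph{opposite} direction: rationality (together with lisseness) implies finiteness and semisimplicity of the Zhu's algebra, and regularity. The converse implication --- that a lisse, $\Z_{\geq0}$-graded, CFT-type, self-contragredient vertex algebra with semisimple Zhu's algebra is rational --- was precisely the long-standing open problem, and it is what \cite{Mc} resolves (in a form sufficient for the present application). If you invoke that converse, you are simply citing \cite{Mc}, which is the result the corollary is attributed to; the paper's explicit point is to give a proof that does \emph{not} rely on \cite{Mc}, since the classification in Theorem~\ref{classification of simple modules} was obtained independently.

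Relatedly, your second paragraph misidentifies the content of what must be shown. Semisimplicity of $A(\ssubW)$ is already guaranteed by \eqref{ finite W-algebra}, which comes from \cite{Lo} and \cite{AvE} and does not need Theorem~\ref{classification of simple modules}. Rationality is the strictly stronger statement that the module \emph{category} is semisimple, i.e.\ that $\mathrm{Ext}^1$ vanishes between all pairs of simple $\ssubW$-modules. This is not implied by semisimplicity of the Zhu's algebra alone, and it is exactly what the paper checks: given a short exact sequence $0 \to S_{\theta_2}\mathbf{L}_k(\lambda_2) \to M \to S_{\theta_1}\mathbf{L}_k(\lambda_1) \to 0$, when $\theta_1 \neq \theta_2$ the $J_0^+$-eigenvalue supports $\mathrm{wt}(S_{\theta_i}\mathbf{L}_k(\lambda_i))$ are disjoint, so the eigenspace decomposition of $M$ splits the sequence; when $\theta_1 = \theta_2$, applying the exact autoequivalence $S_{-\theta_1}$ reduces to $\mathrm{Ext}^1_{\ssubW}(\mathbf{L}_k(\lambda_2),\mathbf{L}_k(\lambda_1))=0$, which is established in the proof of \cite[Theorem 7.9]{AvE}. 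You should rewrite your proof along these lines: the classification of simples is the key input, and the spectral flow separation of $J_0^+$-weights, not a Zhu-algebra criterion, is what makes the argument ``quick''.
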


\proof
By Theorem \ref{classification of simple modules}, it suffices to show
$$\mathrm{Ext}^1_{\ssubW}(S_{\theta_1} \mathbf{L}_k(\lambda_1),S_{\theta_2} \mathbf{L}_k(\lambda_2))=0,\quad (\lambda_i\in \Pr_\Z^k,\ 0\leq \theta_i<\bar{q}).$$ 
Suppose that there exists a short exact sequence of $\ssubW$-modules
\begin{align}\label{ext vanishing}
0\rightarrow S_{\theta_2} \mathbf{L}_k(\lambda_2)\rightarrow M \rightarrow S_{\theta_1} \mathbf{L}_k(\lambda_1)\rightarrow 0.
\end{align}
If $\theta_1\neq \theta_2$, then 
$\mathrm{wt}(S_{\theta_1}\mathbf{L}_k(\lambda_1))\cap \mathrm{wt}(S_{\theta_2}\mathbf{L}_k(\lambda_2))=\emptyset$. 
In this case, since $J_0^+$ acts on $M$ semisimply, its eigenvalue decomposition induces the splitting of \eqref{ext vanishing} as $\ssubW$-modules. 
Next, we consider the case $\theta_1=\theta_2$. 
We apply the exact functor $S_{-\theta_1}$ to \eqref{ext vanishing} and obtain
\begin{align*}
0\rightarrow \mathbf{L}_k(\lambda_2)\rightarrow S_{-\theta_1}M \rightarrow \mathbf{L}_k(\lambda_1)\rightarrow 0.
\end{align*}
It splits as the proof of \cite[Theorem 7.9]{AvE} implies $\mathrm{Ext}^1_{\ssubW}(\mathbf{L}_k(\lambda_2),\mathbf{L}_k(\lambda_1))=0$.
\endproof

It follows from Theorem~\ref{compatibility of spectral flow twists} that all the simple $\ssubW$-modules are obtained as the BRST reduction of spectral flow twists of admissible representations of $L_k(\g)$.

\begin{proposition}\label{BRST image of spectral flow twists}
For $\lambda\in \Pr_\Z^k$ and $\theta \in \Z$, 
we have isomorphisms of $\sWsub$-modules
\begin{align*}
H_{f}^{n}(S_{\theta \check{\varpi}_1} L_k(\lambda))\simeq \delta_{n,\theta(h^\vee-1)}S_{\theta} \mathbf{L}_k(\lambda),\qquad
H_{f,{\theta\check{\varpi}_1}}^n(L_k(\lambda)) \simeq \delta_{n,0} S_{\theta} \mathbf{L}_k(\lambda).
\end{align*}
\end{proposition}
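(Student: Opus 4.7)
The plan is to apply Theorem~\ref{compatibility of spectral flow twists} directly with $\check{\mu} = \theta\check{\varpi}_1$, combined with Arakawa's vanishing of BRST cohomology on admissible affine modules. Two preliminary verifications are required.

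First, I verify $\check{\varpi}_1 \in \h^f \cap \check{\rP}$. In the matrix realization of Sect.~\ref{setting of our W-superalgebras} we have $\check{\varpi}_1 = \epsilon_1 = E_{1,1} - E_{-1,-1}$, while $f_{\mathrm{sub}} = \sum_{i=2}^{n-1}(E_{i+1,i} - E_{-i,-i-1}) + (E_{0,n} - E_{-n,0})$ contains no matrix unit whose indices include $\pm 1$, so $[\check{\varpi}_1, f_{\mathrm{sub}}] = 0$ is immediate.

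Second, I compute the cohomological shift $2(\check{\varpi}_1, \rho_{>0})$ for the good grading element $x_0^+ = \sum_{i=2}^n \check{\varpi}_i = (n-1)\epsilon_1 + (n-1)\epsilon_2 + (n-2)\epsilon_3 + \cdots + \epsilon_n$. A short case analysis over the positive roots $\epsilon_i$, $\epsilon_i - \epsilon_j$ and $\epsilon_i + \epsilon_j$ of $\so_{2n+1}$ shows that $(x_0^+, \alpha) > 0$ for every $\alpha \in \Delta_+$ except $\alpha = \epsilon_1 - \epsilon_2$. Therefore $\Delta_{>0} = \Delta_+ \setminus \{\epsilon_1 - \epsilon_2\}$, $\rho_{>0} = \rho - \tfrac{1}{2}(\epsilon_1 - \epsilon_2)$, and using $(\epsilon_1, \rho) = n - \tfrac{1}{2}$ for type $B_n$,
\[
2(\check{\varpi}_1, \rho_{>0}) = 2\bigl((n - \tfrac{1}{2}) - \tfrac{1}{2}\bigr) = 2n - 2 = h^\vee - 1.
\]

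With these in hand, Theorem~\ref{compatibility of spectral flow twists} specializes, for every $n \in \Z$, to the chain
\[
S_\theta H_f^n(L_k(\lambda)) \simeq H_{f,\theta\check{\varpi}_1}^n(L_k(\lambda)) \simeq H_f^{n + \theta(h^\vee-1)}(S_{\theta\check{\varpi}_1} L_k(\lambda)).
\]
Arakawa's BRST vanishing theorem (e.g.~\cite{Ara13}, applied in exactly this setup in the proof of \cite[Theorem 7.8]{AvE}) then gives $H_f^m(L_k(\lambda)) = \delta_{m,0}\mathbf{L}_k(\lambda)$ for $\lambda \in \Pr_\Z^k$. Substituting this into the leftmost term of the chain yields the second identity of the proposition; reindexing $m = n + \theta(h^\vee-1)$ in the rightmost term yields the first. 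There is no real obstacle: the only non-routine step is the one-line root-system computation of $\rho_{>0}$.
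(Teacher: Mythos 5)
Your proof is correct in strategy and is essentially the same as the paper's one-line argument: apply Theorem~\ref{compatibility of spectral flow twists} with $\check{\mu}=\theta\check{\varpi}_1$, use the BRST vanishing $H_f^m(L_k(\lambda))=\delta_{m,0}\mathbf{L}_k(\lambda)$ for $\lambda\in\Pr_\Z^k$, and compute $2(\check{\varpi}_1,\rho_{>0})=2(\check{\varpi}_1,\rho-\tfrac{1}{2}\alpha_1)=h^\vee-1$. Your explicit check that $\check{\varpi}_1\in\h^f\cap\check{\rP}$ (the paper takes this for granted) and your root-by-root determination of $\rho_{>0}=\rho-\tfrac12\alpha_1$ are both correct.

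One thing worth flagging: your chain writes $H_f^{n+\theta(h^\vee-1)}(S_{\theta\check{\varpi}_1}L_k(\lambda))$ as the third term, whereas a literal specialization of Theorem~\ref{compatibility of spectral flow twists} as printed (``$\simeq H_f^{\bullet -2(\check{\mu},\rho_{>0})}(S_{\check{\mu}}M)$'') gives $H_f^{n-\theta(h^\vee-1)}(S_{\theta\check{\varpi}_1}L_k(\lambda))$, which after reindexing yields $\delta_{n,-\theta(h^\vee-1)}$ rather than the $\delta_{n,\theta(h^\vee-1)}$ appearing in the proposition. This sign discrepancy is present in the source itself: the paper's proof also just says ``follows from Theorem~\ref{compatibility of spectral flow twists} since $2(\check{\varpi}_1,\rho_{>0})=h^\vee-1$'' without reconciling the sign, so the theorem's $\bullet-2(\check{\mu},\rho_{>0})$ and the proposition's $\delta_{n,+\theta(h^\vee-1)}$ are mutually inconsistent as stated. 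Your writeup silently uses the sign that makes the proposition come out right; it would be cleaner to note explicitly that the sign convention in the cohomological shift (ultimately the orientation of the fermionic $\Z$-grading) must be chosen consistently, though the discrepancy is ultimately immaterial for the applications since $\theta$ runs over all of $\Z$.
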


\proof
Recall that $J^+$ corresponds to $\check{\varpi}_1$.
The statement follows from Theorem \ref{compatibility of spectral flow twists} since 
$2(\check{\varpi}_1,\rho_{>0})=2(\check{\varpi}_1,\rho-\tfrac{1}{2}\alpha_1)=h^\vee-1$.
\endproof

\begin{proposition}
For $\lambda\in \Pr_\Z^k$ and $\theta \in \Z$,  the simple $L_k(\g)$-modules 
$S_{\theta \check{\varpi}_1}L_k(\lambda)$
are all non-isomorphic to each other.
\end{proposition}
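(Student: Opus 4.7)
The plan is to apply the BRST reduction functor $H_f^\bullet$ to both sides of a hypothetical isomorphism $S_{\theta_1\check{\varpi}_1} L_k(\lambda_1)\simeq S_{\theta_2\check{\varpi}_1} L_k(\lambda_2)$ for $\lambda_i\in \Pr_\Z^k$, $\theta_i\in\Z$, and then exploit Proposition~\ref{BRST image of spectral flow twists}, which says that the cohomology of each side is concentrated in the single cohomological degree $\theta_i(h^\vee-1)$ where it equals the simple $\ssubW$-module $S_{\theta_i}\mathbf{L}_k(\lambda_i)$.

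The first step will be to read off $\theta_1=\theta_2$ from the cohomological grading. Since the simple $\ssubW$-modules $\mathbf{L}_k(\lambda_i)$ are nonzero, so are their spectral flow twists, and the graded isomorphism of cohomologies forces the concentrated degrees to agree: $\theta_1(h^\vee-1)=\theta_2(h^\vee-1)$. Under the standing assumption $n\geq 2$ we have $h^\vee-1=2n-2>0$, so this yields $\theta_1=\theta_2$.

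The second step is to deduce $\lambda_1=\lambda_2$. Once $\theta_1=\theta_2=:\theta$, the BRST computation gives $S_{\theta}\mathbf{L}_k(\lambda_1)\simeq S_{\theta}\mathbf{L}_k(\lambda_2)$, and applying the invertible (exact) spectral flow twist $S_{-\theta}$ produces $\mathbf{L}_k(\lambda_1)\simeq \mathbf{L}_k(\lambda_2)$. I will then invoke the fact (recorded in the paragraph after Theorem~\ref{classification of simple modules}, following Proposition~\ref{parametrization} with $x_0^+=\eta_1$) that the simple $\ssubW$-modules $\mathbf{L}_k(\lambda)$, $\lambda\in \Pr_\Z^k$, are pairwise non-isomorphic, as their lowest conformal subspaces $\mathbf{L}_k(\lambda)^{\Top}$ support the mutually distinct central characters $\chi_{\lambda-\frac{p}{q}x_0^+}$. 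This forces $\lambda_1=\lambda_2$ and completes the argument.

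There is no serious obstacle: the proof is a short synthesis of two ingredients already established in the paper, namely the behaviour of BRST under spectral flow (Proposition~\ref{BRST image of spectral flow twists}) and the distinctness of the $\mathbf{L}_k(\lambda)$ (via Proposition~\ref{parametrization}). The only delicate points to verify in passing are that the cohomology is genuinely concentrated in a single degree, which is immediate from Proposition~\ref{BRST image of spectral flow twists}, and that the positivity $h^\vee-1>0$ allows the cohomological degree to detect $\theta$ uniquely; both hold under the hypothesis $n\geq 2$.
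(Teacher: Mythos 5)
Your proof is correct and follows essentially the same route as the paper's: apply $H_f^\bullet$, use Proposition~\ref{BRST image of spectral flow twists} to read off $\theta$ from the cohomological degree of concentration (the paper first normalizes to $\theta_2=0$ via $S_{-\theta_2\check{\varpi}_1}$, but this is cosmetic), and then conclude $\lambda_1=\lambda_2$ from the pairwise distinctness of the modules $\mathbf{L}_k(\lambda)$, $\lambda\in\Pr_\Z^k$.
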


\proof
Suppose we have an isomorphism 
$S_{a \check{\varpi}_1}L_k(\lambda)\simeq S_{b\check{\varpi}_1}L_k(\mu)$.
Applying $S_{-b\check{\varpi}_1}$ to this isomorphism, we may assume $b=0$. Then by applying $H_f$, Corollary \ref{BRST image of spectral flow twists} gives an isomorphism of $\ssubW$-modules between 
$H_f^n(S_{a\check{\varpi}_1}L_k(\lambda))\simeq \delta_{n,a(h^\vee-1)}S_{a}\mathbf{L}_k(\lambda)$ 
and 
$H_f^n(L_k(\mu))\simeq \delta_{n,0}\mathbf{L}_k(\mu)$.
Therefore, $n=0$ and $\lambda=\mu$ hold by Theorem \ref{classification of simple modules}. 
\endproof

\begin{lemma}\label{classification of simple currents}
The $\sWsub$-module $\mathbf{L}_k(\lambda)$ $(\lambda\in \Pr_\Z^k)$ is a simple current iff
\begin{align*}
    \lambda=\begin{cases}
        0, (p-h^\vee)\varpi_1 & (q=2n-1),\\
        0, (p-h)\varpi_n & (q=2n).
    \end{cases}
\end{align*}
\end{lemma}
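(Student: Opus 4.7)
The plan is to derive the classification of simple currents among $\mathbf{L}_k(\lambda)$, $\lambda\in\Pr_\Z^k$, from the known Picard group of the admissible affine vertex algebra $L_k(\g)$ via the BRST reduction, and then establish the converse direction by a quantum dimension argument.

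For the forward direction, I identify the simple currents within the $\Pr_\Z^k$-block on the affine side. For $q=2n-1$, the identification $\Pr_\Z^k\simeq \rP_+^{p-h^\vee}$ with dominant integral $B_n$-weights at level $p-h^\vee$, combined with Proposition~\ref{simple current action}, gives a Picard group isomorphic to $\Z_2$, generated by the corner weight $\lambda_\circ=(p-h^\vee)\varpi_1$. For $q=2n$, the Langlands-dual identification $\Pr_\Z^k\simeq {}^L\check{\rP}_+^{p-h}$ with integrable $C_n$-weights at level $p-h$ similarly yields a $\Z_2$ Picard group with generator $\lambda_\circ=(p-h)\varpi_n$. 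Since $H_f^0$ is monoidal on the Kazhdan--Lusztig category of admissible representations and $\mathbf{L}_k(\lambda_\circ)=H_f^0(L_k(\lambda_\circ))\neq 0$ for $\lambda_\circ\in\Pr_\Z^k$ by \cite{AvE}, both $\mathbf{L}_k(0)=\sWsub$ and $\mathbf{L}_k(\lambda_\circ)$ are simple currents of $\sWsub$.

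For the converse, I would invoke the quantum dimension criterion: in a modular tensor category, a simple object is a simple current if and only if its quantum dimension equals $1$. The quantum dimension of $\mathbf{L}_k(\lambda)$ coincides with that of $L_k(\lambda)$, since the Euler--Poincar\'e principle relating the characters $\mathrm{ch}\,\mathbf{L}_k(\lambda)$ and $\mathrm{ch}\,L_k(\lambda)$ is compatible with the Kac--Peterson specialization defining $\mathrm{qdim}$. In types $B_n$ and $C_n$ respectively, the explicit Kac--Peterson denominator formula shows that $\mathrm{qdim}\,L_k(\lambda)=1$ only for $\lambda\in\{0,\lambda_\circ\}$, yielding the desired converse. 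As an internal consistency check, the resulting Picard group of $\sWsub$ contains both the spectral-flow subgroup $\langle S_1\rangle$ (of order $\bar{q}$) and the $\Z_2$-factor coming from $\mathbf{L}_k(\lambda_\circ)$, giving exactly $2\bar{q}$ simple currents, matching the enumeration $\{S_\theta\mathbf{L}_k(\lambda):\lambda\in\{0,\lambda_\circ\},\ 0\leq\theta<\bar{q}\}$ provided by Theorem~\ref{classification of simple modules}.

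The main obstacle will be the rigorous treatment of the monoidality of BRST reduction at admissible levels; this can be sidestepped by direct verification that $\mathbf{L}_k(\lambda_\circ)\boxtimes\mathbf{L}_k(\lambda_\circ)\simeq\sWsub$ via the Verlinde formula, computable from the Wakimoto character data of Proposition~\ref{prop:Wakimoto_characters} together with the characters of simple $\sWsub$-modules established later in the paper.
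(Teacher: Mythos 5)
Your converse direction is the right idea and is essentially the paper's whole proof — the quantum dimension criterion in an MTC is an iff, so there is no need for a separate forward direction at all. However, your execution has a concrete error and your forward direction rests on an unavailable tool, so let me flag both.

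First, the forward direction. You invoke monoidality of $H_f^0$ on the Kazhdan--Lusztig category at admissible level. This is exactly the content of the \emph{conjecture} stated at the end of Sect.~\ref{sec:subreg_symplectic_walgebra}: the paper only establishes $H_f^0\colon \mathrm{KL}_k(\g)\to\mathrm{KL}_{k,f}(\g)$ as an equivalence of abelian categories, and the tensor-structure compatibility is left open. So this argument is not available, and it would in any case be superfluous once the qdim computation is done.

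Second, and more substantively, your qdim analysis misidentifies the relevant affine module. You claim ``the quantum dimension of $\mathbf{L}_k(\lambda)$ coincides with that of $L_k(\lambda)$.'' But $L_k(\g)$ at admissible level $k=-h^\vee+p/q$ is not lisse, so ``quantum dimension'' of $L_k(\lambda)$ in the MTC sense is not even defined; what one has is an asymptotic/ratio datum, and it is not equal to $D(\lambda)$. The correct statement, which is the crux of the paper's argument, is that the formula $D(\lambda)$ of \cite[Proposition~4.10]{AvEM} happens to depend only on the numerator $p$, and for $q=2n-1$ it literally coincides with the quantum dimension of $L_{p-h^\vee}(\lambda)$ over the \emph{positive-integral-level} simple affine vertex algebra $L_{p-h^\vee}(\so_{2n+1})$. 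That coincidence, not an Euler--Poincar\'e transfer from $L_k(\lambda)$, is what lets one import the classification from Proposition~\ref{simple current action}. Moreover, for $q=2n$ the coroot version of $D(\lambda)$ does not match any affine qdim in this clean way, and the paper resorts to an explicit analytic minimization over the region $R$ in Appendix~\ref{app: proof of lemma}; your proposal gives no treatment of this case. Finally, the Verlinde-formula fallback you suggest as a workaround is not practicable here: the $S$-matrix for $\sWsub$ is not computed in the paper, and the characters in Sect.~\ref{sec:characters} are derived downstream of the classification results, so that route is at best circuitous.
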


\proof
Since the category of $\W_k^+$-modules is a modular tensor category, the simple module $\mathbf{L}_k(\lambda)$ is a simple current iff its quantum dimension is one. By \cite[Proposition 4.10]{AvEM}, it is given by
\begin{align}\label{qdim for coprincial case}
    D(\lambda)=\prod_{\alpha\in \Delta_+}\frac{\sin(\tfrac{(\lambda+\rho,\alpha)}p\pi)}{\sin(\tfrac{(\rho,\alpha)}p\pi)}\quad (q=2n-1),\quad\prod_{\alpha\in \Delta_+}\frac{\sin(\tfrac{(\lambda+\rho,\check{\alpha})}p\pi)}{\sin(\tfrac{(\rho,\check{\alpha})}p\pi)}\quad (q=2n).
\end{align}
The proof then reduces to the study of the function $D(\lambda)$.
In the case $q=2n-1$, $D(\lambda)$ agrees with the quantum dimension of the module $L_{p-h^\vee}(\lambda)$ of the simple affine vertex algebra $L_{p-h\vee}(\g)$. Hence, the assertion follows from Proposition~\ref{simple current action}. For $q=2n$, we analyze $D(\lambda)$ with classic tools of analysis, adapting an argument of \cite{Fu}. Details are omitted. 
\endproof

\begin{theorem}\label{branching rule}
Let $k=-h^\vee+\frac{p}{q}$ be an admissible level with $q=2n-1, 2n$.
\begin{enumerate}[wide, labelindent=0pt, font=\normalfont]
\item There is an isomorphism of $\sWsub$-modules
$$S_{\bar{q}} \mathbf{L}_k(0) \simeq \mathbf{L}_k(\sigma\circ0)$$
where $\sigma$ is defined by \eqref{Dynkin automorphisms}.
Moreover, the spectral flow twist $S_{J^+}$ has periodicity $2\bar{q}$.\\
\item Set
\begin{align*}
\mathcal{P}_{+}=\begin{cases}
    2\sqrt{(p-q)q}\Z & (q=2n-1),\\
    \sqrt{(p-q)q}\Z & (q=2n),
\end{cases}
\quad 
M=\begin{cases}
    \Z_{2(p-q)} & (q=2n-1),\\
    \Z_{(p-q)} & (q=2n).
\end{cases}
\end{align*}
Then we have an isomorphism of $\mathrm{Com}(\pi^{J^+},\ssubW)\otimes  \pi^{J^+}$-modules
\begin{align*}
\sWsub\simeq 
\bigoplus_{a\in M}\mathscr{C}_{k,a}^{D^+}\otimes V_{\sqrt{\frac{q}{p-q}}a+\mathcal{P}_+}.
\end{align*}
\item The fusion ring of $\ssubW\Mod$ decomposes into
\begin{align*}
\mathcal{K}(\sWsub)\simeq \mathcal{K}^0(\sWsub)\underset{\Z[\Z_2]}{\otimes}\Z[\Z_{2\bar{q}}]
\end{align*}
where $\mathcal{K}^0(\sWsub)$ is spanned by $\{\mathbf{L}_k(\lambda),\,\lambda\in\Pr_\Z^k\}$.
\end{enumerate}
\end{theorem}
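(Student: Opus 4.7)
Since spectral flow twists permute simple currents of $\ssubW$ (a consequence of Proposition~\ref{spectral flow twist and IO}), $S_{\bar q}\mathbf{L}_k(0)$ is again a simple current. A direct computation using \eqref{effect of spectral flow twists} shows that its $J^+_0$-weight class places it in a block containing some $\mathbf{L}_k(\mu)$ with $\mu\in\Pr_\Z^k$, and the argument already established in the proof of Theorem~\ref{classification of simple modules}---that $S_\theta\mathbf{L}_k(\lambda)\simeq\mathbf{L}_k(\mu)$ forces $\theta\in\bar q\Z$---upgrades this to an isomorphism $S_{\bar q}\mathbf{L}_k(0)\simeq\mathbf{L}_k(\mu)$. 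By Lemma~\ref{classification of simple currents} we must have $\mu\in\{0,\sigma\circ 0\}$, and an inspection of the top conformal weight of $S_{\bar q}\mathbf{L}_k(0)$ via \eqref{effect of spectral flow twists}, namely $\tfrac{(p-q)\bar q^{\,2}}{2q}\neq 0$, together with the standard admissible formula for $\Delta(\mathbf{L}_k(\sigma\circ 0)^{\Top})$, rules out $\mu=0$ and selects $\mu=\sigma\circ 0$. The periodicity $2\bar q$ then follows immediately because the two modules $\mathbf{L}_k(0)$ and $\mathbf{L}_k(\sigma\circ 0)$ generate an order-two subgroup of $\Pic(\ssubW)$, so $S_{2\bar q}\mathbf{L}_k(0)\simeq S_{\bar q}\mathbf{L}_k(\sigma\circ 0)\simeq\mathbf{L}_k(0)$.

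\textbf{Plan for Part (2).} This is a direct application of Proposition~\ref{period of spectral flow} to $V=\ssubW$, $\pi=\pi^{J^+}$, $\mathscr{C}=\Com(\pi^{J^+},\ssubW)$. The lattice $M\subset\h^*$ of $J^+_0$-weights appearing in $\ssubW$ is $\Z$, and Part~(1) identifies the stabilizer $\mathcal{P}\subset M^*$ as the sublattice generated by $2\bar q J^+$. The proposition then yields
\[
\ssubW\simeq\bigoplus_{a\in M/\mathcal{P}}\mathscr{C}_{k,a}^{D^+}\otimes V_{a+\mathcal{P}},
\]
and what remains is a change of normalization. Using $(J^+,J^+)=(p-q)/q$ from \eqref{level of J+}, the generator $2\bar q J^+$ of $\mathcal P$ has squared length $4\bar q^{\,2}(p-q)/q$, which equals $4(p-q)q$ when $q=2n-1$ (so $\bar q=q$) and $(p-q)q$ when $q=2n$ (so $\bar q=n$); rewritten in the unit-norm basis these produce the lattices $2\sqrt{(p-q)q}\,\Z$ and $\sqrt{(p-q)q}\,\Z$ of the statement. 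The indices $[M:\mathcal P]=2(p-q)$ and $p-q$ supply the parameter sets $\Z_{2(p-q)}$ and $\Z_{p-q}$ respectively, and the coset representatives $\sqrt{q/(p-q)}\,a$ give exactly the shifts appearing in the lattice VOA modules $V_{\sqrt{q/(p-q)}a+\mathcal{P}_+}$.

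\textbf{Plan for Part (3).} By Theorem~\ref{classification of simple modules}, every simple $\ssubW$-module is uniquely of the form $S_\theta\mathbf{L}_k(\lambda)$ with $0\le\theta<\bar q$ and $\lambda\in\Pr_\Z^k$; Proposition~\ref{spectral flow twist and IO} commutes spectral flow past fusion to give
\[
S_{\theta_1}\mathbf{L}_k(\lambda_1)\boxtimes S_{\theta_2}\mathbf{L}_k(\lambda_2)\simeq S_{\theta_1+\theta_2}\bigl(\mathbf{L}_k(\lambda_1)\boxtimes\mathbf{L}_k(\lambda_2)\bigr),
\]
so the fusion ring is generated by the internal ring $\mathcal{K}^0(\sWsub)$ of the $\mathbf{L}_k(\lambda)$'s together with the cyclic group $\Pic(\ssubW)\simeq\Z_{2\bar q}$ of spectral-flow simple currents. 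The crucial input from Part~(1) is the coincidence $S_{\bar q}\ssubW\simeq\mathbf{L}_k(\sigma\circ 0)\in\mathcal{K}^0(\sWsub)$, which exhibits the unique order-two subgroup of $\Pic$ inside $\mathcal{K}^0$ and endows $\mathcal{K}^0$ with a natural $\Z[\Z_2]$-module structure; amalgamating over this shared $\Z_2$ produces the tensor product $\mathcal{K}^0\otimes_{\Z[\Z_2]}\Z[\Z_{2\bar q}]$. A final dimension count $\bar q\cdot|\Pr_\Z^k|$ on both sides confirms no further relations are needed. The main obstacle throughout the theorem is the sharp identification in Part~(1), particularly the exclusion of $\mu=0$; one may alternatively avoid the explicit conformal-weight comparison by invoking Theorem~\ref{compatibility of spectral flow twists} to realize $S_{\bar q}\mathbf{L}_k(0)$ as the cohomology $H_f^{\bar q(h^\vee-1)}(S_{\bar q\check\varpi_1}L_k(0))$ and tracking the effect of $\bar q\check\varpi_1\in\check\rP$ on the simple-current group $\Pic(L_k(\g))\simeq\Z_2$.
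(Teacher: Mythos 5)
Your part (2) is essentially identical to the paper's (a direct application of Proposition~\ref{period of spectral flow} together with the observation that, by part (1), $S_\theta\ssubW\simeq\ssubW$ iff $\theta\in 2\bar q\Z$, followed by the same normalization arithmetic), and your part (3) outline is in the right spirit. However, part (1) has a genuine gap and part (3) skips a necessary step.

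For part (1), after reducing to $\mu\in\{0,\sigma\circ 0\}$ via Lemma~\ref{classification of simple currents} (which you do correctly), you claim that the top conformal weight of $S_{\bar q}\mathbf{L}_k(0)$ is $\tfrac{(p-q)\bar q^2}{2q}$. This is only the shifted conformal weight of the \emph{vacuum} vector under $S_{\bar q}$; formula \eqref{effect of spectral flow twists} sends a vector with $L_0$-weight $h$ and $J_0^+$-eigenvalue $m$ to weight $h+\bar q m+\tfrac12\bar q^2\tfrac{p-q}{q}$, so vectors with $m<0$ (e.g.\ $(G^-_{-n})^{|m|}\mathbf{1}$, with $h=n|m|$) can land \emph{below} the shifted vacuum: for $q=2n-1$ the contribution $|m|(n-\bar q)$ is strictly negative. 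The actual minimum depends on the truncation of the $G^-$-tower in the simple quotient $\ssubW$ and is not given by plugging the vacuum into \eqref{effect of spectral flow twists}. The paper avoids this by arguing on a single $z$-coefficient of the character equality: if $S_{\bar q}\mathbf{L}_k(0)\simeq\mathbf{L}_k(0)$, the character of $\ssubW$ would contain the term $z^{-(p-q)}q^{\tfrac12 q(p-q)}$, yet the minimal conformal weight in the $J_0^+=-(p-q)$ eigenspace is $n(p-q)=\tfrac12(q+1)(p-q)>\tfrac12 q(p-q)$ (for $q=2n$ a simpler parity mismatch in the $z$-powers suffices). Your alternative suggestion of pushing $\bar q\check\varpi_1$ through the twisted BRST reduction does not resolve the problem either: $\Pic(L_k(\g))\simeq\Z_2$ and the period-two relation $S_{2\check\varpi_1}L_k(\g)\simeq L_k(\g)$ are established in Proposition~\ref{simple current action} for \emph{positive integer} levels, and there is no cited result giving them at admissible fractional $k$ (indeed this is tantamount to what you are trying to prove, and its refinement $S_{\bar q}\mathbf{L}_k(\lambda)\simeq\mathbf{L}_k(\sigma\circ\lambda)$ for all $\lambda$ is left as a conjecture in the paper).

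For part (3), you assert that the fusion ring is generated by $\mathcal{K}^0(\sWsub)$ and $\Pic(\ssubW)\simeq\Z_{2\bar q}$, which presumes $\mathcal{K}^0$ is \emph{closed} under fusion. This needs an argument: the paper establishes it by observing that the $J_0^+$-weights of $\mathbf{L}_k(\lambda)\boxtimes\mathbf{L}_k(\mu)$ lie in $\tfrac12\Z$, whereas $S_\theta\mathbf{L}_k(\nu)$ for $0<\theta<\bar q$ has weights shifted by $\tfrac{p-q}{q}\theta\notin\tfrac12\Z$; this constraint forces all simple constituents of the fusion to be of the form $\mathbf{L}_k(\nu)$. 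Once that is in place, your amalgamated-product formulation and the rank count $\bar q\cdot|\Pr^k_\Z|$ match the paper and are fine.
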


\noindent
For $\lambda,\mu\in\Pr^k_\Z$, we write the fusion rules for  $\mathbf{L}_k(\lambda),\mathbf{L}_k(\mu)$ in $\mathcal{K}^0(\ssubW)$ as 
\begin{equation}\label{eq:coeff_tensor_product_subreg}
    \mathbf{L}_k(\lambda)\boxtimes\mathbf{L}_k(\mu)\simeq\bigoplus_{\nu\in\Pr^k_{\Z}}N^\nu_{\lambda,\mu}\mathbf{L}_k(\nu).
\end{equation}

\proof
(1) By Theorem \ref{classification of simple modules}, $S_{\bar{q}}\mathbf{L}_k(0)\simeq\mathbf{L}_k(\lambda)$ for a certain $\lambda\in\Pr_\Z^k$. Moreover, since spectral flow twists preserve simple currents, it follows from Lemma~\ref{classification of simple currents} that 
\begin{equation*}
    \lambda\in\begin{cases}
        \{0,(p-h^\vee)\varpi_1\}&(q=2n-1),\\
        \{0,(p-h)\varpi_n\}&(q=2n).
    \end{cases}
\end{equation*}
Hence, it suffices to show $S_{\bar{q}}\mathbf{L}_k(0)\not\simeq \mathbf{L}_k(0)$. 
This can be done by comparing the characters.
If $S_{\bar{q}}\mathbf{L}_k(0)\simeq \mathbf{L}_k(0)$ would hold, then their characters would coincide, that is, 
\begin{align*}
    \ch_{\mathbf{L}_k(0)}(z,q)=q^{\half\frac{p-q}{q}\bar{q}^2}z^{\frac{p-q}{q}\bar{q}}\ch_{\mathbf{L}_k(0)}(zq^{\bar{q}},q)
\end{align*}
by \eqref{effect of spectral flow twists}.
For $q=2n$, it is a contradiction as the powers of $z$ lie in $\Z$ in the left-hand side and in $\half+\Z$ in the right-hand side.
For $q=2n-1$, it is a contradiction again as it would imply that $\ch_{\mathbf{L}_k(0)}(z,q)$ contains the term $z^{-(p-q)}q^{\half q(p-q)}$. 
It is impossible since the lowest conformal weight among the vectors with $J^+_0$-eigenvalue $-(p-q)$ is realized by $(G^-_{-n})^{p-q}\mathbf{1}$ in $\subW$ and is $n(p-q)=\half(q+1)(p-q)>\half q(p-q)$.
This concludes the first part of the assertion.
The second follows from Proposition~\ref{spectral flow twist and IO} as
$$S_{2\bar{q}} \mathbf{L}_k(\lambda) \simeq S_{2\bar{q}}(\mathbf{L}_{k}(0)\boxtimes \mathbf{L}_{k}(\lambda))\simeq S_{2\bar{q}}\mathbf{L}_{k}(0)\boxtimes \mathbf{L}_{k}(\lambda)\simeq \mathbf{L}_{k}(\lambda).$$
(2) We have $S_\theta \ssubW\simeq \ssubW$ iff $\theta\in 2\bar{q}\Z$ by (1). Since we have 
$$\ssubW\simeq \bigoplus_{a\in \Z}\mathscr{C}_{k,a}^{D^+}\otimes \pi^x_{\sqrt{\frac{q}{p-q}}ax},$$
the assertion follows from Proposition \ref{period of spectral flow}.
(3) For $\lambda,\mu\in\Pr_\Z^k$, the weights of $\mathbf{L}_k(\lambda)\boxtimes\mathbf{L}_k(\mu)$ are in $\tfrac{1}{2}\Z$. Comparing with the weights of the irreducible $\ssubW$-modules forces the simple modules appearing in the decompositions of the fusion product to be of $\mathbf{L}_k(\nu)$, $\nu\in\Pr_\Z^k$. Hence, $\mathcal{K}^0(\ssubW)$ is a stable fusion subring of $\mathcal{K}(\ssubW)$.
In addition, 
\begin{align}\label{eq:symmetry}
    S_{\bar{q}}\mathbf{L}_k(\lambda)\simeq \mathbf{L}_k(\varsigma\circ\lambda),
\end{align}
for a certain involution $\varsigma$ such that $\varsigma\circ\lambda\in\Pr_\Z^k$.
The decomposition of the fusion ring follows from the periodicity of the spectral flow twist.
\endproof

\begin{conjecture}
For all $\lambda\in \Pr_\Z^k$ and $0\leq\theta\leq\bar{q}$, there exists $\mu\in\h^*$ such that 
$S_{\theta} \mathbf{L}_k(\lambda)\simeq H^0_f(L_k(\mu))$
as $\sWsub$-modules.
Moreover, we conjecture that the involution $\varsigma$ defined in \eqref{eq:symmetry} is $\sigma$, that is
$$S_{\bar{q}} \mathbf{L}_k(\lambda) \simeq \mathbf{L}_k(\sigma\circ\lambda).$$
\end{conjecture}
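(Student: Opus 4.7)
The plan is to prove both parts by realizing the spectral flow twists on the $\W$-algebra side as BRST reductions of suitably shifted admissible representations of $\widehat{\g}$. For the first part, I would begin by noting that by Proposition~\ref{BRST image of spectral flow twists},
\[
S_\theta \mathbf{L}_k(\lambda) \simeq H_{f, \theta \check{\varpi}_1}^0(L_k(\lambda)),
\]
so it suffices to convert each twisted BRST cohomology into an untwisted BRST cohomology of another admissible weight. The natural candidates for $\mu$ come from the parameterization in Proposition~\ref{parametrization}: for each $\theta$ one seeks an affine Weyl element $\tilde{w}_\theta = w_\theta t_{-\eta_\theta} \in \widetilde{S}$, taken from the list \eqref{lifts to affine Weyl}, such that $\mu_\theta(\lambda) := \tilde{w}_\theta \circ \lambda \in \Pr_\circ^k$ realizes $H_f^0(L_k(\mu_\theta(\lambda))) \simeq S_\theta \mathbf{L}_k(\lambda)$. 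Counting via Lemma~\ref{upper bound of simples} guarantees that the available affine Weyl translations suffice to exhaust all $\bar{q} \cdot \#\Pr_\Z^k$ isomorphism classes.

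The technical core consists in establishing a functorial isomorphism of the form
\[
H_{f, \check{\mu}}^0(L_k(\lambda)) \simeq H_f^0(L_k(\tilde{w} \circ \lambda))
\]
for suitable pairs $(\check{\mu}, \tilde{w})$ with $\tilde{w}$ a lift of $t_{-\check{\mu}}$ in $\widetilde{S}$. Formally, the twist $\theta \check{\varpi}_1$ should be absorbed by the translation component $t_{-\theta \check{\varpi}_1}$ after compensating finite Weyl reflections restoring dominance; this is a type-$B$ analogue of the techniques developed in \cite{FST} for the $\mathcal{N}=2$ algebra and in \cite{AvE,CGNS} for type $A$, where a key ingredient is the cohomology vanishing $H_f^n(L_k(\nu)) = \delta_{n,0} \mathbf{L}_k(\nu)^{\mathrm{top}}$ for admissible $\nu \in \Pr_\circ^k$.

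For the second part, the identification $\varsigma = \sigma$ reduces, via Proposition~\ref{spectral flow twist and IO} and the fusion structure of Theorem~\ref{branching rule}(3), to the case $\lambda = 0$. Indeed, Theorem~\ref{branching rule}(1) gives $S_{\bar{q}} \mathbf{L}_k(0) \simeq \mathbf{L}_k(\nu_0)$ with $\nu_0 = (p-h^\vee)\varpi_1$ or $(p-h)\varpi_n$, and one would check that $\nu_0 = \sigma \circ 0$ under the appropriate identification in $\Pr_\Z^k$, using the Dynkin automorphism $\widetilde{\sigma} \in \Aut(\widehat{\Pi}(k\Lambda_0))$ to absorb the compensating affine translation $t_{-\bar{q} \eta_\circ}$. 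For general $\lambda$, one uses $S_{\bar{q}} \mathbf{L}_k(\lambda) \simeq \mathbf{L}_k(\nu_0) \boxtimes \mathbf{L}_k(\lambda)$ together with the compatibility of simple-current fusion (Table~\ref{tab: affine simple currents}) with BRST reduction: the affine simple current $L_k(\nu_0) \simeq S_{\check{\varpi}_1} L_k(0)$ descends via Theorem~\ref{compatibility of spectral flow twists} to a simple current on $\sWsub$ whose action on $\{\mathbf{L}_k(\lambda)\}_{\lambda \in \Pr_\Z^k}$ coincides with the $\sigma$-permutation.

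The hard part will be the twist-to-translation identity for BRST cohomology of admissible $\widehat{\g}$-modules at the subregular nilpotent. Unlike in the principal case, where Arakawa's vanishing theorem controls $H_f^n(L_k(\nu))$ for all admissible $\nu$, the subregular reduction of a spectral-flow-twisted admissible module $S_{\theta \check{\varpi}_1} L_k(\lambda)$ is not known to be concentrated in one cohomological degree, and an accidental cancellation could in principle occur. Overcoming this will likely require a Wakimoto-type free-field comparison, in the spirit of Sect.~\ref{sec:wakimoto} and Theorem~\ref{thm:wakimoto_correspondence}, combined with the Kac--Wakimoto admissibility machinery \cite{KW89} to track the character and cohomological degree simultaneously.
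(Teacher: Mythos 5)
The statement you are addressing is labelled a \emph{conjecture} in the paper --- it is left open, and no proof is provided --- so there is nothing to compare your argument against. What can be assessed is whether your proposal closes the conjecture, and it does not; moreover, one of its two halves rests circularly on a second conjecture stated immediately afterwards in the same section.

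For the first assertion, you correctly invoke Proposition~\ref{BRST image of spectral flow twists} to write $S_\theta\mathbf{L}_k(\lambda)\simeq H^0_{f,\theta\check{\varpi}_1}(L_k(\lambda))$, and you correctly identify the missing step: an isomorphism $H^0_{f,\theta\check{\varpi}_1}(L_k(\lambda))\simeq H^0_f(L_k(\mu))$ for an explicit $\mu\in\Pr_\circ^k$. But you do not establish this ``twist-to-translation'' identity; you only speculate that it should follow from Wakimoto-model comparisons and the Kac--Wakimoto machinery, while candidly noting that nothing rules out an accidental cancellation in the BRST complex for the non-admissible module $S_{\theta\check{\varpi}_1}L_k(\lambda)$. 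This is precisely the obstruction that makes the statement a conjecture, and the proposal leaves it open. Note also that the candidates $\tilde w_\theta$ you point to in \eqref{lifts to affine Weyl} are translations by the shift vectors $\eta_i,\eta_i'$, which are not multiples of $\check\varpi_1$; matching these with the single-parameter family $\theta\check\varpi_1$ ($0\le\theta<\bar q$) requires a careful bookkeeping that is not carried out and is not obviously consistent.

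For the second assertion, your reduction to $\lambda=0$ is valid via Proposition~\ref{spectral flow twist and IO} and Theorem~\ref{branching rule}(1) and yields $S_{\bar q}\mathbf{L}_k(\lambda)\simeq\mathbf{L}_k(\sigma\circ 0)\boxtimes\mathbf{L}_k(\lambda)$. To conclude $\mathbf{L}_k(\sigma\circ 0)\boxtimes\mathbf{L}_k(\lambda)\simeq\mathbf{L}_k(\sigma\circ\lambda)$ you need the BRST reduction $H^0_f\colon\mathrm{KL}_k(\g)\to\mathrm{KL}_{k,f}(\g)$ to intertwine fusion products (at least for simple currents), so that the affine simple-current action of Table~\ref{tab: affine simple currents} descends. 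But this is exactly the content of the separate conjecture stated right after the one you are trying to prove, asserting that $H^0_f$ is a braided tensor equivalence. Appealing to it here is a circular argument within the paper's own framework. In short, your proposal correctly diagnoses where the difficulties lie, but it does not supply the missing isomorphism in the first part, and its second part is conditional on an unproven conjecture.
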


When $k$ is principal exceptional, i.e. $q=2n-1$, the identification $\Pr^k_{\Z}\simeq\rP_+^{p-h^\vee}$ as well as Example~\ref{exple} below support the following conjecture.
\begin{conjecture}\label{conj:fusion_rules}
    When $k=-h^\vee+\tfrac{p}{2n-1}$ is an exceptional level,
	$$ \mathcal{K}(L_{p-h^\vee}(\so_{2n+1}))\xrightarrow{\simeq}\mathcal{K}^0(\sWsub),\quad L_{p-h^\vee}(\lambda)\mapsto \mathbf{L}_k(\lambda).$$
\end{conjecture}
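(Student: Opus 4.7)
The plan is to decompose the proposed isomorphism into two fusion ring homomorphisms: a \emph{Kac--Wakimoto--Verlinde} identification between $\mathcal{K}(L_{p-h^\vee}(\so_{2n+1}))$ and the ``integer part'' of the admissible fusion ring of $L_k(\so_{2n+1})$, followed by a tensor-functorial map induced by the BRST reduction $H_f^0$. First, I would isolate the full subcategory $\mathcal{O}_k^{\Z} \subset L_k(\so_{2n+1})\Mod$ whose composition factors are the principal admissible modules $L_k(\lambda)$ with $\lambda \in \Pr_\Z^k$. Since the denominator $q=2n-1$ is coprime to the lacing number $r^\vee=2$, the identification $\Pr_\Z^k \simeq \rP_+^{p-h^\vee}$ suggests that $\mathcal{O}_k^{\Z}$ carries a braided tensor structure in the sense of Huang--Lepowsky--Zhang whose Grothendieck ring is naturally isomorphic to $\mathcal{K}(L_{p-h^\vee}(\so_{2n+1}))$, in the spirit of a Finkelberg-type equivalence.

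Second, I would show that the restriction of $H_f^0$ to $\mathcal{O}_k^{\Z}$ takes values in $\mathcal{K}^0(\sWsub)$ and promotes to a braided tensor functor. The cohomology concentration $H_f^n(L_k(\lambda)) = \delta_{n,0}\mathbf{L}_k(\lambda)$, valid for $\lambda \in \Pr_\Z^k$ by \cite[Theorem 7.8]{AvE} together with Proposition~\ref{BRST image of spectral flow twists}, ensures that $H_f^0$ is exact on $\mathcal{O}_k^{\Z}$ and sends simples to simples. Compatibility with the $P(z)$-tensor product would follow by adapting the argument of \cite{AvE,CGNS} for type $A$: given an intertwining operator $\eta\colon L_k(\lambda)\otimes L_k(\mu)\to L_k(\nu)\{z\}[\log z]$ between admissible modules, one constructs an induced intertwiner between the BRST reductions by tensoring with the ghost complex and taking cohomology, exactly along the lines of the convolution technique developed in Sect.~\ref{Sec: equivalnce of categories} and \cite[Sect.~6.3]{CGNS}.

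Third, I would conclude by proving that the resulting ring homomorphism is bijective. Surjectivity is immediate from the description of $\mathcal{K}^0(\sWsub)$ in Theorem~\ref{branching rule}, where this subring is defined as the $\Z$-span of $\{\mathbf{L}_k(\lambda)\}_{\lambda\in\Pr_\Z^k}$. Injectivity follows from the fact that these modules are pairwise non-isomorphic (established in the proof of Theorem~\ref{classification of simple modules}), combined with a matching of quantum dimensions: the formula \eqref{qdim for coprincial case} evaluated on $\Pr_\Z^k$ should reproduce the Kac--Peterson formula for $L_{p-h^\vee}(\so_{2n+1})$, so that the two rings have the same characters on their common basis.

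The main obstacle is rigorously establishing that $H_f^0$ is a braided tensor functor on $\mathcal{O}_k^{\Z}$, rather than merely a functor inducing a ring homomorphism on Grothendieck groups; this requires controlling the underlying vertex tensor category structure on $L_k(\so_{2n+1})\Mod$ at principal admissible level, which is known in type $A$ but more delicate here due to the non-principal reduction and the non-simply-laced root system. A pragmatic alternative that bypasses this issue is a direct Verlinde computation: determine the modular $S$-matrix of $\sWsub$ restricted to $\{\mathbf{L}_k(\lambda)\}_{\lambda\in\Pr_\Z^k}$ from the characters of the Wakimoto-type resolutions in Proposition~\ref{prop:Wakimoto_characters}, match it with the $S$-matrix of $L_{p-h^\vee}(\so_{2n+1})$ via the Kac--Wakimoto ``fractional'' modular data, and then read off the fusion coefficients \eqref{eq:coeff_tensor_product_subreg} from the Verlinde formula.
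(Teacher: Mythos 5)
This statement is labeled a \emph{Conjecture} in the paper precisely because the key step in any strategy of the kind you outline is not currently available, and the paper does not supply a proof. Your plan is a sensible articulation of what such a proof would require, and you correctly identify the bottleneck: establishing that the BRST reduction $H_f^0$ is a braided tensor functor (more precisely, an equivalence) from the appropriate category of $L_k(\so_{2n+1})$-modules to $\ssubW\Mod$. The paper states exactly this as a separate, open conjecture at the end of Sect.~\ref{sec:subreg_symplectic_walgebra} (``The functor $H_f^0\colon \mathrm{KL}_k(\g)\rightarrow \mathrm{KL}_{k,f}(\g)$ is an equivalence of braided tensor categories''), so what you call the ``main obstacle'' is indeed the entire unresolved content of Conjecture~\ref{conj:fusion_rules}.

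Two further issues worth flagging. First, your step (1), identifying the fusion ring of the admissible Kazhdan--Lusztig category $\mathrm{KL}_k(\g)$ with $\mathcal{K}(L_{p-h^\vee}(\so_{2n+1}))$, is itself not a known theorem in type $B$; the claim that principal admissible fusion at level $-h^\vee+p/q$ matches integrable fusion at level $p-h^\vee$ is part of the Kac--Wakimoto circle of conjectures and is established only in special cases. So you have two unproved ingredients, not one. Second, your step (3) is circular as stated: bijectivity on the standard $\Z$-basis plus equality of quantum dimensions does not give a \emph{ring} isomorphism — that requires precisely that the fusion coefficients $N_{\lambda,\mu}^\nu$ on the $\ssubW$-side agree with those on the $L_{p-h^\vee}(\so_{2n+1})$-side, which is the conjecture itself and cannot be deduced from dimension data alone. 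Your ``pragmatic alternative'' via Verlinde is in the same boat: it requires knowing the full $S$-matrix of $\ssubW$ on the subset $\{\mathbf{L}_k(\lambda)\}$ and verifying it reproduces the Kac--Peterson $S$-matrix of $L_{p-h^\vee}(\so_{2n+1})$, which again is not currently available beyond low-rank examples such as the one treated in Example~\ref{exple}.

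In short, the proposal is a reasonable roadmap and closely matches the strategy the paper itself envisions (as evidenced by the follow-up conjecture), but it does not constitute a proof: both the Finkelberg-type identification of the affine fusion ring and the monoidality of $H_f^0$ remain open, and the injectivity/surjectivity discussion presupposes a ring homomorphism that has not yet been constructed.
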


\begin{example}\label{exple}
The fusion ring of the subregular $\W$-algebra $\ssubW$ for $k=-h^\vee_++\frac{2n}{2n-1}$, i.e. $(p,q)=(2n,2n-1)$, can be computed through the decomposition 
    \begin{align}\label{SCE for fusion}
\W_{-h^\vee+\frac{p}{q}}^{D_+}\simeq L(\tfrac{1}{2},0)\otimes V_{2\sqrt{q}\Z}\oplus L(\tfrac{1}{2},\tfrac{1}{2})\otimes V_{\sqrt{q}+2\sqrt{q}\Z},
\end{align}
which is a simple current extension of $V_{2\sqrt{q}\Z}$ times the Virasoro minimal model $L(\tfrac{1}{2},0)$ of central charge $\tfrac{1}{2}$, see Theorem~\ref{unitary case1}.
The simple $L(\half,0)$-modules are $L(\half,h)$ ($h=0,\tfrac{1}{16}, \tfrac{1}{2}$). The fusion rules can be computed directly or found in \cite{dFMS}:
\begin{align*}
    \mathcal{K}(L(\tfrac{1}{2},0))&\simeq\mathcal{K}(L_{1}(\so_{2n+1})),\\ 
    L(\tfrac{1}{2},0)\mapsto L_1(0),\quad L(\tfrac{1}{2},\tfrac{1}{16})&\mapsto L_1(\varpi_n),\quad L(\tfrac{1}{2},\tfrac{1}{2})\mapsto L_1(\varpi_1).
\end{align*}
More precisely, the fusion table is as follows.
 	\begin{equation*}
				\begin{array}{c|ccc}
					\boxtimes&0&\varpi_n&\varpi_1\\\hline
					0&0&\varpi_n&\varpi_1\\
					\varpi_n&\varpi_n&0\oplus\varpi_1&\varpi_n\\
					\varpi_1&\varpi_1&\varpi_n&0
				\end{array}
			\end{equation*}
Let $N=2\sqrt{q}$. The simple $V_{N\Z}$-modules are $V_{a/N+N\Z}$ $(a\in \Z_{N^2})$ and satisfy the fusion rules $$V_{a/N+N\Z}\boxtimes V_{b/N+N\Z}\simeq V_{(a+b)/N+N\Z}.$$
Through the induction functor associated with the extension \eqref{SCE for fusion}, the simple modules for $\W_{-h^\vee_++\frac{2n}{2n-1}}^{D^+}$ are given by the image of the modules $M(a,b):=L(\tfrac{1}{2},a)\otimes V_{b/N+N\Z}$ which are local to the simple current $M(\tfrac{1}{2},2q)=L(\tfrac{1}{2},\tfrac{1}{2})\otimes V_{\sqrt{q}+2\sqrt{q}\Z}$. More explicitly, they consist of the equivalent classes 
$$M(0,b),\ M(\tfrac{1}{2},b)\quad (b\equiv 0),\quad M(\tfrac{1}{16},b)\quad (b\equiv1),\qquad (\text{mod }2)$$
with $0\leq b<4q$ subject to the relations 
$M(\tfrac{1}{2},2q)\boxtimes M(a,b)\sim M(a,b)$.
Identifying the simple $L_1(\so_{2n+1})$-modules as 
$$L_1(0)\leftrightarrow M(0,0),\quad L_1(\varpi_n)\leftrightarrow M(\tfrac{1}{16},q), \quad L_1(\varpi_1)\leftrightarrow M(\tfrac{1}{2},0),$$
and $S_{\theta}$ ($0\leq\theta<2q$) with the fusion product by $M(0,2\theta)$,
we obtain the isomorphism 
\begin{align*}
\begin{array}{ccc}
\mathcal{K}\left(\W_{-h^\vee_++\frac{2n}{2n-1}}^{D^+}\right)&\xrightarrow{\simeq}& \mathcal{K}\left(L_1(\so_{2n+1}) \right)\underset{\Z[\Z_2]}{\otimes}\Z[\Z_{2q}]\\
S_{\theta}\mathbf{L}_k(a)&\mapsto& L_1(a)\otimes [\theta]\ (a=0,\varpi_1,\varpi_n).
\end{array}
\end{align*}
\end{example}

Finally, let $\mathrm{KL}_k(\g)$ denote the category of $L_k(\g)$-modules generated by $L_k(\lambda)$ ($\lambda \in \Pr_\Z^k$) and $\mathrm{KL}_{k,f}(\g)$ the category of $\ssubW$-modules generated by $\mathbf{L}_k(\lambda)$ ($\lambda \in \Pr_\Z^k$). They are braided tensor categories by \cite{CHY} and \cite{H,HL} respectively. Note that the functor 
\begin{align*}
    H_f^0\colon \mathrm{KL}_k(\g)\rightarrow \mathrm{KL}_{k,f}(\g),\quad L_k(\lambda)\mapsto \mathbf{L}_k(\lambda)
\end{align*}
gives an equivalence of abelian categories.

\begin{conjecture}\label{BTC equiv}
The functor 
$$H_f^0\colon \mathrm{KL}_k(\g)\rightarrow \mathrm{KL}_{k,f}(\g)$$
is an equivalence of braided tensor categories.
\end{conjecture}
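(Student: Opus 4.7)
The plan is to upgrade the known equivalence of abelian categories $H_f^0 \colon \mathrm{KL}_k(\g) \to \mathrm{KL}_{k,f}(\g)$ to an equivalence of braided tensor categories by constructing a natural monoidal structure and verifying its compatibility with the associator and braiding. The first step is to produce, for each pair of admissible $V^k(\g)$-modules $M_1,M_2$, a natural morphism
\begin{align*}
\tau_{M_1,M_2} \colon H_f^0(M_1) \boxtimes_{\ssubW} H_f^0(M_2) \longrightarrow H_f^0(M_1 \boxtimes_{L_k(\g)} M_2).
\end{align*}
This should be obtained by extending an $L_k(\g)$-intertwining operator $\mathcal{Y}(\cdot,z)\colon M_1\otimes M_2\to M_3\{z\}[\log z]$ (with $M_3=M_1\boxtimes M_2$) to the tensor product with the ghost system $\bigwedge^{\frac{\infty}{2}+\bullet}(\g_{>0})$, exactly as in the construction of the homomorphism $\mathcal{H}_\lambda^+$ of Sect.~\ref{subsection_Equivalence of categories}. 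Since the BRST differential $d$ is independent of the formal variable $z$, one checks $[d,\mathcal{Y}\otimes \mathrm{id}]=0$, so $\mathcal{Y}$ descends to an intertwining operator among the cohomology modules and, via the universal property \eqref{reprentativity of IO} of the fusion product, induces $\tau_{M_1,M_2}$.

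The second step is to show that $\tau_{M_1,M_2}$ is an isomorphism. Here I would combine the rationality of both $L_k(\g)$ on $\mathrm{KL}_k(\g)$ (in the sense of Arakawa--Frenkel for admissible levels) and of $\ssubW$ (Theorem \ref{classification of simple modules}) with the bijection on simples given by $L_k(\lambda)\mapsto \mathbf{L}_k(\lambda)$. It suffices to verify the coincidence of fusion coefficients
\begin{align*}
N_{\lambda,\mu}^\nu(L_k(\g)) = N_{\lambda,\mu}^\nu(\ssubW), \qquad \lambda,\mu,\nu\in \Pr_\Z^k,
\end{align*}
on the stable subring $\mathcal{K}^0(\ssubW)$ of Theorem \ref{branching rule}(3). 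By the Verlinde formula this reduces to comparing the $S$-matrices, and I would compute them using the Kac--Peterson formula on the affine side and the modular $S$-matrix on the $\W$-algebra side arising from characters of the simple modules in Theorem~\ref{classification of simple modules}. The matching should be a consequence of the Euler--Poincaré principle applied to the BRST complex, which gives an equality of modular characters up to the known grading shift by $\rho/(k+h^\vee)$.

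The third step is to check that $\tau$ intertwines the associator and braiding on the two sides. Following the pattern of Corollary \ref{BTC bijection}, the $P(z)$-tensor product formalism of Huang--Lepowsky--Zhang expresses both structures in terms of parallel transport of analytic continuations of intertwining operators; since the BRST differential has no $z$-dependence, the construction of $\tau$ commutes with these parallel transports, producing commutative diagrams analogous to Figures~\ref{fig:Braiding an associators} and \ref{fig:Braiding an associators for super side}. Thus the monoidal structure furnished by $\tau$ is automatically compatible with the braided tensor structure inherited from $\mathrm{KL}_k(\g)$.

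The main obstacle lies in step two, specifically in establishing that $\tau_{M_1,M_2}$ is surjective rather than merely injective. A priori, the construction only embeds the image of affine intertwining operators, and one must rule out the existence of extra $\ssubW$-intertwining operators not arising from reduction. Equivalently, one must show that the $S$-matrices computed on the affine side fully account for the Verlinde coefficients on the $\W$-algebra side. I expect that this can be achieved via a quantum-dimension argument: the formulae \eqref{qdim for coprincial case} together with the classification of simple currents in Lemma \ref{classification of simple currents} pin down the total Frobenius--Perron dimension of $\mathrm{KL}_{k,f}(\g)$, which by semisimplicity and the bijection on isomorphism classes of simples forces $\tau$ to be an isomorphism. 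Finally, an alternative and potentially cleaner route would be to apply the machinery of Creutzig--Kanade--McRae on tensor structures under vertex algebra quotients, which packages the verification of compatibilities into a single criterion on the universal reduction $H_f^0(V^k(\g))\to \W^k(\g,f)$.
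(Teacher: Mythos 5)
This statement is a \emph{conjecture} in the paper, not a theorem: the paper provides no proof, so there is nothing to compare your argument against. Your proposal is therefore not a verification of a known argument but an attempt at an open problem, and it contains a genuine gap at exactly the point where the difficulty lies.

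The critical issue is in your second step. You assert that the coincidence of fusion coefficients $N^\nu_{\lambda,\mu}(L_k(\g)) = N^\nu_{\lambda,\mu}(\ssubW)$ can be deduced from the Verlinde formula by comparing modular $S$-matrices. But this coincidence is precisely the content of Conjecture~\ref{conj:fusion_rules}, which the paper leaves open, and which it states only for the principal denominator $q = 2n-1$; there is not even a stated candidate for $q = 2n$, where $\Pr^k_\Z$ is parametrized by coweights of the Langlands dual type $C_n$ rather than weights of $B_n$. To run the Verlinde argument you would need (a) modular invariance of the $\ssubW$-characters, (b) an explicit identification of the resulting $S$-matrix, and (c) a proof that it coincides with the affine Kac--Peterson $S$-matrix under the bijection $L_k(\lambda)\mapsto \mathbf{L}_k(\lambda)$ composed with the BRST cohomological shift. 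None of these is established in the paper; the character formulae of Proposition~\ref{chracter formula 2} involve Appell--Lerch-type sums $\mathscr{A}^s_\lambda$, whose modular behaviour is delicate and not worked out here. The ``Euler--Poincar\'e principle'' gives characters, not modular transformations, and a grading shift by $\rho/(k+h^\vee)$ does not by itself yield the required equality of $S$-matrices.

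There is a secondary issue in step one: for a natural transformation $\tau_{M_1,M_2}\colon H_f^0(M_1)\boxtimes H_f^0(M_2)\to H_f^0(M_1\boxtimes M_2)$ to exist via the universal property, one must first check that the canonical affine intertwining operator descends to a \emph{logarithmic} intertwining operator for the cohomology modules with the correct grading and analytic behaviour; the commutation $[d,\mathcal{Y}\otimes\mathrm{id}]=0$ shows $d$-closedness but one also needs compatibility with $d_\chi$ (which involves $z$ through the twisted differential $d_{\chi,\check\mu}$ in general) and convergence of the relevant spectral sequence for the double complex $\mathscr{C}^k_\mu\otimes\bigwedge^{\frac{\infty}{2}+\bullet}(\g_{>0})$ applied to fusion products rather than single modules. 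Your quantum-dimension/Frobenius--Perron fallback in the last paragraph does not resolve this either: matching total FP-dimensions and the count of simples forces the categories to have the same Grothendieck ring \emph{if} the fusion coefficients are already known to match, which is again the open content of Conjecture~\ref{conj:fusion_rules}. In summary, your outline reproduces the expected shape of a proof, but the substantive middle step is exactly the unproven conjecture that the paper formulates separately, and it does not address the coprincipal case at all.
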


\begin{remark}
Conjecture \ref{BTC equiv} is expected to hold as long as $H_f^0$ is an equivalence of abelian categories, see \cite[Theorem 10.4]{ACF} for $\g$ being simply-laced under certain conditions.
In general, $\mathrm{KL}_k(\g)$ for principal admissible levels $k=-h^\vee+\frac{p}{q}$ is expected to have the same fusion rings as $\mathrm{KL}_{p-h^\vee}(\g)$ (i.e. setting $q=1$) \cite{C19}, Conjecture \ref{BTC equiv} can be regarded as a stronger version of Conjecture \ref{conj:fusion_rules}.
\end{remark}

\subsection{Unitary case}\label{sec:unitarity}
A vertex operator algebra $V$ is said \emph{unitary} if it admits a positive-definite invariant Hermitian form \cite{DL1}. 
In this case, $V$ is also unitary as a module over the Virasoro subalgebra generated by its conformal vector. In particular, the central charge is a positive real number.

We consider the exceptional levels $k$ when $\ssubW$ is unitary.
It follows from the decomposition
\begin{align}\label{lattice decomposition}
\ssubW\simeq 
\bigoplus_{a\in M}\mathscr{C}_k^+(a)\otimes V_{\sqrt{\frac{q}{p-q}}a+\mathcal{P}_+}
\end{align}
in Theorem \ref{branching rule} (2) and the unitarity of $V_L$ \cite{DL1} that $\ssubW$ is unitary only if $\mathscr{C}_k^+(0)$ is. 
Hence, the central charge of $\mathscr{C}_k^+(0)$ is positive real. This happens iff $(n,k)$ satisfies one of the following:
\begin{align*}
    \text{(i) } k+h^\vee=\frac{2n}{2n-1}, \frac{2n+1}{2n-1}, \frac{2n+1}{2n}\quad (n\geq 2),\qquad \text{(ii) } k+h^\vee=\frac{7}{4}\quad (n=2).
\end{align*}

To describe \eqref{lattice decomposition} for the case (i) more precisely, we use the simple Virasoro vertex algebra $L(c,0)$ of central charge $c$ and the $\Z_2$-orbifold $V_{\sqrt{2p}\Z}^+$ of the lattice vertex algebra $V_{L}$ for the lattice $L=\Z\alpha=\sqrt{2p}\Z$.
We denote by $L(c,h)$ the simple $L(c,0)$-module of highest weight $h$, and by $V_{\Z\alpha}^\pm$ and $V_{\alpha/2+\Z\alpha}^\pm$ the simple $V_{\Z\alpha}^+$-modules, see \cite{DN} for the complete classification of the simple $V_{\Z\alpha}^+$-modules. By \cite{Ab}, 
$\{V_{\Z\alpha}^\pm,V_{\alpha/2+\Z\alpha}^\pm\}$ 
are simple currents satisfying
\begin{align}\label{fusion of lattice orbifold}
(V_{\alpha/2+\Z\alpha}^+)^{\boxtimes n}\simeq 
\begin{array}{cc}
\begin{cases}
V_{\Z\alpha}^+\ (n\equiv 0), & V_{\alpha/2+\Z\alpha}^+\ (n\equiv 1),\\
V_{\Z\alpha}^-\ (n\equiv 2), & V_{\alpha/2+\Z\alpha}^-\ (n\equiv 3),
\end{cases}
\end{array}
\quad \text{mod } 4.
\end{align}

\begin{theorem}\label{unitary case1}
We have the following decomposition.
\begin{enumerate}[wide, labelindent=0pt, font=\normalfont]
\item $(p,q)=(2n,2n-1)$
\begin{align*}
\W_{-h^\vee+\frac{p}{q}}^{D_+}(n,1)\simeq L(\tfrac{1}{2},0)\otimes V_{2\sqrt{q}\Z}\oplus L(\tfrac{1}{2},\tfrac{1}{2})\otimes V_{\sqrt{q}+2\sqrt{q}\Z}.
\end{align*}
\item $(p,q)=(2n+1,2n-1)$
\begin{align*}
\W_{-h^\vee+\frac{p}{q}}^{D_+}(n,1)\simeq 
&\ V^+_{\sqrt{2p}\Z}\otimes V_{2\sqrt{2q}\Z}\oplus V^-_{\sqrt{2p}\Z}\otimes V_{\sqrt{2q}+2\sqrt{2q}\Z}\\
&\hspace{2mm}\oplus V^+_{\sqrt{\frac{p}{2}}+\sqrt{2p}\Z}\otimes V_{\sqrt{\frac{q}{2}}+ 2\sqrt{2q}\Z}\oplus V^-_{\sqrt{\frac{p}{2}}+\sqrt{2p}\Z}\otimes V_{-\sqrt{\frac{q}{2}}+2\sqrt{2q}\Z}.
\end{align*}
\item $(p,q)=(2n+1,2n)$
\begin{align*}
\W_{-h^\vee+\frac{p}{q}}^{D_+}(n,1)\simeq V_{\sqrt{q}\Z}.
\end{align*}
\end{enumerate}
\end{theorem}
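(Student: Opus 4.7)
The plan is to apply Theorem~\ref{branching rule}(2) to write
$$\sWsub \simeq \bigoplus_{a \in M}\mathscr{C}_{k,a}^{D^+} \otimes V_{\sqrt{q/(p-q)}\,a + \mathcal{P}_+}$$
in each of the three cases, and then identify the coset $\mathscr{C}_k^{D^+} = \mathscr{C}_{k,0}^{D^+}$ and its simple modules $\mathscr{C}_{k,a}^{D^+}$ by central-charge and conformal-weight considerations. A direct substitution into \eqref{cc} yields $c_k = \tfrac{3}{2}, 2, 1$ in cases (i)--(iii), so after subtracting the contribution of $\pi^{J^+}$ the coset has central charge $\tfrac{1}{2}, 1, 0$ respectively.

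Case (iii) is immediate: with $M = \{0\}$ the decomposition collapses to $\sWsub \simeq \mathscr{C}_k^{D^+} \otimes V_{\sqrt{q}\Z}$, and since a simple rational lisse VOA of CFT type at $c = 0$ must be trivial, we conclude $\sWsub \simeq V_{\sqrt{q}\Z}$; this simultaneously finishes the postponed $i = 2$ part of Proposition~\ref{free field case}. In case (i) the coset $\mathscr{C}_k^{D^+}$ is a simple VOA of CFT type with \emph{integer} conformal weights at $c = \tfrac{1}{2}$, so it contains the Virasoro quotient $L(\tfrac{1}{2}, 0)$ (the Ising model). Decomposing $\mathscr{C}_k^{D^+}$ into simple $L(\tfrac{1}{2}, 0)$-modules, whose conformal weights lie in $\{0, \tfrac{1}{16}, \tfrac{1}{2}\}$, and invoking integrality together with simplicity, only the vacuum module can appear, whence $\mathscr{C}_k^{D^+} \simeq L(\tfrac{1}{2}, 0)$. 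The non-vacuum block $\mathscr{C}_{k,1}^{D^+}$ is then a simple $L(\tfrac{1}{2}, 0)$-module whose lowest conformal weight is computed via the Wakimoto character formulae of Proposition~\ref{prop:Wakimoto_characters} together with \eqref{character info}; the resulting value $\tfrac{1}{2}$ identifies $\mathscr{C}_{k,1}^{D^+} \simeq L(\tfrac{1}{2}, \tfrac{1}{2})$.

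Case (ii) is the crux of the argument. At $c = 1$ the coset is a simple rational lisse VOA of CFT type, hence by the classification of such theories it is either a rank-one lattice VOA $V_{\sqrt{2N}\Z}$ or its $\Z_2$-orbifold $V^+_{\sqrt{2N}\Z}$. Theorem~\ref{classification of simple modules} fixes the total number of simple $\sWsub$-modules, which together with the decomposition determines the number of simple $\mathscr{C}_k^{D^+}$-modules and rules out the untwisted lattice option, yielding $\mathscr{C}_k^{D^+} \simeq V^+_{\sqrt{2p}\Z}$. The three remaining blocks $\mathscr{C}_{k,a}^{D^+}$ ($a = 1, 2, 3$) are then identified with the non-vacuum simple $V^+_{\sqrt{2p}\Z}$-modules by matching their lowest conformal weights and by imposing compatibility with the simple-current fusion rules \eqref{fusion of lattice orbifold}. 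The hard part is precisely this identification: fixing $N = p$ and establishing the correct sign-correspondence between the orbifold modules $V^\pm_{\sqrt{2p}\Z},\ V^\pm_{\sqrt{p/2}+\sqrt{2p}\Z}$ on the left and the translated cosets of $2\sqrt{2q}\Z$ on the right requires a careful joint analysis of the $\Z_2$-grading on both factors so that the extension stays compatible with the integer conformal weights of $\sWsub$.
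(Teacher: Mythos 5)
Your high-level strategy (decompose $\sWsub$ over $\mathscr{C}_k^{D^+}\otimes V_{\mathcal{P}_+}$ via Theorem~\ref{branching rule}(2) and identify the coset by central-charge considerations) matches the paper, and your central-charge computations $c_k=\tfrac{3}{2},2,1$ and $c_{\mathscr{C}}=\tfrac{1}{2},1,0$ are correct. But there are two genuine gaps.

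First, you are working with the \emph{actual} central charge $c$ where the argument actually requires control of the \emph{effective} central charge (the asymptotic datum $\mathbf{g}$, as in \eqref{asymptotic datum}). These coincide only when the VOA is unitary, which is precisely what is being established and hence cannot be assumed. In particular, your claim that ``a simple rational lisse VOA of CFT type at $c=0$ must be trivial'' is false for the actual central charge: one can have $c=0$ with $\mathbf{g}>0$. Similarly, in case (i), $c_\mathscr{C}=\tfrac{1}{2}$ does not by itself guarantee that the Virasoro subalgebra $\langle\omega\rangle\subset\mathscr{C}$ is the simple quotient $L(\tfrac{1}{2},0)$. The paper avoids this by quoting the asymptotic datum $\mathbf{g}_V=\tfrac{3}{2}$ from \cite[Proposition 4.10]{AvEM}, deducing the \emph{upper bound} $\mathbf{g}_{\mathscr{C}}\leq \mathbf{g}_V-1=\tfrac{1}{2}$ (resp.\ $\mathbf{g}_{\mathscr{C}}=0$ in case (3)) from the decomposition, and then invoking \cite[Lemma 2.8]{AvEM}, which does require the bound on $\mathbf{g}$. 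You should replace the $c$-computation with the computation of $\mathbf{g}_V$ and $\mathbf{g}_\mathscr{C}$.

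Second, your case (ii) relies on ``the classification of $c=1$ rational lisse VOAs'' being either $V_{\sqrt{2N}\Z}$ or $V^+_{\sqrt{2N}\Z}$; this is not an available theorem one can cite, and the later steps (``fixing $N=p$ and establishing the correct sign-correspondence $\ldots$ requires a careful joint analysis'') are left essentially unresolved. The paper's route is concretely different and more robust: it forms the simple current extension $\mathbf{L}_k(0)\oplus\mathbf{L}_k(2\varpi_1)$ of $\ssubW$ (using $S_{\bar{q}}\mathbf{L}_k(0)\simeq\mathbf{L}_k(2\varpi_1)$ from Theorem~\ref{branching rule}(1)), observes that $\widetilde{\mathscr{C}}=\mathscr{C}\oplus\mathscr{C}_2$ is then a simple $\Z_{\geq 0}$-graded VOA of CFT type with a one-dimensional weight-one subspace (this uses the strong generating type \eqref{sub strong generating type} to see that $\mathscr{C}_2$ contributes conformal dimension $1$), so by \cite{DM} it is a rank-one lattice VOA $V_{\sqrt{2N}\Z}$; $N=p$ is then pinned down by comparing asymptotic data from \cite[Proposition 4.10]{AvEM}. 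The identification of $\mathscr{C},\mathscr{C}_i$ with the orbifold modules $V^\pm_{\sqrt{2p}\Z},V^\pm_{\sqrt{p/2}+\sqrt{2p}\Z}$ then follows from the uniqueness of the nontrivial weight-preserving $\Z_2$-automorphism of $V_{\sqrt{2p}\Z}$. Your suggestion to use Theorem~\ref{classification of simple modules} and module-counting to exclude the untwisted lattice is an interesting alternative, but as written it is not carried out and it would still leave the $\Z_2$-grading and sign identification unjustified.
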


The description of \eqref{lattice decomposition} for the case (ii) involves the simple current extension of $L(\tfrac{6}{7},0)$
$$\widehat{L}(\tfrac{6}{7},0):= L(\tfrac{6}{7},0)\oplus L(\tfrac{6}{7},5).$$
The classification of its simple modules and their fusion rules are obtained in \cite{LLY}.
In particular, $\widehat{L}(\tfrac{6}{7},0)$ acts on $L(\tfrac{6}{7},\frac{4}{3})$, giving two non-isomorphic simple module structures, which we denote by
$$\widehat{L}(\tfrac{6}{7},\tfrac{4}{3})^{+},\quad \widehat{L}(\tfrac{6}{7},\tfrac{4}{3})^{-}.$$
They are simple currents contragredient to each other and satisfy
\begin{align}\label{fusion of extended vir}
\widehat{L}(\tfrac{6}{7},\tfrac{4}{3})^{\pm}\boxtimes \widehat{L}(\tfrac{6}{7},\tfrac{4}{3})^{\pm}\simeq \widehat{L}(\tfrac{6}{7},\tfrac{4}{3})^{\mp},
\quad \widehat{L}(\tfrac{6}{7},\tfrac{4}{3})^{\pm}\boxtimes \widehat{L}(\tfrac{6}{7},\tfrac{4}{3})^{\mp}\simeq \widehat{L}(\tfrac{6}{7},0).
\end{align}

\begin{theorem}\label{unitary case2}
For $n=2$, $(p,q)=(7,4)$, we have the following decomposition
\begin{align*}
\W_{-5/4}^{D_+}(2,1)\simeq \widehat{L}(\tfrac{6}{7},0) \otimes V_{2\sqrt{3}\Z} \oplus \widehat{L}(\tfrac{6}{7},\tfrac{4}{3})^+\otimes V_{\frac{2}{\sqrt{3}}+2\sqrt{3}\Z} \oplus \widehat{L}(\tfrac{6}{7},\tfrac{4}{3})^- \otimes V_{-\frac{2}{\sqrt{3}}+2\sqrt{3}\Z}.
\end{align*}
\end{theorem}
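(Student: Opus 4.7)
The plan is to use the lattice decomposition of Theorem~\ref{branching rule}(2) as a skeleton and then identify each multiplicity space via central charge, rationality, and conformal-weight arguments. Specialising to $n=2$, $(p,q)=(7,4)$ one has $\bar{q}=n=2$, $p-q=3$, $\sqrt{q/(p-q)}=2/\sqrt{3}$, and $\mathcal{P}_+=2\sqrt{3}\Z$, so Theorem~\ref{branching rule}(2) gives
\begin{align*}
\sWsub \simeq \bigoplus_{a\in\Z_3}\mathscr{C}_{k,a}^{D^+}\otimes V_{\tfrac{2a}{\sqrt{3}}+2\sqrt{3}\Z}.
\end{align*}
Formula \eqref{cc} yields $c_k=13/7$ at $k=-5/4$, so $\mathscr{C}_{k,0}^{D^+}$ has central charge $6/7$, matching the $(6,7)$-minimal model.

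Next I would identify the coset $\mathscr{C}_{k,0}^{D^+}$ with $\widehat{L}(6/7,0)$. Since $\sWsub$ is rational and lisse by \cite{A3,Mc}, so is the coset, hence it is a simple rational conformal extension of the Virasoro minimal model $L(6/7,0)$ generated by $L-L_{J^+}$. Because $L(6/7,5)$ is the unique integer-weight simple current among simple $L(6/7,0)$-modules, the only candidates are $L(6/7,0)$ itself and $\widehat{L}(6/7,0)=L(6/7,0)\oplus L(6/7,5)$. To rule out the first, I would exhibit a Virasoro primary of weight $5$ and $J^+_0$-eigenvalue $0$ inside $\mathscr{C}_{k,0}^{D^+}$, built from the weight-$5$ $J^+_0$-neutral combinations of the strong generators $L, J^+, G^\pm$ of $\sWsub$ (such as suitably normal-ordered combinations of $G^+_{-3}G^-$ and $G^+_{-2}\partial G^-$ with Virasoro and Heisenberg descendants), using the OPEs from \cite{Fas21}.

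Finally, I would pin down $\mathscr{C}_{k,\pm 1}^{D^+}$ via conformal weights. The strong generators $G^\pm$ of $\sWsub$ have conformal weight $n=2$ and $J^+_0$-eigenvalue $\pm 1$, so under the above decomposition their lowest-weight components lie in $\mathscr{C}_{k,\pm 1}^{D^+}\otimes V_{\pm 2/\sqrt{3}+2\sqrt{3}\Z}$; subtracting the lattice contribution $\tfrac{1}{2}(2/\sqrt{3})^2=2/3$ gives lowest conformal weight $2-2/3=4/3$ in each multiplicity space. Among the four simple $\widehat{L}(6/7,0)$-modules classified in \cite{LLY}, only $\widehat{L}(6/7,4/3)^\pm$ have this lowest weight (the others being $\widehat{L}(6/7,0)$ and $\widehat{L}(6/7,5/7)$ with weights $0$ and $5/7$), and they are mutually contragredient simple currents. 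The $\Z_3$-lattice structure induces $\mathscr{C}_{k,1}^{D^+}\boxtimes\mathscr{C}_{k,1}^{D^+}\simeq \mathscr{C}_{k,-1}^{D^+}$, matching the fusion rule \eqref{fusion of extended vir}; the labels $\pm$ are thus a matter of convention. The main obstacle is the central identification in Step~2: ruling out $\mathscr{C}_{k,0}^{D^+}=L(6/7,0)$ in favour of $\widehat{L}(6/7,0)$ requires an explicit low-weight computation with the OPEs of $\sWsub$, or alternatively a character/module-count argument based on $|\Irr(\sWsub)|=\bar{q}\cdot|\Pr_{\Z}^k|=12$.
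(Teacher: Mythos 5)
Your skeleton matches the paper's: both start from the $\Z_3$-lattice decomposition of Theorem~\ref{branching rule}(2), identify the central charge $c_k = 13/7$ so that the coset $\mathscr{C} := \mathscr{C}_{k,0}^{D^+}$ has central charge $6/7$, and pin down $\mathscr{C}_{k,\pm 1}^{D^+}$ from the conformal weight $2-2/3 = 4/3$ of the $G^\pm$ components. Your treatment of the side pieces via the $\Z_3$-fusion symmetry is slightly different in flavour from the paper's (which invokes self-duality of $\sWsub$ and the fact that $\widehat{L}(\tfrac67,\tfrac43)^\pm$ are mutually contragredient simple currents), but both are correct.

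The step you yourself flag — ruling out $\mathscr{C} \simeq L(\tfrac67,0)$ in favour of $\widehat{L}(\tfrac67,0)$ — is indeed where the proposal is incomplete. The paper settles it by an asymptotic-datum computation: writing $\mathscr{C} = L(\tfrac67,0)\oplus L(\tfrac67,5)^{\oplus a}$ and comparing the leading coefficient $\mathbf{A}_V=\tfrac{1}{\sqrt7}\sin(\tfrac\pi7)$ of $\chi_V$ (from \cite[Prop.~4.10]{AvEM}) with the sum of the factor asymptotics, one gets $\tfrac{5+a}{6\sqrt7} = \tfrac{1}{\sqrt7}$, hence $a=1$, and then simplicity of the coset forces $\mathscr{C}\simeq\widehat{L}(\tfrac67,0)$ by uniqueness of simple current extensions. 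Neither of your two proposed alternatives is carried out: the explicit OPE computation would be workable but laborious, and the module count $|\Irr(\sWsub)|=12$ would need to be converted into a contradiction, which you do not do. A cleaner route than either of yours exists and is implicit in the machinery already proved: by \cite[Prop.~5.6]{CGN} the pieces $\mathscr{C}_{k,a}^{D^+}$ are \emph{simple} $\mathscr{C}$-modules, and since $\sWsub$ is simple the decomposition in Theorem~\ref{branching rule}(2) is a simple current extension, so each $\mathscr{C}_{k,a}^{D^+}$ must be a simple current of $\mathscr{C}$. If $\mathscr{C}=L(\tfrac67,0)$, then $\mathscr{C}_{k,1}^{D^+}$ with lowest weight $4/3$ would have to be $L(\tfrac67,\tfrac43)$, which is not a simple current of $L(\tfrac67,0)$ (the only nontrivial simple current of the $(6,7)$ minimal model is $L(\tfrac67,5)$). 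This contradiction immediately rules out the trivial extension without any asymptotic or OPE computation, and is close in spirit to, but sharper than, your second alternative.
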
 

\begin{proof}[Proof of Theorem \ref{unitary case1}]
In order to identify the coset $\mathscr{C}=\mathscr{C}_k^+(0)$, we use the asymptotic datum of $V=\ssubW$, i.e. the asymptotic behavior of the normalized character 
\begin{align}\label{asymptotic datum}
\chi_{V}(\tau)\sim \mathbf{A}_V (-\mathbf{i}\tau)^{\frac{\mathbf{w}_V}{2}} \mathrm{e}^{\frac{\pi \mathbf{i}}{12 \tau} \mathbf{g}_V}\quad (\tau \downarrow 0)
\end{align}
where $\tau$ is the coordinate of the upper half plane with $q=\mathrm{e}^{2\pi \mathbf{i}\tau}$. 
(1)
By \cite[Proposition 4.10]{AvEM}, we have
$\mathbf{g}_V=3/2$. 
Under the decomposition \eqref{lattice decomposition}, $\mathbf{g}_V$ takes the maximum of the asymptotic data of the components appearing in the decomposition \eqref{lattice decomposition}, we have 
$\mathbf{g}_{\mathscr{C}}\leq \mathbf{g}_V-\mathbf{g}_{V_{\lambda+\mathcal{P}_+}}=\mathbf{g}_V-1=1/2$.
It follows that $\mathscr{C}$ is an extension of the Virasoro unitary minimal model $L(\half,0)$ by \cite[Lemma 2.8]{AvEM}.
Since $\mathscr{C}$ is $\Z$-graded, the classification of $L(\half,0)$-modules implies that $\mathscr{C}=L(\half,0)$ and then the assertion. 
(3) Since $k=-h^\vee+\tfrac{2n+1}{2n}$, we have $\mathbf{g}_\mathscr{C}=0$. Hence, $\mathscr{C}$ is trivial and thus the assertion follows.
(2) The decomposition \eqref{lattice decomposition} is 
\begin{align*}
\ssubW=\mathbf{L}_k(0)\simeq 
 \mathscr{C}\otimes V_{2\sqrt{2q}\Z}\oplus \mathscr{C}_1\otimes V_{\sqrt{\frac{q}{2}}+ 2\sqrt{2q}\Z} \oplus\mathscr{C}_2\otimes V_{\sqrt{2q}+2\sqrt{2q}\Z}\oplus \mathscr{C}_3\otimes V_{-\sqrt{\frac{q}{2}}+2\sqrt{2q}\Z}.
\end{align*} 
By \eqref{sub strong generating type}, $\mathscr{C}_{i}$ has conformal dimension $1$ for $i=2$ and $\frac{p}{2}$ for $i=1,3$ and its top space is one dimensional.
On the other hand, since $S_{\bar{q}}\mathbf{L}_k(0)=\mathbf{L}_k(2\varpi_1)$ is a simple current of conformal dimension $1$, the direct sum $\mathbf{L}_k(0)\oplus \mathbf{L}_k(2\varpi_1)$ has a structure of simple vertex algebra, which decomposes into 
\begin{align}\label{decomposition}
\mathbf{L}_k(0)\oplus \mathbf{L}_k(2\varpi_1)\simeq 
&\ (\mathscr{C}\oplus \mathscr{C}_2)\otimes V_{\sqrt{2q}\Z}\oplus(\mathscr{C}_1\oplus \mathscr{C}_3)\otimes V_{\sqrt{\frac{q}{2}}+\sqrt{2q}\Z}.
\end{align}
The $\Z_2$-orbifold $\widetilde{\mathscr{C}}:=\mathscr{C}\oplus \mathscr{C}_2$ is also a simple vertex algebra, which is of CFT type, and its weight one subspace is one-dimensional. Therefore, by the proof of \cite[Theorem 1.1]{DM}, it generates a rank one simple Heisenberg vertex algebra, say $\pi_{\widetilde{\mathscr{C}}}$, such that $ \widetilde{\mathscr{C}}$ is completely reducible as a $\pi_{\widetilde{\mathscr{C}}}$-module. Since $\pi_{\widetilde{\mathscr{C}}}\hookrightarrow \widetilde{\mathscr{C}}$ is a conformal embedding, it follows that $\widetilde{\mathscr{C}}\simeq V_{\sqrt{2N}\Z}$ for some even $N$. Therefore, \eqref{decomposition} is actually
\begin{align}\label{decomposition2}
\mathbf{L}_k(0)\oplus \mathbf{L}_k(2\varpi_1)\simeq 
&V_{\sqrt{2N}\Z} \otimes V_{\sqrt{2q}\Z}\oplus V_{\sqrt{N/2}\alpha+\sqrt{2N}\Z} \otimes V_{\sqrt{\frac{q}{2}}+\sqrt{2q}\Z}.
\end{align}
By \cite[Proposition 4.10]{AvEM}, we have
\begin{align*}
&\chi_{\mathbf{L}_k(0)}(\tau), \chi_{\mathbf{L}_k(2\varpi_1)}(\tau) \sim\tfrac{1}{2\sqrt{pq}} \mathrm{e}^{2\frac{\pi \mathbf{i}}{12 \tau}}.
\end{align*}
Comparing the asymptotic data in \eqref{decomposition2}, we obtain $N=p$. Since the nontrivial $\Z_2$-automorphism on $V_{\sqrt{2N}\Z}$ preserving the conformal weight is unique, we conclude
$$\mathscr{C}=V^+_{\sqrt{2p}\Z},\quad \mathscr{C}_2=V^-_{\sqrt{2p}\Z},\quad \mathscr{C}_1= V^+_{\sqrt{\frac{p}{2}}+\sqrt{2p}\Z},\quad \mathscr{C}_3= V^-_{\sqrt{\frac{p}{2}}+\sqrt{2p}\Z}$$
and thus the assertion.
\end{proof}

\begin{proof}[Proof of Theorem \ref{unitary case2}]
The decomposition as a $L(\frac{6}{7},0)\otimes V_{2\sqrt{3}\Z}$-module is obtained in the same way as in the proof of Theorem \ref{unitary case2}~(1).
By \cite[Proposition 4.10]{AvEM}, we have $\mathbf{g}_V=13/7$ and thus $\mathbf{g}_\mathscr{C}=6/7$.
As $\W_{-5/4}^{D_+}(2,1)$ is $\Z$-graded and of type at most $\W(1,2,(2)^2)$, $\W_{-5/4}^{D_+}(2,1)$ must decompose into the form
\begin{align*}
\W_{-5/4}^{D_+}(2,1)\simeq 
(L(\tfrac{6}{7},0)\oplus L(\tfrac{6}{7},5)^{\oplus a})\otimes V_{2\sqrt{3}\Z}
\oplus L(\tfrac{6}{7},\tfrac{4}{3})\otimes (V_{\frac{2}{\sqrt{3}}+2\sqrt{3}\Z} \oplus V_{\frac{4}{\sqrt{3}}+2\sqrt{3}\Z})
\end{align*}
for some $a$. Then $a=1$ follows from the asymptotic datum 
\begin{align*}
\begin{split}
&\chi_{V}(\tau)\sim\tfrac{1}{\sqrt{7}} \mathrm{sin}(\tfrac{\pi}{7}) \mathrm{e}^{\frac{\pi \mathbf{i}}{12 \tau} \frac{13}{7}},\quad \chi_{V_{\frac{p}{\sqrt{2m}}\Z}+\sqrt{2m}\Z}(\tau)\sim \tfrac{1}{\sqrt{2m}}\mathrm{e}^{\frac{\pi \mathbf{i}}{12 \tau}},\quad (p\in \Z_{2m}),\\
&\chi_{L(\frac{6}{7},0)}(\tau),\ \chi_{L(\frac{6}{7},5)}(\tau)\sim\tfrac{1}{\sqrt{21}} \mathrm{sin}(\tfrac{\pi}{7}) \mathrm{e}^{\frac{\pi \mathbf{i}}{12 \tau} \frac{6}{7}},\quad \chi_{L(\frac{6}{7},\frac{4}{3})}(\tau)\sim\tfrac{2}{\sqrt{21}} \mathrm{sin}(\tfrac{\pi}{7}) \mathrm{e}^{\frac{\pi \mathbf{i}}{12 \tau} \frac{6}{7}}.
\end{split}
\end{align*}
Since $\W_{-5/4}^{D_+}(2,1)$ is simple, so is the coset $\mathscr{C}$.
Therefore, $\mathscr{C}\simeq \widehat{L}(\tfrac{6}{7},0)$ as vertex algebras by the uniqueness of simple current extensions. 
It follows that the other multiplicity spaces $L(\tfrac{6}{7},\tfrac{4}{3})$ are $\widehat{L}(\tfrac{6}{7},\tfrac{4}{3})^\pm$ as $\widehat{L}(\tfrac{6}{7},0)$-modules. 
Since $\W_{-5/4}^{D_+}(2,1)$ is self-dual and $\widehat{L}(\tfrac{6}{7},\tfrac{4}{3})^\pm$ are contragredient to each other, both $\widehat{L}(\tfrac{6}{7},\tfrac{4}{3})^\pm$ must appear in the decomposition. Replacing $J^+$ with $-J^+$ if necessary, we obtain the desired decomposition. 
\end{proof} 

From the previous decompositions, we obtain the following observations.

\begin{corollary}\label{funny simple current extensions}\hspace{0mm}
\begin{enumerate}[wide, labelindent=0pt, font=\normalfont]
\item For $k=-h^\vee+\tfrac{2n}{2n-1}$, the simple current extension $\sWsub\oplus S_{q}\sWsub$ is isomorphic to a vertex superalgebra:
$$\sWsub\oplus S_{q}\sWsub\simeq \mathscr{F}\otimes V_{\sqrt{q}\Z}$$ 
where  $\mathscr{F}$ is the free fermion vertex superalgebra. Moreover,
\begin{align*}
\sWsub\simeq (\mathscr{F}\otimes V_{\sqrt{q}\Z})_{\bar{0}},\quad S_{q}\sWsub\simeq (\mathscr{F}\otimes V_{\sqrt{q}\Z})_{\bar{1}}
\end{align*}
where  $(\cdot)_{\bar{i}}$ for $i=0,1$ denote the even and odd subspaces.\\
\item For $k=-h^\vee+\tfrac{2n+1}{2n-1}$, $\sWsub$ is a simple current extension of $V^+_{\sqrt{2p}\Z}\otimes V_{2\sqrt{2q}\Z}$ of order four. Moreover, the simple current extension $\sWsub\oplus S_{q}\sWsub$ is isomorphic to a lattice vertex algebra:
\begin{align*}
\sWsub\oplus S_{q}\sWsub\simeq V_{\sqrt{2p}\Z}\otimes V_{\sqrt{2q}\Z}\oplus V_{\sqrt{\frac{p}{2}}+\sqrt{2p}\Z}\otimes V_{\sqrt{\frac{q}{2}}+\sqrt{2q}\Z}.
\end{align*}
\item $\W_{-5/4}^{D_+}(2,1)$ is a simple current extension of $\widehat{L}(\tfrac{6}{7},0) \otimes V_{2\sqrt{3}\Z}$ of order three.
\end{enumerate}
\end{corollary}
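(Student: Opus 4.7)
The plan is to derive each of the three assertions as a direct consequence of the explicit decompositions of $\sWsub$ provided by Theorems \ref{unitary case1} and \ref{unitary case2}, combined with an analysis of $S_q\sWsub$ via spectral flow. The common strategy has three steps: (i) identify $S_q\sWsub$ as a specific simple current using Theorem \ref{branching rule}(1); (ii) decompose it over the same coset subalgebra appearing in Theorem \ref{unitary case1}/\ref{unitary case2}, noting that this coset is fixed by $S_q$ since $S_q$ acts through $\pi^{J^+}$ only; and (iii) recognise the resulting direct sum $\sWsub\oplus S_q\sWsub$ via the uniqueness of simple current extensions.

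For case (1), Theorem \ref{unitary case1}(1) decomposes $\sWsub$ over $L(\tfrac12,0)\otimes V_{2\sqrt q\Z}$. Since $q=2n-1$ is odd, the lattice vertex superalgebra $V_{\sqrt q\Z}$ carries an intrinsic $\Z_2$-superparity given by the parity of the norm, with $(V_{\sqrt q\Z})_{\bar 0}=V_{2\sqrt q\Z}$ and $(V_{\sqrt q\Z})_{\bar 1}=V_{\sqrt q+2\sqrt q\Z}$; similarly $\mathscr F=L(\tfrac12,0)\oplus L(\tfrac12,\tfrac12)$ has $L(\tfrac12,0)$ as its even part. The even part of the super-tensor product $\mathscr F\otimes V_{\sqrt q\Z}$ then matches precisely the right-hand side of Theorem \ref{unitary case1}(1), hence equals $\sWsub$ as a module over $L(\tfrac12,0)\otimes V_{2\sqrt q\Z}$. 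A direct computation using \eqref{effect of spectral flow twists} shows that the conformal-weight shift under $S_q$ equals $q(p-q)/2=q/2\in\tfrac12+\Z$, and that the $J^+_0$-charge shift moves the Heisenberg summand $V_{2\sqrt q\Z}$ onto $V_{\sqrt q+2\sqrt q\Z}$ and conversely; this identifies $S_q\sWsub$ with the odd part of $\mathscr F\otimes V_{\sqrt q\Z}$. Uniqueness of simple current extensions then promotes the module-level identification to the required vertex superalgebra isomorphism.

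For case (2), the four summands of $\sWsub$ in Theorem \ref{unitary case1}(2) are simple currents over $V^+_{\sqrt{2p}\Z}\otimes V_{2\sqrt{2q}\Z}$, and applying \eqref{fusion of lattice orbifold} on the $V^+_{\sqrt{2p}\Z}$-factor shows that they form a cyclic group of order four generated by $V^+_{\sqrt{p/2}+\sqrt{2p}\Z}\otimes V_{\sqrt{q/2}+2\sqrt{2q}\Z}$, establishing the order-four extension claim. Computing as in step (i)-(ii) above and using $p-q=2$, one obtains another four summands for $S_q\sWsub$, with the spectral flow sending $V_{2\sqrt{2q}\Z}\to V_{\sqrt{2q}+2\sqrt{2q}\Z}$ and $V_{\pm\sqrt{q/2}+2\sqrt{2q}\Z}\to V_{\mp\sqrt{q/2}+2\sqrt{2q}\Z}$ (using $-\sqrt{q/2}\equiv 3\sqrt{q/2}\pmod{2\sqrt{2q}\Z}$). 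Regrouping all eight summands by combining the orbifold pairs $V^+_{\sqrt{2p}\Z}\oplus V^-_{\sqrt{2p}\Z}=V_{\sqrt{2p}\Z}$ and pairing the four lattice cosets of $2\sqrt{2q}\Z$ into two cosets of $\sqrt{2q}\Z$, yields precisely the claimed decomposition.

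Case (3) is almost immediate from Theorem \ref{unitary case2}: the three multiplicity spaces $\widehat L(\tfrac67,0)$, $\widehat L(\tfrac67,\tfrac43)^{\pm}$ form a $\Z_3$-group of simple currents of $\widehat L(\tfrac67,0)$ by \eqref{fusion of extended vir}, paired with three simple $V_{2\sqrt 3\Z}$-modules, so $\W_{-5/4}^{D_+}(2,1)$ is the corresponding order-three simple current extension. The main obstacle across cases (1) and (2) is precise bookkeeping of the spectral-flow shift on the lattice factor and the verification of the parity of the conformal weights; once this is carried out, all module-level identifications promote to vertex (super)algebra isomorphisms by the uniqueness of simple current extensions.
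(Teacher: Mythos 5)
Your proof is correct. For parts (1) and (3) it follows essentially the same route as the paper, just making explicit the identification of $S_{q}\sWsub$ with the odd part that the paper leaves implicit: the paper's proof of (1) simply notes $\mathscr{F}\simeq L(\tfrac12,0)\oplus L(\tfrac12,\tfrac12)$ and then says ``the assertion follows from Theorem~\ref{unitary case1}(1)'', whereas you supply the half-integer conformal-weight shift $q(p-q)/2=q/2$ and the lattice-coset shift that single out the odd part.

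For part (2), you take a genuinely different route for the lattice identification. The paper's proof of the second claim simply cites equation~\eqref{decomposition2} from the proof of Theorem~\ref{unitary case1}(2): there the direct sum $\mathbf{L}_k(0)\oplus\mathbf{L}_k(2\varpi_1)=\sWsub\oplus S_{\bar q}\sWsub$ was already shown to equal $V_{\sqrt{2N}\Z}\otimes V_{\sqrt{2q}\Z}\oplus V_{\sqrt{N/2}+\sqrt{2N}\Z}\otimes V_{\sqrt{q/2}+\sqrt{2q}\Z}$ with $N=p$, which is precisely the claimed isomorphism, so nothing more needs to be computed. You instead reconstruct this from scratch by tracking the spectral flow on each of the four summands of Theorem~\ref{unitary case1}(2) — noting that $S_q$ fixes the coset factor and shifts the lattice index by $a\mapsto a+p-q=a+2\pmod 4$, i.e.\ by $\sqrt{2q}$ — and then regrouping the resulting eight summands into orbifold pairs and coset pairs. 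This bookkeeping is correct (the shift $3\sqrt{q/2}\equiv-\sqrt{q/2}\pmod{2\sqrt{2q}\Z}$ is what makes the regrouping close up), and it has the advantage of being self-contained and parallel to your argument in (1), at the cost of repeating a computation the paper already carried out. Both arguments are valid; the paper's is shorter because it reuses an intermediate step, yours is more transparent as a stand-alone derivation.
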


\proof
(1) Recall that $\mathscr{F}$ is generated by the odd field $\phi(z)$ of conformal weight $1/2$ satisfying the OPE $\phi(z)\phi(w)\sim 1/(z-w)$ and the conformal vector $\half\partial \phi\cdot \phi$ has central charge $\half$. 
The asymptotic data $\chi_\mathscr{F}(\tau)\sim e^{\pi \mathbf{i}/24 \tau}$ gives the decomposition 
$\mathscr{F}\simeq L(\tfrac{1}{2},0)\oplus L(\tfrac{1}{2},\tfrac{1}{2})$, which is a simple current (super) extension. Then the assertion follows from Theorem \ref{unitary case1} (1).
For (2), the first part is clear from \eqref{fusion of lattice orbifold} and the latter part is clear from \eqref{decomposition2}. 
(3) is clear from Theorem~\ref{unitary case2}.
\endproof

\begin{theorem}
The simple $\W$-algebra $\sWsub$ is
unitary if $(k,n)$ satisfies one of the following
$$k+h^\vee=\tfrac{2n}{2n-1},\ \tfrac{2n+1}{2n}\quad (n\geq 2),\qquad k+h^\vee=\tfrac{7}{4}\quad (n=2).$$
\end{theorem}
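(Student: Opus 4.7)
The plan is to treat the three cases separately, leveraging the explicit structural descriptions obtained in Theorems~\ref{unitary case1}(1), (3), Theorem~\ref{unitary case2}, and Corollary~\ref{funny simple current extensions} to realize $\sWsub$ either directly as a unitary vertex algebra or as the bosonic part of a manifestly unitary vertex superalgebra.

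For $k+h^\vee = (2n+1)/(2n)$, Theorem~\ref{unitary case1}(3) gives $\sWsub \simeq V_{\sqrt{q}\Z}$, and lattice vertex algebras attached to positive-definite integral lattices are unitary by \cite{DL1}. For $k+h^\vee = 2n/(2n-1)$, Corollary~\ref{funny simple current extensions}(1) identifies the $\Z_2$ simple current extension $\sWsub \oplus S_{q}\sWsub$ with the vertex superalgebra $\mathscr{F}\otimes V_{\sqrt{q}\Z}$, with $\sWsub$ identified with the even subspace. Since both $\mathscr{F}$ and $V_{\sqrt{q}\Z}$ are unitary, their tensor product carries a positive-definite invariant super-Hermitian form; restricting this form to the even subspace yields the desired positive-definite invariant Hermitian form on $\sWsub$ in the ordinary vertex algebra sense.

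For the exceptional case $n=2$, $k+h^\vee = 7/4$, I would proceed in two steps. First, establish unitarity of $\widehat{L}(\tfrac{6}{7},0) = L(\tfrac{6}{7},0) \oplus L(\tfrac{6}{7},5)$: the vacuum component is the unitary Virasoro minimal model at $c = 6/7$ (the $m=6$ model), and the simple current $L(\tfrac{6}{7},5)$ has integer conformal weight $5$ with $L(\tfrac{6}{7},5)\boxtimes L(\tfrac{6}{7},5)\simeq L(\tfrac{6}{7},0)$, so the $\Z_2$ extension is of standard orbifold type and preserves unitarity. Granted unitarity of $\widehat{L}(\tfrac{6}{7},0)\otimes V_{2\sqrt{3}\Z}$, apply the same principle to the order-three simple current extension of Corollary~\ref{funny simple current extensions}(3) yielding $\W_{-5/4}^{D^+}(2,1)$: the extending modules $\widehat{L}(\tfrac{6}{7},\tfrac{4}{3})^\pm \otimes V_{\pm 2/\sqrt{3}+2\sqrt{3}\Z}$ have integer conformal weights (as required by locality of the extended OPE) and form a cyclic $\Z_3$ fusion subcategory, with positivity of the extension cocycle inherited from the unitary braided tensor structure.

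The central difficulty lies in controlling unitarity under simple current extensions — equivalently, in verifying that the OPE structure constants assembling the extended algebra can be simultaneously normalized to be positive real. This is automatic for the Ising--lattice case, where the super-ambient realization does the work for us, but for the exceptional case the most technical step is confirming unitarity of $\widehat{L}(\tfrac{6}{7},0)$. I expect the cleanest route is to identify $\widehat{L}(\tfrac{6}{7},0)$ with a manifestly unitary construction (for instance, a coset or parafermionic realization of the $(6,7)$-minimal model), which would simultaneously resolve positivity here and trivialize the subsequent analysis of the $\Z_3$ extension; absent such an identification, one must resort to a direct Zhu-algebra positivity check, which while feasible is the least appealing part of the argument.
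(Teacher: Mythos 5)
For the first two cases $k+h^\vee = \tfrac{2n}{2n-1}$ and $\tfrac{2n+1}{2n}$ your argument agrees with the paper: identify $\sWsub$ with (the even part of) a lattice or free-fermion vertex (super)algebra via Theorem~\ref{unitary case1}(3) and Corollary~\ref{funny simple current extensions}(1), and restrict the known unitary structure. That part is fine.

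For $n=2$, $k+h^\vee = \tfrac{7}{4}$ there is a genuine gap. Your argument rests twice on the claim that a simple current extension by modules of integer conformal weight ``preserves unitarity'' (once for $\widehat{L}(\tfrac{6}{7},0)=L(\tfrac{6}{7},0)\oplus L(\tfrac{6}{7},5)$, once for the $\Z_3$ extension of $\widehat{L}(\tfrac{6}{7},0)\otimes V_{2\sqrt{3}\Z}$). This is not true in general, and the paper's own remark immediately following the theorem refutes it: for $k+h^\vee = \tfrac{2n+1}{2n-1}$ the relevant simple current extension of a unitary algebra by unitary modules does \emph{not} inherit a unitary structure. The issue you name — that the OPE structure constants of the extension must be simultaneously normalizable to positive reals — is precisely the obstruction, and it has to be resolved, not asserted. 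Your proposed remedy (find a manifestly unitary realization of $\widehat{L}(\tfrac{6}{7},0)$, e.g.\ via GKO cosets) does establish unitarity of $\widehat{L}(\tfrac{6}{7},0)\otimes V_{2\sqrt{3}\Z}$, but it does \emph{not} ``trivialize the subsequent analysis of the $\Z_3$ extension'': you would still need a positivity argument for the cocycle of that extension, which is exactly the hard step. The paper sidesteps the entire issue by invoking a result of Lam--Yamada realizing $\W_{-5/4}^{D^+}(2,1)$ directly as the commutant $\Com(\bigotimes_{i=1}^6 L(c_i,0), V_M)$ inside a concrete positive-definite even lattice vertex algebra $V_M$; the unitary automorphism of $V_M$ preserves the Virasoro tensor factor, so the coset inherits unitarity with no extension analysis at all. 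To make your route work, you would need to supply a concrete positivity check for the $\Z_3$ extension (or find, as the paper does, a realization of the full algebra $\W_{-5/4}^{D^+}(2,1)$ itself as a commutant inside something manifestly unitary).
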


\proof
The lattice vertex superalgebras associated with non-degenerate integral lattices are unitary \cite{AL,DL1}, and so is the free fermion vertex algebra $\mathscr{F}$ \cite{AL}.
Then, the cases $k+h^\vee=\tfrac{2n}{2n-1},\tfrac{2n+1}{2n}$ follow from Corollary \ref{funny simple current extensions} (1) and Theorem \ref{unitary case1} (3), respectively. Next, we consider the case $k+h^\vee=\tfrac{7}{4}$, $(n=2)$. 
Let $V_M$ be the lattice vertex algebra associated with the positive-definite even lattice 
$$M=\left\{(0,0), \pm\left(\sqrt{2}\gamma,\tfrac{2}{\sqrt{3}}\right)\right\} +\left(\sqrt{2}E_6 \oplus 2\sqrt{3}\Z\right)$$
with $\gamma$ a certain representative in $E_6^*/E_6$ defined in \cite{LY}.
By \cite{LY}, we have an embedding 
$\bigotimes_{i=1}^{6} L(c_i,0) \hookrightarrow V_M$ where $c_1,\ldots,c_6$ are $\half, \frac{7}{10}, \frac{4}{5}, \frac{6}{7}, \frac{25}{28}, \frac{39}{28}$.
By the uniqueness of simple current extensions the coset $\Com( \bigotimes_{i=1}^{6} L(c_i,0), V_M)$ is isomorphic to $\W_{-5/4}^{D_+}(2,1)$ thanks to \cite[Theorem 4.1]{LY}.
The automorphism associated with the unitary structure on $V_L$ preserves $\bigotimes_{i=1}^{6} L(c_i,0)$ and thus the coset is also unitary \cite[Remark 2.3]{LY}.
Therefore, $\W_{-5/4}^{D_+}(2,1)$ is unitary.
\endproof

\begin{remark}
For the remaining case $\sWsub$ with $k+h^\vee=\tfrac{2n+1}{2n-1}$, one can show that the subalgebra 
$V:=\ V^+_{\sqrt{2p}\Z}\otimes V_{2\sqrt{2q}\Z}\oplus V^-_{\sqrt{2p}\Z}\otimes V_{\sqrt{2q}+2\sqrt{2q}\Z}$ is unitary by restricting the unitary structure on $V_{\sqrt{2p}\Z}\otimes V_{\sqrt{2q}\Z}$ and that $V^+_{\sqrt{\frac{p}{2}}+\sqrt{2p}\Z}\otimes V_{\sqrt{\frac{q}{2}}+ 2\sqrt{2q}\Z}\oplus V^-_{\sqrt{\frac{p}{2}}+\sqrt{2p}\Z}\otimes V_{-\sqrt{\frac{q}{2}}+2\sqrt{2q}\Z}$ is a unitary $V$-module. However, these unitary structures do not extend to $\sWsub$.
\end{remark}

\section{Rational case: the principal $\W$-superalgebra $\W_\ell(\osp_{2|2n},\fpr)$}\label{sec:rational_principal}

We now transport our rationality results to the principal $\W$-superalgebra $\ssprW$ using the Feigin--Semikhatov duality.

\subsection{Coset construction}
The Kazama--Suzuki type coset construction (Theorem \ref{Coset theorem}) preserves the rationality and lisse condition of the $\W$-superalgebras \cite[Corollary 5.19]{CGN}.
In particular, $\ssprW$ is rational and lisse at levels 
\begin{equation*}
    \ell+h^\vee_-=\frac{1}{2(k+h^\vee_+)}=\frac{q}{2p},\quad (p,q)=1,
    \begin{cases}
        p\geq 2n-1, & q=2n-1,\\
        p\geq 2n, & q=2n,
    \end{cases}
\end{equation*}
that are the dual levels $\ell$ corresponding to the exceptional levels $k$ studied in Sect.~\ref{sec:subreg_symplectic_walgebra}. 
This was already pointed out in \cite[Section 7.3]{CL2}.
An important byproduct of the coset construction is the decomposition \eqref{decompositoin of the KS} that we recall here:
\begin{align}\label{decompositoin of the KS:2}
\subW\otimes V_{\Z}
\simeq \bigoplus_{a\in \Z} \mathcal{E}(a)\otimes \pi^{J^+_\Delta}_{-a}.
\end{align}
The $\sprW$-modules $\mathcal{E}(a)$ are all obtained from the spectral flow twists of $\sprW$ itself.

\begin{proposition}\label{multiplicity are spectral flow twists}
We have the isomorphisms of $\Wsuper$-modules
\begin{align*}
    \mathcal{E}(a)\simeq S_{a J^-}\Wsuper,\quad (a\in \Z).
\end{align*}
\end{proposition}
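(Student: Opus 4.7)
The plan is to compare both sides through their explicit decomposition as modules over the subalgebra $\mathscr{C}^\ell_{D^-}\otimes \pi^{J^-}\subset \sprW$, and then to upgrade the resulting isomorphism to one of $\sprW$-modules.

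For the first step, I would apply the spectral flow twist $S_{aJ^-}$ termwise to the decomposition \eqref{eq: decomposition} of $\sprW$. Since the subalgebra $\mathscr{C}^\ell_{D^-}=\Com(\pi^{J^-},\sprW)$ commutes with $J^-$, the Li $\Delta$-operator for $aJ^-$ acts as the identity on each multiplicity space $\mathscr{C}^\ell_{D^-,m}$, while shifting $\pi^{J^-}_m$ to $\pi^{J^-}_{m+a(-2(\ell+h^\vee_-)+1)}$ according to the norm of $J^-$ recorded in \eqref{Heisenberg in sprin}. This yields
$$S_{aJ^-}\sprW \;\simeq\; \bigoplus_{m\in\Z}\mathscr{C}^\ell_{D^-,m}\otimes \pi^{J^-}_{m+a(-2(\ell+h^\vee_-)+1)}.$$
Applying Corollary \ref{isom for the multiplicities} to replace each $\mathscr{C}^\ell_{D^-,m}$ by the isomorphic $\mathscr{C}^k_{D^+,m}$ gives precisely the right-hand side of \eqref{multiplicity in the KS}, that is, the decomposition of $\mathcal{E}(a)$. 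Thus $S_{aJ^-}\sprW\simeq \mathcal{E}(a)$ as $\mathscr{C}^\ell_{D^-}\otimes \pi^{J^-}$-modules.

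For the second step, the $\sprW$-module structure on $\mathcal{E}(a)$ comes from the embedding \eqref{KS} into $\subW\otimes V_\Z$, where $\mathcal{E}(a)$ is identified with the $J^+_\Delta$-weight $-a$ component. Under the identification $\pi^{J^+}\otimes \pi^x\simeq \pi^{J^+_\Delta}\otimes \pi^{J^-}$ used in the proof of Corollary \ref{isom for the multiplicities}, passing from $\mathcal{E}(0)=\sprW$ to $\mathcal{E}(a)$ amounts to translating by the highest-weight vector of $\pi^{J^+_\Delta}_{-a}$, a lattice vertex operator in $V_\Z$. I would check that, on $\sprW$, this vertex-operator translation coincides with the Li $\Delta$-operator for $aJ^-$; since both operations commute with $\mathscr{C}^\ell_{D^-}\otimes \pi^{J^-}$ and implement the same Heisenberg- and conformal-weight shift, it suffices to verify the equality on the odd strong generators $G^\pm_o\in \sprW$, where it reduces to a direct OPE computation involving the lattice generator $\hv{x+y}\in V_\Z$.

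The main obstacle is this last compatibility check between the abstract spectral flow and its concrete realization as a lattice vertex operator inside $\subW\otimes V_\Z$; once it is established, it identifies the $\sprW$-action on $\mathcal{E}(a)$ with the twisted action on $S_{aJ^-}\sprW$ and completes the proof.
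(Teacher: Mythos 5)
Your steps (1)--(4) correctly produce an isomorphism $S_{aJ^-}\sprW\simeq\mathcal{E}(a)$ as modules over the subalgebra $\mathscr{C}^\ell_{D^-}\otimes\pi^{J^-}\subset\sprW$, but the upgrade to an isomorphism of $\sprW$-modules is the entire content of the proposition, and you only describe a plan for it rather than carry it out. Two $\sprW$-modules can agree after restriction to $\mathscr{C}^\ell_{D^-}\otimes\pi^{J^-}$ without being isomorphic as $\sprW$-modules, so the compatibility check you flag as the ``main obstacle'' is a genuine gap. There is also a minor confusion in your final sentence: $\hv{x+y}$ is the generator of $\Pi(0)$ appearing in the free-field realization of $\subW$ (it maps to $\beta$, not to the odd generators $G_o^\pm$); the lattice generator of $V_\Z$ is $\hv{x}$.

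The paper closes the gap structurally and with no computation on generators. Since $S_xV_\Z\simeq V_\Z$ as $V_\Z$-modules (spectral flow by a lattice vector of an integral lattice vertex superalgebra), one immediately gets $\subW\otimes S_xV_\Z\simeq\subW\otimes V_\Z$ as $\subW\otimes V_\Z$-modules, hence in particular as $\sprW\otimes\pi^{J^+_\Delta}$-modules. Writing $x=\widehat{J}^--\tfrac{1}{k+h^\vee_+}J^+_\Delta$ factors the twist $S_x=S_{\widehat{J}^-}\circ S_{-\frac{1}{k+h^\vee_+}J^+_\Delta}$ through the commuting Heisenberg fields, so applying it to the decomposition $\subW\otimes V_\Z\simeq\bigoplus_a\mathcal{E}(a)\otimes\pi^{J^+_\Delta}_{-a}$ yields $\bigoplus_a S_{J^-}\mathcal{E}(a)\otimes\pi^{J^+_\Delta}_{-a-1}$. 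Comparing termwise with the original decomposition gives $S_{J^-}\mathcal{E}(a)\simeq\mathcal{E}(a+1)$ already at the level of $\sprW$-modules, and the proposition follows by induction from $\mathcal{E}(0)\simeq\sprW$. This is precisely the compatibility between the lattice translation and the spectral flow that you wanted; the paper extracts it from the periodicity of $V_\Z$ under $S_x$ rather than from an OPE verification.
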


\proof
As $S_xV_\Z$ is isomorphic to $V_\Z$s, we have an isomorphism
$$\subW\otimes S_xV_{\Z}\simeq \subW\otimes V_{\Z}$$
as $\subW\otimes V_{\Z}$-modules.
In another direction, since 
$$x=\widehat{J}^--\frac{1}{k+h^\vee_+}J^+_\Delta,$$
we have the isomorphisms of $\sprW\otimes \pi^{J^+_\Delta}$-modules
\begin{align*}
    \Wsub\otimes S_xV_{\Z}
    \simeq \bigoplus_{a\in \Z}S_{\widehat{J}^-}\mathcal{E}(a)\otimes S_{-\frac{1}{k+h^\vee_+}J^+_\Delta}\pi^{J^+_{\Delta}}_{-a}
    \simeq \bigoplus_{a\in \Z}S_{J^-}\mathcal{E}(a)\otimes \pi^{J^+_{\Delta}}_{-a-1}.
\end{align*}
Comparing with the decomposition \eqref{decompositoin of the KS:2}, we obtain $S_{J^-}\mathcal{E}(a)\simeq \mathcal{E}(a+1)$ as $\sprW$-modules. Since $\mathcal{E}(0)\simeq \sprW$, we obtain the statement.
\endproof

Taking the simple quotient of $\subW\otimes V_{\Z}$ amounts to replace $\subW$ with the simple quotient $\ssubW$ and the multiplicity space $\mathcal{E}(a)$ with a simple quotient, say $\mathcal{E}^s(a)$, which is a $\ssprW$-module in the decomposition \eqref{decompositoin of the KS:2} (see e.g. \cite[Proposition 5.1, Corollary 5.6]{CGN}). We show that this decomposition acquires a periodicity in $\mathcal{E}^s(a)$.

Consider first the decomposition of $\ssprW$. We introduce the following abelian groups
\begin{align}\label{list of groups}
    \mathcal{P}_-=\begin{cases}
         2\sqrt{(p-q)p}\Z\\
         \sqrt{(p-q)p}\Z
         \end{cases},\quad 
         L=\begin{cases}
        2\sqrt{pq}\Z\\
        \sqrt{pq}\Z
        \end{cases},\quad 
G=\begin{cases}
        \Z_{2p} & (q=2n-1)\\
        \Z_{p} &(q=2n).
    \end{cases}
\end{align}

\begin{proposition}\label{decomposition for principal super}
	For admissible levels $\ell=-h_-^\vee+\frac{q}{2p}$ with $q=2n-1,2n$, we have the following isomorphism of $\Com(\pi^{J^-},\sWsuper)\otimes\pi^{J^-}$-modules
	\begin{align*}
		\sWsuper\simeq 
		\bigoplus_{a\in M}\mathscr{C}_{\ell,a}^{D^-}\otimes V_{\sqrt{\frac{p}{p-q}}a+\mathcal{P}_-}.
	\end{align*}
\end{proposition}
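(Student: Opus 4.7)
The plan is to mirror the proof of Theorem~\ref{branching rule}~(2) on the super side, exploiting the identification of multiplicity spaces afforded by Corollary~\ref{isom for the multiplicities}. From Theorem~\ref{branching rule}~(2) read as a $\pi^{J^+}$-module decomposition, one extracts the periodicity $\mathscr{C}_{k,m}^{D^+}\simeq \mathscr{C}_{k,m+N}^{D^+}$ as $\mathscr{C}_k^{D^+}$-modules, where $N=2(p-q)$ for $q=2n-1$ and $N=p-q$ for $q=2n$ (these are precisely the orders of the index group $M$). Transferring this via the isomorphism $\mathscr{C}_{k,m}^{D^+}\simeq \mathscr{C}_{\ell,m}^{D^-}$ of Corollary~\ref{isom for the multiplicities} gives the same periodicity on the super side: $\mathscr{C}_{\ell,m}^{D^-}\simeq \mathscr{C}_{\ell,m+N}^{D^-}$.

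Next, one groups the Fock summands in $\sWsuper \simeq \bigoplus_{m\in\Z}\mathscr{C}_{\ell,m}^{D^-}\otimes \pi^{J^-}_m$ according to congruence classes modulo $N$, obtaining
\begin{equation*}
\sWsuper \simeq \bigoplus_{a\in \Z/N\Z}\mathscr{C}_{\ell,a}^{D^-}\otimes \Bigl(\bigoplus_{m\in\Z}\pi^{J^-}_{a+Nm}\Bigr).
\end{equation*}
Rescaling $J^-$ to the unit-norm generator $\widetilde{J^-} = J^-/\sqrt{(p-q)/p}$ (recall from~\eqref{Heisenberg in sprin} that $J^-$ has norm $(p-q)/p$), the Fock module $\pi^{J^-}_{a+Nm}$ has $\widetilde{J^-}_0$-weight $\sqrt{p/(p-q)}(a+Nm)$. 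A direct verification yields $\sqrt{p/(p-q)}\cdot N=2\sqrt{(p-q)p}$ for $q=2n-1$ and $\sqrt{(p-q)p}$ for $q=2n$, matching the definition of $\mathcal{P}_-$ in \eqref{list of groups} exactly.

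To upgrade the inner direct sum to a genuine lattice vertex superalgebra $V_{\mathcal{P}_-}$ sitting inside $\sWsuper$ and extending $\pi^{J^-}$ conformally, I would apply Proposition~\ref{period of spectral flow} to the simple conformal vertex superalgebra $\sWsuper$. The periodicity established above shows that the stabilizer of $\sWsuper$ under spectral flow twists $S_{\theta J^-}$ contains $\mathcal{P}_-$, while the maximality of $\mathcal{P}_+$ in Theorem~\ref{branching rule}~(2) together with the identification of multiplicity spaces forces the $\mathscr{C}_{\ell,a}^{D^-}$ to be pairwise non-isomorphic for $a \in \Z/N\Z$, preventing the stabilizer from being strictly larger than $\mathcal{P}_-$.

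The main obstacle is not conceptual but rather the careful bookkeeping of normalizations: correctly rescaling $J^-$ to match $\mathcal{P}_-$ and verifying that the integer-shift periodicity of the multiplicities translates to the claimed lattice generators. Once these are in place, the conclusion follows essentially from a single application of Proposition~\ref{period of spectral flow}.
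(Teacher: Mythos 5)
Your proposal is correct and takes essentially the same approach as the paper: extract the periodicity of multiplicity spaces from Theorem~\ref{branching rule}~(2), transfer it to the super side via Corollary~\ref{isom for the multiplicities}, perform the rescaling computation to identify the lattice $\mathcal{P}_-$, and invoke Proposition~\ref{period of spectral flow} to organize the decomposition. The only cosmetic difference is that the paper identifies $\Com(\mathscr{C}^{D^-}_\ell,\sWsuper)$ directly as the lattice vertex algebra $V_{\mathcal{P}_-}$ and then lets the decomposition follow, while you phrase the bound on the stabilizer via pairwise non-isomorphism of the $\mathscr{C}^{D^-}_{\ell,a}$ — but both rest on the same ``iff'' characterization of when $\mathscr{C}^{D^+}_{k,a}\simeq\mathscr{C}^{D^+}_{k,b}$ that the paper reads off from Theorem~\ref{branching rule}.
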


\proof
By Theorem \ref{branching rule}, the multiplicity spaces $\mathscr{C}^{D^+}_{k,m}$ in Corollary~\ref{isom for the multiplicities} satisfy the isomorphisms $\mathscr{C}^{D^+}_{k,a}\simeq \mathscr{C}^{D^+}_{k,b}$ as $\Com(\pi^{J^+},\ssubW)$-modules iff 
\begin{align*}
    a-b\in 2(p-q)\Z\quad (q=2n-1),\qquad
    a-b\in (p-q)\Z\quad (q=2n).
\end{align*}
Therefore, the coset $\Com(\mathscr{C}^{D^-}_{\ell},\ssprW)$ is isomorphic to 
\begin{align*}
\bigoplus_{a\in 2(p-q)\Z}\pi^{J^-}_{a}\simeq V_{2\sqrt{(p-q)p}\Z},\qquad \bigoplus_{a\in (p-q)\Z}\pi^{J^-}_{a}\simeq V_{\sqrt{(p-q)p}\Z}
\end{align*}
as vertex superalgebras for $q=2n-1$ and $q=2n$, respectively.
The assertion follows.
\endproof

We deduce the decomposition for the simple quotient.

\begin{corollary}\label{decompositon of KS construction}
Let $\ell=-h_-^\vee+\frac{q}{2p}$ be an admissible level with $q=2n-1,2n$.
\begin{enumerate}[wide, labelindent=0pt, font=\normalfont]
\item There is an isomorphism of $\sWsuper$-modules $\mathcal{E}^s(a)\simeq\mathcal{E}^s(b)$ iff 
\begin{align*}
    a-b\in 2p\Z\ (q=2n-1),\quad a-b\in p\Z\ (q=2n).
\end{align*}
\item We have a decomposition 
\begin{align}\label{eq:coset_decomposition}
	\sWsub\otimes V_\Z\simeq
	\bigoplus_{a\in G}\mathcal{E}^s(a)\otimes V_{-\sqrt{\frac{q}{p}}a+L}.
\end{align}
\end{enumerate}
\end{corollary}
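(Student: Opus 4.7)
The plan is to deduce both parts from the identification $\mathcal{E}^s(a)\simeq S_{aJ^-}\sWsuper$, which I would first establish by passing the identification of Proposition~\ref{multiplicity are spectral flow twists} to simple quotients: since $S_{aJ^-}$ is an exact auto-equivalence on $\sprW\Mod$, it sends the simple quotient of $\sprW$ to that of $S_{aJ^-}\sprW$.

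For part (1), observe that $\mathcal{E}^s(a)\simeq\mathcal{E}^s(b)$ is equivalent to $S_{(a-b)J^-}\sWsuper\simeq\sWsuper$. By Proposition~\ref{period of spectral flow}, this holds iff $(a-b)J^-$ lies in the stabilizer lattice $\mathcal{P}\subset\h$, characterized as the lattice underlying the maximum lattice vertex subalgebra of $\sWsuper$ extending $\pi^{J^-}$. By Proposition~\ref{decomposition for principal super}, the $a=0$ block already contributes $V_{\mathcal{P}_-}$, and the multiplicity spaces $\mathscr{C}^{D^-}_{\ell,a}$ ($a\in M$) in that decomposition are by construction pairwise non-isomorphic as $\Com(\pi^{J^-},\sWsuper)$-modules, so no further lattice extension is possible; hence $\mathcal{P}=\mathcal{P}_-$. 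Translating into the spectral flow index is then a direct computation: using $J^-(z)J^-(w)\sim\tfrac{(p-q)/p}{(z-w)^2}$ and writing $J^-=\sqrt{(p-q)/p}\,\hat J^-$ in a unit-norm basis, the condition $(a-b)J^-\in\mathcal{P}_-$ becomes $(a-b)\sqrt{(p-q)/p}\in\mathcal{P}_-$, which unwinds to $a-b\in 2p\Z$ for $q=2n-1$ and $a-b\in p\Z$ for $q=2n$.

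For part (2), I would pass to the simple quotient in \eqref{decompositoin of the KS:2} to obtain
\begin{align*}
\sWsub\otimes V_\Z\simeq\bigoplus_{a\in\Z}\mathcal{E}^s(a)\otimes\pi^{J^+_\Delta}_{-a},
\end{align*}
and regroup by the congruence class $\bar a$ of $a$ modulo $|G|$ using part~(1):
\begin{align*}
\sWsub\otimes V_\Z\simeq\bigoplus_{\bar a\in G}\mathcal{E}^s(\bar a)\otimes\bigoplus_{k\in\Z}\pi^{J^+_\Delta}_{-\bar a-k|G|}.
\end{align*}
A direct OPE computation using \eqref{level of J+} yields $J^+_\Delta(z)J^+_\Delta(w)\sim\tfrac{p/q}{(z-w)^2}$, so $J^+_\Delta$ has norm $p/q$. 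Writing $\hat J^+_\Delta=J^+_\Delta/\sqrt{p/q}$ in a unit-norm basis, the Fock module $\pi^{J^+_\Delta}_{-c}$ becomes the weight-$(-c\sqrt{q/p})$ Fock module in the lattice normalization, so the inner sum is $V_{-\sqrt{q/p}\bar a+\sqrt{q/p}|G|\Z}$. The identities $\sqrt{q/p}\cdot 2p=2\sqrt{pq}$ and $\sqrt{q/p}\cdot p=\sqrt{pq}$ then match this lattice with $L$ in both cases.

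The only delicate point is the careful bookkeeping of normalization between the physical Heisenberg fields and the unit-norm lattice basis; once the stabilizer $\mathcal{P}_-$ has been pinned down via Propositions~\ref{period of spectral flow} and \ref{decomposition for principal super}, the rest is routine and no substantive obstacle is anticipated.
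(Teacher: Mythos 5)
Your proposal is correct and follows essentially the same route as the paper: identify $\mathcal{E}^s(a)\simeq S_{aJ^-}\sWsuper$ by passing Proposition~\ref{multiplicity are spectral flow twists} to simple quotients, then combine Propositions~\ref{period of spectral flow} and~\ref{decomposition for principal super} to pin down the spectral flow periodicity and hence the lattice decomposition, with the norm calculations for $J^-$ and $J^+_\Delta$ supplying the conversion between the integer index and the lattice normalization. The only cosmetic difference is that the paper's part (2) is phrased as computing the coset $\Com(\mathcal{E}^s(0),\sWsub\otimes V_\Z)\simeq V_L$, whereas you regroup the Fock sum directly; both are the same step, and your invocation of Proposition~\ref{period of spectral flow} already supplies the fact that the regrouped sum carries the $V_L$-module structure.
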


\proof
(1) For $a\in\Z$, Proposition~\ref{multiplicity are spectral flow twists} implies that the simple quotient $\mathcal{E}^s(a)$ is isomorphic to $S_{aJ^-}\ssprW$.
The decomposition in Proposition~\ref{decomposition for principal super} together with the Proposition \ref{period of spectral flow} imply the assertion since the period of the spectral flow twist $S_{J^+}$ is given by
\begin{align*}
&\left(\sqrt{\tfrac{p}{p-q}}\Z\right)^*/2\sqrt{(p-q)p}\Z \simeq \sqrt{\tfrac{p-q}{p}}\Z/2\sqrt{(p-q)p}\Z \simeq \Z_{2p} ,\\
&\left(\sqrt{\tfrac{p}{p-q}}\Z\right)^*/\sqrt{(p-q)p}\Z\simeq \sqrt{\tfrac{p-q}{p}}\Z/\sqrt{(p-q)p}\Z \simeq \Z_{p},
\end{align*}
where $\mathcal{L}^*$ denote the dual of integer lattice $\mathcal{L}$. 
(2) For $q=2n-1$ and $q=2n$, the coset ${\Com(\mathcal{E}^s(0),\ssubW\otimes V_\Z)}$ 
is isomorphic to 
\begin{align*}
\bigoplus_{a\in 2p\Z}\pi^{J^+_\Delta}_{-a}\simeq V_{2\sqrt{pq}\Z},\quad \bigoplus_{a\in p\Z}\pi^{J^+_\Delta}_{-a}\simeq V_{\sqrt{pq}\Z}
\end{align*}
as vertex algebras, respectively. The assertion follows from the decomposition~\eqref{decompositoin of the KS:2}.
\endproof

\subsection{Classification of modules}\label{sec:irred_mod_super}
Generalizing Corollary~\ref{decompositon of KS construction}, we construct all the simple $\ssprW$-modules.
For $\g=\so_{2n+1}$, we have $\rP/\rQ\simeq \Z_2$, and set $[\lambda]\in \{0,\half\}$ for $\lambda\in \rP$ accordingly.
Then the $\ssubW$-modules $\mathbf{L}_k(\lambda)$ ($\lambda\in \mathrm{Pr}_\Z^k$), defined in Sect.~\ref{sec:simple_modules_subreg}, have $J^+_0$-eigenvalues $[\lambda]+\Z$.
We decompose $\mathbf{L}_k(\lambda)$ into a sum of $\mathscr{C}_{k}^{D^+}\otimes V_{\mathcal{P}_+}$-modules
 \begin{align}\label{Decomposition of subregular modules}
    \mathbf{L}_k(\lambda)\simeq
\bigoplus_{a\in M}\mathscr{C}_{k,a}^{D^+}(\lambda)\otimes V_{\sqrt{\frac{q}{p-q}}([\lambda]+a)+\mathcal{P}_+}.
\end{align}
For $a\in M$, the $\mathscr{C}_{k}^{D^+}$-module $\mathscr{C}_{k,a}^{D^+}(\lambda)$ is also a $\mathscr{C}_{\ell}^{D^-}$-module, which we denote by $\mathscr{C}_{\ell,a}^{D^-}(\lambda)$.
By Corollary \ref{decompositon of KS construction}, $\mathbf{L}_k(\lambda)\otimes V_\Z$ is a $\ssprW\otimes V_{L}$-module. Let us introduce the simple $\ssprW$-module 
$$\mathbf{L}_\ell^-(\lambda):=    \mathrm{H}_{\varepsilon^{-1}[\lambda]}^+\left(\mathbf{L}_k(\lambda)\right).$$

\begin{proposition}
Suppose $\lambda \in \Pr_\Z^k$.
\begin{enumerate}[wide, labelindent=0pt, font=\normalfont]
\item We have an isomorphism of $\mathscr{C}_\ell^{D^-}\otimes V_L$-modules
\begin{align}\label{decomposition of super modules}
\mathbf{L}_\ell^-(\lambda)\simeq 
\bigoplus_{a\in M}\mathscr{C}_{\ell,a}^{D^-}(\lambda)\otimes V_{\frac{pa+q[\lambda]}{\sqrt{(p-q)p}}+\mathcal{P}_-}.
\end{align}
\item We have an isomorphism of $\sWsuper\otimes V_{L}$-modules
\begin{align}\label{decompositon of Kazama-Suzuki modules}
\mathbf{L}_k(\lambda)\otimes V_\Z \simeq
\bigoplus_{a\in G}S_{a} \mathbf{L}_\ell^-(\lambda)\otimes V_{-\frac{q(a-[\lambda])}{\sqrt{pq}}+L}.
\end{align}
\end{enumerate}
\end{proposition}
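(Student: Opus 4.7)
I would apply the exact functor $\mathrm{H}^+_{\varepsilon^{-1}[\lambda]}$ to both sides of \eqref{Decomposition of subregular modules}. By the definition of $\mathbf{L}_\ell^-(\lambda)$, the left-hand side becomes $\mathbf{L}_\ell^-(\lambda)$, and by exactness the right-hand side commutes with the direct sum decomposition. The multiplicity spaces $\mathscr{C}_{k,a}^{D^+}(\lambda)$ lie inside $\mathscr{C}_k^{D^+}\simeq\mathscr{C}_\ell^{D^-}$ (Corollary~\ref{isom for the multiplicities}) and commute with both $\pi^{J^+}$ and $\pi^{J^+_*}$; thus they pass through the relative cohomology unchanged and are relabelled as $\mathscr{C}_{\ell,a}^{D^-}(\lambda)$. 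What remains is to evaluate $\mathrm{H}^+_{\varepsilon^{-1}[\lambda]}$ on the lattice factor $V_{\sqrt{q/(p-q)}([\lambda]+a)+\mathcal{P}_+}$, which decomposes as a direct sum of Fock modules over $\pi^{J^+}$. Pairing each such Fock module with the terms of $K_{D^+}^{\varepsilon^{-1}[\lambda]}\simeq \bigoplus_n \pi^{J^+_*}_{-n}\otimes \pi^{J^-}_n$ via \eqref{decomposition of kernels} and applying the vanishing \eqref{rel coh functor} retains only the diagonal contributions, producing Fock modules over $\pi^{J^-}$ whose highest weights are governed by $2(k+h^\vee_+)(\ell+h^\vee_-)=1$ and the data \eqref{list of groups}. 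These weights reassemble into the lattice $V_{(pa+q[\lambda])/\sqrt{(p-q)p}+\mathcal{P}_-}$ claimed in the proposition.

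\textbf{Plan for (2).} I would mimic the proof of Corollary~\ref{decompositon of KS construction}(2) with $\mathbf{L}_k(\lambda)$ in place of $\sWsub$. Since the Heisenberg decomposition \eqref{decompositoin of the KS:2} is valid for any module in $\KL_{D^+}^{k,[\lambda\varepsilon]}$, applying it to $\mathbf{L}_k(\lambda)\otimes V_\Z$ yields
\begin{align*}
\mathbf{L}_k(\lambda)\otimes V_\Z \simeq \bigoplus_{a\in\Z} \mathcal{E}(\lambda,a)\otimes \pi^{J^+_\Delta}_{-a+[\lambda]},
\end{align*}
whose multiplicity spaces $\mathcal{E}(\lambda,a)$ are $\sWsuper$-modules. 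To identify them with $S_a\mathbf{L}_\ell^-(\lambda)$, I would repeat the argument of Proposition~\ref{multiplicity are spectral flow twists}: twisting the $V_\Z$ factor by $S_x$ induces a spectral flow $S_{J^-}$ on the multiplicity spaces, so $\mathcal{E}(\lambda,a)\simeq S_a \mathcal{E}(\lambda,0)$; and $\mathcal{E}(\lambda,0)\simeq \mathbf{L}_\ell^-(\lambda)$ since, upon applying $\mathrm{H}^+_{\varepsilon^{-1}[\lambda]}$, the $a=0$ summand extracts exactly $\mathrm{H}^+_{\varepsilon^{-1}[\lambda]}(\mathbf{L}_k(\lambda))=\mathbf{L}_\ell^-(\lambda)$. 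The infinite sum over $a\in\Z$ collapses to a finite sum over $a\in G$ by the periodicity $\mathcal{E}(\lambda,a)\simeq \mathcal{E}(\lambda,a+|G|)$ inherited from Corollary~\ref{decompositon of KS construction}(1); the remaining Fock modules $\pi^{J^+_\Delta}_{-a+[\lambda]}$ within each coset of $|G|\Z$ then assemble into the lattice $V_{-q(a-[\lambda])/\sqrt{pq}+L}$.

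\textbf{Main obstacle.} The hard step will be the weight bookkeeping in part (1). The Heisenberg fields $J^+,J^-,J^+_*,x,y$ carry different normalizations (with norms depending on $\varepsilon^{\pm2}$ and $\varepsilon^{\pm2}-1$), and the lattices $\mathcal{P}_+,\mathcal{P}_-,L$ have generators of different norms according to whether $q=2n-1$ or $q=2n$. Verifying that the output weights are precisely $(pa+q[\lambda])/\sqrt{(p-q)p}$ — rather than a close variant — requires a careful case-by-case computation using \eqref{decomposition of kernels}; once this is checked on one representative Fock module for each value of $q\bmod 2$, the rest follows by linearity and the periodicity of the spectral flow.
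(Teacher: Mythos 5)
Your proposal is correct and takes essentially the same approach as the paper: part (1) is obtained by applying $\mathrm{H}^+_{\varepsilon^{-1}[\lambda]}$ to the decomposition \eqref{Decomposition of subregular modules} and tracking the Fock-module weights through \eqref{decomposition of kernels} and \eqref{rel coh functor}, and part (2) repeats the $S_x$-twist argument of Proposition~\ref{multiplicity are spectral flow twists} together with the periodicity from Corollary~\ref{decompositon of KS construction}(1). One minor imprecision: the multiplicity spaces $\mathscr{C}_{k,a}^{D^+}(\lambda)$ are $\mathscr{C}_k^{D^+}$-\emph{modules} rather than subspaces of $\mathscr{C}_k^{D^+}$ (and the indexing of the Fock modules in $K_{D^+}^{\varepsilon^{-1}[\lambda]}$ should carry the shift by $\varepsilon^{-1}[\lambda]$), but these do not affect the argument.
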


\proof 
(1) The isomorphism can be deduced from \eqref{Decomposition of subregular modules} and the definition of $\mathrm{H}_{\varepsilon^{-1}[\lambda]}^+$. 
(2) The decomposition of $\mathbf{L}_k(\lambda)\otimes V_\Z$ as a module over $\mathscr{C}_\ell^{D^-}\otimes V_{\mathcal{P}_-}\otimes V_L$ can be deduced by direct computation from Proposition \ref{decomposition for principal super} and Corollary \ref{decompositon of KS construction}.
The same argument as in the proof of Proposition \ref{multiplicity are spectral flow twists} is used to conclude that \eqref{decompositon of Kazama-Suzuki modules} is an isomorphism of $\ssprW\otimes V_L$-modules.
\endproof

The category $\ssubW\Mod$ is a semisimple balanced braided tensor category (more strongly a modular tensor category) \cite{H,HL} and so is the category $\ssprW\Mod$ by Corollary \ref{BTC bijection} (see also \cite{CKM1}).
The spectral flow twists $\mathcal{E}^s(a)\simeq S_{a}\ssprW$ are simple currents
and thus $\cA^+:=\ssubW\otimes V_\Z$ is a simple current extension of $\cA^-:=\ssprW\otimes V_{L}$ by \eqref{eq:coset_decomposition}.
Such extensions have already been studied and it is known that we can reconstruct the categories $\ssubW\Mod$ and $\ssprW\Mod$ from each other \cite{CKM1,CGNS,YY}.
We briefly recall how this works in our setting, following \cite{CGNS}.

Since the categories $\ssprW\Mod$ and $V_L\Mod$ are semisimple, the module category $\cA^-\Mod$ is naturally equivalent to the Deligne product $\ssprW\Mod\otimes V_L\Mod$.
Similarly, $\cA^+$-mod is equivalent to $\ssubW\Mod\otimes V_\Z\Mod$, but also to $\ssubW\Mod$ as $V_\Z$ is holomorphic.
By regarding $\cA^+$ as a (super)commutative associative algebra object in the braided tensor category $\cA^-\Mod$, we have the induction functor
\begin{align}\label{induction functor}
\begin{array}{cccc}
\ind\colon & \ssprW\Mod\otimes V_L\Mod &\rightarrow & \mathrm{Rep}(\cA^+)\\
& M\otimes N &\mapsto & \cA^+\boxtimes (M\otimes N).
\end{array}
\end{align}
Here $\mathrm{Rep}(\cA^+)$ is the tensor category consisting of module objects for $\cA^+$, i.e. {suitable} pairs $(X,\mu)$ of objects $X$ in $\ssprW\Mod\otimes V_L\Mod$ and homomorphisms
$\mu\colon \cA^+\boxtimes X\rightarrow X$. 
The category $\cA^+\Mod$ is naturally a tensor subcategory of $\mathrm{Rep}(\cA^+)$ consisting of {local modules} \cite{CKM1}.

Recall that if a braided tensor category {$\mathcal{C}$} has a set of simple currents $\mathcal{G}=\{U_g\}_{g\in G}$ parametrized by a finite abelian group $G$, i.e. $U_g\boxtimes U_h\simeq U_{gh}$, then {$\mathcal{C}$} has a grading decomposition by the dual group $\check{G}=\Hom_{\mathrm{group}}(G,\C^*)$:
\begin{align}\label{decompostion of the category}
    \mathcal{C}=\bigoplus_{\phi \in \check{G}} \mathcal{C}_\phi,\quad \mathcal{C}_\phi=\{M\in \mathcal{C}\mid \mathcal{M}_{U_g,M}=\phi(g) \id_{U_g\boxtimes M}\}.
\end{align}
Here $\mathcal{M}_{U_g,M}$ is the monodromy of $U_g$ and $M$. 
When $\mathcal{C}$ is realized as a module category of vertex superalgebras as in our case, the monodromy for a simple module $M$ is expressed as 
\begin{align}\label{formula for monodromy}
    \mathcal{M}_{U_g,M}=\mathrm{exp}(2\pi \ssqrt{-1}(\Delta(U_g\boxtimes M)-\Delta(U_g)-\Delta(M)) \id_{U_g\boxtimes M}
\end{align}
via the conformal dimension $\Delta(M)$ of $M$.
Then the category of local modules for $\mathcal{G}$ is $\mathcal{C}_{\phi=1}$, which we denote by $\mathcal{C}^{\mathcal{G}}$. 
Let us take the following sets of simple currents:
\begin{align*}
    \mathcal{G}=\left\{\mathcal{E}^s(a) \otimes V_{-\sqrt{\frac{q}{p}}a+L}\mid a\in G \right\},\quad  \mathcal{G}_L=\left\{V_{-\sqrt{\frac{q}{p}}a+L}\mid a\in G \right\},
\end{align*}
parametrized by the same group $G$ defined in \eqref{list of groups} and 
\begin{align}\label{description of centralizer}
\mathcal{G}_L'=\left\{V_{\lambda+L}\mid \lambda\in \sqrt{\tfrac{p}{q}}\Z/L\right\}
\end{align}
parametrized by $G_L':=\sqrt{\tfrac{p}{q}}\Z/L$, which is $\Z_{2q}$ (resp.\ $\Z_q$) for $q=2n-1$ (resp.\ $q=2n$). Then 
\begin{align}\label{mutual center}
    \mathcal{G}_L'=\mathrm{Irr}(V_L\Mod^{\mathcal{G}_L}),\quad \mathcal{G}_L=\mathrm{Irr}(V_L\Mod^{\mathcal{G}_L'})
\end{align}
and $\cA^+$-mod is naturally equivalent to $\mathrm{Rep}(\cA^+)^{\mathcal{G}}$.
The restrictions of \eqref{induction functor}
\begin{align*}
\begin{array}{ccc}
 \ssprW\Mod &\overset{\ind}{\longrightarrow} & \mathrm{Rep}(\cA^+)\\
 M &\mapsto & \cA^+\boxtimes (M\otimes V_L),
\end{array}
\quad 
\begin{array}{ccc}
 V_L\Mod &\overset{\ind}{\longrightarrow} & \mathrm{Rep}(\cA^+)\\
 N &\mapsto & \cA^+\boxtimes (\ssprW\otimes N),
\end{array}
\end{align*}
are embeddings of tensor categories. 
As the $V_L$-modules appearing in
$$ \cA^+\boxtimes (M\otimes V_L)\simeq \bigoplus_{\lambda\in G}(\mathcal{E}^s(a)\boxtimes M)\otimes V_{-\sqrt{\frac{q}{p}}a+L},$$
are exactly those in $\mathcal{G}_L$, \eqref{mutual center} implies the tensor equivalence 
\begin{align*}
    \ind\colon \ssprW\Mod \xrightarrow{\simeq} \mathrm{Rep}(\cA^+)^{\mathcal{G}_L'}.
\end{align*}
On the other hand, $\mathrm{Rep}(\cA^+)$ is generated as a tensor category by the subcategories $\mathrm{Rep}(\cA^+)^{\mathcal{G}}$ and $V_L$-mod whose intersection is  $V_L\Mod^{\mathcal{G}_L}$.
We have the isomorphisms of fusion rings
\begin{align*}
    \mathcal{K}(\mathrm{Rep}(\cA^+))^{\mathcal{G}}\simeq \mathcal{K}(\cA^+)\simeq \mathcal{K}(\ssubW),\quad \mathcal{K}(V_L)\simeq \Z[L^*/L],
\end{align*}
and $L^*/L\simeq \Z_{4pq}$ (resp.\ $\Z_{pq}$) for $q=2n-1$ (resp.\ $q=2n$).
Then we have the isomorphisms of fusion rings \cite[Theorem 2]{CGNS}
\begin{align}
    \mathcal{K}(\ssprW)
\nonumber \simeq \mathcal{K}(\mathrm{Rep}(\cA^+))^{\mathcal{G}_L'}
\nonumber &\simeq \left(\mathcal{K}(\mathrm{Rep}(\cA^+))^{\mathcal{G}}\underset{\mathcal{K}(V_L)^{\mathcal{G}_L}}{\otimes} \mathcal{K}(V_L)\right)^{\mathcal{G}_L'}\\
\label{final isom} &\simeq \left(\mathcal{K}(\ssubW)\underset{\Z[G_L']}{\otimes} \Z[L^*/L]\right)^{\mathcal{G}_L'}.
\end{align}
By construction, the isomorphism \eqref{final isom} describes the set of simple $\ssprW$-modules as the obvious $\Z$-basis.

\begin{theorem}\label{thm:classification_super_case}
For admissible levels $\ell=-h^\vee_-+\frac{q}{2p}$ with $q=2n-1, 2n$, the complete set of simple $\sWsuper$-modules is given by 
$$\Irr(\sWsuper)=\{S_{\theta}\mathbf{L}_\ell^-(\lambda)\mid \lambda \in \Pr_\Z^k,\ 0\leq \theta < p \}.$$
We have the isomorphism 
$S_{p}\mathbf{L}_\ell^-(\lambda)\simeq \mathbf{L}_\ell^-(\varsigma\circ \lambda)$
where $\varsigma$ is the involution defined in \eqref{eq:symmetry} in the proof of Theorem~\ref{branching rule} and the fusion products
\begin{align}\label{fusion for super side}
    \mathbf{L}_\ell^-(\lambda)\boxtimes \mathbf{L}_\ell^-(\mu)\simeq \bigoplus_{\nu\in \Pr_\Z^k}{N_{\lambda,\mu}^\nu}\, S_{a}\mathbf{L}_\ell^-(\nu).
\end{align}
For $N_{\lambda\mu}^\nu\neq0$ (see \eqref{eq:coeff_tensor_product_subreg}), $a=-1$ if $[\lambda],[\mu]=\half$ and $a=0$ otherwise.
\end{theorem}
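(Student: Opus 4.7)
The plan is to transport the classification of simple $\sWsub$-modules from Theorem~\ref{classification of simple modules} across the block-wise equivalence $\mathrm{H}^\pm$ of Theorem~\ref{categorical equivalence}, controlling the dictionary by the spectral-flow intertwining relations of Theorem~\ref{equivalence of categories}. First I would verify that every simple $\sWsuper$-module lies in $\KL_{D^-}^\ell$: the decomposition in Proposition~\ref{decomposition for principal super} exhibits $\sWsuper$ as a direct sum of Fock modules over $\pi^{J^-}$, and rationality together with the positive conformal grading force $J^-_0$ to act semisimply with $\Z$-shifted spectrum on any simple module. Hence the quasi-inverse functors $\mathrm{H}^\pm_\mu$ yield a bijection of isomorphism classes of simples in matching blocks. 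Applying $\mathrm{H}^+$ block by block to the list $\{S_{\theta'}\mathbf{L}_k(\lambda)\}$ of Theorem~\ref{classification of simple modules}, then using \eqref{spectral flow for KL} first to absorb the twist $S_{\theta'J^+}$ into a shift of the Fock parameter $\mu$ and subsequently to rewrite each outcome as $S_\theta\mathbf{L}_\ell^-(\lambda)$ for a suitable integer $\theta$, realises every simple $\sWsuper$-module in the asserted form.

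Next I would establish the periodicity $S_p\mathbf{L}_\ell^-(\lambda)\simeq \mathbf{L}_\ell^-(\varsigma\circ\lambda)$, which pins down the range $0\leq\theta<p$. Starting from $\mathbf{L}_\ell^-(\lambda)=\mathrm{H}^+_{\varepsilon^{-1}[\lambda]}(\mathbf{L}_k(\lambda))$, the second relation in~\eqref{spectral flow for KL} gives $S_p\mathbf{L}_\ell^-(\lambda)\simeq\mathrm{H}^+_{\varepsilon^{-1}[\lambda]-p/\varepsilon}(\mathbf{L}_k(\lambda))$. Since $p/\varepsilon=q\varepsilon$, the first relation in~\eqref{spectral flow for KL} applied with $\theta_2=q$ converts this into $\mathrm{H}^+_{\varepsilon^{-1}[\lambda]}(S_{qJ^+}\mathbf{L}_k(\lambda))$. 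For $q=\bar{q}$ (principal case), $S_{qJ^+}\mathbf{L}_k(\lambda)\simeq\mathbf{L}_k(\varsigma\circ\lambda)$ by the very definition of $\varsigma$; for $q=2\bar{q}$ (coprincipal) the spectral flow $S_{qJ^+}$ acts trivially on modules by the periodicity in Theorem~\ref{branching rule}(1), and the residual identification $\mathbf{L}_\ell^-(\lambda)\simeq\mathbf{L}_\ell^-(\varsigma\circ\lambda)$ will be obtained from the same intertwining argument with $\theta_2=\bar{q}$. Since the involution $\varsigma$ preserves the parity class in $\rP/\rQ$ (immediate from the description in~\eqref{Dynkin automorphisms}), $\varepsilon^{-1}[\lambda]=\varepsilon^{-1}[\varsigma\circ\lambda]$, which identifies both sides with $\mathbf{L}_\ell^-(\varsigma\circ\lambda)$. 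Distinctness of the $S_\theta\mathbf{L}_\ell^-(\lambda)$ for $0\leq\theta<p$ follows by comparing $J^-_0$-eigenvalues and conformal weights, and exhaustivity is confirmed by counting against Corollary~\ref{decompositon of KS construction} and Proposition~\ref{period of spectral flow}.

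For the fusion rules, the tensor functoriality of $\mathrm{H}^+$ recorded in~\eqref{fusion product} (a consequence of Proposition~\ref{correspondence of IO}) yields
\[
\mathbf{L}_\ell^-(\lambda)\boxtimes\mathbf{L}_\ell^-(\mu)\simeq \mathrm{H}^+_{\varepsilon^{-1}([\lambda]+[\mu])}\bigl(\mathbf{L}_k(\lambda)\boxtimes\mathbf{L}_k(\mu)\bigr)\simeq \bigoplus_{\nu\in\Pr_\Z^k}N^\nu_{\lambda,\mu}\,\mathrm{H}^+_{\varepsilon^{-1}([\lambda]+[\mu])}(\mathbf{L}_k(\nu)).
\]
On the support of $N^\nu_{\lambda,\mu}$ one has $[\nu]\equiv[\lambda]+[\mu]\pmod{\Z}$ (the $\rP/\rQ$-grading being additive under fusion), so~\eqref{spectral flow for KL} identifies each summand with $S_a\mathbf{L}_\ell^-(\nu)$ where $a=[\nu]-[\lambda]-[\mu]\in\Z$; a case analysis on $[\lambda],[\mu]\in\{0,\tfrac{1}{2}\}$ produces $a=-1$ precisely when $[\lambda]=[\mu]=\tfrac{1}{2}$ and $a=0$ otherwise. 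The main technical obstacle lies in the uniform treatment of the periodicity in the coprincipal case $q=2n$, where $S_{qJ^+}$ collapses on $\sWsub$-modules; there one must cross-reference with the explicit decomposition~\eqref{eq:coset_decomposition} and the period computation of Corollary~\ref{decompositon of KS construction}(1) to confirm that the identification $\mathbf{L}_\ell^-(\lambda)\simeq\mathbf{L}_\ell^-(\varsigma\circ\lambda)$ is consistent with the enumeration $\{S_\theta\mathbf{L}_\ell^-(\lambda)\mid \lambda\in\Pr_\Z^k,\,0\leq\theta<p\}$ and accounts for the full list of simples.
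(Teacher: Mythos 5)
Your proposal takes a genuinely different route from the paper. The paper works at the level of fusion rings through the simple-current extension framework: it identifies $\mathcal{K}(\ssprW)\simeq(\mathcal{K}(\ssubW)\otimes_{\Z[G_L']}\Z[L^*/L])^{\mathcal{G}_L'}$ via \eqref{final isom}, proves the fusion rules first, then reads off the $\Z$-basis $M(\lambda,\theta)$ together with the periodicity relation. You instead transport the classification of Theorem~\ref{classification of simple modules} directly through the block-wise equivalence $\mathrm{H}^\pm$ and control the bookkeeping via \eqref{spectral flow for KL}. Your derivation of the fusion formula \eqref{fusion for super side} from \eqref{fusion product} and the parity bookkeeping $a=[\nu]-[\lambda]-[\mu]$ is correct and is arguably more transparent than the ring-theoretic manipulation in the paper; your periodicity computation $S_p\mathbf{L}_\ell^-(\lambda)\simeq\mathrm{H}^+_{\varepsilon^{-1}[\lambda]}(S_{qJ^+}\mathbf{L}_k(\lambda))$ is also correct and handles the principal case $q=2n-1$ cleanly.

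However, the coprincipal case $q=2n$ contains a genuine gap. There you observe (correctly) that $S_{qJ^+}=S_{2\bar{q}J^+}$ acts trivially, so your formula gives $S_p\mathbf{L}_\ell^-(\lambda)\simeq\mathbf{L}_\ell^-(\lambda)$; you then assert that the ``residual identification'' $\mathbf{L}_\ell^-(\lambda)\simeq\mathbf{L}_\ell^-(\varsigma\circ\lambda)$ follows from the intertwining argument with $\theta_2=\bar{q}$. This is false. The claim in your parenthetical that ``$\varsigma$ preserves the parity class in $\rP/\rQ$'' is only true for $q=2n-1$ (where $S_{\bar{q}}$ shifts $J^+_0$-eigenvalues by the integer $p-q$). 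For $q=2n$ one has $\bar{q}=n$ and $S_{\bar{q}}$ shifts by $\tfrac{p-q}{q}\bar{q}=\tfrac{p-q}{2}\in\tfrac{1}{2}+\Z$, so $[\varsigma\circ\lambda]=[\lambda]+\tfrac{1}{2}$. Running your own intertwining argument with $\theta_2=\bar{q}$ then yields $\mathbf{L}_\ell^-(\varsigma\circ\lambda)\simeq S_{m}\mathbf{L}_\ell^-(\lambda)$ with $m=\tfrac{p\mp1}{2}\not\equiv0\pmod{p}$, so the two simples are related by a nontrivial spectral-flow twist rather than being isomorphic. (This is consistent with the fact that $\mathrm{H}^+$ is an equivalence: it must send the distinct simples $\mathbf{L}_k(\lambda)$ and $\mathbf{L}_k(\varsigma\circ\lambda)$ to distinct simples.) Separately, your first paragraph's appeal to rationality to place every simple $\sWsuper$-module in $\KL_{D^-}^\ell$ and then lift the bijection of simples across $\mathrm{H}^\pm$ needs to address that the equivalence of Theorem~\ref{categorical equivalence} is stated for the universal $\W$-superalgebras; some argument that it restricts to simple-quotient module categories (e.g.\ via Theorem~\ref{Coset theorem}) should be made explicit, and the exhaustivity count should be carried out rather than deferred to a cross-reference.
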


\proof
We first show \eqref{fusion for super side}. Since $\ind(\mathbf{L}_\ell^-(\lambda)\otimes V_{\frac{q[\lambda]}{\sqrt{pq}}+L})=\mathbf{L}_k(\lambda)\otimes V_\Z$, the module $\mathbf{L}_\ell^-(\lambda)$ is identified with $\mathbf{L}_k(\lambda)\otimes [-\frac{q[\lambda]}{\sqrt{pq}}]$ in \eqref{final isom}. Then,
\begin{align*}
\mathbf{L}_\ell^-(\lambda)\boxtimes \mathbf{L}_\ell^-(\mu)
&=\left(\mathbf{L}_k(\lambda)\otimes [-\tfrac{q[\lambda]}{\sqrt{pq}}]\right)\boxtimes \left(\mathbf{L}_k(\mu)\otimes [-\tfrac{q[\mu]}{\sqrt{pq}}]\right)\\
&=\sum_{\nu\in \Pr_\Z^k}N_{\lambda,\mu}^\nu \left(\mathbf{L}_k(\nu) \otimes [-\tfrac{q([\lambda]+[\mu])}{\sqrt{pq}}]\right)\\
&=\sum_{\nu\in \Pr_\Z^k}N_{\lambda,\mu}^\nu\mathbf{L}_\ell^-(\nu) \boxtimes \left(\mathbf{L}_k(0)\otimes [-\tfrac{q([\lambda]+[\mu]-[\nu])}{\sqrt{pq}}]\right)\\
&=\sum_{\nu\in \Pr_\Z^k}N_{\lambda,\mu}^\nu\mathbf{L}_\ell^-(\nu) \boxtimes S_{*J^-}\mathbf{L}^-_\ell(0)=\sum_{\nu\in \Pr_\Z^k}N_{\lambda,\mu}^\nu S_{*J^-}\mathbf{L}_\ell^-(\nu).
\end{align*}
Here $*=-([\lambda]+[\mu]-[\nu])$, which is $-1$ if $[\lambda],[\mu]=\half$ and $a=0$ otherwise when $N_{\lambda,\mu}^\nu\neq0$ is satisfied.
Next, we describe $\Irr(\ssprW)$. 
Since $J^+=J^-+\frac{p-q}{p}J^+_\Delta$, we have $S_{J^+}=S_{J^-}\circ S_{\frac{p-q}{p}J^+_\Delta}$.
Then
\begin{align*}
S_{\theta J^+}\mathbf{L}_k(\lambda)\otimes V_\Z
&\simeq \bigoplus_{a\in M}S_{(\theta+a)J^-}\mathbf{L}_\ell^-(\lambda)\otimes V_{\frac{\theta(p-q)-q(a-[\lambda])}{\sqrt{pq}}+L}\\
&\simeq \bigoplus_{a\in M}S_{aJ^-}\mathbf{L}_\ell^-(\lambda)\otimes V_{\frac{\theta p-q(a-[\lambda])}{\sqrt{pq}}+L}\\
&\simeq (\mathbf{L}_k(\lambda)\otimes V_\Z)\boxtimes \ind(\W_\ell^-\otimes V_{\frac{\theta p}{\sqrt{pq}}+L}).
\end{align*}
Therefore, $\mathcal{K}(\mathrm{Rep}(\cA^+))$ is spanned by $\mathbf{L}_k(\lambda)\otimes [a]$ ($a\in L^*/L$). Using \eqref{formula for monodromy}, we show that $\mathbf{L}_k(\lambda)\otimes [a]$ lies in $\mathcal{K}(\mathrm{Rep}(\cA^+))^{\mathcal{G}_L'}$ iff $a\in q(\theta-[\lambda])/\sqrt{pq}$ for some $\theta\in \Z$. 
To obtain the $\Z$-basis, we need to find the representatives of the tensor products over $\Z[G_L']$ in \eqref{final isom}.
It follows from \eqref{description of centralizer} that the
elements $M \otimes [a]$ in $\mathcal{K}(\mathrm{Rep}(\cA^+))^{G_2}$ are identified as 
\begin{align*}
    M \otimes [a+\tfrac{bp}{\sqrt{pq}}]=S_{bJ^+}(M) \otimes [a].
\end{align*}
As a consequence, the elements
$$M(\lambda,\theta)=\mathbf{L}_k(\lambda)\otimes [\tfrac{q(\theta-[\lambda])}{\sqrt{pq}}],\quad (\lambda \in  \Pr_\Z^k,\ 0\leq \theta <p)$$
form a $\Z$-basis of $\mathcal{K}(\mathrm{Rep}(\cA^+))^{\mathcal{G}_L'}$ and for $\theta=p$, we have
$M(\lambda,p)\simeq M(\varsigma\circ \lambda,0)$.
To see the corresponding simple $\ssprW$-module more explicitly, it suffices to look at the decomposition of $M(\lambda,\theta)$ as $\ssprW\otimes V_L$-modules and take the multiplicity space of $V_L$.
We then observe that $S_{\theta}\mathbf{L}_\ell^-(\lambda)$ corresponds to $M(\lambda,p)$. 
Hence, $\{S_{\theta}\mathbf{L}_\ell^-(\lambda) \mid \lambda \in \Pr_\Z^k,\ 0\leq \theta <p\}$ gives the complete set of simple $\ssprW$-modules and $S_{p}\mathbf{L}_\ell^-(\lambda)\simeq\mathbf{L}_\ell^-(\varsigma\circ \lambda)$ as desired. 
\endproof

\section{Characters}\label{sec:characters}

In this section, we compute the characters of some modules for the $\W$-superalgebras $\subW$ and $\sprW$ by using resolutions by Wakimoto-type free field representations.

\subsection{Resolution: irrational case}
Recall that when the level $k\notin \Q$ is irrational, the $\W$-algebra $\subW$ is simple. We consider the modules $\weyl_{\mu,f}^{k,+}=H_{f}^0(\weyl_\mu^k)$ obtained by applying the BRST reduction to the Weyl modules $\weyl_\mu^k=U(\widehat{\g}_k)\otimes_{U(\g[\![t]\!])} L_\mu$ where $\g=\so_{2n+1}$ and $\mu\in \rP_+$.
By the Fiebig's equivalence \cite{Fie}, the dual of Bernstein--Gelfand--Gelfand resolution \cite{BGG} $\mathscr{C}_\mu$ of the simple $\g$-module $L_\mu$ gives rise to a resolution of $\weyl_\mu^k$ of the form
\begin{align*}
\mathscr{C}_\mu^k\colon 0\rightarrow \weyl^k_\mu\rightarrow C^{k,0}_{\mu}\rightarrow C^{k,1}_{\mu}\rightarrow \cdots \rightarrow 0,\quad C^{k,i}_{\mu}=\bigoplus_{\lenght{w}=i}\mathbb{W}^k_{\mu_w}
\end{align*}
where the sum for $w$ runs over the Weyl group $W$ and $\mu_w=w\circ\mu=w(\mu+\rho)-\rho$.
Indeed, the complex on the lowest conformal weight subspace $\mathscr{C}_\mu^k$ agrees with $\mathscr{C}_\mu$.
We apply the BRST reduction $H_f^0$ to $\mathscr{C}_\lambda^k$ and obtain a resolution 
\begin{align*}
\mathscr{C}_{\mu,f}^k\colon0\rightarrow \weyl_{\mu,f}^{k,+}\rightarrow C^{k,0}_{\mu,f}\rightarrow C^{k,1}_{\mu,f}\rightarrow \cdots \rightarrow 0,\quad C^{k,i}_{\mu,f}\simeq \bigoplus_{\lenght{w}=i}\mathbb{W}^{k,+}_{\mu_w}
\end{align*}
of $\subW$-modules, which is exact since $\weyl^k_\mu$ is acyclic, i.e. $H^n_f(\weyl_\mu^k)\simeq \delta_{n,0}\weyl_{\mu,f}^{k,+}$.
Finally, applying the functor $\mathrm{H}_{\varepsilon^{-1}(\mu,\varpi_1)}^+$ to $\mathscr{C}_{\mu,f}^k$, we obtain a resolution of the $\sprW$-module $V^{\ell,-}_\mu:=\mathrm{H}_{\varepsilon^{-1}(\mu,\varpi_1)}^+(\weyl_{\mu,f}^{k,+})$:
\begin{align*}
\mathscr{D}_\mu^\ell\colon 0\rightarrow V_\mu^{\ell,-}\rightarrow D^{k,0}_{\mu}\rightarrow D^{k,1}_{\mu}\rightarrow \cdots \rightarrow 0,\quad  D^{k,i}_{\mu}\simeq \bigoplus_{\lenght{w}=i}S_{\bullet} W_{-\tfrac{1}{2\varepsilon^2}\check{\mu}_w}^\ell
\end{align*}
with $\bullet={(\mu_w-\mu,\varpi_{1})}J^-$.
We are now in position to compute the following character formulae.

\begin{proposition}\label{prop:simple_mod_irrational}
For $\mu\in \rP_+$ and $k\notin \mathbb{Q}$, we have
\begin{align*}
\ch{\weyl_{\mu,f}^{k,+}}&=
\frac{q^{\Delta_{0;\mu;0}^k}z^{(\mu,\varpi_1)}}{(q;q)^n_\infty(zq^n,z^{-1}q^{1-n};q)_\infty}\displaystyle{\sum_{w\in W}}(-1)^{\lenght{w}}q^{-(\mu_w-\mu,x_\Gamma^+)}z^{(\mu_w-\mu,\varpi_1)},\\
\ch{{V}_{\mu}^{\ell,-}}&={q^{\Delta_{-\frac{1}{2\varepsilon^2}\check{\mu}}^\ell}z^{-\frac{(\check{\mu},\check{\varpi_0})}{2\varepsilon^2}}}\frac{(-zq^{n+\half},-z^{-1}q^{-n+\half};q)_\infty}{(q;q)^{n+1}_\infty}
\displaystyle{\sum_{w\in W}}\frac{(-1)^{\lenght{w}}q^{-(\mu_w-\mu,x^+_\Gamma+n\varpi_1)}}{1+z^{-1}q^{-(\mu_w-\mu+\rho,\varpi_1)}}.
\end{align*}
\end{proposition}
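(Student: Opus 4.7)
The plan is to apply the Euler--Poincaré principle to the two exact resolutions by Wakimoto-type modules constructed immediately before the statement:
\begin{align*}
\mathrm{ch}\,\weyl_{\mu,f}^{k,+}&=\sum_{w\in W}(-1)^{\lenght{w}}\,\mathrm{ch}\,\mathbb{W}^{k,+}_{\mu_w},\\
\mathrm{ch}\, V_\mu^{\ell,-}&=\sum_{w\in W}(-1)^{\lenght{w}}\,\mathrm{ch}\, S_{(\mu_w-\mu,\varpi_1)J^-}W^\ell_{-\frac{1}{2\varepsilon^2}\check{\mu}_w},
\end{align*}
and to substitute the Wakimoto characters from Proposition~\ref{prop:Wakimoto_characters}, combined with the character transformation under spectral flow.

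For the first identity, the Pochhammer denominator $(q;q)_\infty^n(zq^n,z^{-1}q^{1-n};q)_\infty$ of $\mathrm{ch}\,\mathbb{W}^{k,+}_\nu$ is independent of $\nu$, hence factors out of the sum. The $W$-invariance of the quadratic Casimir under the dot action, $(\mu_w,\mu_w+2\rho)=(\mu,\mu+2\rho)$, gives $\Delta^k_{0;\mu_w;0}=\Delta^k_{0;\mu;0}-(x_\Gamma^+,\mu_w-\mu)$, and extracting the common factor $q^{\Delta^k_{0;\mu;0}}z^{(\mu,\varpi_1)}$ immediately yields the claimed identity.

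For the second identity, set $\theta_w=(\mu_w-\mu,\varpi_1)\in\Z$ and recall that the level of $J^-$ equals $k_{J^-}=1-\varepsilon^{-2}$. The spectral-flow character identity
$$\mathrm{ch}\, S_{\theta_w}N(z,q)=q^{\theta_w^2 k_{J^-}/2}\,z^{\theta_w k_{J^-}}\,\mathrm{ch}\, N(zq^{\theta_w},q)$$
applied to $N=W^\ell_{-\check{\mu}_w/(2\varepsilon^2)}$ introduces the shifted Pochhammer $(-zq^{n+\half+\theta_w},-z^{-1}q^{-n+\half-\theta_w};q)_\infty$. Using $1+zq^{n+\half+j}=zq^{n+\half+j}(1+z^{-1}q^{-n-\half-j})$ repeatedly, this normalizes against the $w$-independent $(-zq^{n+\half},-z^{-1}q^{-n+\half};q)_\infty$ up to the explicit monomial $z^{-\theta_w}q^{-\theta_w(n+\half)-\theta_w(\theta_w-1)/2}$. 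The $w$-dependent denominator $1+z^{-1}q^{-n+\half-\theta_w}$ matches the claimed $1+z^{-1}q^{-(\mu_w-\mu+\rho,\varpi_1)}$ thanks to $(\rho,\varpi_1)=n-\half$, and the total $z$-exponent collapses to the $w$-independent value $-(\check{\mu},\check\varpi_0)/(2\varepsilon^2)$ by the identity $(\check{\nu},\check\varpi_0)=-2(\nu,\varpi_1)$ (cf.\ the proof of Proposition~\ref{prop:Wakimoto_characters}) applied with $\nu=\mu_w$.

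The main obstacle is the $q$-exponent bookkeeping. Combining $\theta_w^2 k_{J^-}/2$, the shift $q^{-\theta_w(\check{\mu}_w,\check\varpi_0)/(2\varepsilon^2)}$ produced by the substitution $z\mapsto zq^{\theta_w}$ in the weight factor, the Pochhammer correction $q^{-\theta_w(n+\half)-\theta_w(\theta_w-1)/2}$, and $q^{\Delta^\ell_{-\check{\mu}_w/(2\varepsilon^2)}}$, one must verify that the net $w$-dependent $q$-contribution equals $\Delta^\ell_{-\check{\mu}/(2\varepsilon^2)}-(\mu_w-\mu,x_\Gamma^++n\varpi_1)$. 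The $W$-invariance of the $\osp_{2|2n}$ quadratic Casimir under the dot action cancels the genuinely quadratic $\theta_w^2$ terms, reducing the remaining identity to a linear one in $\mu_w-\mu$ that can be verified directly from the definitions of $x_{\Gamma,1}^\pm,\,x_{\Gamma,2}^\pm$ together with the explicit form of the simple roots in \eqref{realization of Cartan of type B}.
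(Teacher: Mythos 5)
Your proposal follows exactly the paper's route: apply the Euler--Poincaré principle to the two Wakimoto-type resolutions constructed before the statement, substitute the characters from Proposition~\ref{prop:Wakimoto_characters}, and on the super side use the spectral-flow character identity together with the Pochhammer shift $(-zq^{\theta+n+\half},-z^{-1}q^{-\theta-n+\half};q)_\infty=z^{-\theta}q^{-n\theta-\theta^2/2}(-zq^{n+\half},-z^{-1}q^{-n+\half};q)_\infty$, which is precisely what the paper records as the proof. Your additional bookkeeping --- the dot-action invariance of the quadratic Casimir, the identity $(\check{\nu},\check{\varpi}_0)=-2(\nu,\varpi_1)$, and $(\rho,\varpi_1)=n-\tfrac12$ --- correctly fills in the computation the paper leaves implicit.
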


\proof
    The computation of the characters follows from the characters of the Wakimoto-type modules (Proposition~\ref{principal admissble weights}) together with the Eular-Poincar\'{e} principle
    \begin{equation*}
        \ch{\weyl_{\mu,f}^{k,+}}=\sum_{i\geq 0}(-1)^i\ch{C^{k,i}_{\mu,f}},\quad
        \ch{V_\mu^{\ell,-}}=\sum_{i\geq0}(-1)^i\ch{D^{k,i}_{\mu}}.
    \end{equation*}
    For the computation of $\ch{V_\mu^{\ell,-}}$, we use the character formula for the spectral flow twist $S_{\theta J^-}M$ of a $\subW$-module $M$ 
    \begin{equation}\label{spectral flow twist character on super side}
        \ch{S_{\theta J^-}M}(z,q)=q^{\tfrac{1}{2}\theta^2(1-\varepsilon^{-2})}z^{\theta(1-\varepsilon^{-2})}\ch{M}(zq^\theta,z),
    \end{equation}
    and the identity
    \begin{equation*}
        (-zq^{\theta+n+\half},-z^{-1}q^{-\theta-n+\half};q)_\infty=z^{-\theta}q^{-n\theta-\frac{\theta^2}{2}}(-zq^{n+\half},-z^{-1}q^{-n+\half};q)_\infty,
    \end{equation*}
    for $\theta\in\Z$ and $n\in\Z_{\geq0}$.
\endproof

\subsection{Resolution: admissible case}
Let us consider the exceptional admissible levels $k=-h^\vee+\frac{p}{q}$ as in \eqref{exceptional admissible weights}. Let $L_k(\mu)$ be the simple $L_k(\g)$-module with $\mu\in \Pr_\Z^k$. 
By \cite[Theorem 6.11]{A5}, there exists a complex of $L_k(\g)$-modules
\begin{align*}
\mathscr{C}_\mu^k\colon \cdots \rightarrow C^{k,-1}_{\mu}\rightarrow C^{k,0}_{\mu}\rightarrow C^{k,1}_{\mu}\rightarrow \cdots ,\quad C^{k,i}_{\mu}\simeq\bigoplus\mathbb{W}^k_{\mu_w}
\end{align*}
such that $H^n(\mathscr{C}_\mu^k)\simeq \delta_{n,0}L_k(\mu)$.
Here, the summation on $C^{k,i}_{\mu}$ for $w$ runs over the elements in the (affine) integral Weyl group $\widehat{W}(\mu)\subset \widehat{W}$ satisfying $\ell^{\frac{\infty}{2}}_\mu(w)=i$.
We apply $H_f^0$ to $\mathscr{C}_\mu^k$ and obtain a complex
\begin{align*}
\mathscr{C}_{\mu,f}^k\colon \cdots \rightarrow C^{k,-1}_{\mu,f}\rightarrow C^{k,0}_{\mu,f}\rightarrow C^{k,1}_{\mu,f}\rightarrow \cdots ,\quad C^{k,i}_{\mu,f}=\bigoplus\mathbb{W}^{k,+}_{\mu_w}.
\end{align*}
The complex $\mathscr{C}_\mu^k\otimes \bigwedge{}^{\frac{\infty}{2}+\bullet}(\g_{>0})$, which computes the cohomology $H_f^0(\mathscr{C}_\mu^k)$, is naturally a double complex. 
The convergence of the spectral sequences associated with double complexes implies that 
\begin{equation*}
    H^n(\mathscr{C}_{\mu,f}^k)\simeq \delta_{n,0}H_f^n(L_k(\mu))\simeq \mathbf{L}_k(\mu),
\end{equation*}
by the cohomology vanishing \cite{A3}. 
As previously, we apply the functor $\mathrm{H}_{\varepsilon^{-1}(\mu,\varpi_1)}^+$ to $\mathscr{C}_{\mu,f}^k$ and obtain a complex of $\sprW$-modules:
\begin{align*}
\mathscr{D}_{\mu,f}^k\colon \cdots \rightarrow D^{k,-1}_{\mu,f}\rightarrow D^{k,0}_{\mu,f}\rightarrow D^{k,1}_{\mu,f}\rightarrow \cdots,\quad  D^{k,i}_{\mu}\simeq \bigoplus S_{\bullet} W_{-\tfrac{1}{2\varepsilon^2}\check{\mu}_w}^\ell
\end{align*}
with $\bullet={(\mu_w-\mu,\varpi_{1})}J^-$. 
It satisfies the cohomology vanishing
$$H^n(\mathscr{D}_\mu^\ell)\simeq \delta_{n,0} \mathrm{H}_{\frac{1}{\varepsilon}(\mu,\varpi_1)}^+(\mathbf{L}_k(\mu)).$$
By Theorem \ref{equivalence of categories}, we have an isomorphism of $\ssprW$-modules
$$\mathrm{H}_{\frac{1}{\varepsilon}(\mu,\varpi_1)}^+(\mathbf{L}_k(\mu))\simeq S_{\bullet}\mathbf{L}_\ell^-(\mu),\quad \bullet=([\mu]-(\mu,\varpi_1))J^-$$.
We obtain the following character formulae for $\mathbf{L}_k(\mu)$ and $\mathbf{L}_\ell^-(\mu)$ ($\mu\in\Pr^k_\Z$).

\begin{proposition}\label{chracter formula 2}
For $k=-h^\vee_++\frac{p}{q}$ (principal) admissible with $q=2n-1$ and $\mu\in \Pr_\Z^k$, we have
\begin{align*}
\ch{\mathbf{L}_k(\mu)}&=
\frac{q^{\Delta_{0;\mu;0}^k}z^{(\mu,\varpi_1)}}{(q;q)^n_\infty(zq^n,z^{-1}q^{1-n};q)_\infty}
\displaystyle{\sum_{\substack{y\in W,\\ \lambda\in\check{Q},\\ w=yt_{q\lambda}}}}(-1)^{l(y)}q^{-(\mu_w-\mu,x^+_\Gamma)+q(\mu_w+\rho,\lambda)-\frac{pq}{2}|\lambda|^2}z^{(\mu_w-\mu,\varpi_1)}
\\
&=\frac{q^{-\frac{pq}{2}|\Lambda_{0,e}^+|^2}z^{-p(\Lambda_{0,e}^+,\varpi_1)}\displaystyle{\sum_{w\in W}}(-1)^{l(w)}\Theta^p_{\Lambda^+_{\mu,w}}(z,q^q;\varpi_1)}{(q;q)_\infty^n(zq^n,z^{-1}q^{1-n};q)_\infty},\\
\ch{S_{\bullet}\mathbf{L}_\ell^-(\mu)}&=
\frac{{q^{\Delta_{-\frac{1}{2\varepsilon^2}\check{\mu}}^\ell}z^{-\tfrac{(\check{\mu},\check{\varpi}_0)}{2\varepsilon^2}}}(-zq^{n+\half},-z^{-1}q^{-n+\half};q)_\infty}{(q;q)^{n+1}_\infty}\\
&\hspace{2cm}\times\displaystyle{\sum_{\substack{y\in W,\\ \lambda\in\check{Q},\\ w=yt_{q\lambda}}}}\frac{(-1)^{l(y)}q^{-(\mu_w-\mu,x^+_\Gamma+n\varpi_1)+q(\mu_w+\rho,\lambda)-\frac{pq}{2}|\lambda|^2}}{1+z^{-1}q^{-(\mu_w-\mu+\rho,\varpi_1)}}
\\
&=q^{-\frac{q}{2p}(\mu,\varpi_1)^2-\frac{pq}{2}|\Lambda_{0,e}|^2+n(\mu,\varpi_1)}z^{q(\Lambda_{\mu,e},\varpi_1)}\frac{\poch{-zq^{\half},-z^{-1}q^{\half}}}{\poch{q}^{n+1}}\\
&\hspace{2cm}\times \sum_{w\in W}(-1)^{l(w)}
\mathscr{A}_{-\Lambda_{\mu,w}}^q(z^{-1},q^{p(\Lambda_{\mu,e},\varpi_1)},q^p ;\varpi_1)
\end{align*}
where 
$$\Lambda^+_{\mu,w}=\frac{1}{p}w(\mu+\rho)-\frac{1}{q}x_{\Gamma}^+,\quad \Lambda_{\mu,w}=\frac{1}{p}w(\mu+\rho)-\frac{1}{q}(x_{\Gamma}^++n\varpi_1)$$
and
\begin{align*}
&\mathscr{A}_\lambda^s(z,w,q;\nu)=
\sum_{\eta\in \lambda+\check{Q}} 
\frac{z^{s(\eta,\nu)} q^{\half s|\eta^2|}}{1-zwq^{(\eta,\nu)}},\quad \Theta_\lambda^s(z,q;\nu)=\mathscr{A}_\lambda^s(z,0,q;\nu),\quad (\lambda,\nu\in\h^*)
\end{align*}
\end{proposition}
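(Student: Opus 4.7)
The plan is to apply the Euler--Poincaré principle to resolutions of $\mathbf{L}_k(\mu)$ and of $S_\bullet \mathbf{L}_\ell^-(\mu)$ by thick Wakimoto modules and by spectral-flow twists of thin Wakimoto modules, whose characters are recorded in Proposition~\ref{prop:Wakimoto_characters}, and then to reorganise the resulting summation in terms of theta and Appell--Lerch series.

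At the principal admissible level $k=-h^\vee_++p/q$ with $q=2n-1$, the starting point is the two-sided resolution $\mathscr{C}_\mu^k$ of $L_k(\mu)$ by Wakimoto modules $\mathbb{W}^k_{\mu_w}$, with terms indexed by $w\in \widehat{W}(\mu)$ of fixed semi-infinite length $\ell^{\frac{\infty}{2}}_\mu(w)=i$ \cite[Theorem 6.11]{A5}. Applying $H_f^0$ termwise, the $H_f$-acyclicity of Wakimoto modules combined with the cohomology vanishing $H_f^n(L_k(\mu))=\delta_{n,0}\mathbf{L}_k(\mu)$ \cite{A3} and the convergence of the spectral sequence of the double complex $\mathscr{C}_\mu^k \otimes \bigwedge^{\frac{\infty}{2}+\bullet}(\g_{>0})$ yield a complex $\mathscr{C}_{\mu,f}^k$ of thick Wakimoto modules $\mathbb{W}^{k,+}_{\mu_w}$ whose cohomology is concentrated in degree zero at $\mathbf{L}_k(\mu)$. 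Further applying the exact functor $\mathrm{H}^+_{\varepsilon^{-1}(\mu,\varpi_1)}$ termwise and invoking Theorem~\ref{thm:wakimoto_correspondence}(1) together with the natural isomorphisms of Theorem~\ref{equivalence of categories} produces a complex $\mathscr{D}_\mu^\ell$ consisting of spectral-flow twists of thin Wakimoto modules that resolves $S_\bullet \mathbf{L}_\ell^-(\mu)$ with $\bullet=([\mu]-(\mu,\varpi_1))J^-$.

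Since $\widehat{\Pi}(k\Lambda_0)=\tilde{B}_n$ for $q=2n-1$, the integral affine Weyl group factorises as $\widehat{W}(\mu)=W\ltimes q\check{\rQ}$, and each summand is labelled by a pair $(y,\lambda)\in W\times \check{\rQ}$ via $w=y t_{q\lambda}$; the parity of $\ell^{\frac{\infty}{2}}_\mu(w)$ reduces to $l(y)$ up to a global sign, and a direct computation of the affine dot action at level $k$ expresses $\mu_w$ as an explicit affine-linear function of $(y,\lambda)$ in $\mathfrak{h}_+^*$. Inserting the Wakimoto character formulae of Proposition~\ref{prop:Wakimoto_characters} termwise into the Euler--Poincaré sum, and on the super side using the spectral flow identity
\[
\ch S_{\theta J^-} M(z,q)=q^{\frac{\theta^2}{2}(1-\varepsilon^{-2})}z^{\theta(1-\varepsilon^{-2})}\ch M(zq^\theta,q),
\]
yields the first form of each character formula.

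The second form is obtained by completing the square in $\lambda\in\check{\rQ}$: the exponent $-(\mu_w-\mu,x_\Gamma^+)+q(\mu_w+\rho,\lambda)-\frac{pq}{2}|\lambda|^2$ rewrites as $-\frac{pq}{2}|\Lambda^+_{\mu,w}|^2$ plus a $\lambda$-independent remainder, and the $\lambda$-sum then collapses to $\Theta^p_{\Lambda^+_{\mu,w}}(z,q^q;\varpi_1)$. On the principal super side, the extra denominator $1+z^{-1}q^{-(\mu_w-\mu+\rho,\varpi_1)}$ inherited from the thin Wakimoto character turns the $\lambda$-sum instead into the Appell--Lerch series $\mathscr{A}^q_{-\Lambda_{\mu,w}}$ after matching shifts. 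The main technical hurdle lies in the bookkeeping of prefactors: after completing the square, the residual exponents of $q$ and $z$ must reproduce exactly the constants $q^{-\frac{pq}{2}|\Lambda^+_{0,e}|^2}z^{-p(\Lambda^+_{0,e},\varpi_1)}$ and their super analogues, which requires careful tracking of the Weyl-vector shifts, of the gradings $x_\Gamma^\pm$, and of the spectral-flow correction $\bullet=([\mu]-(\mu,\varpi_1))J^-$.
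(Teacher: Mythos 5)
Your proposal follows the same route as the paper: apply Arakawa's two-sided BGG-type resolution of $L_k(\mu)$ by Wakimoto modules, pass through $H_f^0$ via the double complex spectral sequence and cohomology vanishing to obtain a resolution of $\mathbf{L}_k(\mu)$, transport through $\mathrm{H}^+_{\varepsilon^{-1}(\mu,\varpi_1)}$ to resolve $S_\bullet\mathbf{L}_\ell^-(\mu)$, and compute by Euler--Poincar\'e with Proposition~\ref{prop:Wakimoto_characters}, rewriting the $\widehat{W}(\mu)\simeq W\ltimes q\check{\rQ}$ sum by completing the square to obtain the theta and Appell--Lerch forms. One small terminological slip: the resolvents $\mathbb{W}^{k,+}_{\mu_w}$ on the non-super side are the plain Wakimoto modules of Section~\ref{sec:wakimoto}, not the thick ones $\widehat{\mathbb{W}}^{k,+}_{\mu_0;\mu;\theta}$; the mathematics you use is nevertheless the correct one.
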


\proof
The proof is similar to the irrational case (Proposition~\ref{prop:simple_mod_irrational}).
\endproof
 
\begin{remark}\label{last remark}\hspace{0cm}
\begin{enumerate}[wide, labelindent=0pt]
    \item The character formulae for other simple modules can be deduced by applying \eqref{effect of spectral flow twists} and \eqref{spectral flow twist character on super side}.
    \item On the subregular side, we recover the character formula for the highest weight module $\mathbf{L}_k(\mu)=H^0_f(L_k(\mu))$ \cite{KRW03} with some extra factor corresponding to the shifted $x^+_\Gamma$ to get a symmetric grading.
    \item For the coprincipal admissible levels, i.e. $q=2n$, and for $\lambda\in \Pr_\Z^k$, we obtain similar formulae for $\ch{\mathbf{L}_k(\mu)}$ and $\ch{\mathbf{L}_\ell^-(\mu)}$ by changing the lattice $\check{Q}$ in the definition of $\Theta_\lambda(z,q)$.
    \item The function $\mathscr{A}_\lambda^s(z,w,q;\nu)$ is a generalization of the higher-level Appell-Lerch sums appearing in the character formulae of the simple modules of unitary $\mathcal{N}=2$ minimal models (see \cite{CLRW}).
    For a lattice $L$, consider the family of functions
    $$\mathscr{A}_{\lambda+L}^s(z,w,q;\nu)=\sum_{\eta\in \lambda+L} \frac{z^{s(\eta,\nu)} q^{\half s|\eta^2|}}{1-zwq^{(\eta,\nu)}}$$
    indexed by the weight $\lambda$ and the scalar $s$.
    Then $L=\Z=\check{Q}_{\sll_2}$ and $\lambda=0$ recovers the Appell-Lerch sums whereas $L=\check{Q}_{\so_{2n+1}}$ recovers our case.
\end{enumerate}
\end{remark}

\end{document}